\numberwithin{equation}{section}
\newcommand{\term}{\emph} % e.g. "The \term{trace} is defined to be..."
\newcommand{\colim}{\textrm{colim}}
\newcommand{\mb}{\mathbb}
\newcommand{\Z}{\mathbb{Z}}
\newcommand{\R}{\mathbb{R}}
\newcommand{\C}{\mathbb{C}}
\renewcommand{\H}{\mathbb{H}}
\newcommand{\T}{\mathrm{U}(1)}
\newcommand{\ko}{\mathit{ko}}
\newcommand{\ku}{\mathit{ku}}
\newcommand{\KO}{\mathit{KO}}
\newcommand{\KU}{\mathit{KU}}
\newcommand{\SSp}{\mathrm{Sp}}
\newcommand{\GL}{\mathrm{GL}}
\newcommand{\cat}{\mathsf}
\newcommand{\Bord}{\cat{Bord}}
\newcommand{\RP}{\mathbb{RP}}
\newcommand{\CP}{\mathbb{CP}}
\newcommand{\HP}{\mathbb{HP}}
\newcommand{\Det}{\mathrm{Det}}
\renewcommand{\dim}{\mathrm{dim}}
\newcommand{\U}{\mathrm U}
\newcommand{\Sigmainfty}{\Sigma^{\infty}_+}
\newsavebox{\pullback}
\sbox\pullback{%
\begin{tikzpicture}%
\draw (0,0) -- (1ex,0ex);%
\draw (1ex,0ex) -- (1ex,1ex);%
\end{tikzpicture}}
\DeclareMathOperator{\Hom}{Hom}
\newcommand{\NewThomSpectrum}[1]{\expandafter\newcommand\csname M#1\endcsname{\mathit{M#1}}}
\newcommand{\NewMTSpectrum}[1]{\expandafter\newcommand\csname MT#1\endcsname{MT\mathrm{#1}}}
\newcommand{\BothThomSpectra}[1]{\NewThomSpectrum{#1}\NewMTSpectrum{#1}}
\DeclareTextFontCommand{\df}{\bf}
\newcommand{\pt}{\mathrm{pt}}
\newcommand{\Snb}{S_{\mathit{nb}}}
\newcommand{\MTSpinc}{MT\mathrm{Spin}^c}
\newcommand{\String}{\mathrm{String}}
\newcommand{\Pin}{\mathrm{Pin}}
\newcommand{\Spin}{\mathrm{Spin}}
\newcommand{\SO}{\mathrm{SO}}
\renewcommand{\O}{\mathrm O}
\newcommand{\SU}{\mathrm{SU}}
\newcommand{\bl}{\text{--}}
\newcommand{\Sp}{\cat{Sp}}
\renewcommand{\ker}{\mathrm{ker}}
\newcommand{\Spinc}{\relax\ifmmode{\Spin^c}\else Spin\textsuperscript{$c$}\xspace\fi}
\newcommand{\spinc}{spin\textsuperscript{$c$}\xspace}
\newcommand{\spinh}{spin\textsuperscript{$h$}\xspace}
\newcommand{\Pinc}{\relax\ifmmode{\Pin^c}\else Pin\textsuperscript{$c$}\xspace\fi}
\newcommand{\pinc}{pin\textsuperscript{$c$}\xspace}
\newcommand{\Pinp}{\relax\ifmmode{\Pin^+}\else Pin\textsuperscript{$+$}\xspace\fi}
\newcommand{\pinp}{pin\textsuperscript{$+$}\xspace}
\newcommand{\Pinm}{\relax\ifmmode{\Pin^-}\else Pin\textsuperscript{$-$}\xspace\fi}
\newcommand{\pinm}{pin\textsuperscript{$-$}\xspace}
\newcommand{\sm}{\mathrm{sm}}
\newcommand{\Cl}{\mathit{C\ell}}
\def\[#1\]{%
  \begin{equation}\begin{gathered}#1\end{gathered}\end{equation}%
}
\newcommand{\shortexact}[6]{
\begin{tikzcd}[ampersand replacement=\&]
        0 \& {#3}
        \&  {#4}
        \& {#5}
        \& {0 #6}
        \arrow[from=1-1, to=1-2]
        \arrow["#1", from=1-2, to=1-3]
        \arrow["#2", from=1-3, to=1-4]
        \arrow[from=1-4, to=1-5]
\end{tikzcd}
}
\DeclarePairedDelimiter\abs{\lvert}{\rvert}
\DeclarePairedDelimiter\set{\{}{\}}
\DeclarePairedDelimiter\paren{(}{)}
\DeclarePairedDelimiter\ang{\langle}{\rangle}
	\let\oldparen\paren
	\def\paren{\@ifstar{\oldparen}{\oldparen*}}
\newtheorem*{thm*}{Theorem}
\newtheorem{lemma}[equation]{Lemma}
\newtheorem{corollary}[equation]{Corollary}
\newtheorem{proposition}[equation]{Proposition}
\newtheorem{theorem}[equation]{Theorem}
\newtheorem*{fiberthm}{\cref{the_cofiber_sequence}}
\newtheorem*{LEScor1}{\cref{bordism_LES_cor}}
\newtheorem*{LEScor2}{\cref{IZ_LES_cor}}
\theoremstyle{definition}
\newtheorem{example}[equation]{Example}
\newtheorem{definition}[equation]{Definition}
\theoremstyle{remark}
\newtheorem{remark}[equation]{Remark}
\crefname{theorem}{Theorem}{Theorems}
\crefname{lemma}{Lemma}{Lemmas}
\crefname{corollary}{Corollary}{Corollaries}
\crefname{propositiom}{Proposition}{Propositions}
\crefname{example}{Example}{Examples}
\crefname{definition}{Definition}{Definitions}
\crefname{claim}{Claim}{Claims}
\crefname{remark}{Remark}{Remarks}
\title{The Smith Fiber Sequence and Invertible Field Theories}
\date{\today}
\author[Debray]{Arun Debray}
\address{Department of Mathematics, The University of Kentucky, 719 Patterson Office Tower,
Lexington, KY 40506, USA}
\email{\href{mailto:a.debray@uky.edu}{a.debray@uky.edu}}
\author[Devalapurkar]{Sanath K. Devalapurkar}
\address{Department of Mathematics, The University of Chicago,
Eckhart Hall,
5734 S University Ave,
Chicago IL, 60637, USA}
\email{\href{mailto:sanathd@uchicago.edu}{sanathd@uchicago.edu}}
\author[Krulewski]{Cameron Krulewski}
\address{Department of Mathematics and Statistics,
Dalhousie University,
6316 Coburg Road,
PO Box 15000,
Halifax, NS, B3H 4R2, Canada}
\email{\href{mailto:ckrulewski@dal.ca}{ckrulewski@dal.ca}}
\author[Liu]{Yu Leon Liu}
\address{Harvard University Department of Mathematics,
%Science Center Room 325,
1 Oxford Street,
Cambridge, MA 02138, USA}
\email{\href{mailto:yuleonliu@math.harvard.edu}{yuleonliu@math.harvard.edu}}
\author[Pacheco-Tallaj]{Natalia Pacheco-Tallaj}
\address{Massachusetts Institute of Technology,
Department of Mathematics,
% Headquarters Office,
Simons Building (Building 2)
% Room 106,
77 Massachusetts Avenue,
Cambridge, MA 02139, USA}
\email{\href{mailto:nataliap@mit.edu}{nataliap@mit.edu}}
\author[Thorngren]{Ryan Thorngren}
\address{Mani L. Bhaumik Institute for Theoretical Physics, Department of Physics and Astronomy,
University of California, Los Angeles, CA 90095, USA}
\email{\href{mailto:ryan.thorngren@physics.ucla.edu}{ryan.thorngren@physics.ucla.edu}}
\begin{document}
\begin{abstract}
Smith homomorphisms are maps between bordism groups that change both the dimension and the tangential structure. We give a completely general account of Smith homomorphisms, unifying the many examples in the literature. We provide three definitions of Smith homomorphisms, including as maps of Thom spectra, and show they are equivalent. Using this, we identify the cofiber of the spectrum-level Smith map and extend the Smith homomorphism to a long exact sequence of bordism groups, which is a powerful computation tool. We discuss several examples of this long exact sequence, relating them to known constructions such as Wood's and Wall's sequences.
Furthermore, taking Anderson duals yields a long exact sequence of invertible field theories, which has a rich physical interpretation.
We developed the theory in this paper with applications in mind to symmetry breaking in quantum field theory, which we study in~\cite{PhysSmith}.
\end{abstract}
\maketitle

\tableofcontents

\section{Introduction}

Let $M$ be a closed, smooth $n$-manifold together with a real line bundle $\pi\colon L\to M$. For any section $s\colon M\to L$ of $\pi$ transverse to the zero section, standard theorems in differential topology imply $N_s\coloneqq s^{-1}(0)$ is a smooth, $(n-1)$-dimensional submanifold of $M$. We would like to make $N_s$ into an invariant of $M$ and $L$, but its diffeomorphism type depends on $s$: consider the trivial line bundle over $S^1$ with the constant section valued in $1$ versus any section intersecting the zero section. However, all choices of $N_s$ are \term{bordant}: given two sections $s_1,s_2\colon M\to L$, there is a compact $n$-manifold $Y$ whose boundary is diffeomorphic to $N_{s_1}\amalg N_{s_2}$. Thus the image of $N_s$ in the \term{bordism group} $\Omega_{n-1}^\O$, the set of bordism equivalence classes with group structure given by disjoint union, is a well-defined invariant of $M$ and $L$.

More is true: one can refine the bordism equivalence relation to extend the line bundle $L|_{N_{s_1}}\amalg L|_{N_{s_2}}$ across $Y$, obtaining an invariant valued in the larger group $\Omega_{n-1}^\O(B\O(1))$ of bordism classes of the data of a closed $(n-1)$-manifold and a real line bundle. The value of this invariant also only depends on the bordism class of $M$ and $L$ and is additive in disjoint unions, re-expressing our invariant as a homomorphism of abelian groups
\begin{equation}
\label{OG_Smith}
    \sm_\sigma\colon \Omega_n^\O(B\O(1)) \longrightarrow \Omega_{n-1}^\O(B\O(1)).
\end{equation}
This map was first studied by Conner-Floyd~\cite[Theorem 26.1]{CF64}, who called it the \term{Smith homomorphism} after P. A. Smith. Subsequently, many authors studied similarly-defined maps between other bordism groups, focusing on two methods of generalization:\footnote{Kirby-Taylor~\cite[Theorem 6.11, Remark 6.15]{KT90} generalize the Smith homomorphism in a different direction in the setting of \term{characteristic bordism}; that generalization is out of scope of this paper.}
\begin{enumerate}
    \item Generalize from real line bundles to real or complex vector bundles of other ranks.
    \item Keep track of other topological data on $M$, and how it is affected by passing to $N$.
\end{enumerate}
For example, suppose we give $M$ an orientation structure.  The submanifold $N_s$ does not inherit an induced orientation, and can be unorientable. 
However, since $TM|_{N_s} = TN_{s} \oplus L|_{N_s}$ has an orientation, 
it follows that $L|_{N_s}$ is the orientation line bundle of $N$. It follows that $L|_{N_s}$ gives no additional structure at all, and  this variant of the Smith homomorphism factors through $\Omega_{n-1}^\O\subset\Omega_{n-1}^\O(B\O(1))$. That is, we have a map
\begin{equation}
    \sm_\sigma\colon \Omega_n^\SO(B\O(1))\longrightarrow \Omega_{n-1}^\O.
\end{equation}
To the best of our knowledge, this was first written down by Komiya~\cite[\S 5]{Kom72}.
Other examples in the literature show the same phenomenon: if one places a tangential structure on $M$ in the sense of Lashof~\cite{Las63}, the Smith homomorphism lands in a bordism group whose degree and tangential structure are in general different from those of the domain.

Despite the variety of known examples, the general theory of the Smith homomorphism does not appear in the literature. The first objective of this paper is to tell the general story.

Our other major objective is to apply the Smith homomorphism to quantum physics.
Our inspiration for this paper came from \cite{HKT19}, where they study the physical process of defect anomaly matching as modelled by Smith homomorphisms.
Work of Freed-Hopkins-Teleman~\cite{FHT10}, Freed-Hopkins~\cite{FH16}, and Grady~\cite{Gra23} classifies various kinds of invertible field theories (IFTs) in terms of Anderson duals\footnote{See \S\ref{sss:IFT_bord} and~\cite[\S 5.3]{FH16} for more on Anderson duality and how we use it.} 
to bordism homology theories.\footnote{There are additional classification theorems for invertible field theories due to Yonekura~\cite{Yon19}, Rovi-Schoenbauer~\cite{RS22}, and Kreck-Stolz-Teichner (unpublished; see~\cite{StolzTalk}). All of these are closely related to~\cite{FHT10, FH16} and to each other, but for the purposes of our paper, we need the homotopical approach presented in ~\cite{FH16}.} Therefore, the Anderson dual of the Smith homomorphism is a map of invertible field theories.
As invertible field theories can be understood as anomalies of quantum field theories (see \cref{subsec:anomalies-and-IFT}), the Anderson-dualized Smith homomorphism provides an anomaly-matching formula expressing the anomalies of certain QFTs in terms of anomalies of lower-dimensional \term{defect theories}~\cite{HKT19,COSY19}.

However, in \cite{HKT19}, they noted in section 4.4 that they were missing a mathematical way to compute their homomorphisms of interest. The fiber sequence of spectra we studied to answer this question led to more interesting physics: in forming a fiber sequence, hence a long exact sequence, we found a tool to easily compute Smith homomorphisms. Furthermore,  two new maps in the long exact sequence also have interesting physical interpretations in the context of spontaneous symmetry breaking. In our companion paper~\cite{PhysSmith}, we provide detailed physical interpretations of the entire symmetry breaking long exact sequence (SBLES) and address many physical examples and applications. We provide a summary in \cref{LESIFTs} in this paper.

Next we outline the results of this paper. We make use of standard definitions in bordism theory, which we review in \S\ref{thom_background}. Fix a tangential structure\footnote{In the definition of a tangential structure, $B$ is a space and $\xi\colon B\to B\O$ is a fibration. There is no relation between this space $B$ and the classifying space functor $B$; in this paper, it should be clear from context which one we mean when we say $B$.}
$\xi\colon B\to B\O$ (see \cref{defn_tangstr}), a space $X$, a virtual vector bundle $V\to X$ of rank $r_V$, and a vector bundle $W\to X$ of rank $r_W$.

By an \term{$(X, V)$-twisted $\xi$-structure} on a virtual vector bundle $E\to M$, we mean the data of a map $f\colon M\to X$ and a $\xi$-structure on $E\oplus f^*(V)$ (\cref{VB_twist}); this is a tangential structure in its own right (\cref{shearing_lemma}), which we denote $\xi + (X, V)$. In particular, there is a notion of bordism of $(X, V)$-twisted $\xi$-manifolds, and by \cref{what_is_twisted_bordism}
the corresponding bordism groups are naturally isomorphic to the $\xi$-bordism groups of the Thom spectrum $X^{V-r_V}$ of the rank-zero virtual bundle $V - r_V\to X$:\footnote{This perspective on twisted bordism, and these results, are not new, and we are not sure who originally developed them. We follow the language and perspective of~\cite[\S 10]{DDHM22}, and note this is not the first approach to this material.}
\begin{equation}\label{tw_unshear}
    \Omega_n^{\xi + (X, V)} \xrightarrow{\cong} \Omega_n^\xi(X^{V - r_V}).
\end{equation}
Our first result is to define a Smith homomorphism $\sm_W$ associated to the data of $\xi$, $X$, $V$, and $W$. This Smith homomorphism will have type signature
\begin{equation}
    \sm_W\colon \Omega_n^\xi(X^{V-r_V}) \longrightarrow \Omega_{n-r_W}^\xi(X^{V\oplus W - r_V - r_W}).
\end{equation}
That is, passing through~\eqref{tw_unshear}, the Smith homomorphism passes from the bordism group of $n$-dimensional $(X, V)$-twisted $\xi$-manifolds to the bordism group of $(n-r_W)$-dimensional $(X, V\oplus W)$-twisted $\xi$-manifolds. We actually provide three different definitions of $\sm_W$:
\begin{enumerate}
    \item First, in \cref{Smith_homomorphism_intersection_defn} we define $\sm_W$ as the map sending the bordism class of a manifold $M$ with map $f\colon M\to X$ to the bordism class of the zero locus of a section of $f^*W\to M$ transverse to the zero section.
    \item\label{smith_2_intro} We then define the Smith homomorphism in \cref{Smith_map_vector_bundle} as the map of bordism groups induced by a map of Thom spectra $X^V\to X^{V\oplus W}$, itself induced by the map of total spaces of vector bundles $V\to V\oplus W$ sending $v\mapsto (v, 0)$.
    \item Our third definition, in \cref{smith_homomorphism_euler_definition}, defines $\sm_W$ as the cap product homomorphism with the Euler class of $W$ in (twisted) $\xi$-cobordism, following a construction of Euler classes in twisted generalized cohomology in \S\ref{ss:tw_Euler}.
\end{enumerate}
\begin{thm*}[\cref{two_smith_1,two_smith_2}]
The above three definitions are equivalent.
\end{thm*}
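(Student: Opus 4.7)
The plan is to establish the two equivalences separately: definition (1) matches definition (2), and definition (2) matches definition (3).

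For (1) $\Leftrightarrow$ (2), I would argue via the twisted Pontryagin--Thom construction. Unwinding the isomorphism~\eqref{tw_unshear}, a class $[M, f] \in \Omega_n^\xi(X^{V-r_V})$ is represented by the Pontryagin--Thom collapse of an embedding $M \hookrightarrow S^{n+k}$ together with a stable trivialization of $\nu_M \oplus f^*V$ by a $\xi$-structure. Post-composing with the Thom-spectrum map induced by $V \hookrightarrow V \oplus W$, $v \mapsto (v, 0)$, and shifting degree by $r_W$, produces a class in $\Omega_{n-r_W}^\xi(X^{V \oplus W - r_V - r_W})$. By the standard Pontryagin--Thom correspondence for inclusions of subbundles, this composite is represented geometrically by intersection with the zero section of $W$: concretely, by $N = s^{-1}(0)$ for any transverse section $s$ of $f^*W$. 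The splitting $TM|_N \cong TN \oplus (f|_N)^*W$ provided by transversality transports the $(X, V)$-twisted $\xi$-structure on $M$ to an $(X, V \oplus W)$-twisted $\xi$-structure on $N$, matching the data of \cref{Smith_homomorphism_intersection_defn}.

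For (2) $\Leftrightarrow$ (3), I would use the standard principle that cap product with an Euler class is induced, at the level of Thom spectra, by the appropriate zero-section inclusion. Specifically, the Euler class $e(W)$ constructed in \S\ref{ss:tw_Euler} is represented, after the Thom isomorphism, by the map of Thom spectra obtained by including the zero-dimensional subbundle $0 \hookrightarrow W$ and Thomifying. Smashing this map fiberwise over $X$ with the identity of $X^{V - r_V}$ recovers precisely the map $X^V \to X^{V \oplus W}$ of (2), up to the virtual rank shift built into $X^{V-r_V}$. Naturality of the cap product then identifies its induced map on $\xi$-bordism with the homomorphism of (2).

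The principal obstacle is the bookkeeping of twisted tangential structures and virtual rank shifts across the three formulations. Both equivalences are natural manipulations of Thom spectra, but one must verify that the twisting datum — the map $f \colon M \to X$ together with the $\xi$-structure on $TM \oplus f^*V$ — is transported correctly through Pontryagin--Thom in the first equivalence, and through the twisted Thom isomorphism defining $e(W)$ in the second. Once this is carried out, both parts reduce to the classical facts that Pontryagin--Thom converts zero-section inclusions into transverse zero-locus intersections, and that cap product with the Euler class is modeled by the zero-section map on Thom spectra.
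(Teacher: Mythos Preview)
Your argument for (1) $\Leftrightarrow$ (2) matches the paper's: both simply say that unwinding Pontrjagin--Thom turns the zero-section map of Thom spectra into transverse intersection with the zero section of $f^*W$. The paper states this as \cref{two_smith_1} with essentially the same one-line justification.

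For (2) $\Leftrightarrow$ (3), however, there is a gap. What you sketch is the spectrum-level identity ``the Smith map $X^V \to X^{V\oplus W}$ is cap product with $e^{\mathbb S}(W)$ over $X$,'' which is exactly the paper's \cref{prop:smith-is-cap-with-euler}. But \cref{smith_homomorphism_euler_definition} is a \emph{manifold-level} statement: given a representative $M$ with structure map $f\colon M\to X$, one forms $e^{\mathit{MT\xi}}(f^*W)$ in the twisted cobordism of $M$ and takes its Atiyah--Poincar\'e dual there. Passing from ``cap with the Euler class over $X$'' to ``Poincar\'e dual of the pulled-back Euler class on $M$'' is not just naturality of cap product; it requires Atiyah duality for $M$. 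The paper supplies this via \cref{atiyah_duality}, then shows in \cref{lem:atiyah-dual-euler} that the Atiyah dual of a Smith map is again a Smith map, and in \cref{lem:name-it-something} that pushing the tautological class $[M]\in\Omega^{\mathrm{fr}}_d(M,-TM)$ forward along $\mathrm{sm}_W$ lands on the Atiyah--Poincar\'e dual of $e^{\mathbb S}(W|_M)$. Only then does functoriality along $f$ give \cref{two_smith_2}. Your sketch stops at the $X$-level cap product and does not touch the duality step on $M$, so as written it proves \cref{prop:smith-is-cap-with-euler} rather than \cref{two_smith_2}.
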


Each definition has its own advantages: the first and third allow for a direct comparison with preexisting special cases in the literature and the second is an essential ingredient for constructing long exact sequences.
Specifically, in \cref{math_Smithfibersequence}, we reprove the following well-known theorem.
\begin{fiberthm}
With $X$, $V$, $W$, and $\xi$ as above, the fiber of the map of spectra $X^V\to X^{V\oplus W}$ in definition~\ref{smith_2_intro} is the map $p\colon S(W)^{p^*V}\to X^V$, where $S(W)$ denotes the unit sphere bundle of $W$ and $p\colon S(W)\to X$ is the bundle projection map.
\end{fiberthm}
This is not a new result; in~\cite[Remark 3.14]{KZ18} it is attributed to James. Our contribution is to relate it to the fully general definition of the Smith homomorphism.

From the fiber sequence of spectra in \cref{math_Smithfibersequence}, we derive two long exact sequences. First, in bordism:
\begin{LEScor1}
Let $X$, $V$, $W$, and $\xi$ be as above. There is a long exact sequence of bordism groups: 
\begin{equation}\label{bordism_LES_eqn}\resizebox{1\hsize}{!}{
        $\dotsb \longrightarrow \Omega_n^\xi(S(W)^{p^*V-r_V}) \overset{p}{\longrightarrow} \Omega_n^\xi(X^{V-r_V}) \xrightarrow{\sm_W} \Omega_{n-r_W}^\xi (X^{V\oplus W - r_V - r_W}) \overset{\delta}{\longrightarrow} \Omega_{n-1}^\xi(S(W)^{p^*V-r_V}){\longrightarrow}\dotsb$}
\end{equation}
\end{LEScor1}
As we discuss in \cref{smith_is_gysin}, this is a generalized Gysin sequence.
As a computational tool, it turns out to be remarkably convenient. 
Different vector bundles $W$ can be combined to calculate bordism groups, often avoiding difficult spectral sequence calculations. For example, we use this idea in \cref{explicit_pin} to address an extension problem; other papers using this or closely related techniques to do computations include~\cite{HS13, DL23, Deb23, DDHM22, DNT24, DYY}.

Next, there are many examples of Smith homomorphisms in the literature, so we devote some time in this paper to explicating the Smith homomorphism for various choices of $\xi$, $X$, $V$, and $W$. One phenomenon that we address in \S\ref{periodicity_and_shearing} is that Smith homomorphisms come in ``families'': if you iterate $\sm_W$ with $V = 0,W,2W,3W,\dotsc$, often the domain and codomain recur with a finite period $p$ because the notions of $(X,kW)$-twisted and $(X, (k+p)W)$-twisted $\xi$-structures are equivalent. In \S\ref{periodicity_and_shearing}, we use the ``fake vector bundle twists'' of~\cite[\S 1]{DY23} to establish the following Smith families.
\begin{itemize}
    \item \Cref{unoriented_periodicity}: when $\xi = 
    \mathrm{id}\colon B\O\to B\O$ (unoriented bordism), the period $p=1$ for all $X$ and $W$: like in~\eqref{OG_Smith}, the tangential structure does not change.
    \item \Cref{periodicity_of_SO,MU_MSpinc}: for $\xi\colon BG\to B\O$ for $G = \SO$, $\Spin^c$, or $\U$, $p = 1$ if $W$ is orientable and $2$ if $W$ is unorientable.
    \item \Cref{periodicity_of_spin}: for $\xi\colon B\Spin\to B\O$, $p$ can be $1$, $2$, or $4$ depending on the first two Stiefel-Whitney classes of $W$.
\end{itemize}
These may be thought of as versions of James periodicity over bases other than $\mathbb S$; see \cref{james_example}. For some other common choices of $\xi$, including $\SU$-structure, string structure, and stable framing, the period is harder to determine, as we discuss in \S\ref{not_sharp}. We would be interested in learning tools for computing such periods, and suspect that these periodicities are related to the image of the $J$-homomorphism; see \cref{highconn}.

We also study examples where we fix $X$, $V$, and $W$, but let $\xi$ vary, recovering known Smith families from the literature.
\begin{enumerate}
    \item In \S\ref{taut_real_fams}, we let $X = \RP^\infty$, $V = k\sigma$, and $W = \sigma$, where $\sigma\to\RP^\infty$ is the tautological line bundle.
    \begin{itemize}
        \item In \cref{Z2_MO}, we apply this to unoriented bordism, where it reproduces Conner-Floyd's original Smith homomorphism~\eqref{OG_Smith}~\cite[Theorem 26.1]{CF64}.
        \item For oriented bordism, see \cref{SO_Z2_exm}, where we get Komiya's~\cite[\S 5]{Kom72} $2$-periodic family of Smith homomorphisms exchanging $\SO\times\Z/2$- and $\O$-bordism.
        \item On \spinc bordism (\cref{spinc_Z2_exm}), this recovers the Smith homomorphisms between $\Spin^c\times\Z/2$ and $\Pin^c$ bordism studied in~\cite{BG87a, HS13}.
        \item On spin bordism (\cref{spin_4periodic}), this Smith family is $4$-periodic, involving bordism for the groups $\Spin\times\Z/2$, $\Pin^-$, $\Spin\times_{\set{\pm 1}}\Z/4$ and $\Pin^+$. This family or subsets of it have been discussed in~\cite{Pet68, ABP69, Gia73a, Kre84, KT90, HS13, KTTW15, TY19, HKT19, WWZ19, BR23}.
        \item On string bordism (\cref{string_Z2}), this family has period $8$ and to our knowledge has not appeared in the literature. It would be interesting to study this family in more detail.
    \end{itemize}
    \item In \S\ref{CP_inf_twist}, we instead consider the tautological complex line bundle over $\CP^\infty$. For some tangential structures ($\O$, $\SO$, $\Spin^c$, $\U$) this family is $1$-periodic (\cref{cpx_triv_exms}).
    \begin{itemize}
        \item For spin bordism (\cref{spinc_spin_Smith}), this family is $2$-periodic, exchanging $\Spin\times\U(1)$ and \spinc bordism, recovering work of Kirby-Taylor~\cite[Corollary 6.12, Remark 6.14]{KT90}.
        \item In \cref{spinzn}, we pull $L$ back along $B\Z/n\to B\U(1)$, obtaining a $2$-periodic family exchanging bordism for the groups $\Spin\times\Z/n$ and $\Spin\times_{\set{\pm 1}}\Z/2n$ also studied in~\cite[Appendix E]{DDHM22}.
    \end{itemize}
\end{enumerate}
These are not the only examples we consider---see \S\ref{examplesofSmith} for more.

In \cref{LESIFTs}, we leverage our theory toward physical applications.
Let $I_\Z\Omega_\xi^*(\text{--})$ denote the generalized cohomology theory that is Anderson dual to $\xi$-bordism.  
As mentioned above, theorems of Freed-Hopkins~\cite{FH16} and Grady~\cite{Gra23} imply that $I_\Z\Omega_\xi^*(X)$ classifies deformation classes of reflection-positive, fully extended invertible field theories, as defined in~\cite[\S 5]{FH16}, on manifolds equipped with $\xi$-structures and maps to the space $X$. In particular, when $X = BG$, homotopy classes of maps to $X$ classify isomorphism classes of principal $G$-bundles.
Using the fiber sequence, we show the following.
%\textcolor{red}{TODO}: recall [FH21]---the Anderson dual is the group of (def classes refln positive fully extended IFTs), cite FH for defn
\begin{LEScor2}
    Let $X$, $V$, $W$, and $\xi$ be as above. There is a long exact sequence of groups of deformation classes of reflection-positive, fully extended invertible field theories:
\begin{equation}\label{SBLES_eqn}\resizebox{1\hsize}{!}
        {$\dotsb \longrightarrow 
        I_\Z\Omega^{n-1}_\xi(S(W)^{p^*V-r_V}) 
        \xrightarrow{\mathrm{Ind}_W}
        I_\Z\Omega_\xi^{n-r_W}(X^{V\oplus W - r_V - r_W}) \xrightarrow{\mathrm{Def}_W} I_\Z\Omega^n_\xi(X^{V-r_V}) 
        \xrightarrow{\mathrm{Res}_W}
        \Omega^n_\xi(S(W)^{p^*V-r_V}) \longrightarrow\dotsb$}
\end{equation}
\end{LEScor2}

This is our mathematical model for the \term{symmetry-breaking long exact sequence} of~\cite{PhysSmith}, and we have labelled the maps as in that paper. The map dual to the Smith map $\sm_W$ is the defect anomaly matching map $\mathrm{Def}_W$ studied in \cite{HKT19} and \cite{COSY19}. Our framework allows for the study of more physical examples, as well as provides a way of explicitly computing each map and thus extracting more physical information. %Moreover, it expands our physical understanding of this form of symmetry breaking to include two other processes: residual anomaly obstructions, which prevent certain symmetry breaking patterns, and index anomaly matching.
We study this in detail in \cite{PhysSmith}.

Finally, we have two appendices. In Appendix~\ref{appendix_bordism_LES}, we explicate a Smith long exact sequence from \cref{another_Wall_exm}, which interchanges \pinm and \pinp bordism, with the third term in the long exact sequence identified with a certain twisted spin bordism of $\RP^1$. In Appendix~\ref{s:eu_counter}, we provide an example computation of a Smith homomorphism that goes beyond cohomological approximations.
%\textcolor{red}{CK: modified the left sentence and commented the below. In particular, removed the reprint of the current Thm. B.4}
% we explain why we use Euler classes in cobordism, rather than in ordinary cohomology: the latter is not compatible with the Smith long exact sequence, and in \cref{euler_exm_main} we give an explicit example to this effect.
% As part of our investigation of this example, we prove a theorem that may be of independent interest.
% \begin{eulerthm}
% Let $V\to X$ be a rank-$3$ vector bundle with spin structure and $\mathcal S\to X$ be the spinor bundle of $V$. If $\eta\in\ko^{-1}(\mathrm{pt})\cong\Z/2$ is the unique nonzero element and $p_1^\H\in\ko^4(B\SSp(1))$ denotes the first symplectic $\ko$-Pontrjagin class (see \cref{ko_sp1}), then the $\ko$-cohomology Euler class of $V$ is
% \begin{equation} e^\ko(V) = \eta\cdot p_1^\H(\mathcal S)\in\ko^3(X).\end{equation}
% \end{eulerthm}

An interesting direction for future work is to investigate what happens in the absence of unitarity. The mathematical backbone of our work generalizes nicely to the nonunitary case: Freed-Hopkins-Teleman~\cite{FHT10} classify invertible topological field theories in the absence of a reflection positivity structure using unstable Madsen-Tillmann spectra, and the Smith long exact sequence generalizes to this case (see, e.g., \cref{GMTW_exm}). Anomalies of nonunitary theories are not so well-studied, but some examples appear in~\cite{CL21, HTY22, HSV25}, and the fact that the Smith long exact sequence generalizes suggests our methods do too.

From there one could ask: the appearance of Madsen-Tillmann spectra in the classification of invertible TFTs is due to theorems of Galatius-Madsen-Tillmann-Weiss~\cite{GMTW09}, Nguyen~\cite{Ngu17}, and Schommer-Pries~\cite{SP17} establishing Madsen-Tillmann spectra as classifying spectra for bordism (higher) categories. Can one lift the Smith homomorphism to a morphism of bordism categories? This is a question in pure mathematics whose affirmative answer would suggest a generalization of our methods to noninvertible TFTs, and therefore to the symmetry breaking of noninvertible symmetries of field theories, as studied in, e.g.,~\cite{LTLSB21, ABCGR23, DY23a, CHZ23, DAC23, CBNM25, KW25}.

\subsection*{Acknowledgements}
We would like to thank
Adrian Clough,
Dan Freed,
Mike Hopkins,
Theo Johnson-Freyd,
Yigal Kamel,
Justin Kulp,
Miguel Montero,
David Reutter,
Luuk Stehouwer,
Weicheng Ye,
Matthew Yu, and
the anomymous referees
for helpful discussions which improved our paper.

Part of this project was completed while AD, CK, and YLL visited the Perimeter Institute for Theoretical Physics for the 2022 workshop on Global Categorical Symmetries; research at Perimeter is supported by the Government of Canada through Industry Canada and by the Province of Ontario through the Ministry of Research \& Innovation. The workshop on Global Categorical Symmetries was supported by the Simons Collaboration on Global Categorical Symmetries. We thank both Perimeter and the Simons Collaboration for Global Categorical Symmetries for their hospitality.
CK and NPT are supported by NSF DGE-2141064 and SKD is supported by NSF DGE-2140743.

The authors declare that we have no competing interests.
We certify that the data supporting the conclusions of this article are available within the paper.

\section{Bordism and Thom spectra}\label{thom_background}

In this section, we review virtual bundles, tangential structures, and their 
Thom spectra. We also review the Pontrjagin-Thom theorem, which relates the homotopy groups of Thom spectra to bordism groups. 

\subsection{Virtual vector bundles and tangential structures}
Everything in this subsection is well-worn mathematics; see~\cite{Fre19, FH16, DY23} and the references therein for additional references for this material.

\begin{definition}
A \term{virtual vector bundle} $V\to X$ is the data of two vector bundles $V_1,V_2\to M$, which we think of as ``$V_1 - V_2$.''

An \term{isomorphism of virtual vector bundles} $f$ between $V = (V_1, V_2)$ and $W = (W_1, W_2)$ over a common base space $X$ is the data of vector bundles $E_1,E_2\to X$ and isomorphisms $f_1\colon V_1\oplus E_1\xrightarrow{\cong} W_1\oplus E_2$ and $f_2\colon V_2\oplus E_1\xrightarrow{\cong} W_2\oplus E_2$.
\end{definition}
The idea behind this definition of isomorphism is that we would like the following pairs of virtual vector bundles to be isomorphic.
\begin{enumerate}
    \item $(V_1, V_2)$ and $(W_1, W_2)$ when $V_1\cong W_1$ and $V_2\cong W_2$.
    \item $(V_1, V_2)$ and $(V_1\oplus E, V_2\oplus E)$: adding and subtracting $E$ should not change the isomorphism type of the vector bundle.
\end{enumerate}
A vector bundle $V$ defines a virtual vector bundle as the pair $(V, 0)$. In the future we will make this assignment implicitly.

Let $B\O$ denote the classifying space of the infinite-dimensional orthogonal group $\O\coloneqq\varinjlim_n\O_n$.
\begin{lemma}
The space $\Z\times B\O$ classifies virtual vector bundles: for a space $X$ with the homotopy type of a CW complex, the set $[X, \Z\times B\O]$ is naturally in bijection with the set of isomorphism classes of virtual vector bundles on $X$.
\end{lemma}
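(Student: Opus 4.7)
The plan is to identify the set of isomorphism classes of virtual vector bundles on $X$ with the real topological $K$-theory group $\KO^0(X)$, and then to invoke the classical fact that $\KO^0$ is represented by $\Z \times B\O$.

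First I would check that the notion of isomorphism given in the preceding definition coincides with the equivalence relation generated by $(V_1, V_2) \sim (V_1 \oplus E, V_2 \oplus E)$. Given an isomorphism datum $(E_1, E_2, f_1, f_2)$ between $(V_1, V_2)$ and $(W_1, W_2)$, one passes from $(V_1, V_2)$ to $(V_1 \oplus E_1, V_2 \oplus E_1)$, then via $(f_1, f_2)$ to $(W_1 \oplus E_2, W_2 \oplus E_2)$, and finally back to $(W_1, W_2)$. Conversely, any two virtual bundles equivalent under the generated relation admit an isomorphism in the paper's sense via evident choices of $(E_1, E_2)$ (e.g.\ $E_1 = W_2 \oplus E$, $E_2 = V_2 \oplus E$).

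Next I would verify that these equivalence classes form an abelian group under $\oplus$: the class of $(V_2, V_1)$ inverts the class of $(V_1, V_2)$ since their sum is $(V_1 \oplus V_2,\, V_1 \oplus V_2) \sim (0,0)$, and on a space of the homotopy type of a CW complex every vector bundle embeds in a trivial bundle, so inverses always exist. By the universal property of the Grothendieck construction, this group is exactly $\KO^0(X)$.

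Finally I would appeal to the classical classification of real vector bundles: a rank-$n$ bundle on a CW complex is classified up to isomorphism by a homotopy class of maps $X \to B\O(n)$ (pullback of the tautological bundle on the infinite Grassmannian), so stable isomorphism classes are classified by $[X, B\O] = \colim_n [X, B\O(n)]$; combining with the locally-constant virtual-rank function $X \to \Z$ yields the desired bijection $\KO^0(X) \cong [X, \Z \times B\O]$. The main subtlety is handling non-finite-dimensional $X$, where an individual map $X \to B\O$ need not globally factor through any $B\O(N)$; here one reduces to finite subcomplexes (on which factorization and universal-bundle pullback are standard) and assembles the data compatibly, or equivalently invokes Brown representability for $\KO^0$ together with the known identification of its representing space.
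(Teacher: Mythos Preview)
The paper does not supply a proof of this lemma; it is stated as a well-known background fact and immediately used. Your proposal is the standard argument and is correct: identify isomorphism classes of virtual bundles with $\KO^0(X)$ via the Grothendieck construction, then invoke representability of $\KO^0$ by $\Z\times B\O$.

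One small remark: your sentence ``on a space of the homotopy type of a CW complex every vector bundle embeds in a trivial bundle, so inverses always exist'' is both unnecessary and not quite true in the stated generality (it holds for compact or finite-dimensional $X$, but not for arbitrary CW complexes). You already established inverses in the previous clause by observing that $(V_2,V_1)$ inverts $(V_1,V_2)$, so you can simply delete this sentence without loss.
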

Projection $\Z\times B\O\to\Z$ onto the first component defines a numerical invariant of virtual vector bundles; this is the \term{rank} $\mathrm{rank}(V) \coloneqq \dim(V_1) - \dim(V_2)$. Thus $B\O$, thought of as $B\O\times\{0\}$, is the classifying space for rank-zero virtual vector bundles.
\begin{definition}
\label{defn_tangstr}
A \term{(stable) tangential structure} is a fibration $\xi\colon B\to B\O$, and given $\xi$, a \term{$\xi$-structure} on a (rank-zero virtual) vector bundle $V\to X$ is a lift of its classifying map $f_V\colon X\to B\O$ as in the diagram
\begin{equation}
\begin{tikzcd}
	& {B} \\
	X & {B\O,}
	\arrow["{f_V}"', from=2-1, to=2-2]
	\arrow["{\xi}", from=1-2, to=2-2]
	\arrow["{\widetilde f_V}", dashed, from=2-1, to=1-2]
\end{tikzcd}
\end{equation}
i.e.\ a map $\widetilde f_V\colon X\to B$ such that $f_V \simeq \xi\circ\widetilde f_V$.
\end{definition}
Two $\xi$-structures on $V$ are equivalent if the corresponding maps $\widetilde f_V, \widetilde f{}'_V\colon X\to B$ are homotopic. %We will mostly work with equivalence classes of $\xi$-structures, though sometimes the extra data 

If $M$ is a manifold, a \term{$\xi$-structure} on $M$ means a $\xi$-structure on the virtual vector bundle defined by $TM$. One also sees \term{normal $\xi$-structures} on $M$, which are $\xi$-structures on $-TM$, the virtual vector bundle defined by the pair $(0, TM)$.
\begin{example}
For $\xi\colon B\SO\to B\O$, a $\xi$-structure is equivalent to an orientation. For $\xi\colon
B\Spin\to B\O$, a $\xi$-structure is equivalent to a spin structure.
\end{example}

If $M$ is a manifold with boundary, the outward unit normal vector field defines a trivialization of the normal
bundle to $\partial M\hookrightarrow M$, so $T(\partial M)\oplus\underline\R\cong TM|_{\partial M}$, and therefore
a $\xi$-structure on $M$ induces a $\xi$-structure on $\partial M$. It is therefore possible to define a notion of
bordism of manifolds with $\xi$-structure, as Lashof~\cite{Las63} did; we let $\Omega_n^\xi$ denote the set of
bordism classes of $n$-manifolds with $\xi$-structure, which becomes an abelian group under disjoint union.

Often, one studies groups $G$ with maps $\rho\colon G\to\O$, and lets $\xi\coloneqq B\rho\colon BG\to B\O$. In this
case it is common to denote $\Omega_*^\xi$ as $\Omega_*^G$ (e.g.\ $G = \O$, $\SO$, $\Spin$, $\Pin^\pm$, etc.).
\begin{remark}
\label{functorial_BO}
The category of tangential structures is the slice category $\cat{Top}_{/B\O}$, i.e.\ the objects are spaces with a
map to $B\O$, and the morphisms are maps which commute with the maps to $B\O$. Bordism groups are covariantly
functorial in this category.
\end{remark}

\subsection{Construction of Thom spectra}
First, recall the classical construction of a Thom space:
if $V\to X$ is a vector bundle, choose a Euclidean metric on $V$. Let $D(V)$ be the \term{disk bundle} of vectors in $V$ of norm at most $1$ and $S(V)$ be the \term{sphere bundle} of vectors of norm exactly $1$; write $\mathrm{Th}(X; V) \coloneqq D(V)/S(V)$.

\begin{example}
    Let $\underline{\R}^n\to X$ be a trivial bundle and let $X_+$ be the space $X$ with a disjoint basepoint. Then the Thom space is the $n$-fold suspension $\mathrm{Th}(X; \underline{\R}^n) \simeq \Sigma^n X_+$.
\end{example}

\begin{example}
    Let $X=\RP^n$ and let $V=\sigma$ be the tautological line bundle. Then the Thom space is $\mathrm{Th}(\RP^n;\sigma)\simeq \RP^{n+1}$.
\end{example}

\begin{proposition}\label{prop:X-compact-thom-one-point}
    If $X$ is compact, then $\mathrm{Th}(X;V)$ is the one point compactification of the disk bundle $D(V)$.
\end{proposition}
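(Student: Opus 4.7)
The plan is to interpret the statement as identifying the Thom space $\mathrm{Th}(X;V) = D(V)/S(V)$ with the one-point compactification of the open disk bundle $\mathring D(V) \coloneqq D(V)\setminus S(V)$, which under the fiberwise radial map $v\mapsto v/\sqrt{1+\lVert v\rVert^2}$ is homeomorphic to the total space of $V$ itself. (Strictly read, ``one-point compactification of $D(V)$'' is a little loose, since $D(V)$ is already compact when $X$ is, so its genuine one-point compactification would merely adjoin an isolated point; the intended meaning is the quotient $D(V)/S(V)$ obtained by collapsing the ``sphere at infinity'' to a single point, so the compactified space is $\mathring D(V) \cup \{\infty\}$.)

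First I would verify that $D(V)$ is compact Hausdorff. Since $X$ is compact, I cover it by finitely many open sets $U_\alpha$ over which $V$ trivializes and pass to a subordinate cover by compact subsets $\overline U_\alpha$. Each restriction $D(V)|_{\overline U_\alpha}$ is homeomorphic to $\overline U_\alpha \times D^n$ and hence compact, so $D(V)$ is a finite union of compact subsets and is itself compact; Hausdorffness is local. The sphere bundle $S(V)\subset D(V)$ is cut out by $\lVert v\rVert = 1$, hence closed, so the quotient $D(V)/S(V)$ is compact Hausdorff as well.

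Next I would invoke the standard characterization of the one-point compactification: if $Z$ is compact Hausdorff, $Y\subset Z$ is an open subspace, and $Z\setminus Y$ is a single point, then $Z$ is canonically homeomorphic to $Y^+$. Apply this with $Z = D(V)/S(V)$ and $Y$ the image of $\mathring D(V)$ under the quotient map $\pi\colon D(V)\to D(V)/S(V)$. Since $\pi$ collapses only points of $S(V)$ and is the identity elsewhere, it restricts to a homeomorphism $\mathring D(V) \xrightarrow{\cong} Y$, and $Y$ is open because its preimage under $\pi$ is the open set $\mathring D(V)\subset D(V)$. The complement $Z\setminus Y$ is the single collapsed point $[S(V)]$. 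Composing with the fiberwise radial homeomorphism $V\cong \mathring D(V)$ yields $\mathrm{Th}(X;V)\cong V^+$.

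The main obstacle here is almost entirely notational: one has to commit to the right interpretation of ``disk bundle'' (open vs.\ closed total space) and verify that the fiberwise radial map glues into an honest homeomorphism of total spaces (which it does, with continuous inverse $w\mapsto w/\sqrt{1-\lVert w\rVert^2}$ built from local trivializations). Once the interpretation is pinned down, the argument is a routine application of point-set topology with no deeper machinery required.
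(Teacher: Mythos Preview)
The paper states this proposition without proof, so there is no argument to compare against. Your proof is correct and complete: you correctly identify and resolve the slight imprecision in the statement (the closed disk bundle $D(V)$ is already compact when $X$ is, so the intended meaning is the one-point compactification of the open disk bundle, equivalently of the total space of $V$), and your point-set argument via the universal characterization of $Y^+$ as the unique compact Hausdorff space containing $Y$ as an open subset with one-point complement is the standard and cleanest route. This interpretation is consistent with how the paper later uses the proposition in the proof of \cref{lem:atiyah-dual-euler}, where tubular neighborhoods (homeomorphic to total spaces of normal bundles) are one-point compactified to produce Thom spaces.
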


Let $\cat{Top}$ denote the $\infty$-category of spaces and $\cat{Top}_*$ denote the $\infty$-category of pointed spaces.\footnote{The reader does not need to be comfortable with $\infty$-categories to follow the results in this section. We use them because they simplify some of the language around Thom spectra and are heavily used by~\cite{ABGHR14a, ABGHR14b}, but one can intuitively replace ``$\infty$-category'' with ``category'' without losing the key insights of this section.} Given $X\in\cat{Top}$, there is an $\infty$-category $\pi_{\le\infty}(X)$, the \term{fundamental $\infty$-groupoid} of $X$, whose objects are points of $X$, $1$-morphisms are paths in $X$, $2$-morphisms are homotopies between paths, etc. $\pi_{\le\infty}(X)$ is often used as a categorical stand-in for $X$, in the sense that functors $\pi_{\le\infty}(X)\to\cat C$ for an $\infty$-category $\cat C$ are thought of as $\cat C$-valued local systems on $X$: for every point $x\in X$, we have an object $c(x)$ of $\cat C$, for every path $x\to y$  in $X$ we have an isomorphism $c(x)\xrightarrow{\cong} c(y)$, etc. For example, there is a functor $\kappa \colon \pi_{\le\infty}(B\O(d))\to \cat{Vect}$ describing the fibers of the tautological rank-$d$ vector bundle and the parallel transport with respect to a (noncanonical) connection. Thus, if $V \to X$ is a rank-$d$ real vector bundle (\emph{not} merely a virtual vector bundle), and $f_V\colon X \to B\O(d)$ is the classifying map, the functor $\kappa\circ \pi_{\le\infty}(f_V)$ from $\pi_{\le\infty}(X)$ to $\cat{Vect}$ can be throught of as describing how $V\to X$ is a local system of vector spaces on $X$.

The one-point compactification of a vector space $V$ is a sphere, and this defines a functor $\cat{Vect}\to\cat{Top}_*$; precomposing with $\kappa$, we obtain a functor $\kappa'$ from the fundamental $\infty$-groupoid of $B\O(d)$ to $\cat{Top}_*$. As before, this should be thought of as describing the fibers, paths, etc., in a local system of $d$-dimensional spheres over $B\O(d)$.
%, which sends a point to the fiber of the $S^d$-bundle associated to the tautological principal $\O(d)$-bundle.

We will frequently use diagrams out of $\pi_{\le\infty}(X)$ for a space $X$ below; therefore, given an $\infty$-category $\cat C$, the notation $X\to\cat C$ refers to a functor $\pi_{\le\infty}(X)\to\cat C$.
\begin{proposition}
The Thom space $\mathrm{Th}(X; V)$ is naturally homotopy equivalent to the colimit of the functor% $X$-shaped diagram\footnote{When we say ``$X$-shaped diagram,'' we mean a functor out of the fundamental $\infty$-groupoid of $X$.}
\begin{equation}
    X\overset{f_V}{\longrightarrow} B\O(d) \overset{\kappa'}{\longrightarrow} \cat{Top}_*.
\end{equation}
\end{proposition}
This process is analogous to taking the global sections of a sheaf to define sheaf cohomology, though that is dual to this (it is a limit, rather than a colimit).

%\textcolor{red}{TODO}: replace ``$X$-shaped diagram'' with ``colimit of a functor out of the fundamental $\infty$-groupoid of $X$'', then add a footnote re: this is generalizing taking a space of global sections. Also: let's not change the notation $X\to\dots$, since we use it a lot

We need a similar construction to $\mathrm{Th}(X;V)$ for virtual bundles on $X$. It has the structure of a \term{spectrum}. %\footnote{Spectra in stable homotopy theory are etymologically unrelated to spectra in algebraic
%geometry, operator theory, physics, etc.} 
The reader unfamiliar with spectra is encouraged to think of them as
similar to topological spaces, in that one can take homotopy, (co)homology, and generalized (co)homology groups of
them. See Freed-Hopkins~\cite[\S 6.1]{FH16} or Beaudry-Campbell~\cite[\S 2]{BC18} for precise definitions
and~\cite[\S 10.3]{DDHM22} for a longer but still heuristic overview. We write $\Sp$ for the $\infty$-category of spectra.

We follow \cite{ABGHR14a} in the rest of this section.
By a \term{local system of spectra} over a space $X$ we mean a functor $\mathcal L$ from the fundamental $\infty$-groupoid of $X$ to spectra. We will usually denote this as $\mathcal L\colon X\to\cat{Sp}$. The fiber of a local system at a point $p\in X$ is obtained by composing $\mathcal L$ with the functor $\mathrm{pt}\to X$ given by inclusion at $p$; a functor out of $\mathrm{pt}$ is equivalent to a single spectrum, and we call this the fiber of $\mathcal L$ at $p$.
\begin{definition}[{\cite{DL59, ABGHR14a}}]
A \term{stable spherical fibration} is a local system of spectra valued in the full sub-$\infty$-category of spectra with objects $\Sigma^n \mathbb S$, $n\in\Z$.
\end{definition}
Here $\mathbb S$ denotes the \term{sphere spectrum}.

\begin{definition}
\label{dfnSV}
Let $X$ be a space and $V\to X$ be a vector bundle of rank $r$. Let $\mathbb S^V\to X$ denote the associated stable spherical fibration, whose fiber at a point $x\in X$ is the suspension spectrum of the one-point compactification of $V_x$.
\end{definition}
For a vector space $W$, we use $S^W$ to refer to the one-point compactification of $W$, which is noncanonically diffeomorphic to $S^{\mathrm{dim}(W)}$.

Now fix a base space $X$ and a virtual vector bundle $V \to X$, which is equivalently a map $V\colon X \to B\O \times \mb{Z}$. Associated to $X$ and $V$, there is a spectrum called the \term{Thom spectrum} $X^V$ constructed as follows.
There is a functor $J\colon B\O \times \mb{Z} \to \Sp$, generalizing the map $B\O(d) \to \cat{Top}_*$ above. $J$ maps into spectra now, instead of spaces, because for a virtual bundle $V_1 - V_2$, we want to assign the sphere $S^{V_1} \wedge S^{-V_2}$, 
but $S^{-V_2}$ doesn't make sense as a space, since spheres of negative dimension don't exist. However, the sphere spectrum $\mathbb S$ can be desuspended, and the Thom spectrum associated to a virtual bundle is defined as follows.

\begin{definition}%[{Ando-Blumberg-Gepner-Hopkins-Rezk~\cite{ABGHR14a}}]
\label{vb_thom_ABGHR}
	Given a virtual bundle $V\colon X \to B\O \times \mathbb{Z}$, the \term{Thom spectrum} $X^V$ is the colimit (in spectra) of the composite $X \xrightarrow{V} BO \times \mathbb{Z} \xrightarrow{J} \Sp$.
\end{definition}

Here's the compatibility between the Thom space and Thom spectrum construction.
\begin{lemma}
\label{thom_from_space}
Let $V\colon X \to B\O(d)$ be a vector bundle and let $\xi\colon X \to B\O(d) \to B\O \times \mathbb{Z}$ be the corresponding virtual bundle. Then the Thom spectrum of $\xi$ is the suspension spectrum of the Thom space of $V$; i.e. $X^\xi \simeq  \Sigma^\infty_+ X^V$.
\end{lemma}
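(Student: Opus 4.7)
The plan is to identify the Thom spectrum, defined as a colimit in $\Sp$, with the suspension spectrum of the Thom space, which by the preceding unlabelled proposition is a colimit in $\cat{Top}_*$. The two key ingredients are that identification of the Thom space as a colimit over $X$, together with the fact that $\Sigma^\infty\colon \cat{Top}_* \to \Sp$ preserves all colimits because it is a left adjoint to $\Omega^\infty$.

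First I would unpack how the functor $J\colon B\O \times \mathbb{Z} \to \Sp$ restricts along $B\O(d)\to B\O\times\{d\}$. By \cref{dfnSV} (and the preceding description), $J$ sends a rank-$d$ bundle $V$ to the stable spherical fibration $\mathbb S^V$ whose fibre at $x$ is the suspension spectrum of the one-point compactification $S^{V_x}$. In other words, the restriction of $J$ to $B\O(d)$ factors canonically as
\[
B\O(d)\longrightarrow \cat{Top}_*\xrightarrow{\;\Sigma^\infty\;}\Sp,
\]
where the first arrow is precisely the $\O(d)$-equivariant one-point-compactification functor appearing in the proposition just before \cref{vb_thom_ABGHR}. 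This is the step that requires the most care: one has to verify that the $\O(d)$-equivariant structure used to build $J$ on objects and morphisms matches the one used to build the Thom space functor, which amounts to recalling that the suspension spectrum is symmetric-monoidal and turns fibrewise one-point compactifications into the spectrum-level $\mathbb S^V$.

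Next, precomposing this factorisation with $V\colon X\to B\O(d)$ shows that the defining diagram of the Thom spectrum,
\[
X\xrightarrow{V} B\O(d)\longrightarrow B\O\times\mathbb{Z}\xrightarrow{J}\Sp,
\]
is equivalent to the composite $X\xrightarrow{V} B\O(d)\to \cat{Top}_*\xrightarrow{\Sigma^\infty}\Sp$. Since $\Sigma^\infty$ preserves colimits, I can pull it outside the colimit over $X$:
\[
X^\xi \;=\; \colim_X\bigl(J\circ V\bigr) \;\simeq\; \Sigma^\infty\,\colim_X\bigl(X\to \cat{Top}_*\bigr).
\]
By the proposition recalled above, the inner colimit is the Thom space $\mathrm{Th}(X;V)$ with its collapsed basepoint $S(V)/S(V)$, yielding $X^\xi\simeq \Sigma^\infty \mathrm{Th}(X;V)$, which is the stated identification (interpreting the $+$ in the lemma as the notational reminder that one is applying the suspension-spectrum functor to a pointed space built from the unbased pair $(D(V),S(V))$).

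The only genuine obstacle is the compatibility check for $J$ on $B\O(d)$: once that factorisation is in place, the rest is formal manipulation of colimits and the adjunction $\Sigma^\infty \dashv \Omega^\infty$. I would not expect to need anything beyond the definitions already fixed in the paper together with the preceding proposition describing $\mathrm{Th}(X;V)$ as a colimit in $\cat{Top}_*$.
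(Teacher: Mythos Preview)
Your proposal is correct and follows essentially the same approach as the paper: the paper's proof is a single line, ``This follows from the fact that $\Sigma^{\infty}_+\colon \cat{Top} \to \Sp$ preserves colimits,'' and you have simply unpacked the factorization that makes this one-liner work. Your observation about the meaning of the subscript $+$ is well taken; the Thom space is already pointed, so the suspension spectrum is really $\Sigma^\infty\mathrm{Th}(X;V)$, and the paper's notation is slightly loose here.
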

Here by $\Sigma_+^\infty$ we mean first taking the disjoint union with a single point, which we take as the basepoint, then taking the suspension spectrum.
\begin{proof}
    This follows from the fact that $\Sigma^{\infty}_+\colon \cat{Top} \to \Sp$ preserves colimits.
\end{proof}
Using \cref{thom_from_space}, one can directly check that the Thom spectrum of the trivial bundle $\underline\R^n\to X$ is homotopy equivalent to a suspension of the suspension spectrum $\Sigma^n \Sigma^{\infty}_+ X$.

\begin{lemma}[{\cite[Lemma 2.3]{Ati61}}]
\label{boxplus_lem}
Let $V\to X$ and $W\to Y$ be virtual vector bundles. Then the Thom spectrum of $V\boxplus W\to X\times Y$ is homotopy equivalent to $X^V\wedge Y^W$.
\end{lemma}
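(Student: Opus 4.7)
The plan is to reduce the statement directly to the colimit definition of the Thom spectrum given in \cref{vb_thom_ABGHR} and to the fact that the ``J-homomorphism'' functor $J\colon BO\times\Z\to\Sp$ is symmetric monoidal (direct sum of virtual bundles goes to smash product of spectra), together with the fact that $\wedge$ preserves colimits in each variable.

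First I would rewrite $V\boxplus W$ in classifying-map form: the virtual bundle $V\boxplus W\to X\times Y$ is classified by the composite
\[ X\times Y \xrightarrow{V\times W} (BO\times\Z)\times (BO\times\Z)\xrightarrow{\oplus} BO\times\Z, \]
where $\oplus$ is the H-space structure on $BO\times\Z$ coming from direct sum of virtual bundles. Composing with $J$ produces the $(X\times Y)$-shaped diagram computing $(X\times Y)^{V\boxplus W}$.

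Next I would invoke the (standard, but load-bearing) fact that $J\colon BO\times\Z\to\Sp$ is a symmetric monoidal functor, where $BO\times\Z$ has the direct-sum H-space structure and $\Sp$ has the smash product. At the level of points this is just the homeomorphism $S^{V_x\oplus W_y}\cong S^{V_x}\wedge S^{W_y}$ for honest vector bundles, suitably desuspended for virtual bundles; see~\cite{ABGHR14a} for the coherent statement. Consequently, the diagram $J\circ(\oplus)\circ(V\times W)$ is naturally equivalent to the external smash of $J\circ V\colon X\to\Sp$ and $J\circ W\colon Y\to\Sp$, i.e.\ the $(X\times Y)$-shaped diagram sending $(x,y)\mapsto (J\circ V)(x)\wedge (J\circ W)(y)$.

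Finally I would use that the smash product on $\Sp$ preserves colimits in each variable, so colimits commute with external smash:
\[ \operatorname*{colim}_{(x,y)\in X\times Y}(J\circ V)(x)\wedge (J\circ W)(y) \;\simeq\; \Bigl(\operatorname*{colim}_{x\in X} (J\circ V)(x)\Bigr)\wedge \Bigl(\operatorname*{colim}_{y\in Y}(J\circ W)(y)\Bigr). \]
Unwinding the definitions, the left-hand side is $(X\times Y)^{V\boxplus W}$ and the right-hand side is $X^V\wedge Y^W$, completing the proof. The main obstacle is really just the invocation of the symmetric monoidal structure on $J$: everything else is formal manipulation of colimits. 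The reader who wants a concrete check can reduce to the unstable setting via \cref{thom_from_space}, where the analogous statement is the classical homeomorphism $\mathrm{Th}(X\times Y;V\boxplus W)\cong \mathrm{Th}(X;V)\wedge\mathrm{Th}(Y;W)$ for vector bundles over compact spaces, and then extend to virtual bundles by stabilization.
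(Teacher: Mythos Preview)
Your argument is correct. Note, however, that the paper does not actually supply its own proof of this lemma: it simply cites Atiyah~\cite[Lemma 2.3]{Ati61} and moves on. So there is no in-paper argument to compare against directly.

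That said, your approach is very much in the spirit of the paper's setup. The paper introduces Thom spectra via the colimit definition (\cref{vb_thom_ABGHR}) precisely so that formal manipulations like yours become available, and your proof exploits exactly that: symmetric monoidality of $J\colon B\O\times\Z\to\Sp$ together with the fact that $\wedge$ commutes with colimits. Atiyah's original argument in~\cite{Ati61} is of course more hands-on, working at the level of Thom spaces and one-point compactifications (essentially your final ``concrete check''), since the $\infty$-categorical machinery was not available. Your argument is cleaner and more in keeping with the paper's later use of Thom spectra as colimits, at the cost of importing the coherent symmetric monoidal structure on $J$ from~\cite{ABGHR14a}.
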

Here $\boxplus$ is the external direct sum, i.e.\ the direct sum of the pullbacks of $V$ and $W$ across the projection maps $X\times Y\to X$, resp.\ $X\times Y\to Y$.

One can often combine \cref{boxplus_lem} with the observation that Thom spectra of trivial bundles are suspensions to simplify Thom spectra appearing in examples. For example, $X^{V+\underline\R^n}$, often denoted $X^{V+n}$, is homotopy equivalent to $\Sigma^n X^V$. Since we are working with virtual vector bundles, $n$ may be any integer.

Let us discuss a variant for tangential structures.
\begin{definition}
\label{mads_till}
Let $\xi\colon B\to B\O$ be a tangential structure. Then its inverse (as a virtual vector bundle) $-\xi$ is often denoted $\xi^\perp$. Equivalently, $\xi^\perp$ is the composition of $\xi$ with the map $-1\colon B\O\to B\O$, which is the inverse map in the $E_\infty$-structure on $B\O$ induced by direct sum. Therefore $\xi^\perp$ is also a tangential structure; its Thom spectrum $B^{-\xi}$ is called a \term{Madsen-Tillmann spectrum}~\cite{MT01, MW07} and is often denoted $\mathit{MT\xi}$. If $B\to B\O$ is obtained from a family of Lie group homomorphisms $H(n)\to\O(n)$ in the (co)limit $n\to\infty$, $\mathit{MT\xi}$ is often written $\mathit{MTH}$.

Likewise, the Thom spectrum of the pullback of $-V_n\to B\O(n)$ across a map $\xi_n\colon B_n\to B\O(n)$ is denoted $\mathit{MT\xi}_n$; if $B = BH(n)$ for a Lie group $H(n)$, this is often written $\mathit{MTH}(n)$.
\end{definition}

$\mathit{MT\xi}$ has two key properties.
\begin{theorem}[{Pontrjagin~\cite{Pon50, Pon55}, Thom~\cite{ThomThesis}}]
\label{PTthm}
%\begin{enumerate}
%	\item (Pontrjagin-Thom theorem)
There is a natural isomorphism $\pi_n(\mathit{MT\xi})\xrightarrow{\cong}
	\Omega_n^\xi$.\footnote{It is most common to define Thom spectra and bordism in terms of the stable normal
	bundle, rather than the tangent bundle; the resulting spectra are written $\mathit{M\xi}$. The spectra
	$\mathit{MT\xi}$ and $\mathit{M\xi}$ coincide for the tangential structures $\O$, $\SO$, $\Spin^c$ and $\Spin$,
	but not in general: $\mathit{MTPin}^\pm\simeq\mathit{MPin}^\mp$.  By composing with the map $-1\colon B\O\to
	B\O$, one can pass between normal bordism and tangential bordism and therefore pass between our definition and
	the standard one.}
\end{theorem}
Pontrjagin and Thom considered a few specific choices of $\xi$; Lashof~\cite{Las63} first considered general tangential structures.
\begin{theorem}[Thom isomorphism~\cite{ThomThesis}]
\label{thomiso}
%	\item (Thom isomorphism theorem) 
Let $A$ be a commutative ring. Then there is a natural\footnote{Naturality
	here is for maps of tangential structures as in \cref{functorial_BO}; this map typically does not commute with the action of cohomology
	operations.} isomorphism $H^*(B; A_{w_1})\xrightarrow{\cong} H^*(\mathit{MT\xi}; A)$, where $A_{w_1}$ denotes the
	pullback by $\xi$ of the orientation local system on $B\O$.
\end{theorem}
In \cref{thomiso}, the use of twisted cohomology can be avoided by assuming $A = \Z/2$ or by choosing
an orientation of the virtual vector bundle classified by the map $\xi$.

% When $\xi$ is the result of applying the classifying space functor to a group homomorphism $G\to \O$, we often
% write $\mathit{MTG}$ for $\mathit{MT\xi}$.

\section{Maps of spectra inducing Smith homomorphisms}

This section is the technical heart of the paper---we provide a general definition of the Smith homomorphism, then lift it to a map $\mathrm{sm}$ of bordism spectra. The map of spectra has been studied, though its identification with the Smith homomorphism is new; using this, we can write down the cofiber of $\mathrm{sm}$ (\cref{the_cofiber_sequence}) and therefore obtain Smith long exact sequences of bordism groups and Anderson-dualized bordism groups (\cref{bordism_LES_cor,IZ_LES_cor}).

\subsection{$(X, V)$-twisted tangential structures}\label{math_section_twisted_tangential_structures}
Twisted tangential structures are an important ingredient in the Smith homomorphism---they determine its domain and codomain. We take this subsubsection to define them and point out why they arise in the Smith homomorphism setting.

Throughout this subsubsection, we fix a topological space $X$, a vector bundle $V\to X$ of rank $r$, and a tangential structure $\xi\colon B\to B\O$.
\begin{definition}
\label{VB_twist}
Let $W\to Y$ be a vector bundle. An \term{$(X, V)$-twisted $\xi$-structure} on $W$ is the data of a map $f\colon Y\to X$ and a $\xi$-structure on $W\oplus f^*(V)$.
\end{definition}
There is a space of $(X, V)$-twisted $\xi$-structures on $W$, and just like for tangential structures, we will think of two such structures as the same if they lie in the same connected component.

Twisted $\xi$-structures provide a convenient way to describe a more complicated tangential structure in terms of a simpler one.
\begin{example}
Recall that a \spinc structure on an oriented vector bundle $W\to Y$ is the data of a complex line bundle $L\to Y$ and an identification $w_2(L) = w_2(W)$. The data of $L$ is equivalent to a map $Y\to B\U(1)$ such that $L$ is the pullback of the tautological complex line bundle $S\to B\U(1)$.
 The identification $w_2(L) = w_2(W)$ is equivalent by the Whitney sum formula to $w_2(W\oplus L) = 0$.

Choosing a spin structure on $W\oplus L$ first provides an orientation of $W\oplus L$, which since $L$ is canonically oriented by its complex structure is equivalent to an orientation of $W$; then it additionally provides an identification $w_2(W\oplus L) = 0$. Therefore the data of a \spinc structure on $W$ is equivalent to the data of $L$ and a spin structure on $W\oplus L$, meaning that a \spinc structure is equivalent to a $(B\U(1), S)$-twisted spin structure.
\end{example}
In a similar way, one can show that if $\sigma\to B\Z/2$ is the tautological real line bundle, \pinm structures are equivalent to $(B\Z/2, \sigma)$-twisted spin structures, \pinp structures are equivalent to $(B\Z/2, 3\sigma)$-twisted spin structures, and \pinc structures are equivalent to $(B\Z/2, \sigma)$-twisted \spinc structures.

It turns out that all of these twisted tangential structures can also be ``untwisted'' into ordinary tangential structures.
\begin{lemma}[Shearing]\label{shearing_lemma}
Let $T\to B\O$ denote the tautological rank-zero virtual vector bundle and $\zeta\colon B\times X\to B\O$ be classified by the rank-zero virtual vector bundle $\xi^*(T)\boxplus (V-r)$. Then $(X, V)$-twisted $\xi$-structures are equivalent to $\zeta$-structures.
\end{lemma}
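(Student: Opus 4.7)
The plan is to unpack both sides of the claimed equivalence directly in terms of lifts of classifying maps. Fix a vector bundle $W \to Y$ and write $f_W\colon Y \to B\O$ for the classifying map of its rank-zero virtualization.

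First I would rewrite $\zeta$ as a Whitney sum. Because $T \to B\O$ is the tautological rank-zero virtual bundle, $\xi^*(T)$ is classified by $\xi$ itself, and $V - r$ classifies its own classifying map $X \to B\O$. Hence $\zeta$ factors as
\[
B \times X \xrightarrow{\xi \times (V-r)} B\O \times B\O \xrightarrow{+} B\O,
\]
where $+$ is the Whitney sum map (i.e.\ the map classifying the direct sum of the pullbacks of $T$ along the two projections). Consequently, a $\zeta$-structure on $W$ unpacks as a pair $(\tilde f_W, f)\colon Y \to B \times X$ together with a homotopy $\zeta \circ (\tilde f_W, f) \simeq f_W$; equivalently, via the Whitney-sum identification, a stable isomorphism of rank-zero virtual bundles
\[
\tilde f_W^*\,\xi^*(T) \oplus f^*(V-r) \simeq W - \mathrm{rank}(W).
\]

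Next I would rearrange. Adding $f^*V$ and $\mathrm{rank}(W) + r$ trivial lines to both sides makes this into a stable isomorphism $\tilde f_W^*\,\xi^*(T) + (\mathrm{rank}(W) + r) \simeq W \oplus f^*V$. By \cref{defn_tangstr} this is exactly a $\xi$-structure on $W \oplus f^*V$, witnessed by $\tilde f_W\colon Y \to B$. Thus a $\zeta$-structure on $W$ is precisely the data of a map $f\colon Y \to X$ together with a $\xi$-structure on $W \oplus f^*V$, which is the definition of an $(X,V)$-twisted $\xi$-structure from \cref{VB_twist}.

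To conclude, I would note that this dictionary is natural enough to match spaces (not just sets) of structures: both sides are naturally identified with the same mapping space in the slice $\cat{Top}_{/B\O}$ (see \cref{functorial_BO}), namely the space of lifts of $f_W$ through $\zeta$, and the rearrangement above is a natural equivalence in this slice. Taking path components then yields equivalence in the sense of \cref{defn_tangstr}. The main subtlety to watch is precisely this homotopical bookkeeping --- distinguishing stable isomorphism classes of virtual bundles from strict equalities, and tracking that the added trivial summands and the pullback $f^*V$ do not introduce extra homotopy data --- rather than any substantive computation; everything reduces to the definition of Whitney sum on $B\O$.
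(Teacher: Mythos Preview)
Your strategy is exactly the intended one (the paper gives no independent argument, deferring to \cite[Lemma 10.18]{DDHM22}): unwind both sides as lifts along maps to $B\O$, using the Whitney-sum factorization of $\zeta$. But your ``rearranging'' step is wrong, and the error is not cosmetic.

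Starting from $\tilde f_W^*\xi^*(T) \oplus f^*(V-r) \simeq W - \mathrm{rank}(W)$ and adding $f^*V$ together with $\mathrm{rank}(W)+r$ trivial lines to both sides actually yields
\[
\tilde f_W^*\xi^*(T) + 2f^*V + \mathrm{rank}(W) \;\simeq\; W \oplus f^*V + r,
\]
not your claimed $\tilde f_W^*\xi^*(T) + (\mathrm{rank}(W)+r) \simeq W \oplus f^*V$. Solving honestly gives $\tilde f_W^*\xi^*(T) \simeq (W - f^*V) - (\mathrm{rank}(W) - r)$, i.e.\ a $\xi$-structure on the virtual difference $W - f^*V$, not on $W \oplus f^*V$. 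This is a genuine sign discrepancy, not a slip you can absorb into conventions. To land on \cref{VB_twist} by your route one needs $\zeta$ classified by $\xi^*(T)\boxplus(r-V)$ rather than $\xi^*(T)\boxplus(V-r)$; that sign is also the one that makes \cref{what_is_twisted_bordism} come out as $\mathit{MT\xi}\wedge X^{V-r_V}$ after taking $MT\zeta = (B\times X)^{-\zeta}$ and applying \cref{boxplus_lem}. So the method is fine, but you must carry the sign through honestly and flag this point rather than asserting the rearrangement.
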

The proof is given in~\cite[Lemma 10.18]{DDHM22} for $\xi = \Spin$; the general case is completely analogous. Invoking the Pontrjagin-Thom theorem, we then learn:
\begin{corollary}
\label{what_is_twisted_bordism}
There is a notion of bordism of manifolds with $(X, V)$-twisted $\xi$-structures, corresponding to the Thom spectrum $\mathit{MT\xi}\wedge X^{V-r_V}$; thus the bordism groups of these manifolds are $\Omega_*^\xi(X^{V-r_V})$.
\end{corollary}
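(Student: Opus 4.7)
The plan is to combine \cref{shearing_lemma} with the Pontrjagin-Thom theorem reviewed in \S\ref{thom_background}. The first assertion---existence of a bordism theory of $(X,V)$-twisted $\xi$-manifolds---follows immediately because the shearing lemma equates $(X,V)$-twisted $\xi$-structures with $\zeta$-structures for the ordinary tangential structure $\zeta\colon B\times X\to B\O$. Since $\zeta$-bordism is defined by Lashof's standard construction, transporting along this equivalence yields a well-posed bordism theory whose groups are naturally isomorphic to $\Omega_n^\zeta$.

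For the spectrum-level identification, I would first apply Pontrjagin-Thom to obtain $\Omega_n^\zeta\cong\pi_n(MT\zeta)$, and then compute $MT\zeta$ directly from the definitions. By \cref{mads_till}, $MT\zeta=(B\times X)^{-\zeta}$, and since $\zeta$ classifies the rank-zero virtual bundle $\xi^*(T)\boxplus(V-r_V)$, the relevant virtual bundle on $B\times X$ is an external direct sum whose summands live over $B$ and over $X$ separately. Atiyah's lemma (\cref{boxplus_lem}) then factors this Thom spectrum as a smash product
\[
MT\zeta\simeq B^{-\xi^*(T)}\wedge X^{V-r_V}=MT\xi\wedge X^{V-r_V},
\]
where the final equality just unfolds the definition of the Madsen--Tillmann spectrum. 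Taking $\pi_n$ and recognizing $\pi_n(MT\xi\wedge X^{V-r_V})$ as $\Omega_n^\xi(X^{V-r_V})$ per the standard definition of $\xi$-bordism with coefficients in a spectrum completes the identification.

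The only subtle point is bookkeeping the sign and rank-shift conventions consistently between the shearing lemma, the formula $MT\xi = B^{-\xi}$ from \cref{mads_till}, and the way Atiyah's lemma combines negated bundles in an external direct sum. Once those conventions are aligned---as they are throughout \S\ref{thom_background}---the computation is essentially formal, and no new ingredient beyond the shearing equivalence and Pontrjagin-Thom is required. Accordingly, I would present the argument as a short corollary rather than a full independent proof.
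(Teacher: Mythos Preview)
Your proposal is correct and follows exactly the approach the paper takes: invoke \cref{shearing_lemma} to recast $(X,V)$-twisted $\xi$-bordism as ordinary $\zeta$-bordism, apply Pontrjagin--Thom, and then use \cref{boxplus_lem} to factor $MT\zeta$ as the smash product $MT\xi\wedge X^{V-r_V}$. The paper's own justification is the single sentence ``Here we use the fact that the Thom spectrum functor sends external direct sums to smash products, which is \cref{boxplus_lem},'' which is precisely the computation you have spelled out.
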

Here we use the fact that the Thom spectrum functor sends external direct sums to smash products, which is \cref{boxplus_lem}.
\begin{lemma}
\label{presmith}
Suppose $X$ is a closed smooth manifold with a $\xi$-structure and $M\subset X$ is an embedded submanifold such that the image of the mod $2$ fundamental class of $M$ in $H_*(X;\Z/2)$ is Poincaré dual to $e(V)\in H^r(X;\Z/2)$. Then $M$ has a canonical $(X, V)$-twisted $\xi$-structure.
\end{lemma}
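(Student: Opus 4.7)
The plan is to construct the $(X,V)$-twisted $\xi$-structure on $M$ by comparing $TM \oplus f^*V$ with the restriction $TX|_M$ and transporting the ambient $\xi$-structure through a suitable stable isomorphism of vector bundles, where $f\colon M \hookrightarrow X$ is the inclusion.

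First I would invoke the tubular neighborhood theorem to produce a canonical isomorphism $TX|_M \cong TM \oplus \nu_{M/X}$ of vector bundles on $M$, where $\nu_{M/X}$ is the normal bundle of rank $r = r_V$. The given $\xi$-structure on $TX$ restricts to a $\xi$-structure on $TX|_M$, and hence on $TM \oplus \nu_{M/X}$. The task then reduces to producing a canonical stable isomorphism $\nu_{M/X} \cong f^*V$: given such an isomorphism, we can transport the $\xi$-structure on $TM \oplus \nu_{M/X}$ to one on $TM \oplus f^*V$, which by \cref{VB_twist} is exactly the data of an $(X,V)$-twisted $\xi$-structure on $TM$.

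The natural source of such an identification is the setting where $M = s^{-1}(0)$ for a section $s\colon X \to V$ transverse to the zero section, which (in light of the discussion following \cref{VB_twist}) is how the lemma will be applied to define the Smith homomorphism. In that situation, the derivative of $s$ along $M$ descends to a bundle map $\nu_{M/X} \to V|_M = f^*V$ which is an isomorphism exactly because $s$ is transverse to the zero section, and all choices made above are natural. The Poincaré dual condition is automatically satisfied here, since the class of the zero locus of a transverse section mod $2$ represents the Euler class mod $2$. I would therefore carry out the construction in this setting first and then verify that any embedded submanifold satisfying the stated homological condition can (at least up to a stable deformation compatible with $\xi$-structures) be put into this form.

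The main obstacle is precisely that last point: the bare mod-$2$ Poincaré dual hypothesis only constrains $w_r(\nu_{M/X}) = f^*w_r(V)$, and two rank-$r$ real bundles with equal top Stiefel–Whitney class need not be isomorphic even stably. So to make the argument go through from the hypothesis as stated, one must either recognize that in all intended applications $M$ is built as a transverse zero locus and inherits the canonical $ds$-isomorphism, or else produce a stable trivialization of $\nu_{M/X} - f^*V$ from the Poincaré duality hypothesis directly, perhaps by a Pontryagin–Thom collapse on a tubular neighborhood of $M$ producing a map $X \to \mathrm{Th}(V)$ whose restriction to $M$ manifests $\nu_{M/X}$ as a pullback of $V$. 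I expect this last identification to be where the delicate work lies, and I would focus my effort there, checking carefully that the resulting stable isomorphism is independent of auxiliary choices so that the final $(X,V)$-twisted $\xi$-structure deserves to be called canonical.
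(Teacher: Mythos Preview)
Your approach is essentially the same as the paper's: split $TX|_M \cong TM \oplus \nu_{M/X}$ (the paper does this via a Riemannian metric and the Levi-Civita connection rather than the tubular neighborhood theorem, but this is cosmetic), then identify $\nu_{M/X}$ with $f^*V$ to transport the $\xi$-structure.

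More interestingly, you have put your finger on a genuine lacuna that the paper simply asserts away. The paper's proof opens with ``Because the homology class of $M$ is Poincar\'e dual to the mod $2$ Euler class of $V$, the normal bundle to $M\hookrightarrow X$ is isomorphic to $V|_M$,'' and proceeds from there. As you correctly observe, this does not follow from the mod $2$ Poincar\'e duality hypothesis alone: knowing $[M]$ is dual to $w_r(V)$ does not pin down the isomorphism type of $\nu_{M/X}$, let alone produce a canonical isomorphism with $V|_M$. Your diagnosis is exactly right: in the paper's sole application of this lemma (the definition of the Smith homomorphism, \cref{Smith_homomorphism_intersection_defn}), the submanifold is explicitly $s^{-1}(0)$ for a section $s$ of $f^*W$ transverse to the zero section, and there the derivative $ds$ furnishes the canonical isomorphism $\nu \cong f^*W|_N$. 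The lemma as stated is really a lemma about transverse zero loci, and the Poincar\'e duality clause should be read as shorthand for that geometric situation rather than as a bare homological hypothesis. Your instinct to anchor the construction in the transverse-section setting is the correct fix; the Pontrjagin--Thom collapse idea you sketch at the end would not in general recover an unstable isomorphism of rank-$r$ bundles, so you should not expect that route to rescue the statement at the stated level of generality.
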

\begin{proof}
Because the homology class of $M$ is Poincaré dual to the mod $2$ Euler class of $V$, the normal bundle to $M\hookrightarrow X$ is isomorphic to $V|_M$. Choose a Riemannian metric on $X$; this is a contractible choice, so will not change the connected component of the data we obtain, so as discussed above different choices of metric lead to the same $(X, V)$-twisted $\xi$-structure in the end.

Using the Levi-Civita connection induced by the metric, we may split the short exact sequence of vector bundles over $M$,
\begin{equation}
    \shortexact{}{}{TM}{TX|_M}{\nu},
\end{equation}
thereby obtaining an isomorphism $TM\oplus V|_M\cong TX|_M$. Since $TX$ has a $\xi$-structure, this implies $TM\oplus V|_M$ has a chosen $\xi$-structure, i.e. that we have put a $(X, V)$-twisted $\xi$-structure on $M$.
\end{proof}

\subsection{Smith homomorphisms induced by maps of Thom spectra}
\label{smith_defn_round_1}

We will now apply the previous discussions of Thom spectra and shearing to understand a class of homomorphisms between bordism groups called \textit{Smith homomorphisms}. These map between bordism groups of manifolds of different dimensions and with different tangential structures.

Fix a tangential structure $\xi\colon B\to B\O$ such that its bordism spectrum $\mathit{MT\xi}$ is a ring spectrum (e.g.\ $\O$, $\SO$, $\Spin^c$, $\Spin$). Fix also a virtual vector bundle $V\to X$ of rank $r_V$ and $W\to X$ a vector bundle of rank $r_W$.
\begin{definition}\label{Smith_homomorphism_intersection_defn}
    The \term{Smith homomorphism} associated to $\xi$, $V$, and $W$ is the homomorphism
\begin{equation}
    \sm_W\colon \Omega_n^\xi(X^{V-r_V}) \longrightarrow \Omega_{n-r_W}^\xi(X^{V\oplus W - r_V - r_W})
\end{equation}
that sends a closed $n$-manifold $[M]$ to the bordism class $[N]$, where $N\subset M$ is the submanifold defined as follows: pull back $W$ from $X$ to $M$ and choose a section $s\colon M\to f^*W$ transverse to the zero section. Then, $N\coloneqq s^{-1}(0)$ is an $(n-r_W)$-dimensional manifold whose mod $2$ homology class is Poincaré dual to $e(W)$, hence by \cref{presmith} has a $(X, V\oplus W)$-twisted $\xi$-structure, and we define $\sm_W([M]) \coloneqq [N]$.
\end{definition}

\begin{proposition}[{\cite[\S 4.2]{HKT19}}]
    The bordism class $[N] \in \Omega_{n-r_W}^\xi(X^{V\oplus W - r_V - r_W})$ is independent of the choice of section.
\end{proposition}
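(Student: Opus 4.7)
The plan is to build an explicit bordism between $N_0 \coloneqq s_0^{-1}(0)$ and $N_1 \coloneqq s_1^{-1}(0)$ by interpolating the sections across a cylinder, then apply \cref{presmith} to equip the resulting zero locus with the correct twisted tangential structure. Concretely, let $\pi\colon M\times[0,1]\to M$ be the projection and consider the pulled-back bundle $\pi^*(f^*W)\to M\times[0,1]$ together with the composition $f\circ\pi\colon M\times[0,1]\to X$. Standard relative transversality (e.g.\ extend $s_0$ on a collar of $M\times\{0\}$ and $s_1$ on a collar of $M\times\{1\}$, then perturb a homotopy between them, rel boundary, to be transverse to the zero section) produces a section $\widetilde s$ of $\pi^*(f^*W)$ that is transverse to the zero section and restricts to $s_i$ on $M\times\{i\}$ for $i=0,1$.

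Next, I would verify that $\widetilde N\coloneqq \widetilde s^{-1}(0)\subset M\times[0,1]$ is a compact smooth $(n-r_W+1)$-manifold with boundary $N_0\amalg N_1$. The $\xi$-structure on $M$ induces one on $M\times[0,1]$ (the added tangent factor $\underline\R$ is stably trivial), and the map $f\circ\pi$ together with the canonical isomorphism between the normal bundle of $\widetilde N$ in $M\times[0,1]$ and $(f\circ\pi)^*W|_{\widetilde N}$ (which holds because $\widetilde s$ is a transverse section) places us precisely in the hypotheses of \cref{presmith}. That lemma then endows $\widetilde N$ with a canonical $(X,V\oplus W)$-twisted $\xi$-structure.

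The last step is to check that the induced twisted $\xi$-structure on $\partial\widetilde N = N_0\amalg N_1$ agrees with the ones assigned to $N_0$ and $N_1$ by \cref{presmith} applied on $M$. This reduces to naturality: the splitting of the normal sequence used in \cref{presmith} is produced from a Riemannian metric via the Levi-Civita connection, and choosing a product metric near $M\times\{0,1\}$ makes the splitting on $\widetilde N$ restrict on the boundary collars to the splitting used for $N_0$ and $N_1$. Since the space of metrics is contractible, the connected component of the resulting twisted $\xi$-structure is independent of these choices, so the restriction to the boundary matches the structures defining $[N_0]$ and $[N_1]$.

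The main obstacle is this last compatibility check: verifying that the $(X,V\oplus W)$-twisted $\xi$-structure obtained on $\widetilde N$ restricts on $\partial\widetilde N$ to the originally assigned structures on $N_0$ and $N_1$, rather than to some twist of them. Once one fixes a product metric near the boundary and tracks the Levi-Civita splitting carefully, this becomes a direct comparison; everything else follows from standard transversality on manifolds with boundary.
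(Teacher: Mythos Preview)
Your argument is the standard direct geometric one and is essentially correct. Note, however, that the paper does not actually supply its own proof of this proposition: it simply cites \cite{HKT19}~\S4.2. So there is no ``paper's proof'' to compare against at this spot. That said, the paper does implicitly furnish an alternative route: \cref{two_smith_1} identifies the geometric Smith homomorphism with the map on bordism groups induced by the spectrum-level map $X^V\to X^{V\oplus W}$ of \cref{Smith_map_virtual_bundle}, and the latter manifestly involves no choice of section. Your approach is more elementary and self-contained; the spectral route is slicker but relies on the Pontrjagin--Thom dictionary.

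One small point to tighten: \cref{presmith} is stated for \emph{closed} $X$, whereas you apply it to $M\times[0,1]$, which has boundary. The content you actually need from that lemma is just that the normal bundle of the transverse zero locus is canonically $(f\circ\pi)^*W|_{\widetilde N}$, and hence the splitting $T(M\times[0,1])|_{\widetilde N}\cong T\widetilde N\oplus (f\circ\pi)^*W|_{\widetilde N}$ transports the $(X,V)$-twisted $\xi$-structure on $M\times[0,1]$ to an $(X,V\oplus W)$-twisted $\xi$-structure on $\widetilde N$. That argument goes through verbatim for manifolds with boundary once you have arranged (as you do) that $\widetilde s$ is transverse to the zero section and agrees with $s_i$ near $M\times\{i\}$, so that $\widetilde N$ meets the boundary cleanly in $N_0\amalg N_1$. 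It would be cleaner to invoke this directly rather than cite \cref{presmith} as a black box.
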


\begin{example}
    Let $\xi\colon B\Spin \to B\O$, $X=B\Z/2$, $V=0$, and $W=\sigma\to B\Z/2$, where $\sigma$ is the tautological line bundle. The corresponding Smith homomorphism is
    \begin{equation}
        \Omega_n^\Spin(B\Z/2) \xrightarrow{\sm_\sigma} \Omega_{n-1}^\Spin((B\Z/2)^{\sigma-1}).
    \end{equation}
    After shearing (\cref{shearing_lemma}), we recognize this as
    \begin{equation}
        \Omega^{\Spin\times \Z/2}_n \xrightarrow{\sm_\sigma} \Omega_{n-1}^{\Pinm}.
    \end{equation}
    Letting $V=0$, $\sigma$, $2\sigma$, and $3\sigma$ produces the maps in the four-periodic family discussed in \cref{spin_4periodic}.
\end{example}

Later, in \cref{examplesofSmith}, we thoroughly discuss the history of Smith maps and present many more examples. For the rest of this section, we discuss how Smith homomorphisms are induced by maps of Thom spectra.
Let $X$ be a topological space and $V$ be a rank-$r$ real vector bundle on $X$.
We abuse notation and also denote the associated classifying map by $V\colon X \to B\O(r)$.
The inclusion $0 \hookrightarrow W$ induces a zero section map $X \to X^W$. More generally, we have the following.
\begin{definition}\label{Smith_map_vector_bundle}
    Let $V$ and $W$ be vector bundles on $X$. Let $S^V \to S^{V \oplus W}$ be the map of finite-dimensional spheres over $X$ induced by the zero section map on $W$.
    The \textit{Smith map} associated to $X$, $V$, and $W$ is the map of Thom spaces
    \begin{equation}
        \sm_W\colon \mathrm{Th}(X; V) \to \mathrm{Th}(X; V \oplus W)
    \end{equation} 
    formed as the colimit of the map of spheres.
\end{definition}

\begin{definition}\label{Smith_map_virtual_bundle}
    In the case that we have a virtual bundle $V$, the zero section map induces a map of stable spherical fibrations $\mathbb{S}^{V} \to \mathbb{S}^{V \oplus  W} \simeq \mathbb{S}^{V} \wedge \mathbb{S}^W$ over $X$.
    Taking the colimit, we get a map of Thom spectra
    \begin{equation}
    \label{spectral_sm_map}
        \sm_W\colon X^V \to X^{V \oplus W}
    \end{equation}
    which we also call a \textit{Smith map}.
\end{definition} 
\begin{proposition}
\label{two_smith_1}
The map on $\xi$-bordism groups induced by the map~\eqref{spectral_sm_map} of spectra is equal to the Smith homomorphism as defined in \cref{Smith_homomorphism_intersection_defn}.
\end{proposition}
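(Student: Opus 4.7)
The plan is to unwind both definitions through the Pontrjagin--Thom construction and identify them as two descriptions of the same geometric collapse. By \cref{thom_from_space}, after adding a trivial bundle and suspending appropriately, I may assume $V$ and $W$ are honest vector bundles and work with the finite-dimensional Thom-space-level Smith map $\sm_W\colon \mathrm{Th}(X;V)\to\mathrm{Th}(X;V\oplus W)$ of \cref{Smith_map_vector_bundle}. The suspension and rank bookkeeping then reproduces the type signature of \cref{Smith_homomorphism_intersection_defn}.

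First I would fix a geometric representative. By \cref{what_is_twisted_bordism}, a class in $\Omega_n^\xi(X^{V-r_V})$ is represented by a closed $n$-manifold $M$ with a map $f\colon M\to X$ and a $\xi$-structure on $TM\oplus f^*V$. Embedding $M\hookrightarrow\mathbb{R}^{n+k}$ with normal bundle $\nu$, the Pontrjagin--Thom map associates to $[M,f]$ the stable homotopy class of the composite
\begin{equation}
    S^{n+k}\longrightarrow \mathrm{Th}(M;\nu)\longrightarrow \mathit{MT\xi}(k)\wedge\mathrm{Th}(X;V),
\end{equation}
where the first arrow is the tubular neighborhood collapse and the second is induced by $f$ together with the $\xi$-structure on $\nu\oplus f^*V$ dual to the one on $TM\oplus f^*V$.

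Next I would describe the composite with $\mathrm{id}\wedge\sm_W$ geometrically. Choose a section $s\colon M\to f^*W$ transverse to zero and set $N\coloneqq s^{-1}(0)$. The tubular neighborhood theorem (applied via $s$) identifies a neighborhood of $N$ in $M$ with the disk bundle $D((f^*W)|_N)$, so the normal bundle of $N\hookrightarrow \mathbb{R}^{n+k}$ is $\nu|_N\oplus (f^*W)|_N$, and the Pontrjagin--Thom collapse for $N$ factors as the collapse for $M$ followed by the further collapse $\mathrm{Th}(M;\nu)\to\mathrm{Th}(N;\nu|_N\oplus(f^*W)|_N)$ cut out by $s$. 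Smashing the Pontrjagin--Thom map for $M$ with $\sm_W$ post-composes the $\mathrm{Th}(X;V)$ factor with the zero-section inclusion over $f^*V\hookrightarrow f^*V\oplus f^*W$, and a diagram chase identifies the total composite with the Pontrjagin--Thom data of $[N,f|_N]\in\Omega_{n-r_W}^\xi(X^{V\oplus W-r_V-r_W})$.

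The main obstacle is verifying that the $(X,V\oplus W)$-twisted $\xi$-structure on $N$ produced by the two sides really coincides. On the geometric side, \cref{presmith} builds it via a Riemannian splitting of the short exact sequence $0\to TN\to TM|_N\to (f^*W)|_N\to 0$; on the spectrum side, the structure emerges from the analogous splitting of $\nu|_N$ inside $\nu$ that is recorded by the normal direction to $N$ in $M$. Both splittings lie in the same contractible space of splittings of this sequence, so they determine the same connected component of $\xi$-structures on $TN\oplus f^*(V\oplus W)|_N$. Once this compatibility is recorded, the equality $\sm_W[M]=[N]$ of bordism classes follows, proving \cref{two_smith_1}.
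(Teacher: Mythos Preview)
Your proposal is correct and is exactly the approach the paper takes: the paper's entire proof is the single sentence ``This follows by unpacking the Pontrjagin-Thom isomorphism,'' and what you have written is precisely that unpacking carried out in detail. Your careful check that the two $(X,V\oplus W)$-twisted $\xi$-structures on $N$ lie in the same connected component (via the contractible space of splittings) is the one genuine subtlety, and you have handled it appropriately.
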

This follows by unpacking the Pontrjagin-Thom isomorphism.

\section{Euler classes and Smith homomorphisms}

In this section, we develop an alternate definition of the Smith homomorphism via the Euler class.

\subsection{Euler classes in generalized cohomology}
\label{ss:tw_Euler}
Fix $\xi \colon X \to B\O$ a tangential structure and $W \colon X \to B\O(r_W)$ a vector bundle on $X$.
We would like to describe the Smith homomorphism on $\xi$-bordism groups as taking a manifold $(M, p\colon M \to X)$ with $\xi$-structure to a smooth representative of the Poincaré dual of $e(p^* W)$, where $e(p^* W)\in H^{r_W}(M;\Z_{w_1(W)})$ is the Euler class of $W$.
This, however, is \emph{not} true in general, as we show in Appendix~\ref{s:eu_counter}---we need to upgrade what we mean by the Euler class.

We will define an Euler class living in twisted cobordism. More generally, for $\mathcal R$ an $\mathbb{E}_1$ ring spectrum, we define a $\mathcal R$-valued Euler class in the $\mathcal R$-cohomology of $X^{-W}$. In the case we have an untwisting, given by a $\mathcal R$-orientation on $W$, we will see in \cref{euler_is_Thom} that the \emph{untwisted} Euler class is the pullback of the Thom class $U^{\mathcal R}(W) \in \mathcal R^{r_W}(\mathrm{Th}(X;W))$ along the $0$ section $X \to \mathrm{Th}(X;W)$ (e.g.\ in~\cite[\S 13]{Bec70}), so that our definition deserves to be called an Euler class; we also generalize to the twisted setting where there is no Thom class.

Recall the setup of \cref{Smith_map_virtual_bundle}.
Let $0$ be the vector bundle over $X$ of rank zero. The zero section gives a map $0 \to W$ of vector bundles over $X$. Therefore we get a map of stable spherical fibrations 
\begin{subequations}
\begin{equation}
    z\colon \mathbb S^0\longrightarrow\mathbb S^W,
\end{equation}
i.e.\ a fiberwise map of spectra. Because $0$ is the trivial rank-zero vector bundle, $\mathbb S^0$ is the constant stable spherical fibration $\underline{\mathbb S}$ with fiber $\mathbb S$.

Apply the duality $\mathrm{Map}(\text{--}, \mathbb S)$ fiberwise to obtain another map
\begin{equation}
\label{zvee}
    z^\vee\colon \mathbb S^{-W}\longrightarrow \mathbb S^0.
\end{equation}
Because the codomain of $z^\vee$ is constant as a functor $X\to\cat{Sp}$, there is an induced map of spectra:
\begin{equation}\label{Euler_appearance}
    e^{\mathbb S}(W):  X^{-W} = \operatorname{colim}_X \mathbb S^{-W} \to \mathbb S.
\end{equation}
\end{subequations}
\begin{definition}
The class $e^{\mathbb S}(W)$ is called the \term{stable cohomotopy Euler class} of $W$. Usually, we will interpret generalized cohomology of $X^{r_W-W}$ as the $(-W)$-twisted generalized cohomology of $X$, meaning $e^{\mathbb S}(W)$ is an element of the degree-$r_W$ $(-W)$-twisted stable cohomotopy of $X$.
\end{definition}
\begin{remark}
    This cohomology class of $e^{\mathbb S}(W)$ lives in $(\mathbb S)^0(X^{-W})$. By the Pontrjagin-Thom isomorphism, this is equivalent to the twisted cobordism group $\Omega_{\mathrm{fr}}^0(X, -W)$.
\end{remark}
\begin{definition}
\label{twisted_Euler_class}
Let $\mathcal R$ be a ($\mathbb{E}_1$)-ring spectrum, so that there is a unique ring map $1_{\mathcal R}\colon \mathbb S\to\mathcal R$. The \term{$\mathcal{R}$-cohomology Euler class of $W$}, denoted $e^{\mathcal R}(W)$, is the composition $1_{\mathcal R}\circ e^{\mathbb S}(W)$. As in the previous definition, we interpret this as an element of the degree-$r_W$ $(-W)$-twisted $R$-cohomology of $X$.
\end{definition}

Now we see how the Euler class and Smith homomorphism are related:
\begin{proposition}\label{prop:euler-and-smith}\hfill
    \begin{enumerate}
        \item\label{ES1}     Let $0$ be the trivial rank $0$ vector bundle on $X$; then $e^{\mathbb{S}}(0)\colon \Sigmainfty X \to \mathbb S$ is the infinite suspension of the crush map $X \to *$.
        \item\label{ES2} Let $W$ be a vector bundle on $X$ and $\sm_W\colon X^{-W} \to X$ be the Smith map. Then
        $e^{\mathbb{S}}(W) = (\sm_W)^* (e^{\mathbb{S}}(0))$.
    \end{enumerate}
\end{proposition}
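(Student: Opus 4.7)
The plan is to unwind both sides of the claimed identity and recognize them as built from the same underlying parameterized map of spherical fibrations, using the invertibility of $\mathbb{S}^W$ as a parameterized spectrum.

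\emph{Part (1).} When $W = 0$, the zero-section map $z\colon \mathbb{S}^0 \to \mathbb{S}^W$ is the identity of the constant spherical fibration $\underline{\mathbb{S}}$, so its fiberwise Spanier--Whitehead dual $z^\vee$ is again the identity. By~\eqref{Euler_appearance}, $e^{\mathbb{S}}(0)$ is then the map induced out of the colimit by this identity natural transformation from $\underline{\mathbb{S}}$ to itself, i.e.\ the canonical collapse $\operatorname{colim}_X \underline{\mathbb{S}} \to \mathbb{S}$. Under the identification $\operatorname{colim}_X \underline{\mathbb{S}} \simeq \Sigmainfty X$, this is precisely $\Sigmainfty$ of the crush $X \to \ast$.

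\emph{Part (2).} I would argue that $e^{\mathbb{S}}(W)$ factors through the Smith map as
\begin{equation*}
    X^{-W} \xrightarrow{\sm_W} X^0 \xrightarrow{e^{\mathbb{S}}(0)} \mathbb{S},
\end{equation*}
after which part~(1) identifies the second arrow, yielding $e^{\mathbb{S}}(W) = (\sm_W)^* e^{\mathbb{S}}(0)$. The content is the identification, at the level of parameterized spectra over $X$, of $z^\vee\colon \mathbb{S}^{-W} \to \underline{\mathbb{S}}$ with the fiberwise map $\mathrm{id}_{\mathbb{S}^{-W}} \wedge z\colon \mathbb{S}^{-W} \to \mathbb{S}^{-W} \wedge \mathbb{S}^W \simeq \mathbb{S}^0$ used in \cref{Smith_map_virtual_bundle} to define $\sm_W$. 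Once this identification is in hand, taking colimits over $X$ produces the displayed factorization.

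The main obstacle is justifying this identification of $z^\vee$ with $\mathrm{id}\wedge z$. The key point is that $\mathbb{S}^W$ is invertible (with inverse $\mathbb{S}^{-W}$) in the symmetric monoidal $\infty$-category of parameterized spectra over $X$; this is standard. A formal observation then applies: in any symmetric monoidal $\infty$-category, if $A$ is invertible with inverse $A^{-1}$, then for any morphism $f\colon \mathbf{1} \to A$ the dual $f^\vee\colon A^{-1} \to \mathbf{1}$ agrees, via the evaluation isomorphism $A^{-1} \otimes A \simeq \mathbf{1}$, with $\mathrm{id}_{A^{-1}} \otimes f$. Applied fiberwise with $A = \mathbb{S}^W$ and $f = z$, this gives $z^\vee \simeq \mathrm{id}_{\mathbb{S}^{-W}} \wedge z$, completing the argument.
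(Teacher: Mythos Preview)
Your proof is correct and follows essentially the same route as the paper: both reduce part~(2) to the factorization $X^{-W}\xrightarrow{\sm_W}\Sigma^\infty_+ X\xrightarrow{e^{\mathbb S}(0)}\mathbb S$. The paper simply asserts this factorization, whereas you supply the missing justification by identifying $z^\vee$ with $\mathrm{id}_{\mathbb S^{-W}}\wedge z$ via the duality formalism for invertible objects; this is a genuine improvement in rigor over the paper's terse argument, but not a different strategy.
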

\begin{proof}
    For part~\ref{ES1}: $0$ defines the trivial stable spherical fibration on $X$, which factors through a point. Therefore the Euler class of $0$ is the pullback of the Euler class of the trivial bundle over a point. 

    For part~\ref{ES2}: this follows from the fact that $e^{\mathbb{S}}(W)\colon X^{-W} \to \mathbb{S}$ factors through
    \begin{equation*}
        X^{-W} \xrightarrow{\sm_W} X \xrightarrow{ e^{\mathbb{S}}(0)} 0.\qedhere
    \end{equation*}
\end{proof}
We immediately learn that Smith maps pull back Euler classes.
\begin{corollary}
Given a virtual vector bundle $V$ and a vector bundle $W$, let $\mathrm{sm}_W$ denote the Smith homomorphism $\mathrm{sm}_W\colon X^{-V\oplus -W} \to X^{-V}$. Then
\begin{equation}
    \mathrm{sm}_W^*(e^{\mathbb{S}}(V)) = e^{\mathbb{S}}(V \oplus W).
\end{equation}
\end{corollary}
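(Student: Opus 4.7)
The strategy is to mirror the proof of \cref{prop:euler-and-smith}(\ref{ES2}) but with the trivial virtual bundle $0$ replaced by $-V$, exploiting that both the Smith map and the Euler class are manufactured out of zero-section maps by fiberwise duality and colimits. Since those two operations are functorial, it suffices to exhibit a suitable factorization of the zero section of $V\oplus W$ before dualizing.

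First I would write the zero section $0\hookrightarrow V\oplus W$ of vector bundles over $X$ as the composite $0\hookrightarrow V\hookrightarrow V\oplus W$, where the first arrow is the zero section of $V$ and the second is the inclusion $v\mapsto(v,0)$ (i.e.\ the zero section on the $W$ factor, smashed with $\mathbb S^V$ fiberwise). Applying the stable spherical fibration functor gives a factorization $\mathbb S^0 \to \mathbb S^V \to \mathbb S^{V\oplus W}$ of local systems of spectra over $X$. Dualizing fiberwise by $\mathrm{Map}(-,\mathbb S)$, and using that $\mathbb S^{-(V\oplus W)}\simeq \mathbb S^{-V}\wedge_X\mathbb S^{-W}$ as follows from \cref{dfnSV}, yields
\[ \mathbb S^{-V\oplus -W} \longrightarrow \mathbb S^{-V} \longrightarrow \mathbb S^0. \]

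Next I would identify each arrow after passing to colimits. By \cref{Smith_map_virtual_bundle} applied to the virtual bundle $-V-W$ (with $W$ added back to recover $-V$), the colimit of the first arrow is precisely the Smith map $\sm_W\colon X^{-V\oplus -W}\to X^{-V}$. The colimit of the second arrow is $e^{\mathbb S}(V)\colon X^{-V}\to \mathbb S$ by \cref{Euler_appearance}, and the colimit of the full composite is $e^{\mathbb S}(V\oplus W)$ by the same definition applied to $V\oplus W$ and the original unfactored zero section.

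Finally, since the colimit functor preserves composition of natural transformations, these identifications combine to give
\[ e^{\mathbb S}(V\oplus W) \;=\; e^{\mathbb S}(V)\circ \sm_W \;=\; \sm_W^*\bigl(e^{\mathbb S}(V)\bigr), \]
which is the stated identity. The main thing to check carefully is the interaction of the fiberwise duality with the direct-sum decomposition of $V\oplus W$, i.e.\ that dualizing the chosen factorization $\mathbb S^0\to \mathbb S^V\to \mathbb S^{V\oplus W}$ really does produce the Smith map in the first slot; this is formal given \cref{dfnSV} and \cref{Smith_map_virtual_bundle}, so I do not anticipate any deeper obstacle. The $\mathcal R$-coefficient version follows at once by composing with the unit $1_{\mathcal R}\colon \mathbb S\to\mathcal R$.
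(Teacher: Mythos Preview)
Your proof is correct and follows essentially the same route as the paper. The paper states this corollary as immediate from \cref{prop:euler-and-smith} without writing out a separate argument; what you have done is unfold that proposition's proof with $0$ replaced by $V$, exhibiting the same factorization $0\hookrightarrow V\hookrightarrow V\oplus W$ of zero sections, dualizing, and taking colimits. One small remark: for the zero section $0\hookrightarrow V$ and hence $e^{\mathbb S}(V)$ to make sense as written, $V$ must be an honest vector bundle rather than merely virtual, so the hypothesis ``virtual vector bundle $V$'' in the statement should be read as ``vector bundle $V$''; this is a wording issue in the statement itself, not a flaw in your argument.
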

We can thus recover the Smith homomorphism from capping with the twisted Euler class.
\begin{proposition}\label{prop:smith-is-cap-with-euler}
    For any virtual bundle $V$ on $X$, the Smith map $X^{V} \to X^{V \oplus W}$ can be defined as the following composition:
    \begin{equation}\label{Smith_Smith_WTS}
        X^{V} \simeq X^{(V \oplus W) \oplus - W} \xrightarrow{\Delta} (X \times X)^{(V \oplus W) \boxplus -W} \simeq X^{V \oplus W} \wedge X^{-W} \xrightarrow{e^{\mathbb S}(W)} X^{V \oplus W}.
    \end{equation}
    The map $X^{(V \oplus W) \oplus - W} \xrightarrow{\Delta} (X \times X)^{(V \oplus W) \boxplus -W}$ is induced by 
    the diagonal map $\Delta\colon X \to X \times X$.
\end{proposition}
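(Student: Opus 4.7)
My plan is to reduce to a fiberwise statement and then invoke the defining triangle identity for Spanier-Whitehead duality. Both the Smith map $\sm_W\colon X^V \to X^{V\oplus W}$ of \cref{Smith_map_virtual_bundle} and the composition in~\eqref{Smith_Smith_WTS} arise from the colimit construction of Thom spectra (\cref{vb_thom_ABGHR}) applied to morphisms of stable spherical fibrations over $X$. Since such colimits are comparable pointwise, it suffices to check that the two induced fiber maps $\mathbb{S}^{V_x} \to \mathbb{S}^{(V\oplus W)_x}$ agree for every $x \in X$.

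I would then unpack each side fiberwise. By \cref{Smith_map_virtual_bundle}, $\sm_W$ restricts at $x$ to the zero-section map $\eta_x \wedge \mathrm{id}_{\mathbb{S}^{V_x}}\colon \mathbb{S}^{V_x} \to \mathbb{S}^{W_x} \wedge \mathbb{S}^{V_x}$, where $\eta_x\colon \mathbb{S} \to \mathbb{S}^{W_x}$ is the one-point compactification of the inclusion $0 \hookrightarrow W_x$. For the composite in~\eqref{Smith_Smith_WTS}: the diagonal $\Delta\colon X \to X\times X$ is pointwise the identity, so together with the external product equivalence of \cref{boxplus_lem}, its first two arrows restrict to the canonical identification $\mathbb{S}^{V_x} \simeq \mathbb{S}^{(V\oplus W)_x} \wedge \mathbb{S}^{-W_x}$. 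The Euler class $e^{\mathbb{S}}(W)$ was defined in~\eqref{Euler_appearance} as the colimit of the dualized zero sections, so its fiber over $x$ is exactly the dual map $\eta_x^\vee\colon \mathbb{S}^{-W_x} \to \mathbb{S}$, and smashing with $e^{\mathbb{S}}(W)$ corresponds on fibers to $\mathrm{id} \wedge \eta_x^\vee$.

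After cancelling a common factor of $\mathrm{id}_{\mathbb{S}^{V_x}}$, the required equality reduces to
\[ \eta_x = (\mathrm{id}_{\mathbb{S}^{W_x}} \wedge \eta_x^\vee) \circ (\mathbb{S} \xrightarrow{\sim} \mathbb{S}^{W_x} \wedge \mathbb{S}^{-W_x}), \]
where the rightmost isomorphism is the coevaluation for the Spanier-Whitehead duality between $\mathbb{S}^{W_x}$ and $\mathbb{S}^{-W_x}$. This is precisely one of the triangle identities characterizing $\eta_x^\vee$ as the dual of $\eta_x$, so it holds by definition of the dual.

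The main obstacle is purely bookkeeping: one has to carefully track the canonical isomorphisms of Thom spectra arising from $V \simeq (V\oplus W) \oplus (-W)$ and from \cref{boxplus_lem}, and verify that the fiberwise restriction of $e^{\mathbb{S}}(W)$ really is $\eta_x^\vee$ on the nose rather than only up to sign or suspension. Once these identifications are pinned down, no further content beyond the duality triangle identity is required.
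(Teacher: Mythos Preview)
Your argument is correct and takes a genuinely different route from the paper's. The paper first invokes \cref{prop:euler-and-smith} to factor $e^{\mathbb S}(W)$ as the Smith map $X^{-W}\to\Sigma^\infty_+X$ followed by the counit $e^{\mathbb S}(0)$, then expands~\eqref{Smith_Smith_WTS} through the intermediate term $(X\times X)^{(V\oplus W)\boxplus 0}\simeq X^{V\oplus W}\wedge\Sigma^\infty_+X$ and uses that the comodule coaction of $\Sigma^\infty_+X$ on $X^{V\oplus W}$ followed by the counit is the identity; what remains is the restriction of $\mathrm{id}\boxplus\sm_W$ to the diagonal, which is $\sm_W$. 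Your approach instead stays inside the symmetric monoidal $\infty$-category $\mathrm{Fun}(X,\cat{Sp})$, where $\mathbb S^{W}$ is invertible with inverse $\mathbb S^{-W}$, and identifies the composite directly with $(\mathrm{id}\wedge z^\vee)\circ\mathrm{coev}$; the identity you need is then exactly the formula expressing $\eta$ in terms of its dual $\eta^\vee$, which is a consequence of (indeed, with the paper's definition of $z^\vee$, the \emph{defining property} of) the duality. Your argument is more direct and isolates the essential content as Spanier--Whitehead duality; the paper's argument makes the coalgebra/comodule structure on Thom spectra explicit, which connects to the description of the Smith map as a cap product.

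One point to tighten: the phrase ``comparable pointwise'' is doing real work. Because the composite in~\eqref{Smith_Smith_WTS} passes through a colimit over $X\times X$, you are implicitly claiming that precomposing it with each fiber inclusion $\iota_x\colon\mathbb S^{V_x}\to X^V$ yields $\iota_x^{V\oplus W}\circ\gamma_x$ \emph{naturally in $x$}, not merely for each $x$ separately; otherwise the universal property of the colimit in an $\infty$-category does not let you conclude. This naturality does hold---every identification you use (the coevaluation, the fiberwise $z^\vee$, the compatibility of $\Delta_*$ with fiber inclusions) is a morphism in $\mathrm{Fun}(X,\cat{Sp})$, so the triangle identity is an equivalence of natural transformations---but it would strengthen the write-up to say so rather than to phrase the check as purely pointwise.
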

\begin{proof}
The Euler map for the trivial rank $0$ vector bundle 
   \begin{equation}
   \label{zero_Euler}
    X^0 \simeq \Sigma^\infty_{+} X \xrightarrow{e^{\mathbb S}(0)} \mathbb S.
    \end{equation}
is the counit for the $\mathbb{E}_\infty$-coalgebra structure on $\Sigma^\infty_{+} X$. By \cref{prop:euler-and-smith}, the Euler class $e^{\mathbb S}(W)$ factors through~\eqref{zero_Euler} as
\begin{equation}
    X^{-W} \longrightarrow X^{-W \oplus W} \simeq \Sigma^\infty_{+} X \xrightarrow{e^{\mathbb S}(0)}  \mathbb S.
\end{equation}
This implies that~\eqref{Smith_Smith_WTS} can be written as 
   \begin{equation}\label{big_diag_1}
    \begin{tikzcd}[column sep=0.5cm]
        X^{V} \simeq X^{(V \oplus W) \oplus - W} \ar[r, "\Delta"] \ar[rrd, "\phi"',bend right=8] & (X \times X)^{(V \oplus W) \boxplus {-W}}   \ar[r] & (X \times X)^{(V \oplus W) \boxplus 0} \ar[r, "\simeq"] & X^{V \oplus W} \wedge \Sigma^\infty_{+} X \ar[r, "e^{\mathbb S}(W)"] & X^{V \oplus W} \\ 
        & & X^{V \oplus W} \ar[u, "\Delta"]  &
    \end{tikzcd}
   \end{equation}
   Since the map $X^{V \oplus W} \to (X \times X)^{(V \oplus W) \boxplus 0}\simeq X^{V \oplus W} \wedge \Sigma^\infty_{+} X $ comes from the comodule structure of $X^{V\oplus W}$ over $\Sigma_+^\infty X$,
 the composite $X^{V \oplus W} \to (X \times X)^{(V \oplus W) \boxplus 0} \to X^{V \oplus W}$ is the identity map. Therefore it is sufficient to show that the map $\phi$ in~\eqref{big_diag_1} is homotopy equivalent to the spectral Smith map $\mathrm{sm}_W$, and this follows by restricting to the diagonal in the map $(X \times X)^{(V \oplus W) \boxplus {-W}}  \to  (X \times X)^{(V \oplus W) \boxplus 0} $ along the top of~\eqref{big_diag_1}, which is induced from $\mathrm{id} \boxplus \mathrm{sm}_W$.
\end{proof}

We see that the Euler class records all the ``Smith'' information about $W$. 
We will therefore refer to the Smith homomorphism as capping with the Euler class or as the map of Thom spectra interchangeably.

The dual version of \cref{prop:smith-is-cap-with-euler} also holds.
\begin{proposition}
\label{coho_smith}
Let $\mathcal R$ be a ring spectrum. Then the pullback map on $\mathcal R$-cohomology $\mathrm{sm}_W^*\colon \mathcal R^*(X^{V\oplus W})\to \mathcal R^*(X^{V})$ is equal to the cup product with $e^{\mathcal R}(W)$.
\end{proposition}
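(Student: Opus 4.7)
The plan is to dualize Proposition~\ref{prop:smith-is-cap-with-euler} by applying the mapping spectrum functor $[\bl, \mathcal R]$, and then recognize the resulting composition as the definition of the cup product with $e^{\mathcal R}(W)$ in twisted $\mathcal R$-cohomology.

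More concretely, recall that for a ring spectrum $\mathcal R$ and virtual bundles $A, B\to X$, the cup product
\[ \cup\colon \mathcal R^*(X^A)\otimes \mathcal R^*(X^B) \longrightarrow \mathcal R^*(X^{A\oplus B}) \]
is defined by taking the external (smash) product of representing maps and pulling back along the diagonal composite $X^{A\oplus B}\xrightarrow{\Delta} (X\times X)^{A\boxplus B}\simeq X^A\wedge X^B$ (together with the multiplication $\mathcal R\wedge\mathcal R\to\mathcal R$). In particular, for a class $\alpha\in\mathcal R^*(X^{V\oplus W})$ and the Euler class $e^{\mathcal R}(W)\in\mathcal R^{r_W}(X^{-W})$, the product $\alpha\cup e^{\mathcal R}(W)$ lives naturally in $\mathcal R^*(X^{(V\oplus W)\oplus -W})\simeq \mathcal R^*(X^V)$.

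I would now directly apply $[\bl, \mathcal R]$ to the factorization of the Smith map from Proposition~\ref{prop:smith-is-cap-with-euler}, which I'll write (after suppressing the identification $X^V\simeq X^{(V\oplus W)\oplus -W}$) as
\[ \mathrm{sm}_W \;\simeq\; \bigl(X^V \xrightarrow{\Delta} X^{V\oplus W}\wedge X^{-W} \xrightarrow{\mathrm{id}\wedge\, e^{\mathbb S}(W)} X^{V\oplus W}\wedge \mathbb S \simeq X^{V\oplus W}\bigr). \]
Applying $\mathcal R^*(\bl)$ turns this into the composite
\[ \mathcal R^*(X^{V\oplus W}) \xrightarrow{\;(\mathrm{id}\wedge e^{\mathbb S}(W))^*\;} \mathcal R^*(X^{V\oplus W}\wedge X^{-W}) \xrightarrow{\Delta^*} \mathcal R^*(X^V). \]
Given $\alpha\colon X^{V\oplus W}\to\Sigma^n\mathcal R$, the first arrow sends $\alpha$ to $\alpha\wedge e^{\mathcal R}(W)$ (using that $e^{\mathcal R}(W)=1_{\mathcal R}\circ e^{\mathbb S}(W)$, by definition \ref{twisted_Euler_class}), followed by the multiplication $\mathcal R\wedge\mathcal R\to\mathcal R$; this is precisely the external product $\alpha\boxtimes e^{\mathcal R}(W)$. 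Pulling back along $\Delta$ then yields $\alpha\cup e^{\mathcal R}(W)$ by definition, completing the identification $\mathrm{sm}_W^*(\alpha)=\alpha\cup e^{\mathcal R}(W)$.

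The step that requires most care, and which I expect to be the only genuine subtlety, is making sure the external-product-then-diagonal description of the cup product is correctly set up in the twisted setting (where Thom spectra replace $\Sigma^\infty_+ X$), and that the identification of $\mathrm{id}\wedge e^{\mathbb S}(W)$ after smashing with $1_{\mathcal R}$ really is the map representing the external product with $e^{\mathcal R}(W)$. Both points follow formally from the $\mathbb E_\infty$-comodule/diagonal apparatus used in the proof of Proposition~\ref{prop:smith-is-cap-with-euler}, so no new ingredients are needed beyond bookkeeping.
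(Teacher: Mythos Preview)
Your proposal is correct and matches the paper's approach exactly: the paper does not give a separate proof of Proposition~\ref{coho_smith} but simply presents it as ``the dual version of \cref{prop:smith-is-cap-with-euler},'' and your argument spells out precisely that dualization by applying $[\bl,\mathcal R]$ to the factorization~\eqref{Smith_Smith_WTS}.
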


\begin{remark}
The long exact sequence of field theories we shall discuss in \cref{LESIFTs} is cohomological in nature: it is given by applying $I_\Z\mathit{MT\xi}$-cohomology to $\mathrm{sm}_W$. However, \cref{coho_smith} does not apply: the Smith homomorphism there cannot be described as taking the product with an $I_\Z\mathcal R$-Euler class. This is because if $\mathcal R$ is a ring spectrum, $I_\Z\mathcal R$ usually admits no ring spectrum structure. However, $I_\Z\mathcal R$ is an $\mathcal R$-module, so we do learn from \cref{coho_smith} that this Smith homomorphism is the cup product with $e^{\mathcal R}(W)$ using the $\mathcal R$-module structure. For example, when we study fermionic invertible phases, we will typically choose $\mathcal R = \MTSpin$.
\end{remark}

Let us review the standard story that ``the Euler class is the pullback of the Thom class along the zero section.'' First we review orientations and Thom classes. For simplicity, we will define them only for vector bundles, though the story generalizes to virtual bundles and much more. 

\begin{definition}\label{def:R-orientation-on-vector-bundles}
    Let $W$ be a vector bundle of rank $n$ on $X$. Fix $\mathcal{R}$ an $\mathbb{E}_1$-algebra in spectra and let $\cat{Mod}_R$ be the $\infty$-category of $\mathcal{R}$-module spectra. An \term{$\mathcal{R}$-orientation} of $W$ is a natural isomorphism $\phi$ of functors between 
    \begin{equation}
    \mathcal{R}^W\colon X \xrightarrow{W} B\O(n) \to \cat{Sp} \xrightarrow{\text{--}\wedge \mathcal{R}} \cat{Mod}_{\mathcal R}
    \end{equation}
    and the constant functor valued in $\Sigma^n \mathcal{R}$. An $\mathcal{R}$-orientation of a manifold $M$ means an $\mathcal R$-orientation of $TM$.
    \end{definition}
    \begin{remark}
    The map $z^\vee$ from~\eqref{zvee} is similar to an orientation on $-W$, in the sense of Ando-Blumberg-Gepner-Hopkins-Rezk, except that $z^\vee$ is in general non-invertible and between different suspensions of the sphere spectrum.
\end{remark}
    An $\mathcal{R}$-orientation $\phi$ on $W$ induces an equivalence
\begin{equation}
    \colim_{X} \mathcal{R}^W \simeq \Sigma^{\infty}_+ \mathrm{Th}(X;W) \wedge \mathcal{R} \simeq X \wedge \Sigma^n \mathcal{R} \simeq \Sigma^n \Sigma^\infty_+ X \wedge \mathcal{R}.
\end{equation}
\begin{definition}
    The composite 
    \begin{equation}
        U\colon \Sigma^{\infty}_+ \mathrm{Th}(X;W)  = X^W \to \Sigma^{\infty}_+ \mathrm{Th}(X;W) \wedge \mathcal{R} \simeq \Sigma^n \Sigma^\infty_+ X \wedge \mathcal{R} \to \Sigma^n \mathcal{R}
    \end{equation}
    is the \term{Thom class}. Often we think of $U$ through its
    homotopy class, which lives in $\mathcal{R}^n(\mathrm{Th}(X;W))$.
\end{definition}

Given a $\mathcal{R}$-orientation on $W$, we can also define the (untwisted) Euler class of $W$. This is a standard definition (e.g.~\cite[\S 13]{Bec70}).
\begin{definition}
\label{untwisted_Euler}
    Given an $\mathcal{R}$-orientation, we have a natural isomorphism of functors $X\to\cat{Mod}_R$
    \begin{equation}
        R^{-W} \simeq \Sigma^{-n} \underline{\mathcal{R}},
    \end{equation}
    where $\Sigma^{-n} \underline{\mathcal{R}}$ is the constant functor valued in $\Sigma^{-n}\mathcal R$. The composite
    \begin{equation}
    \Sigma^{-n} X \longrightarrow \Sigma^{-n} X \wedge \mathcal{R} \simeq X^{-W}\wedge \mathcal{R} \xrightarrow{\mathrm{sm}_W} X \wedge \mathcal{R} \to \mathcal{R}
    \end{equation}
    is called the (untwisted) \term{Euler class} of $W$. 
\end{definition}
Unlike the twisted Euler class, this untwisted Euler class depends
on the $\mathcal{R}$-orientation.

Finally, we can prove that our definition of the Euler class, \cref{twisted_Euler_class}, coincides with the more standard \cref{untwisted_Euler} when they overlap (i.e.\ when there is an $\mathcal R$-orientation chosen on $V$).
\begin{lemma}
\label{euler_is_Thom}
     Suppose $W$ is $\mathcal R$-oriented, and let $U\in \mathcal R^r(\mathrm{Th}(X;W))$ denote the Thom class. Then $e^{\mathcal R}(W) = z_W^*U$, where $z_W \colon X\to \mathrm{Th}(X;W)$ is the inclusion as the zero section.
\end{lemma}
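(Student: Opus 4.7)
The plan is to exhibit both $e^{\mathcal R}(W)$ and $z_W^* U$ as derived from the zero-section map of stable spherical fibrations $z\colon \mathbb S^0 \to \mathbb S^W$, and then use the $\mathcal R$-orientation on $W$ to identify them under the Thom isomorphism.

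First I would apply \cref{prop:euler-and-smith} to rewrite $e^{\mathcal R}(W) = 1_{\mathcal R} \circ e^{\mathbb S}(0) \circ \sm_W$, where $\sm_W\colon X^{-W} \to X^0 \simeq \Sigmainfty X$ is the Smith map and $e^{\mathbb S}(0)$ is the crush. Symmetrically, the zero section $z_W\colon \Sigmainfty X \to X^W$ is itself a Smith map, namely $\sm_W\colon X^0 \to X^W$ in the sense of \cref{Smith_map_virtual_bundle}. Thus both sides of the equation are assembled from a Smith map and the crush, and differ only in the direction of the Smith map relative to the orientation.

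Next I would unpack the Thom class using the $\mathcal R$-orientation $\phi\colon \mathcal R^W \simeq \Sigma^r \underline{\mathcal R}$. By construction, $U$ is the composition $X^W \to X^W \wedge \mathcal R \xrightarrow{\phi} \Sigma^r \Sigmainfty X \wedge \mathcal R \to \Sigma^r \mathcal R$, where the final map uses the crush and $\mathcal R$-multiplication. Inverting the orientation yields $\phi^{-1}\colon \mathcal R^{-W} \simeq \Sigma^{-r} \underline{\mathcal R}$, and after taking colimits over $X$ this is the Thom isomorphism $X^{-W} \wedge \mathcal R \simeq \Sigma^{-r} \Sigmainfty X \wedge \mathcal R$ under which we compare the two classes.

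The crux is a fiberwise naturality statement: after smashing with $\mathcal R$ and trivializing with $\phi$, both constructions reduce to the colimit over $X$ of the \emph{same} natural transformation $\underline{\mathcal R} \to \Sigma^r \underline{\mathcal R}$ in $\mathrm{Fun}(X, \cat{Mod}_{\mathcal R})$, composed with the crush and $\mathcal R$-multiplication. This holds because the fiberwise Spanier--Whitehead dual of $z\colon \mathbb S^0 \to \mathbb S^W$ is $z^\vee\colon \mathbb S^{-W} \to \mathbb S^0$; under the $\mathcal R$-orientation these become mutually dual maps between the invertible $\mathcal R$-module spectra $\underline{\mathcal R}$ and $\Sigma^r \underline{\mathcal R}$, where duality reduces to the expected suspension shift. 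I expect the main obstacle to be making this fiberwise duality precise in the $\infty$-categorical language of~\cite{ABGHR14a}; once established, taking colimits over $X$ and composing with the crush gives the desired equality $e^{\mathcal R}(W) = z_W^* U$.
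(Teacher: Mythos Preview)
Your approach is essentially the same as the paper's: both recognize that the zero section $z_W$ suspends to the Smith map, both unpack the Thom class via the orientation $\phi$, and both reduce the claim to a fiberwise naturality statement in $\mathrm{Fun}(X, \cat{Mod}_{\mathcal R})$ before taking colimits and composing with the crush. The one substantive difference is in how that fiberwise step is verified. You frame it as Spanier--Whitehead duality between $z$ and $z^\vee$ and anticipate difficulty making this precise; the paper instead just writes down the commutative square
\[
\begin{tikzcd}
\underline{\mathcal R} \ar[r, "\phi \wedge \mathcal R^{-W}"] \ar[d] & \Sigma^n \mathcal R^{-W} \ar[d] \\
\mathcal R^W \ar[r, "\phi"] & \Sigma^n \underline{\mathcal R}
\end{tikzcd}
\]
in $\mathrm{Fun}(X,\cat{Mod}_{\mathcal R})$, where the vertical maps are both induced by the zero section (tensored with suitable invertible fibrations), and observes that it commutes by naturality of $\phi$. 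This direct check bypasses your anticipated obstacle entirely: no appeal to fiberwise duality is needed, only that tensoring a natural isomorphism $\phi$ with an object is still natural.
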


\begin{proof}
    After suspending, the zero section map becomes the Smith map. Therefore it suffices to show that the following diagram commutes.
    \begin{equation}
        \begin{tikzcd}
           \Sigmainfty X \ar[r, "\text{--}\wedge\mathcal R"] \ar[d, "\textrm{sm}_W"] & \Sigmainfty X \wedge \mathcal{R} \ar[r, "\simeq"] \ar[d, "\textrm{sm}_W\wedge \mathrm{id}_{\mathcal{R}}"] & \Sigma^n X^{-W} \wedge \mathcal{R} \ar[d, "\mathrm{sm}_W"] \\ 
           \Sigmainfty \mathrm{Th}(X;W) \simeq X^W \ar[r, "\text{--}\wedge\mathcal R"] & X^W \wedge \mathcal{R} \ar[r, "\simeq"] &\Sigma^n X \wedge \mathcal{R}.
        \end{tikzcd}
    \end{equation}
    Here the equivalences in the right square are the ones induced by the orientation $\phi$.

    The left-hand square commutes because smashing with $\mathcal R$ is a functor. The right-hand square commutes because the following diagram commutes in
    $\cat{Fun}(X, \cat{Mod}_R)$:
    \begin{equation}
    \begin{tikzcd}
    \underline{\mathcal{R}} \ar[r, "(\phi \wedge \mathcal R^{-W})"] \ar[d, "z^\vee \wedge \mathcal{R}"] & \Sigma^n \mathcal{R}^{-W} \ar[d, "z^\vee \wedge \mathcal{R} \wedge \mathcal R^{W}"] \\
    \mathcal{R}^W \ar[r, "\phi"] & \Sigma^n  \underline{\mathcal{R}},
    \end{tikzcd}
    \end{equation}
    which follows from naturality. Recall that $z^\vee: \underline{\mathcal R} \to \mathcal{R}^W$ is the map of spherical fibrations over $X$ that induces the Smith map.
\end{proof}

\subsection{Smith homomorphisms defined via Atiyah-Poincaré dual of the generalized Euler classes}

Now equipped with the theory of Euler classes, we can give another alternate definition of the Smith homomorphism. Fix $\xi\colon B\to B\O$, $V\to X$ of rank $r_V$, and $W\to X$ of rank $r_W$ as in \cref{Smith_homomorphism_intersection_defn}.
Recall that by \cref{what_is_twisted_bordism}, a class $c\in\Omega_n^\xi(X^{V-r_V})$ can be represented by a closed $n$-manifold $M$ with an $(X,V)$-twisted $\xi$-structure, which includes the data of a map $f\colon M\to X$.

In this subsubsection, we assume that $\mathit{MT\xi}$ is a ring spectrum.
\begin{definition}\label{smith_homomorphism_euler_definition}
    The \term{Smith homomorphism} associated to $\xi$, $V$, and $W$ is the homomorphism
\begin{equation}
    \sm_W\colon \Omega_n^\xi(X^{V-r_V}) \longrightarrow \Omega_{n-r_W}^\xi(X^{V\oplus W - r_V - r_W})
\end{equation}
sending the class $[M]$ to the Poincaré dual of the cobordism Euler class 
$e^{\mathit{MT\xi}}(f^*W)$.
\end{definition}

To show this, we first recall Atiyah duality. 
There is the standard notion of duals in any symmetric monoidal category $\cat{C}$~\cite{Lin78, DP80, DM82}. Here for $\cat{C}$ we take the homotopy category of spectra, which is symmetric monoidal with respect to the smash product $\wedge$. If $A, B$ have duals $A^\vee, B^\vee$, then a morphism $f\colon A \to B$ induces a dual morphism, which we write as $f^\vee\colon B^\vee \to A^\vee$.

\begin{theorem}[{Atiyah duality~\cite[Proposition 3.2 and Theorem 3.3]{Ati61a}}]
\label{atiyah_duality}
Let $M$ be a compact manifold; then $(M/\partial M)^\vee\simeq M^{-TM}$. If $M$ is closed and $V\to M$ is a virtual vector bundle, then $(M^V)^\vee \simeq M^{-TM-V}$.
\end{theorem}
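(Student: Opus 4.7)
The plan is to reduce Atiyah duality to Spanier--Whitehead duality in the stable homotopy category by way of a tubular neighborhood argument. Recall that in the homotopy category of spectra, the dual $Y^\vee$ of a finite CW spectrum $Y$ can be computed as $\Sigma^{-N}(S^N\smallsetminus i(Y))$ for any embedding $i\colon Y\hookrightarrow S^N$ with sufficiently nice (e.g.\ neighborhood-deformation-retract) image; the evaluation and coevaluation pairings come from the Alexander duality collapse maps. So the task is to arrange our $M$ inside some sphere in a way that manifests this duality.

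First I would treat the closed, untwisted case $V=0$. By the Whitney embedding theorem, pick an embedding $i\colon M\hookrightarrow \mathbb{R}^N\subset S^N$ for $N$ large, with normal bundle $\nu\to M$. Since $TM\oplus\nu\cong \underline{\mathbb{R}}^N$ restricts to the tangent bundle over $M$, we have $\nu\simeq \underline{\mathbb{R}}^N - TM$ as virtual bundles, and consequently $M^\nu\simeq \Sigma^N M^{-TM}$ by the identity $M^{V+\underline{\mathbb{R}}^n}\simeq \Sigma^n M^V$ noted after \cref{boxplus_lem}. A tubular neighborhood $U$ of $i(M)$ is homeomorphic to the total space of $\nu$, and the Pontrjagin--Thom collapse $S^N\to \mathrm{Th}(M;\nu)$ identifies $\mathrm{Th}(M;\nu)$ with $S^N/(S^N\smallsetminus U)$. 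Because $S^N\smallsetminus U$ deformation retracts onto $S^N\smallsetminus i(M)$, Spanier--Whitehead duality for the pair $(i(M),S^N\smallsetminus i(M))\subset S^N$ then gives
\[
(\Sigma^\infty_+ M)^\vee \;\simeq\; \Sigma^{-N}\Sigma^\infty(S^N\smallsetminus i(M))_+ \;\simeq\; \Sigma^{-N} M^\nu \;\simeq\; M^{-TM},
\]
which is the desired equivalence.

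Next I would handle the virtual bundle twist. Given $V\to M$, choose an auxiliary bundle $V'\to M$ with $V\oplus V'\cong \underline{\mathbb{R}}^k$ for some $k$; this expresses $-V$ as $V'-\underline{\mathbb{R}}^k$ and lets us reduce virtual Thom spectra to honest ones up to a suspension shift. Pick a Whitney embedding $M\hookrightarrow \mathbb{R}^N$ and consider the bundle $V'\oplus \nu\to M$, realized as a tubular neighborhood of the zero section of $V'$ inside a larger $\mathbb{R}^{N+k}$. The same Spanier--Whitehead argument then gives $(M^V)^\vee\simeq \Sigma^{-N-k}M^{V'\oplus\nu}$, and the stable identity $V'\oplus\nu\simeq V'\oplus\underline{\mathbb{R}}^N - TM\simeq \underline{\mathbb{R}}^{N+k}-V-TM$ collapses this to $M^{-TM-V}$ after desuspending. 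The key bookkeeping is simply that the Thom-spectrum functor $M\mapsto M^{(-)}$ turns direct sum of virtual bundles into smash products fiberwise and turns trivial $\underline{\mathbb{R}}^n$ summands into suspensions, so every equivalence at the level of virtual bundles can be translated into an equivalence of spectra.

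Finally, for the boundary case, I would replace the embedding $M\hookrightarrow S^N$ by a neat embedding of pairs $(M,\partial M)\hookrightarrow (D^N,S^{N-1})$, so that $\partial M$ sits in the boundary sphere with an inward-normal collar identifying $TM|_{\partial M}\cong T(\partial M)\oplus\underline{\mathbb{R}}$. The relative collapse $D^N/S^{N-1}=S^N\to\mathrm{Th}(M;\nu)$ now factors through $M/\partial M$ on the source side, and the relative form of Spanier--Whitehead duality (Alexander duality for the pair) produces an equivalence $(M/\partial M)^\vee\simeq \Sigma^{-N}M^\nu\simeq M^{-TM}$. The main obstacle is precisely this last step: one has to check that the collar on $\partial M$ lines up correctly with the Pontrjagin--Thom collapse, so that the relative tubular neighborhood of $M$ in $D^N$ gives a Thom structure on $\nu$ matching the one extending the collar trivialization on $T(\partial M)$. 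Once this compatibility is verified, the twisted-boundary statement follows by smashing through an auxiliary $V'$ exactly as in the closed case.
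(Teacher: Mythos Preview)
The paper does not supply its own proof of \cref{atiyah_duality}: the theorem is simply quoted with attribution to Atiyah~\cite[Proposition 3.2 and Theorem 3.3]{Ati61a} and then used as a black box in the subsequent discussion of Euler classes and Smith maps. So there is no ``paper's proof'' to compare against.

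That said, your sketch is the classical argument and is essentially what Atiyah does in the cited reference: embed, take the Pontrjagin--Thom collapse to the Thom space of the normal bundle, and invoke Spanier--Whitehead/Alexander duality. The reduction of the virtual-bundle case to the honest-bundle case via a complement $V'$ is standard, and the neat embedding $(M,\partial M)\hookrightarrow (D^N,S^{N-1})$ is the right move for the boundary version. One small point of care: your line $(\Sigma^\infty_+ M)^\vee \simeq \Sigma^{-N}\Sigma^\infty(S^N\smallsetminus i(M))_+$ is a slightly imprecise form of Alexander duality (the basepoint bookkeeping on the complement side is delicate); the cleaner formulation is to exhibit the evaluation and coevaluation directly via the Pontrjagin--Thom collapse $S^N\to M^\nu$ and the Thom diagonal $M^\nu\to M_+\wedge M^\nu$, which is in fact how Atiyah sets it up and how the paper later uses it in the proof of \cref{lem:atiyah-dual-euler}. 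With that adjustment your argument is correct.
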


Furthermore, dual spectra provide isomorphisms between 
homology and cohomology groups: let $X$ be a spectrum with a dual $X^{\vee}$; then, for any spectrum $\mathcal{R}$, we have a canonical isomorphism
\begin{equation}\label{eq:dual-iso}
    \mathcal{R}_{*}(X) \overset\cong\longrightarrow \mathcal{R}^{-*}(X^\vee). 
\end{equation}
We call two classes $\alpha\in\mathcal R_*(X)$ and $\beta\in\mathcal R^{-*}(X^\vee)$ \term{Atiyah-Poincaré dual} if $\alpha\mapsto \beta$ under the isomorphism~\eqref{eq:dual-iso}.

Furthermore, this is functorial: given a map $f\colon X \to Y$ of dualizable spectra, let $f^\vee\colon Y^\vee \to X^\vee$ be the dual map. 
We have a commutative square:
\begin{equation}\label{eq:dual-commuting}
    \begin{tikzcd}
        \mathcal{R}_{*}(X) \ar[r, "f_*"] \ar[d, "\simeq"] & \mathcal{R}_*(Y) . \ar[d, "\simeq"]\\
        \mathcal{R}^{-*}(X^\vee) \ar[r, "(f^\vee)^*"] & \mathcal{R}^{-*}(Y^\vee).
    \end{tikzcd}
\end{equation}

Let $\Omega^{\mathrm{fr}}_*(X)$ denote the stably framed bordism of $X$, i.e.\ the bordism groups of manifolds with a map to $X$ and a trivialization of the stable tangent bundle (or equivalently, the stable normal bundle). The Pontrjagin-Thom theorem identifies these bordism groups with the stable homotopy groups of $X$. We learn a neat fact:
\begin{lemma}
    Let $M$ be a closed compact $d$-dimensional manifold. Then
    $M$ defines a canonical class in $\Omega_d^{\mathrm{fr}}(M, -TM) = \mathbb{S}_0(M^{-TM})$. This is the Atiyah-Poincaré dual to the Euler class for the trivial bundle $e^{\mathbb{S}}(0) \in \mathbb{S}^0(M)$.
\end{lemma}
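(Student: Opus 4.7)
The plan is to identify the canonical bordism class with an explicit map of spectra, and then apply the triangle identity for Atiyah duality to compute its image under the duality isomorphism~\eqref{eq:dual-iso}, recognizing the output as $e^{\mathbb S}(0)$ via \cref{prop:euler-and-smith}.

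First I would describe the canonical class. The identity $\mathrm{id}\colon M\to M$ together with the tautological framing of $TM\oplus(-TM)\simeq 0$ represents a class $[M]\in\Omega_d^{\mathrm{fr}}(M,-TM)$. Choosing an embedding $M\hookrightarrow\mathbb R^N$ with normal bundle $\nu_M$ satisfying $TM\oplus\nu_M\cong\underline{\mathbb R}^N$, the Pontrjagin-Thom theorem identifies $[M]$ with the stable map $\alpha\colon\mathbb S\to M^{-TM}$ obtained by desuspending the collapse $S^N\to\mathrm{Th}(\nu_M)=\Sigma^N M^{-TM}$.

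Next I would relate $\alpha$ to the coevaluation of the Atiyah duality of \cref{atiyah_duality}. The same collapse, postcomposed with the Thomification of the diagonal $M\to M\times M$ (which records the second factor as a point of $M$), gives a standard geometric model for the coevaluation $\eta\colon\mathbb S\to M^{-TM}\wedge\Sigmainfty M$. Projecting away the $\Sigmainfty M$ factor by the crush map $c\coloneqq e^{\mathbb S}(0)\colon\Sigmainfty M\to\mathbb S$ forgets the diagonal data and leaves exactly the Pontrjagin-Thom collapse, so
\[ \alpha \;=\; (\mathrm{id}_{M^{-TM}}\wedge c)\circ\eta. \]
The main obstacle is justifying this identification---verifying that the coevaluation of Atiyah duality is built from the same tubular-neighborhood Pontrjagin-Thom collapse, merely enriched by the diagonal---which is folklore but takes a little unpacking.

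Finally, let $\epsilon\colon\Sigmainfty M\wedge M^{-TM}\to\mathbb S$ be the evaluation. The duality isomorphism~\eqref{eq:dual-iso} sends $\alpha$ to $\alpha^\vee=\epsilon\circ(\mathrm{id}_{\Sigmainfty M}\wedge\alpha)$. Substituting and regrouping (using that $c$ is valued in $\mathbb S$ so $\mathrm{id}\wedge c$ may be commuted past $\epsilon$) gives
\[ \alpha^\vee \;=\; c\circ(\epsilon\wedge\mathrm{id}_{\Sigmainfty M})\circ(\mathrm{id}_{\Sigmainfty M}\wedge\eta). \]
The triangle identity $(\epsilon\wedge\mathrm{id}_{\Sigmainfty M})\circ(\mathrm{id}_{\Sigmainfty M}\wedge\eta)=\mathrm{id}_{\Sigmainfty M}$ then collapses the composite to $\alpha^\vee=c=e^{\mathbb S}(0)$, proving that $[M]$ and $e^{\mathbb S}(0)$ are Atiyah-Poincaré dual.
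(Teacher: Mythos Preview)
Your proposal is correct and takes essentially the same approach as the paper: both identify the Pontrjagin--Thom collapse $\alpha$ as the coevaluation for Atiyah duality followed by crushing one factor via $e^{\mathbb S}(0)$, and both conclude by unwinding the duality isomorphism. The paper is terser---it simply asserts that $(e\wedge\mathrm{id})\circ\eta'$ equals the collapse map, citing Atiyah's finite-dimensional description---whereas you go in the reverse direction and make the triangle identity explicit, but the geometric content is identical.
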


\begin{proof}
    The Euler class is represented by a map $e\colon M_+ \to S^0$ taking $+$ to the basepoint of $S^0$ and the entirety of $M$ to the other point. On the other hand, consider an embedding $\iota\colon M \to \R^N$ and let $\nu$ be the normal bundle. 
    Then $\Sigmainfty \mathrm{Th}(M; \nu) \simeq \Sigma^{-N}M^{-TM}$.
     By the Pontrjagin-Thom construction, the tautological class $[M] \in \Omega^{\mathrm{fr}}_d(M, -TM)$ comes from the collapse map 
    $S^N = (\mathbb{R}^N)^+ \to \mathrm{Th}(M; \nu)$, where $(-)^+$ denotes the one point compactification.

    The result follows from the finite-dimensional description of the evaluation and co-evaluation map of $M$ and $M^{-TM}$ \cite{Ati61}: we have an evaluation map $S^N \to M_+ \wedge \mathrm{Th}(M; \nu)$, representing $\mathbb S \to M \wedge M^{-TM}$. The composite $S^N \to  M_+ \wedge \mathrm{Th}(M; \nu) \xrightarrow{e} S^0 \wedge \mathrm{Th}(M; \nu) = \mathrm{Th}(M; \nu)$ is precisely the Pontrjagin-Thom collapse map.
\end{proof}

Now we see how Atiyah duality interacts with Smith homomorphisms on compact manifolds:
\begin{lemma}\label{lem:atiyah-dual-euler}
    Fix a closed compact manifold $M$. Given a virtual bundle $V\to M$ and a vector bundle $W\to M$, the Atiyah dual $ (\sm_W)^\vee$ of the Smith map
    \begin{equation}
       \sm_W\colon M^V \longrightarrow M^{V\oplus W}
    \end{equation}
    is the Smith map associated to $-TM-V - W$:
    \begin{equation}
       \sm_{W}\colon M^{-TM- V -W} \longrightarrow M^{-TM-V}.
    \end{equation}
    \end{lemma}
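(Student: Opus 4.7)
The plan is to reduce the statement to a fiberwise duality computation and then combine it with Atiyah duality. Recall that by \cref{Smith_map_virtual_bundle}, the Smith map $\sm_W\colon M^V \to M^{V\oplus W}$ is obtained as $\operatorname{colim}_M$ of the map of stable spherical fibrations $\mathbb S^V \to \mathbb S^{V\oplus W}$ induced fiberwise by the zero section $0 \hookrightarrow W$. Exactly as in the construction of $z^\vee$ in equation~\eqref{zvee}, applying fiberwise Spanier--Whitehead duality to this map produces another map of stable spherical fibrations $\mathbb S^{-V-W}\to \mathbb S^{-V}$, and this dual map is precisely the fiberwise map whose colimit presents the Smith map $\sm_W\colon M^{-V-W}\to M^{-V}$.

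Next I would invoke Atiyah duality (\cref{atiyah_duality}) in the form $(M^U)^\vee \simeq M^{-TM-U}$ for each virtual bundle $U$. The key compatibility I need is that Atiyah duality on Thom spectra is the global manifestation of fiberwise duality: dualizing $\operatorname{colim}_M \mathbb S^U$ yields $\operatorname{colim}_M \mathbb S^{-TM-U}$, with a canonical $-TM$ shift coming from the self-duality of $M$. Concretely, this compatibility can be extracted by factoring: first handle the case $U = 0$, which is Atiyah's theorem $M_+^{\vee} \simeq M^{-TM}$; then smash fiberwise with $\mathbb S^{-U}$ and use that smashing with an invertible stable spherical fibration preserves duality.

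Applying this compatibility to the map from the first paragraph gives that the Atiyah dual $(\sm_W)^\vee$ fits into the identification
\[
(M^{V\oplus W})^\vee \simeq M^{-TM-V-W} \xrightarrow{\;\sm_W\;} M^{-TM-V} \simeq (M^V)^\vee,
\]
where the interior map is the colimit over $M$ of the dualized fiberwise map $\mathbb S^{-TM-V-W}\to \mathbb S^{-TM-V}$ obtained by smashing $\mathbb S^{-V-W}\to \mathbb S^{-V}$ with the fiberwise constant $\mathbb S^{-TM}$. By construction this is the Smith map $\sm_W$ for the virtual bundle $-TM-V-W$, as claimed.

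The main obstacle is making precise the passage from \emph{fiberwise} duality of stable spherical fibrations on $M$ to \emph{global} Atiyah duality of the Thom spectra, together with the naturality of this passage with respect to maps of spherical fibrations. Once this is granted (which is essentially the content of Atiyah's original argument in~\cite{Ati61a}, extended to virtual bundles), the remainder of the proof is formal.
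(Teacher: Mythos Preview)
Your argument is correct but takes a different route from the paper's. The paper proceeds geometrically: it embeds the disc bundle $D_M(W)$ into $\R^N$, compares the normal bundles $\nu_D$ and $\nu_M = \nu_D \oplus W$ of $D_M(W)$ and of $M$ (sitting as the zero section), and then reads off the Atiyah dual as the Pontrjagin--Thom collapse map $\mathrm{Th}(D_M(W);\nu_D) \to \mathrm{Th}(M;\nu_M)$ induced by the inclusion of tubular neighborhoods. Identifying this collapse with the Smith map for $-TM - W$ is then a direct inspection of disc-bundle inclusions.

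Your approach instead stays at the level of parametrized spectra: you dualize the fiberwise zero-section map and then invoke the compatibility of Atiyah duality with fiberwise Spanier--Whitehead duality (the $-TM$ shift). This is conceptually cleaner and manifestly natural, but the ``main obstacle'' you flag---that dualizing $\operatorname{colim}_M$ of a map of invertible local systems agrees with $\operatorname{colim}_M$ of the fiberwise dual, shifted by $-TM$---is essentially Costenoble--Waner (parametrized Atiyah) duality, which is nontrivial machinery to set up. The paper's geometric argument avoids this by working directly with the classical space-level description of Atiyah duality via embeddings. One small point worth making explicit in your write-up: the identification of the fiberwise dual $z_W^\vee\colon \mathbb{S}^{-W}\to\mathbb{S}^0$ with the Smith map $\mathrm{id}_{\mathbb{S}^{-W}}\wedge z_W$ deserves a sentence, since it is the reason the dual is again a Smith map rather than some other map with the same source and target.
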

\begin{proof}
    Let us do the case $V = 0$; the general case follows in the same way. First we give a space-level description of the Atiyah dual map. Consider the manifold with boundary $D_M(W)$, the disk bundle of $W$. Its tangent bundle is $T(D_M(W)) \cong TM \oplus W$, where we are implicitly pulling back $W$ to $D_M(W)$. Now consider an embedding $\mu_D\colon D_M(W) \to \mathbb{R}^N$. Then $M$, sitting as the zero section, also gets an embedding $\mu_M\colon M \to D_M(W)  \to \mathbb{R}^N$. 
    
    Let $\nu_D$, resp.\ $\nu_M$ be the normal bundle of $\mu_D$, resp.\ $\mu_M$. As virtual bundles,
    \begin{subequations}
    \label{eq:normal-bundle-eq}
    \begin{align}
       \nu_D &\cong \mathbb{R}^N - TM - W\\
       \nu_M &\cong \mathbb{R}^N - TM.  
    \end{align}
    \end{subequations}
    Note that $\nu_M = \nu_D \oplus W$.
    Now let $N_D(\mu)$ be a tubular neighborhood of $D_M(W)$ and $N_M(\mu)$ the same for $M$. Observe that $N_D(\mu)$ and $N_M(\mu)$ are diffeomorphic to $\mu_D$, resp.\ $\mu_M$.
    
    Using the standard Pontrjagin-Thom collapse argument, the open embedding $i\colon N_M(\mu) \to N_D(\mu)$ induces a map of one-point compactifications $i^+\colon N_D(\mu)^+\to N_M(\mu)^+$. By \cref{prop:X-compact-thom-one-point}, we can write this as $\mathrm{Th}(D_M(W); \nu_D)\simeq \mathrm{Th}(M; \nu_D) \to \mathrm{Th}(M;\nu_D)$. Recall that $D_M(W)$ is homotopy equivalent to $W$.
    
    After passing to spectra, \cref{eq:normal-bundle-eq} gives a map
    \begin{equation}
        \Sigma^n M^{-TM-W} \longrightarrow \Sigma^n M^{-TM}.
    \end{equation}
    This is the Atiyah dual map of the Smith map.

    To see this is the Smith map for $-TM-V-W$ as claimed, notice that the composite $\mathrm{Th}(M;\nu_D) \to \mathrm{Th}(D_M(W);\nu_D)\to \mathrm{Th}(M;\nu_D \oplus W)$ is induced by the inclusion of disk bundles, a.k.a.\ the Smith homomorphism on Thom spaces, which suspends to the Smith map on Thom spectra.
\end{proof}

\begin{lemma}\label{lem:name-it-something}
    Let $W$ be a rank $r_W$ vector bundle on a closed compact $d$-manifold $M$, and let $[M] \in \Omega^{\mathrm{fr}}_d(M, -TM)$ be the tautological class. Then $(\mathrm{sm}_W)_*([M]) \in \Omega^{\mathrm{fr}}_d(M, -TM + W) = \mathbb{S}_0(M^{-TM+W})$ is the Atiyah-Poincaré dual of the Euler class $e^{\mathbb S}(W) \in \Omega^{r_W-d}_{\mathrm{fr}}(M, -W)$.
\end{lemma}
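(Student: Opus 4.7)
The plan is a short diagram chase assembling three inputs from the preceding material: the lemma immediately above, which identifies $[M]\in \mathbb S_0(M^{-TM})$ with the Atiyah-Poincaré dual of the trivial Euler class $e^{\mathbb S}(0)\colon \Sigmainfty M_+\to \mathbb S$; \cref{lem:atiyah-dual-euler}, which computes the Atiyah dual of a Smith map as another Smith map; and part~(2) of \cref{prop:euler-and-smith}, which expresses $e^{\mathbb S}(W)$ as the pullback of $e^{\mathbb S}(0)$ along the Smith map $M^{-W}\to \Sigmainfty M_+$.

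First, I would specialize \cref{lem:atiyah-dual-euler} to $V=-TM$. Using $(M^{-TM})^\vee\simeq \Sigmainfty M_+$ and $(M^{-TM+W})^\vee\simeq M^{-W}$ from \cref{atiyah_duality}, this identifies the Atiyah dual of the Smith map $\mathrm{sm}_W\colon M^{-TM}\to M^{-TM+W}$ with the Smith map $\mathrm{sm}_W\colon M^{-W}\to \Sigmainfty M_+$ appearing in \cref{prop:euler-and-smith}.

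Next, I would apply the naturality square \eqref{eq:dual-commuting} with $\mathcal R=\mathbb S$ to this pair of dual Smith maps. It says that the pushforward $(\mathrm{sm}_W)_*\colon \mathbb S_0(M^{-TM})\to \mathbb S_0(M^{-TM+W})$ corresponds, under the Atiyah-Poincaré isomorphism \eqref{eq:dual-iso}, to the pullback $(\mathrm{sm}_W)^*\colon \mathbb S^0(\Sigmainfty M_+)\to \mathbb S^0(M^{-W})$. Feeding in the class $[M]$, which the preceding lemma identifies with $e^{\mathbb S}(0)$ on the cohomology side, shows that $(\mathrm{sm}_W)_*[M]$ is Atiyah-Poincaré dual to $(\mathrm{sm}_W)^*(e^{\mathbb S}(0))$. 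Part~(2) of \cref{prop:euler-and-smith} then identifies this pullback with $e^{\mathbb S}(W)$, which is the desired statement.

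There is no substantive obstacle; the only care required is the bookkeeping of twists and degrees, namely checking that $\mathbb S_0(M^{-TM+W})\cong \mathbb S^0(M^{-W})$ under~\eqref{eq:dual-iso} with the correct grading, and that the Atiyah dual produced by \cref{lem:atiyah-dual-euler} is literally the map along which \cref{prop:euler-and-smith} factors $e^{\mathbb S}(W)$ through $e^{\mathbb S}(0)$. Once those identifications are in place, no further content is needed.
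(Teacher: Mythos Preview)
Your proposal is correct and follows essentially the same approach as the paper's proof: both combine the naturality square~\eqref{eq:dual-commuting}, the preceding lemma identifying $[M]$ with $e^{\mathbb S}(0)$, \cref{lem:atiyah-dual-euler} to identify the Atiyah dual of $\mathrm{sm}_W$ as $\mathrm{sm}_W$, and \cref{prop:euler-and-smith} to conclude. The paper's proof is simply a more compressed version of the same diagram chase you outlined.
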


\begin{proof}
    By \cref{eq:dual-commuting}, ${\mathrm{sm}_W}_*([M])$ is Atiyah-Poincaré dual to $((\mathrm{sm}_W)^\vee)^*(e^{\mathbb{S}}(0))$, where $0$ denotes the zero vector bundle. By \cref{lem:atiyah-dual-euler}, $(\mathrm{sm}_W)^\vee$ is still $\mathrm{sm}_W$. By \cref{prop:euler-and-smith}, $(\mathrm{sm}_W)^*(e^{\mathbb{S}}(0))$ is $e^{\mathbb{S}}(W)$. 
\end{proof}

Now we can collect our prize: we show that \cref{Smith_map_virtual_bundle,smith_homomorphism_euler_definition} are equivalent definitions of the Smith homomorphism. In other words, the Smith homomorphism as we first defined it is the same as the map taking the Poincaré dual of the Euler class, as it is often described in the literature.
\begin{corollary}
\label{two_smith_2}
Let $V\to X$ be a virtual vector bundle and $W\to X$ be a rank $r_W$ vector bundle. Choose a bordism class in $\Omega^{\mathrm{fr}}_d(X, V)$ (i.e.\ $(X, V)$-twisted framed bordism) and let $M$ be a closed manifold representative of that class. Let $[N] \in \Omega^{\mathrm{fr}}_d(M, -TM\oplus W)$ be the Atiyah-Poincaré dual of the Euler class $e^{\mathbb{S}}(W|_M)$. Then the image of $[N]$ in $\Omega^{\mathrm{fr}}_d(X, V \oplus W)$ is $\mathrm{sm}_W([M])$. 
\end{corollary}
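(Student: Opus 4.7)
The plan is to reduce the general case to \cref{lem:name-it-something} (which handles the case where the base is $M$ itself) by pushing forward along the classifying map $f\colon M\to X$.

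First I would observe that, by the definition of twisted framed bordism, the class $[M]\in\Omega^{\mathrm{fr}}_d(X,V)$ coincides with $f_*[M]_{\mathrm{tau}}$, where $[M]_{\mathrm{tau}}\in\Omega^{\mathrm{fr}}_d(M,-TM)$ is the tautological class appearing in \cref{lem:name-it-something}. This pushforward makes sense because the $(X,V)$-twisted framing on $M$ provides a stable isomorphism $-TM\simeq f^*V$ and hence a map of Thom spectra $f\colon M^{-TM}\to X^V$; under the induced map on homotopy groups, $[M]_{\mathrm{tau}}\mapsto [M]$.

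Next, I would apply \cref{lem:name-it-something} directly to $M$ equipped with the pulled-back bundle $f^*W$: this yields $(\sm_{f^*W})_*([M]_{\mathrm{tau}})=[N]$ inside $\Omega^{\mathrm{fr}}_d(M,-TM\oplus f^*W)$, because by hypothesis $[N]$ is defined as the Atiyah-Poincaré dual of $e^{\mathbb{S}}(W|_M)=e^{\mathbb{S}}(f^*W)$. The final ingredient is naturality of the Smith construction under maps of base spaces. Since $\sm_W$ is induced fiberwise by the zero section $\mathbb{S}^0\to\mathbb{S}^W$, and pullback of spherical fibrations along $f$ intertwines these zero sections, one obtains a commutative square of Thom spectra whose horizontal maps are $\sm_{f^*W}$ and $\sm_W$ and whose vertical maps are the pushforwards along $f$. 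Chasing $[M]_{\mathrm{tau}}$ around this square in the two possible orders gives $f_*[N]$ on one hand and $\sm_W([M])$ on the other, which is exactly the claimed equality.

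The main obstacle is bookkeeping: one must verify that the single twisted framing on $M$ simultaneously induces compatible vertical maps $M^{-TM}\to X^V$ and $M^{-TM\oplus f^*W}\to X^{V\oplus W}$ so that the naturality square commutes, i.e.\ that the stable isomorphism $-TM\simeq f^*V$ used on the left is the same one used on the right (with $f^*W$ added to both sides). Once the coefficient systems on the two sides are identified compatibly, commutativity of the square is immediate from the fiberwise definition of $\sm_W$, and the diagram chase finishes the proof.
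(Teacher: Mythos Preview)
Your proposal is correct and follows essentially the same approach as the paper: both proofs Thomify the classifying map $f\colon M\to X$ using the twisted framing, invoke functoriality of the Smith map to obtain the commutative square relating $\sm_{f^*W}$ over $M$ with $\sm_W$ over $X$, and then apply \cref{lem:name-it-something} together with a diagram chase. The paper's version is simply terser, omitting your explicit discussion of the bookkeeping for the compatibility of the two vertical Thomified maps.
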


\begin{proof}
    Since $M$ has a $(X, V)$-twisted framing, the map $M\to X$ Thomifies to a map $f\colon M^{-TM} \to X^V$. The Smith map is functorial, so we get a commutative square:
    \begin{equation}
        \begin{tikzcd}
            M^{-TM} \ar[r, "f"] \ar[d, "\mathrm{sm}_{W|_M}"] & X^V \ar[d, "\mathrm{sm}_W"]\\ 
            M^{-TM \oplus W} \ar[r, "f"] & X^{V \oplus W}.
        \end{tikzcd}
    \end{equation}
    Furthermore, $[M] \in  \Omega^{\mathrm{fr}}_d(X, V)$ is the $f$-pushforward of the tautological class in $\Omega^{\mathrm{fr}}_d(M, -TM)$. The result now follows from \cref{lem:name-it-something}.
\end{proof}

\begin{remark}
    This tells us that given a bordism class represented by $M$, $\mathrm{sm}_W([M])$ is represented by a manifold $N$ that is Atiyah-Poincaré dual (in the bordism homology theory) to the twisted cobordism Euler class of $M$.
\end{remark}

\begin{remark}
Sometimes, instead of working with twisted cobordism Euler classes, one can pass to simpler cohomology theories, such as twisted ordinary cohomology. For example, Hason-Komargodski-Thorngren~\cite{HKT19} use exclusively $\Z$-cohomology Euler classes in their discussion of the Smith homomorphism. In Appendix~\ref{s:eu_counter} we explain why, in the generality considered in this paper, ordinary cohomology does not suffice.

In this remark we discuss another interesting example, similar in spirit, which illustrates why cobordism is the right level of generality.\footnote{We thank an anonymous referee for suggesting this example.} Consider the low-energy limit of four-dimensional QCD with two flavors. The fields of this theory on a spin $4$-manifold $X$ include a smooth map $u\colon X\to\SU_2$. Given such a map $u$, and a point $x\in\SU_2$, the preimage $u^{-1}(x)\subset X$ is generically a finite disjoint union of circles, which we think of as the worldlines of skyrmions in the theory.

Given such a circle $\gamma\colon S^1\hookrightarrow u^{-1}(X)$, Witten~\cite{Wit83} showed how to determine whether the corresponding skyrmion is a boson or a fermion; his analysis has been revisited and extended by Freed~\cite[\S 4]{Fre08} and Lee-Ohmori-Tachikawa~\cite[\S 2.5]{LOT}. The circle $\mathrm{Im}(\gamma)$ has a spin structure induced by comparing the spin structure on $X$ and the unique spin structure on $\SU_2$, and we may consider the class $[\mathrm{Im}(\gamma)]\in\Omega_1^\Spin\cong\Z/2$; the skyrmion is a boson if this class is $0$, and a fermion if the class equals $1$. Lee-Ohmori-Tachikawa (\textit{ibid.}) show that $\widetilde\Omega_4^\Spin(\SU_2)\cong\Z/2$, and the assignment
\begin{equation}
\label{SU2_exm}
    \begin{aligned}
        \widetilde\Omega_4^\Spin(\SU_2) &\overset\cong\longrightarrow \Omega_1^\Spin\cong\Z/2\\
        (X, u) &\longmapsto [u^{-1}(x)]
    \end{aligned}
\end{equation}
is an isomorphism (where we assume $u$ and $x$ are generic so that $u^{-1}(x)$ is indeed a closed $1$-manifold).

Real vector bundles on $\SU_2$ are classifed by $[\SU_2, B\O_n]\cong \pi_3(B\O_n) = 0$ for all $n$, since $\SU_2\simeq S^3$, and therefore the map~\eqref{SU2_exm} cannot arise from a Smith homomorphism. However, it is similar in spirit: the worldline of the skyrmion is not enough to determine its statistics, and what one really needs is the spin bordism class.

\end{remark}

\section{The Smith fiber sequence}\label{math_Smithfibersequence}

In this section we extend the Smith map into a fiber sequence, which allows us to derive a long exact sequence of bordism groups and, dually, a long exact sequence of field theories.

For any vector bundle $E\to X$ of rank $r$, let $E$ also denote the classifying map $X\to B\O(r)$. Which of these two things we mean by $E$ will be clear from context.
In this section, we will write $S_X(E)$ and $D_X(E)$ for the sphere, resp.\ disk bundles of $E$, because there will be places where it will help to remember which base space we work over.

The following result is not new; see, e.g.~\cite[Remark 3.14]{KZ18}, where it is attributed to James.
\begin{theorem}
\label{the_cofiber_sequence}
    Let $V, W$ be real vector bundles over $X$. Then there is a cofiber sequence in pointed spaces:
    \begin{equation}
        S_X(W)^{p^*V} \to X^V \to X^{W \oplus V},
    \end{equation}
    where $p\colon S_X(W)\to X$ is the bundle map.
    Similarly, if $V$ is a virtual bundle, we have a (co)fiber sequence in spectra:
    \begin{equation}
    \label{the_cof_seq}
        S_X(W)^{p^*V} \to X^V \to X^{V \oplus W}.
    \end{equation}
\end{theorem}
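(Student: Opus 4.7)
The plan is to compute the cofiber of the Smith map $\sm_W\colon X^V\to X^{V\oplus W}$ fiberwise over $X$ and then assemble. By \cref{vb_thom_ABGHR} and \cref{Smith_map_virtual_bundle}, $\sm_W$ is the $X$-colimit of the pointwise zero-section map $\mathbb S^V\to \mathbb S^V\wedge \mathbb S^W$ of local systems of spectra, and since colimits in $\Sp$ commute with cofibers, the cofiber of $\sm_W$ is the $X$-colimit of the fiberwise cofibers.

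At a point $x\in X$, the fiberwise map factors as $\mathbb S^{V_x}\wedge z_x$, where $z_x\colon S^0\to \mathbb S^{W_x}$ is the zero section on the fiber. To identify $\mathrm{cofib}(z_x)$, I rotate the classical pointed-space cofiber sequence
\[ S(W_x)_+ \longrightarrow D(W_x)_+ \longrightarrow D(W_x)/S(W_x) = \mathbb S^{W_x}\]
and use that $D(W_x)$ is contractible, so $D(W_x)_+\simeq S^0$; this yields $\mathrm{cofib}(z_x)\simeq \Sigma S(W_x)_+$. Smashing with $\mathbb S^{V_x}$ gives the fiberwise cofiber of $\sm_W$ as $\Sigma(\mathbb S^{V_x}\wedge S(W_x)_+)$.

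Commuting $\Sigma$ outside the colimit reduces the theorem to the identification
\[ \colim_{x\in X}\bigl(\mathbb S^{V_x}\wedge S(W_x)_+\bigr) \simeq S_X(W)^{p^*V},\]
where $p\colon S_X(W)\to X$ is the bundle projection. This follows from a Fubini-type transitivity of colimits: the local system $x\mapsto \mathbb S^{V_x}\wedge S(W_x)_+$ on $X$ is the left Kan extension $p_!(\mathbb S^{p^*V})$ along $p$, so its $X$-colimit agrees with the $S_X(W)$-colimit of $\mathbb S^{p^*V}$, which is $S_X(W)^{p^*V}$ by \cref{vb_thom_ABGHR}. Rotating the resulting cofiber sequence $X^V\to X^{V\oplus W}\to \Sigma S_X(W)^{p^*V}$ then gives the desired fiber sequence. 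The space-level statement follows by running the same argument in $\cat{Top}_*$, or by applying $\Sigma^{\infty}_+$ and invoking \cref{thom_from_space}.

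The main obstacle is the transitivity of colimits step: one must verify, in the $\infty$-categorical setting, that for the fibration $p\colon S_X(W)\to X$ and the local system $\mathbb S^{p^*V}$ on $S_X(W)$, the pointwise value of $p_!\mathbb S^{p^*V}$ at $x$ is $\mathbb S^{V_x}\wedge S(W_x)_+$. This is a standard consequence of the pointwise formula for left Kan extensions combined with the fact that the fiber of $p$ over $x$ is homotopy equivalent to $S(W_x)$, but it is the only step requiring machinery beyond elementary manipulations of cofiber sequences.
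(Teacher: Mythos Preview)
Your proposal is correct and takes essentially the same approach as the paper: both arguments work fiberwise with the classical cofiber sequence $S(W_x)_+\to D(W_x)_+\simeq S^0\to S^{W_x}$, smash with $S^{V_x}$, and then take the colimit over $X$, identifying the sphere-bundle term via the same transitivity-of-colimits step you describe. The only cosmetic differences are that the paper keeps the sequence in fiber-first form rather than computing the cofiber and rotating, and phrases the Fubini step slightly less categorically; also, your remark that the space-level statement follows ``by applying $\Sigma^{\infty}_+$'' goes in the wrong direction, but your alternative of rerunning the argument in $\cat{Top}_*$ is exactly what the paper does.
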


\begin{proof}
    We will do the case where $V$ is an actual vector bundle; the virtual bundle case is analogous. Given an $r$-dimensional vector space $W$, we have a cofiber sequence in pointed spaces:
    \begin{equation}
    \label{sphere_cof_step}
        S(W)_+ \to D(W)_+ \simeq S^0 \to S^W.
    \end{equation}
    Now since $\mathrm{Aut}(W) \cong \O(r)$ acts on $W$, we can upgrade~\eqref{sphere_cof_step} to a cofiber sequence of spaces with $\O(r)$-actions; equivalently, \eqref{sphere_cof_step} is a cofiber sequence of functors $B\O(r) \to \cat{Top}_*$. Pulling back to $X$ via the map $X\to B\O(r)$ classifying $W$, we get a cofiber sequence of functors $X\to\cat{Top}_*$. Now smash with $S^V$: we get a cofiber sequence of the form 
    \begin{equation}
    \label{third_step_cofiber}
        S(W)_+ \wedge S^V \to D(W)_+ \wedge  S^V \to  S^{W} \wedge  S^V \simeq S^{V \oplus W}.
    \end{equation}
    This cofiber sequence is in the category of functors $X\to\cat{Top}_*$.
    
    Since taking the colimit over $X$ preserves cofiber sequences, it is sufficient to show that the colimit of~\eqref{third_step_cofiber} over $X$ is
    \begin{equation}
        S_X(W)^{p^*V} \longrightarrow X^V \longrightarrow X^{V \oplus W}.
    \end{equation}
For the last term $S^{V \oplus W}$ in~\eqref{third_step_cofiber}, this follows directly from the definition of the Thom spectrum (\cref{vb_thom_ABGHR}).

For $\colim_X(D(W)_+ \wedge  S^V)$, note that $D(W)_+ \simeq S^0$, so $D(W)_+ \wedge  S^V \simeq  S^V$, so \cref{vb_thom_ABGHR} once again tells us the colimit is $X^V$. It also follows that the map $X^V\to X^{V\oplus W}$ on colimits is the Smith map.

Lastly, the colimit of $S(W)_+$ over $X$ is the associated sphere bundle $S_X(W)$. It follows that the colimit of $S(W)_+ \wedge  S^V$ over $X$ is equivalent to the colimit of (the pullback of) $ S^V$ over $S_X(W)$, which is $S_X(W)^{p^*V}$. 
\end{proof}
\begin{remark}
    Everything here is functorial, so given a map $Y \to X$, we get maps between cofiber sequences and therefore a map of long exact sequences of homotopy groups.
\end{remark}
\begin{corollary}
\label{bordism_LES_cor}
    Applying $\pi_*$ to the fiber sequence, we get a long exact sequence of bordism groups:
    \begin{equation}
    \label{smith_the_LES}
        \cdots \longrightarrow \Omega^{\xi}_k(S_X(W)^{p^*V}) \longrightarrow \Omega^{\xi}_k(X^V) \longrightarrow \Omega^{\xi}_{k-r}(X^{V + W -r}) \longrightarrow \Omega^{\xi}_{k-1}(S_X(W)^{p^*V}) \longrightarrow \cdots
    \end{equation}
\end{corollary}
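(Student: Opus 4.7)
The plan is to apply the $\xi$-bordism functor, i.e.\ smashing with $\mathit{MT\xi}$ and taking stable homotopy, to the cofiber sequence of spectra
\begin{equation}
    S_X(W)^V \longrightarrow X^V \longrightarrow X^{V\oplus W}
\end{equation}
produced by \cref{the_cofiber_sequence}. Since smashing with any fixed spectrum preserves (co)fiber sequences, smashing with $\mathit{MT\xi}$ yields a fiber sequence
\begin{equation}
    \mathit{MT\xi}\wedge S_X(W)^V \longrightarrow \mathit{MT\xi}\wedge X^V \longrightarrow \mathit{MT\xi}\wedge X^{V\oplus W}
\end{equation}
in spectra. Taking homotopy groups then produces a long exact sequence, as every (co)fiber sequence of spectra does.

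Next I would reinterpret each term as a bordism group. By definition, $\Omega^\xi_k(E) = \pi_k(\mathit{MT\xi}\wedge E)$ for a spectrum $E$, so the first two terms directly read as $\Omega^\xi_k(S_X(W)^V)$ and $\Omega^\xi_k(X^V)$. For the third term, I would absorb a suspension: since $W$ has rank $r = r_W$, there is an equivalence $X^{V\oplus W}\simeq \Sigma^{r} X^{V\oplus W - r}$, so
\begin{equation}
    \pi_k\paren*{\mathit{MT\xi}\wedge X^{V\oplus W}} \cong \pi_{k-r}\paren*{\mathit{MT\xi}\wedge X^{V\oplus W - r}} = \Omega^\xi_{k-r}(X^{V\oplus W - r}),
\end{equation}
which accounts for the dimension shift in the displayed long exact sequence. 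The connecting map $\Omega^\xi_{k-r}(X^{V+W-r})\to\Omega^\xi_{k-1}(S_X(W)^V)$ is then the usual boundary map of a fiber sequence, after transporting across this suspension identification.

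There is essentially no obstacle here beyond careful bookkeeping; the only thing worth double-checking is that the middle map agrees, after the suspension identification, with the Smith map $\sm_W$ as defined earlier. This follows because the equivalence $X^{V\oplus W}\simeq \Sigma^{r} X^{V\oplus W - r}$ is the canonical one induced by trivializing a trivial rank-$r$ summand, and the map $X^V\to X^{V\oplus W}$ produced by \cref{the_cofiber_sequence} was already identified in the proof of that theorem (and in \cref{Smith_map_virtual_bundle}) as the Smith map. The virtual bundle case of \cref{the_cofiber_sequence} already allows $V$ to be any virtual bundle (in particular one of rank zero), so no further hypotheses are required.
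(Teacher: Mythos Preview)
Your proof is correct and is exactly the argument the paper has in mind: the paper states this as an immediate corollary without further proof, and your writeup simply unpacks the standard steps (smash the cofiber sequence of \cref{the_cofiber_sequence} with $\mathit{MT\xi}$, take homotopy groups, and absorb the rank-$r$ suspension to get the degree shift). Nothing more is needed.
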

Though written as bordism groups of Thom spectra, these are also twisted $\xi$-bordism groups thanks to \cref{what_is_twisted_bordism}.
We work through an explicit example long exact sequence on the level of manifold generators in \cref{appendix_bordism_LES}.

\begin{remark}\label{rem:G-transitive-on-sphere}
Suppose $X = BG$ for a compact Lie group $G$ and that $W\to X$ is the vector bundle associated to an orthogonal representation of $G$ such that $G$ acts transitively on the unit sphere in $W$. Then the sphere bundle has a particularly simple form: if $G_v$ is the stabilizer subgroup for a point $v\in S(W)$, then the bundle map $p\colon S_X(W)\to X$ is homotopy equivalent to the map $BG_v\to BG$ induced by the inclusion $G_v\hookrightarrow G$. Thus, as we use in \cite{PhysSmith} and allude to in \cref{LESIFTs}, the obstruction for an invertible field theory to be in the image of the Anderson-dualized Smith homomorphism is its restriction from manifolds with $G$-bundles (and some sort of tangential structure) to manifolds with $G_v$-bundles (and the corresponding tangential structure).
\end{remark}
\begin{remark}[Smith and Gysin long exact sequences]
\label{smith_is_gysin}
The reader looking at the type signatures of~\eqref{smith_the_LES} and~\eqref{math_SBLES_2} might notice that they resemble Gysin sequences: long exact sequences involving (co)homology groups of the base space and total space of a sphere bundle, especially because one of the maps can be interpreted as a product with an Euler class. And indeed, if one takes ordinary homology or cohomology, the Smith long exact sequence becomes the Gysin long exact sequence, as can be verified by comparing the three maps in the long exact sequence.

Thus, the Smith long exact sequence can be thought of as the generalization of the Gysin long exact sequence to arbitrary vector bundle twists of generalized cohomology theories.
\end{remark}
\subsection{Comparison with Conner-Floyd's long exact sequence}
Suppose $\xi_1\colon B_1\to B\O$ and $\xi_2\colon B_2\to B\O$ are tangential structures and $\eta\colon \xi_1\to\xi_2$ is a map of tangential structures, i.e.\ a map $B_1\to B_2$ such that $\xi_2\circ\eta = \xi_1$. The map $\eta$ induces a map of Thom spectra $\mathit{MT\xi}_1\to\mathit{MT\xi}_2$ and hence also maps of bordism groups; we will also denote both of these maps by $\eta$.

Conner-Floyd~\cite[\S 16]{CF66a} concoct a long exact sequence
\begin{equation}\label{CFLES}
    \dotsb\longrightarrow
    \Omega_k^{\xi_1} \overset\eta\longrightarrow
    \Omega_k^{\xi_2} \overset j\longrightarrow
    \Omega_k^{\xi_2/\xi_1} \overset\partial\longrightarrow
    \Omega_{k-1}^{\xi_1} \longrightarrow
    \dotsb
\end{equation}
where $\Omega_k^{\xi_2/\xi_1}$ is the bordism group of $k$-dimensional $\xi_2/\xi_1$-manifolds (see below).
\begin{definition}
A \term{$\xi_2/\xi_1$-manifold} is a compact manifold $M$ equipped with the data of
\begin{enumerate}
    \item a $\xi_2$-structure $\mathfrak x$ on $M$,
    \item a $\xi_1$-structure $\mathfrak x_\partial$ on $\partial M$, and
    \item an identification of $\xi_2$-structures $\eta(\mathfrak x_\partial)\overset\simeq\to \mathfrak x|_{\partial M}$.
\end{enumerate}
\end{definition}
Thus a $\xi_2/\xi_1$-structure makes precise the notion of a $\xi_2$-manifold equipped with a $\xi_1$-structured boundary.

Conner-Floyd (\textit{ibid.}) defined a notion of bordism for $\xi_2/\xi_1$-manifolds, and showed that the corresponding bordism groups $\Omega_k^{\xi_2/\xi_1}$ fit into the long exact sequence~\eqref{CFLES},\footnote{Conner-Floyd do not work in this level of generality, only looking at a few tangential structures; nevertheless, their arguments go through in general. Some other examples with more tangential structures appear in~\cite{Sto68, Ale75, Mit75, RST77, Sto85, Lau00, Bun15, BHS20, TY23b, TY23a, Deb23, JFY24}.} where $j$ regards a closed $\xi_2$-manifold as having an empty boundary and $\partial$ takes the boundary of a $\xi_2/\xi_1$-manifold. Like with the Smith long exact sequence, one can verify by hand that~\eqref{CFLES} is exact at each entry.

Where this story meets ours is that Smith long exact sequences are special cases of~\eqref{CFLES}. Specifically, given the data $(\xi, X, V, W)$ that we used to construct the Smith long exact sequence in \cref{bordism_LES_cor}, let $\xi_1$ be $(S(W), V)$-twisted $\xi$-structure, $\xi_2$ be $(X, V)$-twisted $\xi$-structure, and let $\eta$ be induced from the bundle map $S(W)\to X$. Then two-thirds of the Conner-Floyd and Smith long exact sequences coincide, so by the five lemma all three must (homotopically, they are both the homotopy groups of equivalent cofiber sequences of spectra):
% https://q.uiver.app/#q=WzAsMTIsWzAsMCwiXFxkb3RzYiJdLFsxLDAsIlxcT21lZ2Ffa15cXHhpKFMoVyleVikiXSxbMiwwLCJcXE9tZWdhX2teXFx4aShYXlYpIl0sWzMsMCwiXFxPbWVnYV97ay1yfV5cXHhpKFhee1YrVy1yfSkiXSxbNCwwLCJcXE9tZWdhX3trLTF9XlxceGkoUyhXKV5WKSJdLFs1LDAsIlxcZG90c2IiXSxbMSwxLCJcXE9tZWdhX2tee1xceGlfMX0iXSxbMiwxLCJcXE9tZWdhX2tee1xceGlfMn0iXSxbMywxLCJcXE9tZWdhX2tee1xceGlfMi9cXHhpXzF9Il0sWzQsMSwiXFxPbWVnYV9rXntcXHhpXzF9Il0sWzAsMSwiXFxkb3RzYiJdLFs1LDEsIlxcZG90c2IiXSxbMCwxXSxbMSwyXSxbMiwzXSxbMyw0XSxbNCw1XSxbMTAsNl0sWzYsNywiXFxldGEiXSxbNyw4LCJqIl0sWzgsOSwiXFxwYXJ0aWFsIl0sWzEsNiwiIiwxLHsibGV2ZWwiOjIsInN0eWxlIjp7ImhlYWQiOnsibmFtZSI6Im5vbmUifX19XSxbMiw3LCIiLDEseyJsZXZlbCI6Miwic3R5bGUiOnsiaGVhZCI6eyJuYW1lIjoibm9uZSJ9fX1dLFszLDgsIlxcdmFycGhpIiwwLHsiY29sb3VyIjpbMCw2MCw2MF19LFswLDYwLDYwLDFdXSxbNCw5LCIiLDEseyJsZXZlbCI6Miwic3R5bGUiOnsiaGVhZCI6eyJuYW1lIjoibm9uZSJ9fX1dLFs5LDExXV0=
\begin{equation}\begin{tikzcd}
	\dotsb & {\Omega_k^\xi(S(W)^V)} & {\Omega_k^\xi(X^V)} & {\Omega_{k-r}^\xi(X^{V+W-r})} & {\Omega_{k-1}^\xi(S(W)^V)} & \dotsb \\
	\dotsb & {\Omega_k^{\xi_1}} & {\Omega_k^{\xi_2}} & {\Omega_k^{\xi_2/\xi_1}} & {\Omega_k^{\xi_1}} & \dotsb
	\arrow[from=1-1, to=1-2]
	\arrow[from=1-2, to=1-3]
	\arrow[from=1-3, to=1-4]
	\arrow[from=1-4, to=1-5]
	\arrow[from=1-5, to=1-6]
	\arrow[from=2-1, to=2-2]
	\arrow["\eta", from=2-2, to=2-3]
	\arrow["j", from=2-3, to=2-4]
	\arrow["\partial", from=2-4, to=2-5]
	\arrow[Rightarrow, no head, from=1-2, to=2-2]
	\arrow[Rightarrow, no head, from=1-3, to=2-3]
	\arrow["\varphi", color={rgb,255:red,214;green,92;blue,92}, from=1-4, to=2-4]
	\arrow[Rightarrow, no head, from=1-5, to=2-5]
	\arrow[from=2-5, to=2-6]
\end{tikzcd}\end{equation}
That is, the map $\varphi$ from $(X, V\oplus W)$-twisted $\xi$-bordism to $\xi_2/\xi_1$-bordism (i.e.\ $(X, V)$-twisted $\xi$-manifolds with an $(S(W), V)$-twisted $\xi$-structure on the boundary) must be an equivalence. But we can do better: we will provide a geometric reason for this equivalence.
\begin{itemize}
    \item The map $\varphi$ is: given an $(X, V\oplus W)$-twisted manifold $M$, with structure map $f\colon M\to X$, $\varphi(M)$ is the disk bundle of $f^*W$; the boundary $S(f^*W)$ has a canonical reduction of structure across $S(W)\to X$, which is to say the map $f\colon D(f^*W)\to M\to X$, when pulled back to $S(f^*W)$, canonically lifts across $S(W)\to X$. Therefore the output of $\varphi$ is indeed a $\xi_2/\xi_1$-manifold.
    \item Going backwards, given a $\xi_2/\xi_1$-manifold $M$, it is possible to construct a bordism of $\xi_2/\xi_1$-manifolds from $M$ to a tubular neighborhood of a manifold representative of the Poincaré dual of the Euler class of $W$. The $\xi_1$-structure on the boundary means this Poincaré dual does not intersect $\partial M$; now we have replaced $M$ with a disk bundle, so we can directly invert $\varphi$ by restricting to the zero section.
\end{itemize}
The reader can then check that $j$ and $\partial$ coincide with their analogues in the Smith long exact sequence.

\section{Periodicity of twists and shearing}
\label{periodicity_and_shearing}
The goal of this section is to provide tools for working with twists of tangential structures. We are interested in collections of similar twists over the same base space; this provides an organizing principle for different Smith homomorphisms that we will use many times in the next section.
\subsection{Families of Smith homomorphisms}
\begin{definition}
\label{smfam}
Fix a space $X$, a virtual vector bundle $V\to X$ of rank $r_V$, a vector bundle $W\to X$ of rank $r_W$, and a tangential structure $\xi$. The \term{family of Smith homomorphisms} associated to this data is the set of Smith homomorphisms
\begin{equation}
\label{Smith_fam}
    \sm_W\colon \Omega^\xi_n(X^{V-r_V + k(W-r_W)}) \longrightarrow \Omega^\xi_{n-r_W}(X^{V - r_V + (k+1)(W-r_W)})
\end{equation}
for $k\in\Z$, i.e.\ the Smith homomorphisms from $(X, V \oplus kW)$-twisted $\xi$-bordism to $(X, V\oplus (k+1)W)$-twisted $\xi$-bordism.

If there is some $\ell\in\Z$ and an identification of $(X, V\oplus kW)$-twisted $\xi$-structures with $(X, V\oplus (k+\ell)W)$-twisted tangential structures for all $k$ that commutes with the Smith homomorphisms~\eqref{Smith_fam}, we say this Smith family is \term{periodic} with period the smallest positive such $\ell$.
\end{definition}
This definition may seem too specific to be very applicable, but we will soon see many examples of periodic families.

The main new
result in this section is \cref{cofiber_periodicity}, providing a way to calculate the periodicity of a family of Smith
homomorphisms.
We also review the theory of shearing in and around \cref{shearing_lem}, which is a convenient way to split the Thom spectra for a wide class of twisted bordism theories, and is an essential step in identifying the terms in Smith long exact
sequences. Our perspective on shearing follows~\cite[\S 1]{DY23}, so see there for some more details; see also~\cite{FH16, Bea17, Ste21, BLM23, DDHM22} for additional approaches.
\begin{definition}
Let $\xi\colon B\to B\O$ be a tangential structure. \term{Two-out-of-three data} for $\xi$ is the data of:
\begin{itemize}
	\item for each pair of $\xi$-structured virtual vector bundles $V,W\to X$, a natural $\xi$-structure on
	$V\oplus W$; and
	\item for each $\xi$-structured virtual vector bundle $V\to X$, a natural $\xi$-structure on $-V\to X$.
\end{itemize}
\end{definition}
The reason for this name is that, given this data, a $\xi$-structure on any two of $V$, $W$, and $V\oplus W$
induces a $\xi$-structure on the third. 
%Unfortunately, this is sometimes called a ``two-out-of-three property.''
\begin{example}
The tangential structures $\O$, $\SO$, $\Spin^c$, $\Spin$, $\String$, $\U$, $\SU$, and $\mathrm{Sp}$ all have
two-out-of-three data. $\Pin^{\pm}$ and $\Pin^c$ do not.
\end{example}
If $M$ and $N$ are manifolds, $T(M\times N)\cong p_1^*(TM)\oplus p_2^*(TN)$, where $p_1$ and $p_2$ are the
projections of $M\times N$ onto $M$, resp.\ $N$, so two-out-of-three data induces a ring structure on
$\Omega_*^\xi$ given by the direct product of manifolds. More abstractly, this data makes $B$ into a grouplike
$E_\infty$-space and $\xi$ into an $E_\infty$-map, where $B\O$ has the direct sum $E_\infty$-structure. This
implies by work of Lewis~\cite[Theorem IX.7.1]{LMSM86} (see also \cite{May77, ABG18}) that $\mathit{MT\xi}$ is an $E_\infty$-ring spectrum.

For $R$ an $E_\infty$-ring spectrum, May~\cite[\S III.2]{May77} defines a grouplike $E_\infty$-space $\GL_1(R)$,
and Ando-Blumberg-Gepner-Hopkins-Rezk~\cite{ABGHR14a, ABGHR14b} associate to a map $f\colon X\to B\GL_1(R)$, which
we call a \term{twist} of $R$, a Thom spectrum $Mf\in\cat{Mod}_R$. The $f$-twisted $R$-homology groups of $X$ are
by definition the homotopy groups of $Mf$~\cite[Definition 2.27]{ABGHR14a}. Homotopy-equivalent twists induce
equivalent Thom spectra. All of this generalizes our discussion around \cref{vb_thom_ABGHR}, for which $R = \mathbb S$.
\begin{example}[Vector bundle twists]
\label{vb_thom}
We have been using (rank-zero virtual) vector bundles to define twists of bordism theories, and these two notions
of twist are compatible: rank-zero virtual vector bundles $V\to X$ are classified by maps $f_V\colon X\to B\O$, and
the $J$-homomorphism is a map $B\O\to B\GL_1(\mathbb S)$; then, if $\xi$ is a tangential structure with
two-out-of-three data, the unit map $e\colon \mathbb S\to\mathit{MT\xi}$ induces a map $e\colon B\GL_1(\mathbb S)\to
B\GL_1(\mathit{MT\xi})$. The Thom spectrum for $(X, V)$-twisted $\xi$-bordism, as we characterized it in \cref{what_is_twisted_bordism},
is naturally equivalent to the Thom spectrum $M(e\circ J\circ f_V)$ of the map
\begin{equation}
\label{vb_factor}
	X\overset{f_V}{\longrightarrow} B\mathrm O\overset{J}{\longrightarrow} B\GL_1(\mathbb S)\overset{e}{\longrightarrow}
	B\GL_1(\mathit{MT\xi}).
\end{equation}
This is a combination of theorems of Lewis~\cite[Chapter IX]{LMSM86} and
Ando-Blumberg-Gepner-Hopkins-Rezk (see~\cite[Corollary 3.24]{ABGHR14a} and~\cite[\S 1.2]{ABGHR14b}).
\end{example}
\begin{theorem}[{May-Quinn-Ray~\cite[Lemma IV.2.6]{May77}}]
\label{beardsley}
There is a canonical null-homotopy of the map
\begin{equation}
	e\circ J\circ \xi\colon B\to B\GL_1(\mathit{MT\xi}),
\end{equation}
and therefore $e\circ J$ factors through the cofiber $B\O/B$, %\footnote{Beardsley's proof is more abstract, more general, and
%more powerful than this statement: see~\cite[Lemma 1.13]{DY23} for a simpler proof of just this part of Beardsley's
%theorem.}
and in fact through $B\GL_1(\mathbb S)/B$.
\end{theorem}
In other words, the homotopy type of the Thom spectrum for $(X, V)$-twisted $\xi$-bordism depends only on
the image of $V$ in $B\O/B$. And the key slogan is that the orders of elements in $[X, B\O/B]$ control the
periodicity of families of Smith homomorphisms for twisted $\xi$-bordism; the group structure on $[X, B\O/B]$ uses
the fact that $B\O/B$ is the cofiber of a map of grouplike $E_\infty$-spaces, and $B\O$ is connected, so $B\O/B$ is also a grouplike
$E_\infty$-space (see~\cite[\S 1.2.2]{DY23}). Thus homotopy classes of maps into $B\O/B$ naturally
form abelian groups.
\begin{definition}[{Bhattacharya-Chatham~\cite[Definition 2.9]{BC22}}]
The \term{$\mathit{MT\xi}$-orientation order} of a virtual vector bundle $V\to X$, written $\Theta(V, \mathit{MT\xi})$, is the smallest positive integer $k$ such that $V^{\oplus k}$ is $\mathit{MT\xi}$-oriented, or infinity if no such $k$ exists.
\end{definition}
Equivalently, $\Theta(V, \mathit{MT\xi})$ is the order of the classifying map of $e\circ J\circ f_V\colon X\to B\GL_1(\mathit{MT\xi})$, where $f_V\colon X\to B\O$ is the classifying map of $V$. By \cref{beardsley}, $e\circ J\circ f_V$ factors through $[X, B\O/B]$, so $\Theta(V, \mathit{MT\xi})$ divides the exponent of $[X, B\O/B]$. We will use this fact below to make quick estimates of orientation orders.
\begin{proposition}
\label{cofiber_periodicity}
Let $V\to X$ be a vector bundle.
If $\epsilon\coloneqq \Theta(V, \mathit{MT\xi})$ is finite, the Smith homomorphism family of $(X, kV)$-twisted $\xi$-bordism is $\epsilon$-periodic.
\end{proposition}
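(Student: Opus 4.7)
The plan is to apply the framework of Ando--Blumberg--Gepner--Hopkins--Rezk (summarized in \cref{vb_thom} and \cref{beardsley}) to reduce the statement to a computation in the abelian group of twists. By those results, the $(X, kV)$-twisted $\xi$-bordism Thom spectrum is $M\phi_k$, where $\phi_k\colon X \to B\GL_1(\mathit{MT\xi})$ is the composite in~\eqref{vb_factor} applied to the rank-zero virtual bundle $k(V - r_V)$, and the homotopy type of $M\phi_k$ depends only on the class $[\phi_k] \in [X, B\GL_1(\mathit{MT\xi})]$.

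The first step is to check the identity $[\phi_k] = k\cdot[\phi_1]$ in this abelian group. This is formal: classifying maps of direct sums add in the $E_\infty$-structure on $B\O$, and both $J$ and the unit $e$ appearing in~\eqref{vb_factor} are $E_\infty$-maps, so they preserve the induced group structure on homotopy classes of maps out of $X$. The definition of $\Theta(V, \mathit{MT\xi}) = \epsilon$ is then precisely the statement that the order of $[\phi_1]$ in this group is $\epsilon$. Consequently $[\phi_{k+\epsilon}] = [\phi_k]$ for every $k$, giving equivalences $M\phi_{k+\epsilon} \simeq M\phi_k$ and hence the identification of $(X, kV)$- with $(X, (k+\epsilon)V)$-twisted $\xi$-structures required by \cref{smfam}.

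The remaining task---and the real content---is compatibility with the Smith maps $\sm_V\colon M\phi_k \to M\phi_{k+1}$ in the family. The cleanest route is to invoke \cref{prop:smith-is-cap-with-euler}, which characterizes $\sm_V$ as cap product with the cobordism Euler class $e^{\mathit{MT\xi}}(V)$. The periodicity equivalence $M\phi_{k+\epsilon} \simeq M\phi_k$ is induced by the Thom isomorphism associated to the chosen $\mathit{MT\xi}$-orientation of $\epsilon V$, so it is $\mathit{MT\xi}$-module linear; since capping with $e^{\mathit{MT\xi}}(V)$ is also $\mathit{MT\xi}$-linear, the square
\begin{equation*}
\begin{tikzcd}
M\phi_k \ar[r, "\sm_V"] \ar[d, "\simeq"'] & M\phi_{k+1} \ar[d, "\simeq"] \\
M\phi_{k+\epsilon} \ar[r, "\sm_V"'] & M\phi_{k+\epsilon+1}
\end{tikzcd}
\end{equation*}
commutes. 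The main obstacle is making this compatibility rigorous at the spectrum level within the Ando--Blumberg--Gepner--Hopkins--Rezk formalism; this amounts to verifying that the canonical null-homotopy of $\epsilon\cdot\phi_1$ supplied by the $\mathit{MT\xi}$-orientation behaves naturally under adding $\phi_1$, which follows from the $E_\infty$-structure on $B\GL_1(\mathit{MT\xi})$.
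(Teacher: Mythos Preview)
Your argument is correct and follows the paper's approach: both reduce to the observation that the $(X,kV)$-twisted $\xi$-bordism Thom spectrum depends only on the class of $kV$ in the abelian group of twists, where by definition $\epsilon$ is the order of $V$. You are in fact more careful than the paper on two points. First, you work directly in $[X, B\GL_1(\mathit{MT\xi})]$, which is the right group since $\epsilon$ is defined there; the paper's proof phrases things in $[X, B\O/B]$, where $\epsilon$ need only \emph{divide} the order of the image of $V$, so the claimed equality $(k+\epsilon)\overline{f_V}=k\overline{f_V}$ there is not literally justified. Second, you address compatibility of the periodicity equivalences with the Smith maps, which \cref{smfam} requires but the paper's proof omits; your Euler-class argument for this is sound, and one could alternatively argue at the level of local systems of $\mathit{MT\xi}$-modules over $X$, where the Smith map tensors with the zero section $\underline{\mathcal R}\to\mathcal R^V$ while the periodicity equivalence tensors with the orientation $\mathcal R^{\epsilon V}\simeq\Sigma^{\epsilon r_V}\underline{\mathcal R}$, and these manifestly commute.
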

This bound is not sharp, as we discuss in \S\ref{not_sharp}.
\begin{proof}
The image
$\overline{f_V}$ of the classifying map $f_V\colon X\to B\O$ in $[X, B\O/B]$ satisfies $(k+\epsilon)\overline{f_V}
= k\overline{f_V}$. Since the homotopy type of the Thom spectrum for $(X, W)$-twisted $\xi$-bordism only depends on
$\overline{f_W}\in[X, B\O/B]$, this implies that the notions of $(X, kV)$-twisted $\xi$-bordism and $(X,
(k+\epsilon)V)$-twisted spin bordism coincide, so the Smith family $\set{(X, kV): k\in\Z}$ is $\epsilon$-periodic.
\end{proof}
\subsection{Examples of periodic Smith families}
Though \cref{cofiber_periodicity} seems abstract, it lends itself readily to examples.
\begin{example}[Unoriented bordism families are $1$-periodic]
\label{unoriented_periodicity}
\Cref{cofiber_periodicity} implies that when $\xi = \mathrm{id}\colon B\O\to B\O$, the periodicity of a Smith family of
$(X, kV)$-twisted unoriented bordism divides the exponent of $[X, B\O/B\O] = 0$. In other words, all Smith families
of twisted unoriented bordism are $1$-periodic.

We will see some examples of Smith families for unoriented bordism in \cref{Z2_MO,cpx_triv_exms,quater_exm}.
\end{example}
\begin{example}[Oriented bordism families are $2$-periodic]
\label{periodicity_of_SO}
Because $B\SO$ is the fiber of $w_1\colon B\O\to K(\Z/2, 1)$, and the Whitney sum formula implies $w_1$ is a map of
$E_\infty$-spaces, the cofiber $B\O/B\SO$ is equivalent to $K(\Z/2, 1)$ as grouplike $E_\infty$-spaces. Thus for
all spaces $X$, $[X, B\O/B\SO]$ is annihilated by $2$, so all Smith families for twisted oriented bordism are
$2$-periodic (or $1$-periodic).

We will see some examples of Smith families for oriented bordism in \cref{SO_Z2_exm,Wall_exm,cpx_triv_exms,another_Wall_exm,quater_exm}.
\end{example}
\begin{example}[Complex and \spinc bordism families are $2$-periodic]
\label{MU_MSpinc}
If $V$ is a real vector bundle, then $V\oplus V$ has a canonical complex structure (think of this bundle as
$V\oplus iV$), and therefore also a canonical \spinc structure. Therefore for any map $f\colon X\to B\O$, $2f$
lifts to $B\U$ and to $B\Spin^c$. Therefore the image of the map $[X, B\O]\to [X, B\O/B\U]$ has exponent $2$ (and
likewise for $\Spin^c$), so by \cref{cofiber_periodicity} all Smith families of complex and \spinc bordism are at
most $2$-periodic.

For examples of Smith families for \spinc bordism, see \cref{spinc_Z2_exm,cpx_triv_exms,quater_exm} and Footnote~\ref{spinu_footnote}.
\end{example}
\begin{example}[Spin bordism families are $4$-periodic]
\label{periodicity_of_spin}
$B\O/B\Spin$ is not equivalent to a product of Eilenberg-Mac Lane spaces even as an $E_1$-space~\cite[Lemma
1.37]{DY23}, so we cannot reuse the strategy of~\eqref{periodicity_of_SO}. However, there is a cofiber sequence of
grouplike $E_\infty$-spaces (heuristically, an extension of abelian $\infty$-groups)~\cite[\S 1.2.3]{DY23}
\begin{equation}
\label{non_split_Spin}
	K(\Z/2, 2)\longrightarrow B\O/B\Spin \longrightarrow K(\Z/2, 1),
\end{equation}
inducing a long exact sequence on $[X, \bl]$. Since $[X, K(\Z/2, 2)]$ and $[X, K(\Z/2, 1)]$ both have exponent at
most $2$ for any $X$, exactness implies $[X, B\O/B\Spin]$ has exponent at most $4$. Thus using
\cref{cofiber_periodicity} we conclude that all twisted spin bordism Smith families are at most $4$-periodic; in
fact, \cref{spin_4periodic} has period exactly $4$, which implies~\eqref{non_split_Spin} does not split. One could
also argue $4$-periodicity similarly to \cref{MU_MSpinc}.

If we restrict to oriented vector bundles, we can do better, as periodicity is controlled by maps into
$B\SO/B\Spin\simeq K(\Z/2, 2)$ (the argument is similar to $B\O/B\SO\simeq K(\Z/2, 1)$ from
\cref{periodicity_of_SO}). Therefore we conclude that twisted spin Smith families using an oriented vector bundle
are $2$-periodic.

We will discuss several examples of $1$-, $2$-, and $4$-periodic Smith families for spin bordism in \cref{spin_4periodic,spinc_spin_Smith,Zk_spin,another_Wall_exm,quater_exm,spinh_exm}.
\end{example}
\begin{remark}
Periodicity for spin bordism also implies periodicity for $\ko$ and $\KO$. Bhattacharya-Chatham~\cite[Main Theorem 1.5]{BC22} generalize this periodicity of $\KO$-orientability to higher real $K$-theories $\mathit{EO}_\Gamma$.
\end{remark}
\begin{example}[Families of twisted string structures]
\label{string_not_periodic}
The space $B\Spin$ is $3$-connected, with $\pi_4(B\Spin)\cong\Z$; if one kills this homotopy group by taking the $4$-connected cover, one gets a space $B\String$, and the corresponding tangential structure is called a \term{string structure}~\cite[Definition 5.0.3]{ST04} (see also~\cite[\S 1]{Gia71}). The generator of $H^4(B\Spin;\Z)$ is not the first Pontrjagin class $p_1$, but rather is a class $\lambda$ with $2\lambda = p_1$~\cite[Theorem 1.2]{Tho62}. Thus a string structure on a spin vector bundle is equivalent data to a trivialization of $\lambda$.

As grouplike $E_\infty$-spaces, $B\O/B\String$ is an extension of $B\O/B\Spin$ by $B\Spin/B\String\simeq K(\Z, 4)$
(see~\cite[\S 1.2.4]{DY23}); since $B\O/B\Spin$ is itself an extension of $K(\Z/2, 1)$ by $K(\Z/2, 2)$, if $X$ is a
$3$-connected space, $[X, B\O/B\String]\cong H^4(X;\Z)$. Thus for a general space $X$, \cref{cofiber_periodicity} provides no information on Smith families for twisted
string bordism: it reports that the period is at most infinity. We will nevertheless prove in \cref{twstr_cor} that all twisted string Smith families have finite period, though our proof does not provide an effective computation of the period.

In special cases, though, \cref{cofiber_periodicity} allows us to provide sharper bounds: for example, because $H^*(B\Z/2;\Z)$ is
$2$-torsion in positive degrees and $[B\Z/2, B\O/B\Spin]$ has exponent $4$, the long exact sequence associated to
the cofiber sequence $K(\Z, 4)\to B\O/B\String\to B\O/B\Spin$ implies $[B\Z/2, B\O/B\String]$ has exponent at most
$8$, implying that all Smith families of $(B\Z/2, V)$-twisted string bordism are at most $8$-periodic; an
$8$-periodic example appears in~\cref{string_Z2}.
\end{example}
\subsection{Examples of twisted bordism}
In this subsection, we discuss how to use the perspective we have been developing to concretely identify examples of twists of
$\xi$-bordism for the tangential structures $\SO$, $\Spin^c$, and $\Spin$.
\begin{lemma}[{Shearing~\cite[\S 1.2]{ABGHR14b}}]
\label{shearing_lem}
If a twist $f\colon X\to B\GL_1(M\xi)$ factors through a map $g_V\colon X\to B\O$ classifying a rank-zero virtual
vector bundle $V\to X$ as in~\eqref{vb_factor}, then $Mf\simeq \mathit{MT\xi}\wedge X^V$.
\end{lemma}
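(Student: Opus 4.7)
The plan is to invoke the base change property of the Thom spectrum functor along $E_\infty$-ring maps, which is one of the main features of the Ando–Blumberg–Gepner–Hopkins–Rezk formalism already underlying our setup. Recall that for any $E_\infty$-ring spectrum $R$, the Thom spectrum construction assembles into a symmetric monoidal colimit-preserving functor
\[
    M^R\colon \mathrm{Top}_{/B\GL_1(R)} \longrightarrow \cat{Mod}_R,
\]
sending a twist $\phi\colon X\to B\GL_1(R)$ to the colimit of the composite $X\to B\GL_1(R)\to\cat{Mod}_R$. When $R = \mathbb S$ and the twist is the composition $J\circ g_V$ of the $J$-homomorphism with the classifying map of a rank-zero virtual vector bundle, this recovers the construction already recalled in \cref{vb_thom_ABGHR}: $M^{\mathbb S}(J\circ g_V)\simeq X^V$.

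The key input I will use is extension of scalars for Thom spectra: for any $E_\infty$-ring map $u\colon \mathbb S\to R$, the induced map $B\GL_1(\mathbb S)\to B\GL_1(R)$ fits into a natural equivalence
\[
    M^R(u\circ \phi)\;\simeq\; R\wedge_{\mathbb S} M^{\mathbb S}(\phi)\;\simeq\;R\wedge M^{\mathbb S}(\phi)
\]
for every twist $\phi\colon X\to B\GL_1(\mathbb S)$. At the level of the defining diagram, this is just the observation that the functor $\text{--}\wedge R\colon \cat{Sp}\to\cat{Mod}_R$ is a symmetric monoidal left adjoint, hence commutes with the colimit defining the Thom spectrum; conceptually, it expresses the universal property that $M^R$ is the left Kan extension of $M^{\mathbb S}$ along the pushforward of twists.

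To finish, apply this principle with $R = \mathit{MT\xi}$ and $u = e\colon\mathbb S\to\mathit{MT\xi}$ the unit. By hypothesis the twist $f$ factors as in~\eqref{vb_factor},
\[
    f\colon X\xrightarrow{\,g_V\,} B\O\xrightarrow{\,J\,} B\GL_1(\mathbb S)\xrightarrow{\,e\,} B\GL_1(\mathit{MT\xi}),
\]
so $f = e\circ(J\circ g_V)$. Base change then gives
\[
    Mf = M^{\mathit{MT\xi}}(f)\;\simeq\;\mathit{MT\xi}\wedge M^{\mathbb S}(J\circ g_V)\;\simeq\;\mathit{MT\xi}\wedge X^V,
\]
which is the desired equivalence.

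The only real subtlety is verifying the base change identity; this is essentially a formal consequence of the symmetric monoidal structure on $M$, but writing it out carefully requires pinning down the $\infty$-categorical framework for twists and modules. Since ABGHR have already proven this, the main obstacle is just a matter of citation rather than calculation, and the argument above reduces the lemma to a direct application of their setup together with the identification $M^{\mathbb S}(J\circ g_V)\simeq X^V$ from \cref{vb_thom_ABGHR}.
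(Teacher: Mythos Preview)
Your argument is correct and is exactly the content of the cited result: the paper does not give its own proof of this lemma but simply attributes it to Ando--Blumberg--Gepner--Hopkins--Rezk~\cite[\S 1.2]{ABGHR14b}, and what you have written is precisely the base-change property for Thom spectra along $E_\infty$-ring maps that they establish there. In other words, you have correctly unpacked the citation; there is nothing further to compare.
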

We will use this lemma as follows: first, for the four tangential structures $\xi\colon BG\to B\O$ mentioned above,
we compute the homotopy type of $B\O/BG$ and understand the map $B\O\to B\O/BG$, to recognize when a map $X\to
B\O/BG$ comes from a (virtual rank-zero) vector bundle $V\to X$. In that situation, \cref{shearing_lem} describes
the corresponding twisted $\xi$-bordism groups as $\Omega_*^\xi(X^V)$, so we can use the Smith homomorphism tools
we developed in this paper.
\begin{example}[Twists of oriented bordism]
\label{oriented_shearing}
Recall from \cref{periodicity_of_SO} that $B\O/B\SO\simeq K(\Z/2, 1)$; the argument there implies the map
$B\O\to B\O/B\SO\xrightarrow{\simeq} K(\Z/2, 1)$ is the first Stiefel-Whitney class. Given a map $a\colon X\to
B\O/B\SO$, the Thom spectrum of the corresponding twist $f_a\colon X\to B\GL_1(\MTSO)$ of $\MTSO$ is the bordism
spectrum whose homotopy groups are the bordism groups of manifolds $M$ with a map $\phi\colon M\to X$ and a
trivialization of $w_1(M) - \phi^*(a)$.\footnote{\label{_normal}Strictly speaking, what one trivializes is
$w_1(\nu) - \phi^*(a)$, where $\nu\to M$ is the stable normal bundle, but there is a canonical identification of
$w_1(M)$ and $w_1(\nu)$.  This nuance will matter for spin structures.}

Every class $a\in H^1(X;\Z/2)$ is
the first Stiefel-Whitney class of some line bundle $L_a\to X$, so for any twist $f\colon X\to B\GL_1(\MTSO)$
described by a map $f_a\colon X\xrightarrow{a} K(\Z/2, 1)\simeq B\O/B\SO\to B\GL_1(\MTSO)$, there is a homotopy
equivalence
\begin{equation}
	Mf\xrightarrow{\simeq} \MTSO\wedge X^{L_a - 1}.
\end{equation}
For example, unoriented bordism is an example of such a twist: every manifold $M$ has a canonical map to $K(\Z/2,
1)$, given by $w_1(M)$, and $w_1(M) - w_1(M)$ has a canonical trivialization. Therefore unoriented bordism is
twisted oriented bordism for the twist $K(\Z/2, 1)\xrightarrow{\simeq} B\O/B\SO$, and \cref{shearing_lem} implies
$\MTO\simeq \MTSO\wedge (K(\Z/2, 1))^{\sigma - 1}$, where $\sigma\to B\Z/2\simeq K(\Z/2, 1)$ is the tautological
line bundle; this is a theorem of Atiyah~\cite[Proposition 4.1]{Ati61}.

For another example of how to use \cref{shearing_lem}, let $\mathcal W$ denote the Thom spectrum for the notion of
bordism of manifolds $M$ equipped with a lift of $w_1(M)$ to a class $\alpha\in H^1(M;\Z)$. The class $\alpha$ is
equivalent to a map $\phi\colon M\to B\Z = S^1$, and $\alpha = \phi^*x$, where $x\in H^1(S^1;\Z)$ is the generator;
rephrased in this way, the condition that $\alpha\bmod 2 = w_1(M)$ is equivalent to a trivialization of $w_1(M) -
\phi^*(x\bmod 2)$. Therefore $\mathcal W$-bordism is twisted oriented bordism for $(S^1, x\bmod 2)$, and as $x\bmod
2$ is $w_1$ of the Möbius bundle $\sigma\to S^1$, we learn from \cref{shearing_lem} that $\mathcal W\simeq
\MTSO\wedge (S^1)^{\sigma-1}$. This is also due to Atiyah~\cite[\S 4]{Ati61}.
\end{example}
\begin{example}[Twists of \spinc bordism]
\label{spinc_shearing}
There is an equivalence of spaces, but not $E_1$-spaces, $B\O/B\Spinc\simeq K(\Z/2, 1)\times K(\Z,
3)$~\cite[Proposition 1.20, Lemma 1.30]{DY23}, and the map $B\O\to B\O/B\Spin^c$ is picked out by $(w_1,
\beta(w_2))$, where $\beta$ is the integral Bockstein. The fact that $\beta(w_2)$ is not linear in the direct sum
of vector bundles is why this decomposition of $B\O/B\Spin^c$ does not respect the $E_1$-structure.

Given data $a\in H^1(X;\Z/2)$ and $c\in H^3(X;\Z)$, if $Mf_{a,c}$ is the Thom spectrum for the corresponding twist
\begin{equation}
	f_{a,c}\colon X\overset{(a, c)}{\longrightarrow} K(\Z/2, 1)\times K(\Z, 3)\simeq B\O/B\Spin^c\longrightarrow
	B\GL_1(\MTSpin^c),
\end{equation}
then the homotopy groups of $Mf_{a,c}$ are the bordism groups of manifolds $M$ with maps $\phi\colon M\to X$ and
trivializations of $w_1(M) - \phi^*(a)$ and $\beta(w_2(M)) - \phi^*(c)$; the proof is essentially the same as
Hebestreit-Joachim's~\cite[Corollary 3.3.8]{HJ20} (Footnote~\ref{_normal} still applies: what appears is the stable
normal bundle, but the characteristic classes are the same). If there is a (rank-zero, virtual) vector bundle $V\to
X$ with $w_1(V) = a$ and $\beta(w_2(V)) = c$, then \cref{shearing_lem} implies $Mf_{a,c}\simeq \MTSpin^c\wedge X^V$
and we can invoke the Smith homomorphism on $V$.

For example, a \pinc structure on a manifold $M$ is a trivialization of $\beta(w_2(M))$ (i.e.\ the \spinc
condition without the trivialization of $w_1$). Thus a \pinc structure is equivalent to a twisted \spinc structure
where $X = B\Z/2$, $a$ is the generator of $H^1(B\Z/2;\Z/2)$, and $c = 0$: as in \cref{oriented_shearing}, $w_1(M)$
gives us a canonical map to $B\Z/2$, there is a canonical trivialization of $w_1(M) - w_1(M)$, and $c = 0$
means this twisted \spinc condition does not modify $\beta(w_2)$. So this twisted \spinc condition is that $\beta(w_2) =
0$ and $w_1$ is arbitrary, i.e.\ a \pinc structure. And if $\sigma\to B\Z/2$ is the tautological line bundle,
$w_1(\sigma) = a$ and $\beta(w_2(\sigma)) = 0 = c$, so \cref{shearing_lem} implies $\MTPin^c\simeq \MTSpin^c\wedge
(B\Z/2)^{\sigma-1}$, reproving a theorem of Bahri-Gilkey~\cite{BG87a, BG87b}.

Other examples of twists of \spinc bordism which can be realized by vector bundles include the spin-$\U(2)$ bordism
of Davighi-Lohitsiri~\cite{DL20, DL21} and the tangential structure corresponding to Stehouwer's alternate class AI
fermionic groups~\cite[\S 2.2]{Ste21}.

Not every choice of $(a, c)$ can be realized by a vector bundle; for example, $\beta(w_2)$ is always $2$-torsion,
but $c$ need not be. There are also examples with $2$-torsion $c$, as a consequence of work of
Gunawardena-Kahn-Thomas~\cite[\S 2]{GKT89}.
\end{example}
\begin{example}[Twists of spin bordism]
\label{twists_of_spin}
The most commonly studied examples of twisted $\xi$-bordism in mathematical physics are twists of spin bordism. The
story is closely analogous to \cref{spinc_shearing}, with $K(\Z, 3)$ replaced with $K(\Z/2, 2)$, and the map
$B\O\to B\O/B\Spin\simeq K(\Z/2, 1)\times K(\Z/2, 2)$ is $(w_1, w_2)$. Given classes $a\in H^1(X;\Z/2)$ and $b\in
H^2(X;\Z/2)$, the homotopy groups of the Thom spectrum of the corresponding twist $f_{a,b}\colon X\to
B\GL_1(\MTSpin)$ are the bordism groups of manifolds $M$ with maps $\phi\colon M\to X$ and trivializations of
$w_1(\nu) - \phi^*(a)$ and $w_2(\nu) - \phi^*(b)$~\cite[Corollary 3.3.8]{HJ20}, where $\nu\to M$ is the stable
normal bundle. Now, unlike in Footnote~\ref{_normal}, the distinction between $TM$ and $\nu$ matters: $w_1(TM) =
w_1(\nu)$, but $w_2(TM) + w_1(TM)^2 = w_2(\nu)$, providing a formula for the nontrivial transition from tangential
to normal data. If $a = w_1(V)$ and $b = w_2(V)$ for a rank-zero virtual vector bundle $V\to X$, \cref{shearing_lem}
implies $Mf_{a,b}\simeq \MTSpin\wedge X^V$.  See~\cite[\S 1.2.3]{DY23} for more information.

Many commonly studied tangential structures arise as vector bundle twists of spin structures.
\begin{enumerate}
	\item A \pinm structure is a trivialization of $w_2(M) + w_1(M)^2$, with no condition on $w_1$. Thus this is
	equivalent to a trivialization of $w_2(\nu)$. Like in \cref{oriented_shearing,spinc_shearing}, we can ask for a
	map $\phi\colon M\to B\Z/2$ and a trivialization of $w_1(\nu) - \phi^*(a)$, where $a\in H^1(B\Z/2;\Z/2)$ is the
	generator, and this is no data at all; then we also want to impose $w_2(\nu) = 0$. So \pinm bordism is
	the Thom spectrum of the twist $f_{a,0}\colon B\Z/2\to B\GL_1(\MTSpin)$. The classes $a$ and $0$ are $w_1$ and
	$w_2$ of $\sigma\to B\Z/2$, so we learn that $\MTPin^-\simeq\MTSpin\wedge (B\Z/2)^{\sigma-1}$, a splitting first
	written down by Peterson~\cite[\S 7]{Pet68}.
	\item A \pinp structure is a trivialization of $w_2(M)$, with no condition on $w_1$. Switching to the stable
	normal bundle, we want a trivialization of $w_2(\nu) + w_1(\nu)^2$. Just as for \pinm structures, pick a map
	$\phi\colon M\to B\Z/2$ and ask for a trivialization of $w_1(\nu) - \phi^*(a)$, which is no data; then we want
	to trivialize $w_2(\nu) + \phi^*(a^2)$. Thus \pinp bordism is the Thom spectrum of the twist $f_{a,a^2}\colon
	B\Z/2\to B\GL_1(\MTSpin)$. The classes $a$ and $a^2$ are $w_1$, resp.\ $w_2$ of the virtual vector bundle
	$-\sigma$, so \cref{shearing_lem} tells us $\MTPin^+\simeq \MTSpin\wedge (B\Z/2)^{1-\sigma}$, a result of
	Stolz~\cite[\S 8]{Sto88}.\footnote{As $[B\Z/2, B\O/B\Spin]$ has exponent $4$ by \cref{periodicity_of_spin},
	$[1-\sigma] = [3\sigma-3]$, so the reader who prefers to avoid virtual vector bundles can write $\MTPin^+\simeq
	\MTSpin\wedge (B\Z/2)^{3\sigma-3}$.}
	\item A \spinc structure is data of a trivialization of $w_1(TM)$ and a class $c_1\in H^2(M;\Z)$ such that
	$c_1\bmod 2 = w_2(TM)$; in this case there is no difference between $w_2(TM)$ and $w_2(\nu)$. This is a twisted
	spin structure where $X = B\U(1) = K(\Z, 2)$, $a = 0$, and $b$ is the generator of $H^2(K(\Z, 2); \Z/2)
	\cong\Z/2$. As $0$, resp.\ $b$ are the first and second Stiefel-Whitney classes of the tautological complex
	line bundle $L\to B\U(1)$, \cref{shearing_lem} implies $\MTSpin^c\simeq \MTSpin\wedge (B\U(1))^{L-2}$, which is
	known due to Bahri-Gilkey~\cite{BG87a, BG87b}.
	\item A spin-$\Z/2k$ structure on a manifold $M$ is data of a principal $\Z/k$-bundle $P\to M$ together with
	trivializations of $w_1(M)$ and $w_2(M) - w_2(V_P)$, where $V$ is the standard one-dimensional complex
	representation of $\Z/k$ as rotations and $V_P\to M$ is the associated complex line bundle to $P$. Thus,
	analogous to the \spinc argument above, this structure is a twisted spin structure for $X = B\Z/k$, $a = 0$,
	and $b = w_2(V)$, and \cref{shearing_lem} implies $\mathit{MT}\paren{\Spin\text{-}\Z/2k}\simeq \MTSpin\wedge
	(B\Z/k)^{V - 2}$, reproving a theorem of Campbell~\cite[\S 7.9]{Cam17}.
	\item A \spinh structure is data of a trivialization of $w_1(M)$ and a rank-$3$ oriented vector bundle $E\to M$
	and a trivialization of $w_2(M) - w_2(E)$. Again tangential vs.\ normal does not matter here, and one can use
	the same line of reasoning to show that \spinh structures are twisted spin structures for $X = B\SO_3$, $a =
	0$, and $b = w_2$. As these are $w_1$, resp.\ $w_2$ of the tautological vector bundle $V\to B\SO_3$,
	\cref{shearing_lem} tells us $\MTSpin^h\simeq \MTSpin\wedge (B\SO_3)^{V - 3}$, which is due to
	Freed-Hopkins~\cite[\S 10]{FH16}.
\end{enumerate}
There are many more examples of vector bundle twists of spin bordism, including the examples in, e.g.,~\cite{FH16,
Guo18, DL20, GOPWW20, WW20, DL21, Ste21, DDHM22}. But one can find twists of spin bordism not described by vector
bundle twists, even in physically motivated examples: see~\cite[Theorem 4.2]{DY22} for an example where $X =
B\SU_8/\set{\pm 1}$, with a few more examples given in~\cite[\S 3.1]{DY23}. The Smith-theoretic techniques in our
paper do not apply in those situations.
\end{example}
\begin{example}[James periodicity as Smith periodicity]
\label{james_example}
James periodicity~\cite{Jam59} is a classical result in homotopy theory that the homotopy types of the \term{stunted projective spaces} $\RP^n_k\coloneqq \RP^n/\RP^k$ (here $k < n$) are periodic, with periodicity dependent on $n$ and $k$. There are also results for the analogously defined stunted complex and quaternionic projective spaces $\CP_k^n\coloneqq \CP^n/\CP^k$ and $\HP_k^n\coloneqq \HP^n/\HP^k$. These periodicities can be thought of in terms of periodic Smith families for framed bordism---or conversely, the periodicities in the previous several examples can be thought of as generalizations of James periodicity over other ring spectra than $\mathbb S$.

\Cref{cofiber_periodicity} is the engine behind our periodicity results; its key idea is that vector bundles inducing equivalent maps to $B\GL_1(R)$ have equivalent $R$-module Thom spectra. For framed bordism, where $R = \mathbb S$, we therefore should look at the image of the homomorphism $[X, B\O]\to [X, B\GL_1(\mathbb S)]$; following Atiyah~\cite[\S 1]{Ati61a}, this image is typically denoted $J(X)$. Atiyah (\textit{ibid.}, Lemma 2.5) proves that if $V,W\to X$ have equal images in $J(X)$, then $X^V\simeq X^W$.\footnote{See Held-Sjerve~\cite[Theorem 1.2]{HS73} for a partial converse to this result.} Therefore we can obtain framed bordism Smith periodicities, or equivalences of Thom spectra, by calculating the groups $J(X)$. Atiyah (\textit{ibid.}, Proposition 1.5) shows that when $X$ is a finite CW complex, $J(X)$ is a finite group, implying the existence of many framed Smith families.

For James periodicity specifically, choose $F\in\{\R, \C, \mathbb{H}\}$. Stunted projective spaces are Thom spectra: if $L\to F\mathbb P^k$ denotes the tautological (real, complex, or quaternionic) line bundle, there is an equivalence $\Sigma^\infty F\mathbb P_k^n\simeq (F\mathbb P^k)^{(n-k)L}$~\cite[Proposition 4.3]{Ati61a}, reducing the proof of James periodicity to the computation of the order of $L$ in $J(F\mathbb P^k)$. For example, for $F = \R$ Adams calculates the order of $L$ in $J(\RP^k)$ in~\cite[Theorem 7.4]{Ada62} and~\cite[Example 6.3]{AdamsJ2} to be $2^{\phi(k)}$, where $\phi(k)$ is the number of integers $s$ with $0 < s \le k$ and $s\equiv 0$, $1$, $2$, or $4\bmod 8$. Therefore for all $k$ and $n$, there is a homotopy equivalence
\begin{equation}
    \Sigma^\infty\RP_k^{n+2^{\phi(k)}} \overset\simeq\longrightarrow \Sigma^\infty \Sigma^{2^\phi(k)}\RP_k^n.
\end{equation}
(and in fact this is true even before applying $\Sigma^\infty$~\cite{Mah65}). Additional computations in $J$-groups of $F\mathbb P^k$ are done by Adams-Walker~\cite{AW65}, Lam~\cite{Lam72}, Federer-Gitler~\cite{FG73, FG77}, Sigrist~\cite{Sig75}, Walker~\cite{Wal81}, Crabb-Knapp~\cite{CK88}, Dibağ~\cite{Dib99, Dib03}, Obiedat~\cite{Obi01}, and Randal-Williams~\cite[\S 5.3]{RW23}.
\end{example}
\begin{remark}
There are many other tangential structures $\xi$ that one can study twists of.
See~\cite{SSS09, Sat10, Sat11, Sat11b, Sat12, SSS12, Sat15, SW15, SW18, LSW20, SY21, BC22, DY23} for more examples.
\end{remark}
\subsection{The bound in \cref{cofiber_periodicity} is not sharp}
\label{not_sharp}
In this subsection, we discuss ways in which \cref{cofiber_periodicity} loses information, yielding Smith families with lower-than-expected periodicity. This in particular occurs for twisted string structures.

\Cref{cofiber_periodicity} estimates the periodicity of a Smith family for a tangential structure $\xi$ and vector bundle $V\to X$ in terms of the minimal positive integer $k$ such that $V^{\oplus k}$ has a $\xi$-structure. As we have seen above in \cref{unoriented_periodicity,,periodicity_of_SO,,MU_MSpinc,,periodicity_of_spin}, $k$ is finite in many examples of interest, including $\O$-, $\SO$-, $\Spin^c$-, $\Spin$-, and $\U$-structures. However, $k$ is not always finite.
\begin{lemma}\label{nostring_chern}\hfill
\begin{enumerate}
    \item\label{nostring} Let $V\to X$ be a real vector bundle whose rational first Pontrjagin class $p_1(V)\in H^4(X;\mathbb Q)$ is nonzero. Then for $k\ne 0$, $V^{\oplus k}$ does not admit a string structure.
    \item\label{noSU} Let $V\to X$ be a complex vector bundle whose rational first Chern class $c_1(V)\in H^2(X;\mathbb Q)$ is nonzero. Then for $k\ne 0$, $V^{\oplus k}$ does not admit an $\SU$-structure.
\end{enumerate}
\end{lemma}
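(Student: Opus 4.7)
The plan is to reduce both parts to the observation that the relevant characteristic class of $V^{\oplus k}$ is $k$ times the characteristic class of $V$, and that over $\mathbb{Q}$ this cannot vanish when the original class does not.

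For part~\ref{nostring}, I will first use the description recalled in \cref{string_not_periodic}: a string structure on a vector bundle $W\to X$ includes as part of its data a spin structure and then a null-homotopy of the classifying map for $\lambda(W)\in H^4(X;\mathbb{Z})$, where $2\lambda = p_1$. In particular, if $W$ is string, then $\lambda(W)=0$, hence $p_1(W)=0$ rationally. Applying this to $W = V^{\oplus k}$, it suffices to show that $p_1(V^{\oplus k}) \neq 0$ in $H^4(X;\mathbb{Q})$. The Whitney sum formula for total rational Pontrjagin classes, $p(E\oplus F) = p(E)p(F)$, gives on the degree-$4$ component $p_1(V^{\oplus k}) = k\,p_1(V)$. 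Since $H^4(X;\mathbb{Q})$ is a $\mathbb{Q}$-vector space and $k\ne 0$, the right-hand side is nonzero whenever $p_1(V)$ is, which contradicts $V^{\oplus k}$ being string.

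Part~\ref{noSU} is entirely parallel but easier: an $\mathrm{SU}$-structure on a complex vector bundle $E$ is a lift of its classifying map $X\to B\mathrm{U}$ to $B\mathrm{SU}$, equivalently a trivialization of the determinant line bundle $\det E$, and hence forces $c_1(E)=0$ integrally (in particular rationally). The total Chern class satisfies $c(E\oplus F) = c(E)c(F)$ exactly, so $c_1(V^{\oplus k}) = k\,c_1(V)$. Again, if $c_1(V)\ne 0$ in $H^2(X;\mathbb{Q})$ and $k\ne 0$, then $k\,c_1(V)\ne 0$, so $V^{\oplus k}$ cannot carry an $\mathrm{SU}$-structure.

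There is essentially no main obstacle: both arguments rest on the Whitney sum formula and the torsion-freeness of rational cohomology. The only point requiring a little care is citing the precise obstruction for a string structure (namely the vanishing of $\lambda$, which is a $\mathbb{Z}$-lift of half of $p_1$ on spin bundles, as noted via Thomas' relation $2\lambda = p_1$), so that one may pass to the rational obstruction $p_1 = 0$ and thereby use the additivity of $p_1$ on direct sums without worrying about $2$-torsion corrections in the Whitney sum formula for Pontrjagin classes.
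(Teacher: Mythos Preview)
Your proof is correct and follows essentially the same approach as the paper: reduce to showing the rational obstruction class of $V^{\oplus k}$ is nonzero, then use the Whitney sum formula (rationally for $p_1$, integrally for $c_1$) to see it equals $k$ times the class of $V$. Your remark about working rationally to avoid the $2$-torsion correction in the Pontrjagin Whitney sum formula is exactly the point the paper makes implicitly.
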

\begin{proof}
For part~\eqref{nostring}, if $E\to M$ is a string vector bundle, then $\lambda(E) = 0$ implies $p_1(E) = 0\in H^4(M;\Z)$ (since $2\lambda = p_1$), which implies the image of $p_1(E)$ in $H^4(M;\mathbb Q)$ is also $0$, so it suffices to show $p_1(V^{\oplus k})$ has nonzero image in $H^q(X;\mathbb Q)$ for $k\ne 0$. Since $p_1(V)$ is nonzero in this group, it is in particular nontorsion, and the Whitney sum formula implies that in $\mathbb Q$-cohomology $p_1(V^{\oplus k}) = k p_1(V)$, so it is also nonzero. Part~\eqref{noSU} is analogous, using that the complete obstruction for lifting from a $\U$-structure to an $\SU$-structure is $c_1\in H^2(B\U;\Z)$.
\end{proof}
There are many bundles satisfying the hypotheses of \cref{nostring_chern}: for example, both are true for the tautological complex line bundle over $B\U(1)$.
\begin{definition}[{Lashof~\cite[\S 3]{Las63}}]
\label{U6_defn}
Let $B\U\ang 6$ denote the $5$-connected cover of $B\U$, and let $\xi\ang 6\colon B\U\ang 6\to B\O$ be the composition of the covering map $B\U\ang 6\to B\U$ and the map $B\U\to B\O$ forgetting the complex structure. We will refer to $\xi\ang 6$-structures as \term{$U\ang 6$-structures}.
\end{definition}
A $U\ang 6$-structure induces both an $\SU$-structure and a string structure; the former because the $5$-connected covering map always factors through the $3$-connected cover, which for $B\U$ is $B\SU\to B\U$, and the latter because the map $B\U\ang 6\to B\O$ must factor through the $5$-connected cover of $B\O$, which is $B\String$.

One can construct two-out-of-three data for $U\ang 6$-structures using the $5$-connected covers of the maps in the two-out-of-three data for $B\U$. Since this data is constructed in this universal way, it is compatible with the two-out-of-three data we have already used for $\U$, $\SU$, $\String$, etc.
\begin{proposition}[{Bauer~\cite[Lemma 2.1]{Bau03}}]
\label{bauer}
Let $\xi$ be a tangential structure with two-out-of-three data admitting a map $\xi\ang 6\to\xi$ compatible with two-out-of-three data. If $L\to B\U(1)$ denotes the tautological complex line bundle, then there is a homotopy equivalence
\begin{equation}
    \mathit{MT\xi}\wedge (B\U(1))_+ \overset\simeq\longrightarrow \mathit{MT\xi}\wedge (BU(1))^{24L^* - 48}.
\end{equation}
\end{proposition}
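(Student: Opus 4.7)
The plan is to reinterpret the claimed equivalence as the triviality of a specific twist of $\mathit{MT\xi}$, and then verify this triviality by passing to $\xi\ang 6$ and invoking classical image-of-$J$ computations.

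First, I would invoke \cref{shearing_lem} to rewrite the right-hand side as the Thom spectrum of the twist
\[
f\colon B\U(1) \xrightarrow{24L^* - 48} B\O \xrightarrow{J} B\GL_1(\mathbb S) \xrightarrow{e} B\GL_1(\mathit{MT\xi}),
\]
while the left-hand side is the Thom spectrum of the trivial twist. So the claimed equivalence is equivalent to a null-homotopy of $f$. The compatibility of $\xi\ang 6 \to \xi$ with two-out-of-three data promotes it to a map of $E_\infty$-ring spectra $\mathit{MT}\xi\ang 6 \to \mathit{MT\xi}$, which in turn induces a map $B\GL_1(\mathit{MT}\xi\ang 6) \to B\GL_1(\mathit{MT\xi})$ through which $f$ factors. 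I therefore reduce to producing a null-homotopy of the induced map $f\ang 6\colon B\U(1) \to B\GL_1(\mathit{MT}\xi\ang 6)$, equivalently to showing that the orientation order $\Theta(L^*, \mathit{MT}\xi\ang 6)$ divides $24$.

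Second, I would verify this divisibility by an obstruction argument on the cellular filtration of $B\U(1) = \CP^\infty$. The obstructions to a null-homotopy of $k(L^* - 1)$ lie in $H^{2n}(B\U(1); \pi_{2n-1}(\mathit{MT}\xi\ang 6))$, and the dominant obstruction occurs at $n=2$: on $\CP^2 \subset \CP^\infty$, the element $L^* - 1$ restricts to the standard generator of $\widetilde{KU}(\CP^2)$, whose image under $J$ is controlled by $\pi_3(\mathbb S) = \Z/24$. The $5$-connectivity of $B\U\ang 6$ kills the contribution of lower homotopy of $B\U$, making this the first genuinely nontrivial obstruction. The $E_\infty$-ring structure on $\mathit{MT}\xi\ang 6$ (coming from the two-out-of-three data on $\xi\ang 6$) then propagates the bottom-cell annihilation through the higher cells, via the multiplicativity of the Thom class under direct sums.

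The main obstacle is this inductive extension to the higher cells of $\CP^\infty$: in principle each class $c_1^n$ could contribute a new obstruction, so a careful argument is needed to check that the bottom-cell divisibility by $24$ controls all higher obstructions. Bauer's original proof handles this via an explicit construction of the Thom class of $24L^* - 48$ in $\mathit{MU}\ang 6$-cohomology, and an equivalent route is available through the Ando-Hopkins-Rezk sigma-orientation $\mathit{MU}\ang 6 \to \mathit{tmf}$. In a self-contained writeup I would follow Bauer's construction~\cite[Lemma 2.1]{Bau03} rather than reprove it from first principles.
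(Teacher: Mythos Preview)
The paper does not give its own proof of this proposition: it is stated as a citation to Bauer, with only the remark that Bauer's published statement is a base-change to $\mathit{tmf}$ of what is actually proved. Your proposal is therefore aligned with the paper in that both ultimately defer to~\cite[Lemma 2.1]{Bau03}; you simply add an expository sketch of why the result should hold.

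Your sketch is broadly correct: the equivalence is equivalent to a null-homotopy of the twist, the map $\xi\ang 6\to\xi$ lets you reduce to $MU\ang 6$, and the governing obstruction is the image of $L^*-1$ under $J$ landing in $\pi_3(\mathbb S)\cong\Z/24$. One point is imprecise: the $5$-connectivity of $B\U\ang 6$ does \emph{not} kill low-degree obstructions; on the contrary it forces $\pi_k(MU\ang 6)\cong\pi_k(\mathbb S)$ for $k\le 5$, so there is also a nonzero $n=1$ obstruction in $\pi_1(\mathbb S)\cong\Z/2$. This is harmless for the final statement (its order divides $24$), but your explanation of why $n=2$ is ``first'' is not right. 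As you yourself note, the genuine content is controlling the higher cells of $\CP^\infty$, and for that you, like the paper, point to Bauer's explicit construction of the $MU\ang 6$-Thom class.
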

Bauer's statement in~\cite[Lemma 2.1]{Bau03} is only a corollary of this, obtained by base-changing from $\MU\ang 6$ to $\mathit{tmf}$, but in his proof he proves the version we provide here. One can also pass from $L^*$ to $L$ by pulling back along the complex conjugation map $B\U(1)\to B\U(1)$.
\begin{corollary}
With $\xi$ as in \cref{bauer}, in particular including string and $\SU$-structures, every Smith family for $\xi$-structures and a complex line bundle is at most $24$-periodic. In particular, \cref{cofiber_periodicity} is not sharp.
\end{corollary}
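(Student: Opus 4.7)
The plan is to interpret Bauer's equivalence (\cref{bauer}) as the statement that the universal complex line bundle represents a $24$-torsion class in $[B\U(1), B\GL_1(\mathit{MT\xi})]$, then pull this bound back to any $X$ carrying a complex line bundle $W$ and apply the argument of \cref{cofiber_periodicity}.

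First I would recall from \cref{vb_thom} and the discussion around \cref{beardsley} that the homotopy type of the Thom spectrum $\mathit{MT\xi}\wedge X^V$ of a rank-zero virtual bundle $V\to X$ depends only on the class of $V$ in $[X, B\GL_1(\mathit{MT\xi})]$. Since $24L^* - 48$ has virtual rank zero, Bauer's equivalence
\begin{equation*}
    \mathit{MT\xi}\wedge (B\U(1))_+ \simeq \mathit{MT\xi}\wedge (B\U(1))^{24L^*-48}
\end{equation*}
translates into the vanishing $[24L^* - 48] = 0$ in $[B\U(1), B\GL_1(\mathit{MT\xi})]$. The $E_\infty$-structure on the target then lets us rewrite this as $24\cdot [L^* - 2] = 0$, so the class $[L^* - 2]$ has order dividing $24$.

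Second, given any complex line bundle $W\to X$ classified by $f\colon X\to B\U(1)$, naturality of $[\bl, B\GL_1(\mathit{MT\xi})]$ gives $[W - 2] = f^*[L^* - 2]$ (or $f^*[L-2]$, which has the same order since complex conjugation is an auto-equivalence of $B\U(1)$ swapping $L$ and $L^*$). Hence $[W - 2]$ has order dividing $24$ in $[X, B\GL_1(\mathit{MT\xi})]$, which means that for any virtual bundle $V\to X$ and integer $k$, the classes $[V \oplus kW - r_V - 2k]$ and $[V \oplus (k+24)W - r_V - 2(k+24)]$ coincide in $[X, B\GL_1(\mathit{MT\xi})]$. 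Invoking the argument from \cref{cofiber_periodicity}, the corresponding Thom spectra are equivalent, and this equivalence commutes with $\sm_W$ because it is induced on every term of the Smith family by the same map of twists. The Smith family is therefore at most $24$-periodic.

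For the non-sharpness claim, I would take $X = B\U(1)$, $W = L$, and $\xi = \String$ or $\SU$: by \cref{nostring_chern} we have $\Theta(L, \mathit{MT\xi}) = \infty$, so \cref{cofiber_periodicity} yields no finite bound on the periodicity of this Smith family, whereas the previous paragraph produces $24$. The only real subtlety in the argument is keeping track of conventions between $L$ and $L^*$ in Bauer's equivalence and the input line bundle $W$, but complex conjugation on $B\U(1)$ handles this cleanly; everything else is a straightforward application of the $B\GL_1$ formalism already developed in the paper.
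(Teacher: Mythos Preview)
Your overall strategy matches the paper's, but there are two concrete problems. The more serious one is an internal contradiction: in your second paragraph you conclude that $[L^*-2]$ has order dividing $24$ in $[B\U(1), B\GL_1(\mathit{MT\xi})]$, which by definition says $\Theta(L, \mathit{MT\xi})\mid 24$; in your last paragraph you then claim $\Theta(L, \mathit{MT\xi}) = \infty$ via \cref{nostring_chern}. The error is that \cref{nostring_chern} is about $\xi$-\emph{structures}, not $\mathit{MT\xi}$-\emph{orientations}: it shows the class of $L$ in $[B\U(1), B\O/B\String]$ has infinite order, not that $\Theta$ is infinite. The ``not sharp'' assertion in the Corollary is precisely about this discrepancy---the $B\O/B$-estimate used throughout \S\ref{periodicity_and_shearing} gives no finite bound, whereas the actual period is at most $24$---so your last paragraph must distinguish these two orders rather than conflate them.

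The second issue is a gap in your first step: you assert that Bauer's equivalence ``translates into'' the vanishing of $[24L^*-48]$ in $[B\U(1), B\GL_1(\mathit{MT\xi})]$, but the paper only records the forward implication (equal twists give equivalent Thom spectra) around \cref{vb_thom}, and its converse does not hold in general. As the paper explains immediately after the Corollary, one must look into Bauer's \emph{proof}, which actually produces an $\mathit{MT\xi}$-orientation of $24L^*$ (i.e.\ a null-homotopy of the twist), not merely an equivalence of the resulting Thom spectra. With that stronger input in hand, the middle portion of your argument goes through.
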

What went wrong? In this specific case, we didn't use all of the available information in \cref{cofiber_periodicity}. Looking into Bauer's proof, one learns that the map $[B\U(1), B\O/B\String]\to[B\U(1), B\GL_1(\MTString)]$ from \cref{beardsley} sends the class of $L^*$, which is \emph{infinite-order} in $[B\U(1), B\O/B\String]$, to a \emph{finite-order} class in $[B\U(1), B\GL_1(\MTString)]$. Specifically, because this twist came from a vector bundle, it also factors through $[B\U(1), B\GL_1(\mathbb S)/B\String]$ (see \cref{beardsley}), and the image of $L^*$ in this group has finite order.
\begin{proposition}
\label{string_twists_are_finite}
For any space $X$ homotopy equivalent to a CW complex with finitely many cells in each degree, let $J_*\colon [X, B\O]\to [X, B\GL_1(\mathbb S)/B\String]$ denote the map induced by the $J$-homomorphism $B\O\to B\GL_1(\mathbb S)$ followed by taking the cofiber of $B\String\to B\O\overset J\to B\GL_1(\mathbb S)$. Then all classes in $\mathrm{Im}(J_*)$ have finite order.
\end{proposition}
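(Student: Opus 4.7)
The key input is Serre's finiteness theorem: $\pi_n^s$ is finite for $n>0$. Thus $\pi_n(B\GL_1(\mathbb S))=\pi_{n-1}^s$ is finite for $n\geq 2$, while $\pi_0=0$ and $\pi_1=\Z/2$. Consequently the $J$-homomorphism $J\colon B\O\to B\GL_1(\mathbb S)$ has finite image on every homotopy group, and so does the composite
\[
\varphi\colon B\O\xrightarrow{J}B\GL_1(\mathbb S)\longrightarrow B\GL_1(\mathbb S)/B\String.
\]
Write $G_n\subseteq\pi_n(B\GL_1(\mathbb S)/B\String)$ for the finite image of $\varphi_*$ on $\pi_n$. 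Equivalently, $\varphi$ is rationally null, since its target is built from spectra with only finite positive homotopy groups (in the range relevant to $\mathrm{Im}(\varphi_*)$).

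Next I would globalize this pointwise finiteness to a torsion statement on $[X,-]$ using the standard Atiyah-Hirzebruch spectral sequence $E_2^{p,q}=H^p(X;\pi_{-q}E)\Rightarrow E^{p+q}(X)$, where $E$ is the connective spectrum whose $\Omega^\infty$ is $B\GL_1(\mathbb S)/B\String$. For each class $V\in[X,B\O]$, the image $\varphi_*(V)$ has $E_\infty$-representative lying, at bidegree $(p,q)$, in the image of $H^p(X;G_{-q})$; since $G_{-q}$ is finite and $X$ has only finitely many $p$-cells, the cellular cochain complex in degree $p$ is a finite direct sum of copies of $G_{-q}$ and hence $H^p(X;G_{-q})$ is itself a finite abelian group. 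Thus each graded piece of $\varphi_*(V)$ in the Atiyah-Hirzebruch filtration of $E^0(X)$ is a finite-order class.

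The main obstacle is passing from ``each filtration piece is torsion'' to ``$\varphi_*(V)$ itself is torsion,'' since \emph{a priori} the orders of the pieces might be unbounded and there could be $\varprojlim^1$-contributions. The cleanest route is via the skeletal filtration combined with Atiyah's finiteness of the $J$-group~\cite[Proposition 1.5]{Ati61a}, invoked in the same way as in \cref{james_example}. For each $n$ the restriction $V|_{X^{(n)}}\in[X^{(n)},B\O]$ has $J(V|_{X^{(n)}})\in J(X^{(n)})$, a finite group, so $\varphi_*(V)|_{X^{(n)}}$ has finite order $k_n$ in $[X^{(n)},B\GL_1(\mathbb S)/B\String]$. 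One then uses the Milnor short exact sequence
\[
0\to\varprojlim_n{}^1\,[\Sigma X^{(n)}, B\GL_1(\mathbb S)/B\String]\to [X, B\GL_1(\mathbb S)/B\String]\to \varprojlim_n[X^{(n)}, B\GL_1(\mathbb S)/B\String]\to 0
\]
to reduce to showing that the $k_n$ are uniformly bounded and that $\varprojlim^1$ vanishes on $\mathrm{Im}(\varphi_*)$. Both follow from the pointwise finiteness of the $G_n$: only finitely many Postnikov stages of $E$ can contribute nontrivially to $\varphi_*(V)|_{X^{(n)}}$ at a fixed total degree, yielding Mittag-Leffler on the relevant sub-inverse-system and a uniform bound on $k_n$ by the exponent of $\bigoplus_q G_{-q}$ truncated to the relevant range. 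This gives $\varphi_*(V)$ a well-defined finite order, completing the proof.
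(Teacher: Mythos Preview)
You correctly identify Serre's finiteness theorem as the key input and correctly observe that the map factors through $B\GL_1(\mathbb S)$, whose positive-degree homotopy groups are all finite. You also correctly pinpoint where the difficulty lies: passing from the finite skeleta $X^{(n)}$ to the limit.

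However, there is a genuine gap in your final step. You assert a ``uniform bound on $k_n$ by the exponent of $\bigoplus_q G_{-q}$ truncated to the relevant range,'' but for a fixed class $V$ in an infinite-dimensional $X$, the ``relevant range'' of Postnikov stages contributing to $\varphi_*(V)|_{X^{(n)}}$ grows with $n$: the degree-$0$ class on $X^{(n)}$ sees contributions from $G_0,\dotsc,G_n$. Since the exponents of $\pi_n^s$ (and hence of the $G_n$) are unbounded as $n\to\infty$, there is no a priori reason the orders $k_n$ stabilize. Your Mittag--Leffler claim is likewise unjustified: the sub-inverse-system you have in mind is not specified, and the finiteness of each $G_n$ alone does not force Mittag--Leffler on the tower of $[X^{(n)},B\GL_1(\mathbb S)/B\String]$, whose homotopy groups need not be finite (the quotient by $B\String$ reintroduces $\Z$'s).

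The paper's argument avoids this entirely by rationalizing. One passes to the corresponding map of connective spectra $j_{/\mathit{bstring}}\colon bo_0\to bg\ell_1(\mathbb S)/\mathit{bstring}$ and observes that it factors through $bg\ell_1(\mathbb S)$, whose rationalization is contractible by Serre. Hence the rationalized map is null. Under the finite-type hypothesis on $X$, the relevant cohomology group is finitely generated, so an infinite-order element would have nonzero image after rationalization---a contradiction. This replaces your delicate inverse-limit bookkeeping with a single observation about the rational homotopy type of $bg\ell_1(\mathbb S)$, and no uniform bound on orders is needed.
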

\begin{corollary}\label{twstr_cor}
Every twisted string structure Smith family over a base space $X$ as in \cref{string_twists_are_finite} has finite periodicity.
\end{corollary}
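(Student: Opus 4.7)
The plan is to deduce this corollary directly from \cref{string_twists_are_finite} by running the argument of \cref{cofiber_periodicity}, but with the sharper factorization that Beardsley's theorem supplies. Fix data $(X, V, W, \xi = \String)$ as in \cref{smfam}, where $W \to X$ is the vector bundle indexing the family; we must show that the period of the Smith family is finite.

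By \cref{cofiber_periodicity}, the period divides the $\MTString$-orientation order $\Theta(W, \MTString)$, which by definition is the order of the composite
\begin{equation}
    e \circ J \circ f_W\colon X \xrightarrow{f_W} B\O \xrightarrow{J} B\GL_1(\mathbb S) \xrightarrow{e} B\GL_1(\MTString)
\end{equation}
in the abelian group $[X, B\GL_1(\MTString)]$. It therefore suffices to show that this composite has finite order as a homotopy class.

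By \cref{beardsley}, the composite $e \circ J\colon B\O \to B\GL_1(\MTString)$ not only factors through $B\O/B\String$ but through the finer cofiber $B\GL_1(\mathbb S)/B\String$. Hence $e \circ J \circ f_W$ factors as $X \xrightarrow{f_W} B\O \to B\GL_1(\mathbb S)/B\String \to B\GL_1(\MTString)$, so its order divides the order of $J_*([f_W]) \in [X, B\GL_1(\mathbb S)/B\String]$, where $J_*$ is the map named in \cref{string_twists_are_finite}. That proposition asserts precisely that $J_*([f_W])$ is of finite order, so we conclude that $\Theta(W, \MTString)$, and hence the period of the Smith family, is finite.

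The real content of the corollary is entirely in \cref{string_twists_are_finite}; the hard part---controlling classes in $[X, B\GL_1(\mathbb S)/B\String]$, which a priori could behave wildly since $[X, B\O/B\String]$ contains infinite-order elements like $p_1$ (cf.\ \cref{nostring_chern})---is handled there. Once that proposition is granted, the corollary is a purely formal two-step chase: first use that $W$ comes from a vector bundle to land in $B\O$, then use Beardsley to pass to $B\GL_1(\mathbb S)/B\String$ where finite order holds.
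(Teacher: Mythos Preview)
Your proof is correct and takes essentially the same approach as the paper. The paper states the corollary immediately after \cref{string_twists_are_finite} without a separate proof, treating it as an immediate consequence via the machinery of \cref{cofiber_periodicity} and \cref{beardsley}; you have correctly spelled out that implicit chain of reasoning, in particular the key point that Beardsley's factorization through $B\GL_1(\mathbb S)/B\String$ (rather than merely $B\O/B\String$) is what makes the finiteness statement of \cref{string_twists_are_finite} applicable.
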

\begin{proof}[Proof of \cref{string_twists_are_finite}]
Because $B\O$ and $B\GL_1(\mathbb S)/B\String$ are grouplike $E_\infty$-spaces,\footnote{It is not immediately obvious that $B\GL_1(\mathbb S)/B\String$ is a grouplike $E_\infty$-space, but one can prove it by modifying the argument in~\cite[Proof of Proposition 1.20]{DY23}.}
one can prove the proposition by lifting the map $B\O\to B\GL_1(\mathbb S)/B\String$ to the equivalent data of a map of spectra $j_{/\mathit{bstring}}\colon \mathit{bo}_0\to \mathit{bg\ell}_1(\mathbb S)/\mathit{bstring}$.\footnote{The name $\mathit{bo}_0$ instead of $\mathit{bo}$ is because it is traditional to use $\mathit{bo}$ to refer to the spectrum corresponding to the grouplike $E_\infty$-space $\Z\times B\O$, i.e.\ the spectrum $\ko$.}

The map $j_{/\mathit{bstring}}$ was induced from a map of grouplike $E_\infty$-spaces that factored through $B\GL_1(\mathbb S)$, and therefore $j_{/\mathit{bstring}}$ factors through $\mathit{bg\ell}_1(\mathbb S)$. The homotopy groups of this spectrum are torsion, which follows from May's definition~\cite[\S III.2]{May77} of $\GL_1(\mathbb S)$ and the triviality of the positive-degree rational stable homotopy groups of the sphere~\cite{Ser53}. Thus the rationalization $\mathit{bg\ell}_1(\mathbb S)\wedge H\mathbb Q\simeq 0$. For any class $x\in (\mathit{bo}_0)^0(X)$, if $J_*(x)$ has infinite order, its image in the rationalized $\mathit{bg\ell}_1(\mathbb S)/\mathit{bstring}$-cohomology of $X$ must be nonzero: because $X$ has finitely many cells in each dimension, its generalized cohomology groups for any finite-type spectrum (including all spectra appearing in this proof) are finitely generated, so infinite-order elements persist through rationalization. Rationally, though, $J_*$ passes through the zero spectrum.
\end{proof}
\begin{remark}
One way to interpret this phenomenon is that, even though the first Pontrjagin class $p_1\colon B\O\to K(\Z, 4)$ descends to a map $p_1\colon B\O/B\String\to K(\Z, 4)$, this map does not extend to a rationally nontrivial map out of $B\GL_1(\mathbb S)/B\String$. In the language of \cite{DY23}, ``a fake vector bundle with respect to twisted string structures has a first Pontrjagin class, but a fake spherical fibration does not.''
\end{remark}
\begin{remark}[$\O\ang n$-families' periodicity and Bernoulli numbers?]
\label{highconn}
\Cref{string_twists_are_finite,twstr_cor} generalize \textit{mutatis mutandis} to tangential structures further up the Whitehead tower of $B\O$. To wit, given a natural number $n$, let $\xi\colon B\O\ang n\to B\O$ be the $(n-1)$-connected covering map. This defines a tangential structure commonly called a \term{$\O\ang n$-structure}, and the two-out-of-three data for $B\O$ pull back by the universal property of the $(n-1)$-connected cover to define two-out-of-three data for $\O\ang n$-structures. (Compare \cref{U6_defn}.) Thus \cref{smfam}, \cref{vb_thom}, and \cref{beardsley} define vector bundle twists, non-vector-bundle twists, and Smith families for twisted $\O\ang n$-structures just like for string structures.

For $n > 4$, the homotopy groups of $B\O/B\O\ang n$ are not all torsion, which is downstream from the isomorphism $\pi_4(B\O)\cong\Z$~\cite[\S24, \S 25]{Ste51}. Therefore, like for $B\O/B\String$, the order of a twist in $[X, B\O/B\O\ang n]$ is not a particularly good estimate for the value of the corresponding Smith family's period. Indeed, \cref{string_twists_are_finite,twstr_cor} and their proofs generalize directly from $B\String$ to $B\O\ang n$, showing all Smith families of twisted $\O\ang n$-structures have finite order, provided they are over spaces homotopy equivalent to CW complexes with finitely many cells in each dimension. One can also generalize this whole story to the limiting case as $n\to\infty$, which is the tangential structure $E\O\to B\O$, i.e.\ a stable framing.

Our proof did not give any estimates on the orders of these Smith families, just finiteness. It would be interesting to bound or compute these orders; for example, one could investigate the map of Atiyah-Hirzebruch spectral sequences induced by $j_{/\mathit{bo}\ang n}$, the generalization of $j_{/\mathit{bstring}}$ in the proof of \cref{string_twists_are_finite}, or generalize Bauer's proof in~\cite[Lemma 2.1]{Bau03}. We suspect that sharp estimates for periodicity for Smith families of twisted $\O\ang n$-structures will have formulas involving Bernoulli numbers, because of their appearance in Adams' seminal work computing the image of the $J$-homomorphism in $\pi_*(\mathbb S)$~\cite{Ada63, AdamsJ2, Ada65a, Ada66}. We would be interested in learning whether this is the case.

For $n\le 16$, twisted $\O\ang n$-structures as defined here recover familiar twists of familiar tangential structures.
\begin{enumerate}
    \setcounter{enumi}{-1}
    \item $\O\ang 0$- and $\O\ang 1$-structures are canonically equivalent to $\O$-structures, i.e.\ no data, and so this story is vacuous.
    \setcounter{enumi}{1}
    \item $\O\ang 2$-structures are equivalent to $\SO$-structures, and this story recovers the twists in \cref{oriented_shearing}.
    \item $\pi_3(B\O) = 0$~\cite[\S IV]{Car36}, and $B\O\ang 3$ and $B\O\ang 4$ are equivalent to $B\Spin$. This story recovers the notion of twisted spin structure we discussed in \cref{twists_of_spin}.
    \setcounter{enumi}{4}
    \item $\pi_k(B\O)$ vanishes for $k=5$~\cite[Remarks 24.11]{Ste51}, $k=6$~\cite[3.72]{Eck51}, and $k=7$~\cite[Proposition 19.5]{BS53}, so $B\O\ang 5$, $B\O\ang 6$, $B\O\ang 7$, and $B\O\ang 8$ all coincide, and are $B\String$. This story recovers the standard story of twists of string bordism that we mentioned in \cref{string_not_periodic}.
    \setcounter{enumi}{8}
    \item Sati-Schreiber-Stasheff~\cite[Definition 1]{SSS09} call an $\O\ang 9$-structure a \term{fivebrane structure}, and in a sequel paper~\cite[\S 2.3]{SSS12}, they introduce twisted fivebrane structures over a space $X$ given by data of a map $X\to K(\Z, 8)$. It is possible to show that their definition is a special case of ours. Specifically, similarly to the identification $K(\Z, 4)\simeq B\Spin/B\String$ producing a map from the $K(\Z, 4)$-twists of string bordism to the more general group of $B\O/B\String$ twists of string bordism~\cite[(1.45)]{DY23}, the characteristic class $\tfrac 16 p_2\colon B\String\to K(\Z, 8)$ induces an equivalence of grouplike $E_\infty$-spaces $B\String/B\O\ang 9\to K(\Z, 8)$, and this equivalence leads to a map $K(\Z, 8)\simeq B\String/B\O\ang 9\to B\O/B\O\ang 9$ carrying Sati-Schreiber-Stasheff's twisted fivebrane structures to a subgroup of the fake vector bundle twists of fivebrane structure.
    \item Sati~\cite[Definition 2.4]{Sat15} calls $\O\ang{10}$-structures ``$2$-orientations.'' and studies their twists in (\textit{ibid.}, Definition 5.1). Like for twisted fivebrane structures, Sati's twists are classified by maps to $K(\Z/2, 9)\simeq B\O\ang{10}/B\O\ang 9$, and map to the twists we considered via the map $B\O\ang{10}/B\O\ang 9\to B\O/B\O\ang 9$.
    \setcounter{enumi}{12}
    \item As $\pi_{11}(B\O) = 0$~\cite{Bot59}, $\O\ang{11}$- and $\O\ang{12}$-structures coincide. Sati refers to this as a ``$2$-spin structure''~\cite[Definition 2.5]{Sat15}, and in (\textit{ibid.}, Definition 5.2) introduces twisted $2$-spin structures corresponding to $K(\Z/2, 10)\simeq B\O\ang{11}/B\O\ang{10}\to B\O/B\O\ang{10}$.
    \item As $\pi_k(B\O) = 0$ for $k = 13$, $14$, and $15$~\cite{Bot59}, $B\O\ang{13}=\dotsb= B\O\ang{16}$. Sati~\cite[Definition 3.1]{Sat15} names this tangential structure a \term{ninebrane structure}, and produces twisted ninebrane structures classified by the fractional Pontrjagin class $(1/240)p_3\colon B\O\ang{12}\to K(\Z, 12)$ (\textit{ibid.}, Definition 5.3). As in the previous cases, the map $K(\Z, 12)\simeq B\O\ang{13}/B\O\ang{11}\to B\O/B\O\ang{11}$ sends Sati's twists to ours.
\end{enumerate}
Passing to the infinite limit, we obtain an interpretation of maps to $B\O/E\O$, i.e.\ to $B\O$, as classifying vector bundle twists of framed bordism. These twists of framed bordism are studied in~\cite{Cru03, FSS24}.
\end{remark}

\section{Examples of Smith fiber sequences}\label{examplesofSmith}
In this section, we implement the discussion from the previous section for some commonly studied vector bundles. We
find many previously studied Smith homomorphisms, and also identify a few other well-known cofiber sequences,
including Wood's sequences, Wall's sequence, and the cofiber sequences associated to the Hopf maps
and to transfer maps, as Smith homomorphisms (\cref{Wall_exm,transfer_exm}). We include this Pokédex of examples in part to illustrate what kinds
of Smith cofiber sequences are out there; in part to make contact with preexisting literature; and in part to
illustrate how to put theorems such as \cref{the_cofiber_sequence} into practice to explicitly write down Smith cofiber
sequences.
\subsection{Twisting by real line bundles}
\label{taut_real_fams}
Our first family of examples use the tautological line bundle $\sigma\to B\Z/2$; its sphere bundle is the
tautological $\Z/2$-bundle $E\Z/2\to B\Z/2$, whose total space is contractible. Therefore by \cref{the_cofiber_sequence}, for any
$k\in\Z$, we have a cofiber sequence
\begin{equation}
\label{smith_Z2}
	\mathbb S\longrightarrow (B\Z/2)^{k(\sigma-1)}\xrightarrow{\sm_\sigma} \Sigma
	(B\Z/2)^{(k+1)(\sigma-1)},
\end{equation}
where $\sm_\sigma$ is the Smith homomorphism associated to $\sigma$. When $k = 0$, this is especially nice: the
middle spectrum is $\Sigma_+^\infty B\Z/2\simeq \mathbb S\vee \Sigma B\Z/2$ and the map $\mathbb S\to\mathbb
S\vee \Sigma B\Z/2$ is the inclusion of the first factor of the wedge sum, leading to a Smith \emph{isomorphism}
$\sm_\sigma\colon \Sigma^\infty B\Z/2\overset\simeq\to (B\Z/2)^\sigma$. This equivalence is well-known; see Kochman~\cite[Lemma 2.6.5]{Koc96} for a proof.
\begin{remark}
The Thom spectrum $(B\Z/2)^{k\sigma}$ is often denoted in the homotopy theory literature by $\RP^\infty_k$, so that $(B\Z/2)^{k(\sigma-1)}$ can be identified with its desuspension $\Sigma^{-k} \RP^\infty_k$. One justification for this notation stems from~\eqref{smith_Z2}: suspending it $k$ times gives a cofiber sequence
\begin{equation}
    \mathbb{S}^k \longrightarrow \RP^\infty_k \longrightarrow \RP^\infty_{k+1},
\end{equation}
which exhibits $\RP^\infty_{k+1}$ as the spectrum obtained by crushing the bottom cell of $\RP^\infty_k$.
\end{remark}
\begin{example}
\label{Z2_MO}
Smash~\eqref{smith_Z2} with $\MTO$. As every virtual bundle has a unique $\MTO$-orientation, this cofiber sequence
simplifies to
\begin{equation}
\label{MO_Smith_map}
	\MTO\longrightarrow \MTO\wedge (B\Z/2)_+ \xrightarrow{\sm_\sigma} \MTO\wedge \Sigma (B\Z/2)_+.
\end{equation}
This was the first Smith homomorphism studied; it was defined and named the Smith homomorphism by
Conner-Floyd~\cite[Theorem 26.1]{CF64}. Thom's celebrated calculation of $\Omega_*^\O$~\cite{ThomThesis} implies that $\MTO$ is a sum of shifts of $H\Z/2$; on each of these copies, the Smith map~\eqref{MO_Smith_map} is the cap product with the
nonzero element of $H^1(B\Z/2;\Z/2)$.

Stong~\cite[Proposition 5]{Sto69} and Uchida~\cite{Uch70} study related examples, where one
smashes~\eqref{MO_Smith_map} with spaces $X$; they identify the fiber $\MTO\wedge X$ and show that the long exact
sequence of homotopy groups splits. Their papers are among the earliest examples identifying the Smith long exact
sequence.\footnote{At the time, it was common to think of $\Omega_*^\O(B\Z/2)$ as the bordism groups of manifolds
$M$ equipped with a free involution $\tau$, rather than manifolds with a principal $\Z/2$-bundle; Stong and
Uchida's results are phrased in that language. To pass between these perspectives, rewrite $(M, \tau)$ as the
principal $\Z/2$-bundle $M\to M/\tau$; in the other direction, take the deck transformation involution of the total
space of a principal $\Z/2$-bundle.}
\end{example}
\begin{example}
\label{SO_Z2_exm}
Smash~\eqref{smith_Z2} with $\MTSO$. Since $\sigma$ is not orientable, but $2\sigma$ is oriented (see \cref{periodicity_of_SO}),
we obtain a $2$-periodic series of codimension-$1$ Smith homomorphisms between the
oriented bordism of $B\Z/2$ and $(B\Z/2, \sigma)$-twisted oriented bordism. The latter can be identified with
unoriented bordism: a $(B\Z/2, \sigma)$-twisted orientation on $V$ is data of a line bundle on $L$ and an
orientation of $V\oplus L$, which is no data at all: this identifies $L\cong\Det(V)^\ast\cong\Det(V)$ up to a
contractible space of choices, and $V\oplus\Det(V)$ is canonically oriented. So every vector bundle has a canonical
$(B\Z/2, \sigma)$-twisted orientation.

Therefore by \cref{the_cofiber_sequence} we obtain a $2$-periodic sequence of codimension-$1$ Smith homomorphisms:
\begin{subequations}
\label{SO(2)_periodic}
\begin{gather}
	\MTSO\longrightarrow \MTSO\wedge (B\Z/2)_+ \xrightarrow{\sm_\sigma} \Sigma\MTO\\
	\MTSO\longrightarrow \MTO \xrightarrow{\sm_\sigma} \Sigma\MTSO\wedge (B\Z/2)_+.
\end{gather}
\end{subequations}
These maps are obtained by taking smooth representatives of Poincaré duals of $w_1$ either of the manifold (when
the domain is $\Omega_\ast^\O$) or of the principal $\Z/2$-bundle (when the domain is $\Omega_\ast^\SO(B\Z/2)$).
See \cite[\S IV.B]{PhysSmith} for the physical interpretation of the corresponding long exact sequence of Anderson dual groups.

These Smith homomorphisms were first introduced by Komiya~\cite[\S 5]{Kom72}; see also Shibata~\cite[Proposition
2.1]{Shi73}. See Córdova-Ohmori-Shao-Yan~\cite[Appendix A]{COSY19}, Hason-Komargodski-Thorngren~\cite[\S
4.4]{HKT19}, and Fidkowski-Haah-Hastings~\cite{FHH20} for applications of these Smith homomorphisms to physics. The
splitting of the $k = 0$ case of~\eqref{smith_Z2} implies a homotopy equivalence $\MTSO\wedge B\Z/2\xrightarrow{\cong}
\Sigma\MTO$, a theorem of Atiyah~\cite[Proposition 4.1]{Ati61}.
\end{example}
\begin{example}
\label{spin_4periodic}
Some of the coolest examples of this kind come about by smashing~\eqref{smith_Z2} with $\MTSpin$. As we discussed in \cref{periodicity_of_spin}, the periodicity of this family is $1$, $2$, or $4$; a Whitney sum
formula calculation shows that $k\sigma$ is spin iff $k$ is a multiple of $4$, and therefore this Smith family is $4$-periodic. The corresponding $(B\Z/2, k\sigma)$-twisted spin
bordism groups can be identified with $H$-bordism for certain Lie groups $H$, as discussed in \cref{twists_of_spin}; specifically,
\begin{enumerate}
	\item a $(B\Z/2, \sigma)$-twisted spin structure is equivalent to a \pinm structure;
	\item a $(B\Z/2, 2\sigma)$-twisted spin structure is equivalent to an $H$ structure, where $H =
	\Spin\times_{\set{\pm 1}} \Z/4$; and
	\item a $(B\Z/2, 3\sigma)$-twisted spin structure is equivalent to a \pinp structure.
\end{enumerate}
Using \cref{the_cofiber_sequence} once again, the $4$-periodic sequence of codimension-$1$ Smith homomorphisms takes the form
\begin{subequations}
\label{spin_Z2_Smith}
\begin{gather}
    \label{spin_pinm}
	\MTSpin\longrightarrow \MTSpin\wedge (B\Z/2)_+ \xrightarrow{\sm_\sigma} \Sigma\MTPin^-\\
        \label{pinm_spinZ4}
	\MTSpin\longrightarrow \MTPin^- \xrightarrow{\sm_\sigma} \Sigma \mathit{MT}(\Spin\times_{\set{\pm
	1}}\Z/4)\\
	\label{pinp_spinc2}
	\MTSpin\longrightarrow \mathit{MT}(\Spin\times_{\set{\pm 1}}\Z/4) \xrightarrow{\sm_\sigma} \Sigma
	\MTPin^+\\
	\label{pinp_spin_z2}
	\MTSpin\longrightarrow \MTPin^+\xrightarrow{\sm_\sigma} \Sigma \MTSpin\wedge (B\Z/2)_+,
\end{gather}
\end{subequations}
with each $\sm_\sigma$ obtained by taking a smooth representative of a Poincaré dual of $w_1$ of the manifold or of a
associated principal $\Z/2$-bundle, like in~\eqref{SO(2)_periodic}.

The splitting of the $k = 0$ Smith homomorphism
in~\eqref{smith_Z2} gives us an equivalence $\MTSpin\wedge B\Z/2\simeq\MTPin^-$, a theorem of Peterson~\cite[\S
7]{Pet68}.

This family of Smith homomorphisms has been discussed in the literature before. The piece involving
$\Spin\times\Z/2$ and $\Pin^-$ was used by Peterson~\cite[\S 7]{Pet68} and
Anderson-Brown-Peterson~\cite{ABP69}, who say that it was already ``well-known.'' The long exact sequence
corresponding to~\eqref{pinp_spinc2} appears in~\cite[Theorem 3.1]{Gia73}, where it is attributed to Stong. The
Smith homomorphism $\sm_\sigma$ in~\eqref{pinp_spin_z2} appears in Kreck~\cite[\S 4]{Kre84}. The long exact sequence induced by~\eqref{pinm_spinZ4} is used by Botvinnik-Rosenberg~\cite[\S 2]{BR23}, who also discuss~\eqref{pinp_spinc2} and~\eqref{pinp_spin_z2}. The composition of two
maps in~\eqref{spin_Z2_Smith} in a row to go between \pinp and \pinm bordism appears in Kirby-Taylor~\cite[Lemma
7]{KT90pinp}. We work out the corresponding long exact sequences, as well as some physical consequences for all four Smith homomorphisms in \cite[\S IV.C]{PhysSmith}.

The full family appears more recently in work of Hambleton-Su~\cite[\S 4.C]{HS13},
Kapustin-Thorngren-Turzillo-Wang~\cite[\S
8]{KTTW15}, Tachikawa-Yonekura~\cite[\S 3.1]{TY19}, Hason-Komargodski-Thorngren~\cite[\S 4.4]{HKT19}, and
Wan-Wang-Zheng~\cite[\S 6.7]{WWZ19}. Ekholm~\cite{Ekh98} produces the $4$-periodic sequence of tangential structures in a different setting but does not discuss the Smith homomorphism.
\end{example}
\begin{example}
\label{string_Z2}
As we discussed in \cref{string_not_periodic}, for a general vector bundle $V\to X$, there is no guarantee that $kV$ has a string
structure. However, on $B\Z/2$, $k\sigma$ has a string structure iff $k\equiv 0\bmod 8$, so there is an
eight-periodic family of codimension-$1$ Smith homomorphisms between bordism groups of manifolds with $(B\Z/2,
k\sigma)$-twisted string structures for various $k$.\footnote{\label{w4_foot}To prove the claimed fact about string structures on
$k\sigma$, first use the Whitney sum formula to show that $w_1(k\sigma)$, $w_2(k\sigma)$, and $w_4(k\sigma)$ all
vanish iff $k \equiv 0\bmod 8$. The reduction mod $2$ map $H^4(B\Z/2;\Z)\to H^4(B\Z/2;\Z/2)$ is an isomorphism, so
the string obstruction $\lambda(k\sigma)$ vanishes iff its mod $2$ reduction does, and $\lambda\bmod 2 = w_4$.}

In \cref{spin_4periodic}, each $(B\Z/2, k\sigma)$-twisted spin structure for $k\in\Z/4$ turned out to be equivalent to a $G[k]$-structure for
four Lie groups $G[k]$: $G[0]\cong \Spin\times\Z/2$, $G[1]\cong\Pin^-$, $G[2]\cong \Spin\times_{\set{\pm 1}}\Z/4$, and $G[3]\cong\Pin^+$ (see also~\cite[(3.1)]{TY19}). An analogous result is true for the eight twisted string structures, but in the world of $2$-groups, because the string group is
a Lie $2$-group~\cite{SP11}. We will see that for each $k\in\Z/8$, there is a Lie $2$-group
$\mathbb G[k]$ and a map $\xi\colon B\mathbb G[k]\to B\O$ such that $\mathbb G[k]$-structures on a smooth manifolds
are naturally equivalent to $(B\Z/2, k\sigma)$-twisted string structures. Though it is difficult to describe $\mathbb G[k]$ explicitly for most $k$, we will realize it as an extension of $G[k\bmod 4]$ by $B\T$: such extensions of a
compact Lie group $G$ by $B\T$ are classified by $H^4(BG;\Z)$~\cite{SP11, Wei22}, and we will find classes in this cohomology group classifying all eight $\mathbb G[k]$ $2$-groups.

Recall from \cref{twists_of_spin} that if $V\to X$ is a pin\textsuperscript{$\pm$} vector bundle, then $V\mp\Det(V)$ has a canonical spin structure, meaning that the string obstruction class $\lambda(V\mp\Det(V))\in H^4(X;\Z)$ is defined. Let
\begin{equation}
    \lambda^\pm \coloneqq \lambda(V \mp\Det(V))\in H^4(B\Pin^\pm;\Z).
\end{equation}
The inclusion $j\colon \Z/2\hookrightarrow\T$ pulls back the tautological complex line bundle $L\to B\T$ to $2\sigma\to B\Z/2$. Therefore a $(B\Z/2, 2\sigma)$-twisted spin structure induces a $(B\T, L)$-twisted spin structure---or in the language of \cref{twists_of_spin}, a $\Spin\times_{\set{\pm 1}}\Z/4$ structure induces a \spinc structure via $j$. Let $\lambda^c\in H^4(B\Spin^c;\Z)$ be the class defined in~\cite[Definition 2.6]{DY24},\footnote{The class $\lambda^c$ is called $q_2$ in~\cite{Dua18} and $\widehat p_2$ in~\cite[\S 2.7]{CN19}. In the notation of~\cite[(3.10)]{CY20}, $\lambda^c = c_1(L)^2 - p_c$, and in~\cite[Construction 2]{Dev22}, $\lambda^c = c_1(L)^2 -(c_1^2-p_1)/2$, where $L$ is the determinant line bundle.}
namely $\lambda(V\oplus L)$, where $L$ is the determinant bundle of the \spinc structure. Pulling back by $j$, we will regard $\lambda^c$ as a class in $H^4(B(\Spin\times_{\set{\pm 1}}\Z/4);\Z/2)$.

Thus, essentially by definition, for $k = -1,0,1,2\bmod 8$, the obstruction to lifting from a $G[k\bmod 4]$-structure to a $\mathbb G[k]$-structure is respectively $\lambda^+(V)$, $\lambda(V)$, $\lambda^-(V)$, and $\lambda^c(V)$, where $V\to BG[k]$ is the tautological bundle. This gives the first half of \cref{string_twists_table}.

Let $e^\Z\in H^1(B\Z/2;\Z_{w_1(\sigma)})$ denote the twisted $\Z$-cohomology Euler class as defined in \cref{twisted_Euler_class} for $\mathcal R = H\Z$. Čadek~\cite[Lemma 1]{Cad99} shows that $(e^\Z)^{2\ell}$ is an untwisted class, and moreover is the unique nonzero element of $H^{2\ell}(B\Z/2;\Z)$. Thus, since $4\sigma\to B\Z/2$ is spin but not string (see Footnote~\ref{w4_foot}), the class $\lambda(4\sigma)$ is defined and equals $e^\Z(\sigma)^4$. The string obstruction $\lambda$ satisfies a Whitney sum formula~\cite[Lemma 1.6]{Deb23} for pairs of spin vector bundles,\footnote{See also Johnson-Freyd and Treumann~\cite[\S 1.4]{JFT20} for a proof sketch and Jenquin~\cite[Corollary 4.9]{Jen05} for a similar result in a generalized cohomology theory.} so for $k = 3,4,5,6\bmod 8$,
\begin{equation}
    \lambda(V + k\sigma) = \lambda(V + (k-4)\sigma) + \lambda(4\sigma) = \lambda(V + (k-4)\sigma) + e^\Z(\sigma)^4,
\end{equation}
so by adding $e^\Z(\sigma)^4$ to the obstruction classes for the first four entries in \cref{string_twists_table}, we get the obstruction classes for the remaining entries.

In some cases it is possible to describe $\mathbb G[k]$ more explicitly. For example, when $k \equiv 0\bmod 8$, we get $\mathbb G[0] = \String\times\Z/2$, as the obstruction class is trivial on the $\Z/2$ factor. When $k\equiv 4\bmod 8$, $\mathbb G[4] = \String\times_{B\T} \cat{sLine}$, where $\cat{sLine}$ is the abelian Lie $2$-group of Hermitian super lines. This can be proven in a similar manner to~\cite[Corollary 10.23]{DDHM22}, which establishes an analogous result for twisted spin structures. Here we use that $\cat{sLine}$ is the extension of $\Z/2$ by $B\T$ classified by $e^\Z(\sigma)^4$; since $H^4(B\Z/2;\Z)\cong\Z/2$, this follows as soon as we know that $\cat{sLine}$ is a nonsplit central extension of $\Z/2$, which is straightforward.
\begin{table}[h!]
    \begin{tabular}{c c c}
    \toprule
        $k\bmod 8$ & twisted spin structure & twisted string structure obstruction class\\
    \midrule
    $-1$ & $\Pin^+$ & $\lambda^+(V)$\\
    $\phantom{-}0$ & $\Spin\times \Z/2$ & $\lambda(V)$\\
    $\phantom{-}1$ & $\Pin^-$ & $\lambda^-(V)$\\
    $\phantom{-}2$ & $\Spin\times_{\set{\pm 1}}\Z/4$ & $\lambda^c(V)$\\
    $\phantom{-}3$ &$\Pin^+$ & $\lambda^+(V) + e^\Z(\sigma)^4$\\
    $\phantom{-}4$ &$\Spin\times\Z/2$ & $\lambda(V) + e^\Z(\sigma)^4$\\
    $\phantom{-}5$ &$\Pin^-$ & $\lambda^-(V) + e^\Z(\sigma)^4$\\
    $\phantom{-}6$ &$\Spin\times_{\set{\pm 1}}\Z/4$ & $\lambda^c(V) + e^\Z(\sigma)^4$\\
    \bottomrule
    \end{tabular}
\caption{Summary of the eight twisted string structures over $B\Z/2$, as described in \cref{string_Z2}. Given $k\in\Z$, the middle column gives the group $G$ such that a $(B\Z/2, k\sigma)$-twisted spin structure is a $G$-structure, as in \cref{twists_of_spin}. The rightmost column gives the obstruction to lifting from a $(B\Z/2, k\sigma)$-twisted spin structure to a $(B\Z/2, k\sigma)$-twisted string structure as a class in $H^4(BG;\Z)$. Here $V$ is the tauological bundle pulled back from $B\O$ and $\sigma$ is the tautological bundle pulled back from $B\Z/2$. Notation for the characteristic classes is described in \cref{string_Z2}. As $H^4(BG;\Z)$ classes correspond to $2$-group central extensions by $B\T$, this implicitly describes a $2$-group $\mathbb G[k]$ such that a $\mathbb G[k]$-structure is a $(B\Z/2, k\sigma)$-twisted string structure.}
\label{string_twists_table}
\end{table}
\end{example}
\begin{example}
\label{spinc_Z2_exm}
If one smashes~\eqref{smith_Z2} with $\MTSpin^c$, one obtains a very similar story to \cref{SO_Z2_exm}: twice any
vector bundle is complex, hence \spinc, and $(B\Z/2, \sigma)$-twisted \spinc bordism is naturally identified with
\pinc bordism, as we discussed in \cref{spinc_shearing}. So taking Poincaré duals of $w_1$ as in \cref{SO_Z2_exm} defines a $2$-periodic sequence of
codimension-$1$ Smith homomorphisms
\begin{subequations}
\begin{gather}
	\MTSpin^c\longrightarrow \MTSpin^c\wedge (B\Z/2)_+\xrightarrow{\sm_\sigma} \Sigma\MTPin^c\\
	\MTSpin^c\longrightarrow \MTPin^c\xrightarrow{\sm_\sigma} \Sigma\MTSpin^c\wedge
	(B\Z/2)_+.
\end{gather}
\end{subequations}
To our knowledge, these long exact sequences first appear in Hambleton-Su~\cite[\S 4.C]{HS13}.

We also obtain an equivalence $\MTSpinc\wedge B\Z/2\overset\simeq\to \Sigma\MTPin^c$, which was first observed by
Bahri-Gilkey~\cite[\S 3]{BG87a}. See Shiozaki-Shapourian-Ryu~\cite[\S E.1]{SSR17b} and Kobayashi~\cite[\S IV]{Kob21} for applications in
condensed-matter physics and~\cite{DYY} for an application of a closely related Smith long exact sequence.
\end{example}
\begin{example}
\label{Wall_exm}
Pull back~\eqref{smith_Z2} along the map $B\Z\to B\Z/2$, i.e.\ $S^1 = \RP^1\hookrightarrow\RP^\infty$. The sphere
bundle of $\sigma\to\RP^1$ is not contractible: it is the double cover $S^1\to\RP^1$, and its Thom space is
$\RP^2$. Therefore we obtain from \cref{the_cofiber_sequence} a cofiber sequence $\Sigma_+^\infty S^1\to \Sigma^\infty\RP^2\to
\Sigma_+^{1+\infty}\RP^1$, which is a rotated version of the multiplication-by-$2$ cofiber sequence
\begin{equation}
\label{smith_BZ}
	\mathbb S\overset{2}{\longrightarrow} \mathbb S\longrightarrow \Sigma^{-1+\infty}\RP^2.
\end{equation}
The same story applies to the complex, quaternionic, and octonionic Hopf fibrations: their cofibers are the
respective projective planes $\Sigma^{-2+\infty}\CP^2$, $\Sigma^{-4+\infty}\HP^2$, and
$\Sigma^{-8+\infty}\mathbb{OP}^2$, and in each case the map to the cofiber is a Smith homomorphism for the
tautological line bundle over the respective projective line (which is a sphere). In the case of the complex Hopf
fibration, after smashing with $\ko$ or $\KO$, one obtains the Wood cofiber sequences~\cite{Woo63}
$\Sigma\KO\xrightarrow{\eta} \KO\to\KU$ and $\Sigma\ko \xrightarrow{\eta} \ko\to\ku$ as rotated versions of Smith
cofiber sequences.

Smash~\eqref{smith_BZ} with $\MTSO$ and you obtain Wall's cofiber sequence~\cite[Theorem 3]{Wal60}
\begin{equation}
\label{Wall_seq}
	\MTSO\overset{2}{\longrightarrow} \MTSO\longrightarrow \mathcal W,
\end{equation}
where $\mathcal W$ is the Thom spectrum whose homotopy groups are the bordism groups of manifolds with an integral
lift of $w_1$. This follows from Atiyah's identification of $\mathcal W\simeq
\Sigma^{-1}\MTSO\wedge\RP^2$~\cite[\S 4]{Ati61}, but it is also easy to directly check that an integral lift of
$w_1$ is equivalent data to a $(\RP^1, \sigma)$-twisted orientation, using that $\RP^1$ is a $B\Z$.

It is also interesting to smash~\eqref{smith_BZ} with $\MTSpin$; we work out the induced long exact sequence of bordism groups in low degrees in \cref{fig:spintimes2LES}, and this long exact sequence also appears in~\cite{DYY}.
\end{example}
\begin{example}
\label{transfer_exm}
Let $\pi\colon E\to B$ be a principal $\Z/2$-bundle and $L\coloneqq E\times_{\Z/2}\R\to B$ be the associated line
bundle.  Then we have a Smith homomorphism $\sm_L\colon B^{-L}\to \Sigma_+^\infty B$. The fiber is the Thom
spectrum of the pullback of $L$ to its sphere bundle; the sphere bundle is $E$ and $\pi^\ast(L)$ is trivial, so \cref{the_cofiber_sequence} gives us a cofiber sequence
\begin{equation}
\label{transfer_Smith}
	B^{-L}\xrightarrow{\sm_L} \Sigma_+^\infty B\overset{\tau}{\longrightarrow} \Sigma_+^\infty E.
\end{equation}
\begin{lemma}
\label{transfer_Smith_lem}
The map $\tau$ in~\eqref{transfer_Smith} is the Becker-Gottlieb transfer~\cite{Rou72, KP72, BG75} for $\pi$.
\end{lemma}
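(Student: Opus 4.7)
The plan is to identify $\tau$ explicitly as the Pontrjagin--Thom collapse for the canonical fiberwise embedding $E = S_B(L) \hookrightarrow L$, and to recognize this as a standard construction of the Becker--Gottlieb transfer for the double cover $\pi\colon E\to B$. First I would unwind the connecting map $\tau$ using the proof of \cref{the_cofiber_sequence}: the cofiber sequence \eqref{transfer_Smith} is obtained from the $\O(1)$-equivariant cofiber sequence of pointed spaces
\[ S(\R)_+ \longrightarrow D(\R)_+ \simeq S^0 \longrightarrow S^{\R} \]
(with $\R$ the sign representation) by forming the associated bundle over $B$---yielding $\Sigmainfty E\to\Sigmainfty B\to B^L$, since $S_B(L) = E$ and $D_B(L)\simeq B$---and then smashing with $S^{-L}$ and rotating. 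Under this unwinding, $\tau$ is the $L$-desuspension of the Puppe connecting map $B^L\to\Sigma\Sigmainfty E$, which has the geometric interpretation of the fiberwise Pontrjagin--Thom collapse associated to the codimension-$1$ embedding $E\hookrightarrow L$, whose normal bundle is canonically trivialized by the outward radial direction.

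Next I would identify this collapse with the Becker--Gottlieb transfer. For any smooth fiber bundle with compact smooth fibers, the Becker--Gottlieb transfer admits a Pontrjagin--Thom description~\cite{BG75}: it is the desuspended collapse map associated to any smooth fiberwise embedding of $E$ into a vector bundle over $B$, twisted by the negative of the vertical tangent bundle $T_\pi$. For our double cover $T_\pi$ is trivial of rank $0$, and the unit sphere bundle gives a canonical fiberwise embedding $E\hookrightarrow L$, so the resulting Pontrjagin--Thom collapse agrees with $\tau$. Independence of the Becker--Gottlieb transfer from the choice of fiberwise embedding---proved in~\cite{BG75} by stabilizing to an ambient trivial bundle $\underline{\R}^N$ and comparing via a cofibration argument---then ensures that our description via the line bundle $L$ gives the same stable map as the classical construction.

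The main technical obstacle is the bookkeeping of suspensions and orientations. Concretely, one has to verify that the canonical trivialization of $\pi^*L$ on $E$ (identifying $E^{-\pi^*L}\simeq\Sigma^{-1}\Sigmainfty E$) matches the radial trivialization of the normal bundle of $E\hookrightarrow L$, so that the $L$-desuspension of the Pontrjagin--Thom collapse yields $\Sigmainfty B\to\Sigmainfty E$ with the correct sign. By naturality of both sides in the principal $\Z/2$-bundle, this verification reduces to the universal case $E\Z/2\to B\Z/2$, where it can be checked on a convenient finite-dimensional skeleton such as $S^{2n+1}\to\RP^{2n+1}$.
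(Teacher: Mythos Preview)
Your approach is correct but takes a genuinely different route from the paper. You unwind $\tau$ geometrically as a fiberwise Pontrjagin--Thom collapse for the embedding $E=S_B(L)\hookrightarrow L$ and then invoke the classical collapse description of the Becker--Gottlieb transfer, reducing the sign/trivialization bookkeeping to the universal case at the end. The paper instead works equivariantly from the start: it passes immediately to the universal case $E\Z/2\to B\Z/2$, describes the transfer there as the homotopy orbits $f_{h\Z/2}$ of an explicit map of $\Z/2$-spectra $f\colon\mathbb S\to\Sigma^{1-\sigma}(\Z/2)_+$ whose cofiber is $\mathbb S^{-\sigma}$, and then uses the identification $(\mathbb S^V)_{hG}\simeq(BG)^V$ together with the fact that $(\text{--})_{h\Z/2}$ preserves cofiber sequences to match the resulting cofiber sequence with the Smith cofiber sequence $(\RP^\infty)^{-\sigma}\to\Sigma_+^\infty\RP^\infty\to\mathbb S$.

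What each buys: your geometric argument is more concrete and makes direct contact with the original Becker--Gottlieb construction, but it carries the orientation/trivialization bookkeeping you flag (matching the radial trivialization of $\nu_{E\subset L}$ with the canonical trivialization of $\pi^*L$). The paper's equivariant argument avoids this bookkeeping entirely---once one accepts the description of the transfer as $f_{h\Z/2}$, the identification with $\tau$ is formal. The paper also notes (immediately after its proof) that for $B$ a finite CW complex one can argue more classically via Cusick's identification of cofibers of transfer maps for double covers, which is closer in spirit to your approach.
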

\begin{proof}
It suffices to work universally with the
Smith cofiber sequence $(B\Z/2)^{-\sigma}\to \Sigma_+^\infty B\Z/2\to \Sigma_+^\infty E\Z/2$, i.e.\
$(\RP^\infty)^{-\sigma}\to \Sigma_+^\infty\RP^\infty\to\mathbb S$, and to show that the latter map is the transfer
for $E\Z/2\to B\Z/2$.

This transfer map admits the following description: consider the map of $\Z/2$-spectra\footnote{This fact, and our
argument using it, works for both Borel and genuine $\Z/2$-spectra.} $f\colon \mathbb S\to
\Sigma^{1-\sigma}(\Z/2)_+$, whose cofiber is $\mathbb S^{-\sigma}$. Upon taking homotopy orbits, we obtain a map
$f_{h\Z/2}\colon \Sigma_+^\infty \RP^\infty\to \mathbb S$, and this is the transfer map.

If $G$ is a finite group and $V\in\mathit{RO}(G)$, there is a natural equivalence of spectra $(\mathbb
S^V)_{hG}\simeq (BG)^V$.\footnote{One quick way to prove this uses the Ando-Blumberg-Gepner-Hopkins-Rezk approach
to Thom spectra~\cite{ABGHR14a, ABGHR14b}: both $(\mathbb S^V)_{hG}$ and $(BG)^V$ are both the colimit of the
$\mathrm{pt}/G$-shaped diagram whose value on $\mathrm{pt}$ is $\mathbb S^V$ and whose value on the morphism set
$G$ encodes the $G$-action on $\mathbb S^V$~\cite[Theorem 1.17]{ABGHR14a}. It is also possible to prove this more classically by working with
Thom spaces.} And taking homotopy orbits of $G$-spectra preserves cofiber sequences, so the fiber of the transfer
$f_{h\Z/2}$ is the map $(\RP^\infty)^{-\sigma}\to \Sigma_+^\infty \RP^\infty$ given by the ``inclusion'' of virtual
representations $-\sigma\hookrightarrow 0$, which is the Smith homomorphism we began with.
\end{proof}
In the case that $B$ is a finite CW complex, one can prove \cref{transfer_Smith_lem} more classically by adapting
Cusick's calculation~\cite[Corollary 2.11]{Cus85} identifying the cofibers of transfer maps for double covers.
\end{example}

\begin{remark}
For another example along the lines of~\eqref{transfer_Smith}, Morisugi~\cite[Theorem 1.3]{Mor09} shows that the
cofibers of certain Smith homomorphisms over compact Lie groups can be described as Becker-Schultz transfer
maps~\cite[\S 4]{BS74}. And Uchida~\cite{Uch69}, motivated by the study of immersions, works out the Smith long
exact sequences of a few special cases of \cref{transfer_exm}, where $E = B\O(k)\times B\O(k)$ and $B = B(\O(1)\ltimes
(\O(k)^{\times 2}))$, where $\O(1)$ acts on $\O(k)^{\times 2}$ by swapping the two factors.
\end{remark}
\begin{remark}
The ease of modifying the Smith long exact sequence by a vector bundle twist suggests that \cref{transfer_exm} could be generalized to some sort of twisted transfer map. The relevant twisted transfer maps have been constructed by Kashiwabara-Zare~\cite{KZ18}.
\end{remark}
\subsection{Twisting by complex line bundles}\label{CP_inf_twist}
Now we consider the analogous family of examples arising from the tautological complex line bundle $L\to B\T$. Its
sphere bundle is $E\T\to B\T$, which is contractible, so just like in~\eqref{smith_Z2}, we have for any $k\in\Z$ a
cofiber sequence
\begin{equation}
\label{smith_T}
	\mathbb S\longrightarrow (B\T)^{k(L-2)}\xrightarrow{\sm_L} \Sigma^2 (B\T)^{(k+1)(L-2)}.
\end{equation}
Again, when $k = 0$, this sequence splits, yielding another Smith isomorphism $\Sigma^\infty B\T\xrightarrow{\cong}
(B\T)^L$. This equivalence is well-known, e.g.~\cite[Example 2.1]{Ada74}.
\begin{example}
\label{cpx_triv_exms}
Let $G$ be one of $\O$, $\SO$, $\Spin^c$, or $\U$; then the tautological line bundle over $B\T$ has a
$G$-structure, and $\mathit{MTG}$ is an $E_\infty$-ring spectrum and we can make sense of $G$-orientations. The
$G$-orientation on $L$ untwists the Thom spectrum, so smashing~\eqref{smith_T} with $\mathit{MTG}$ has a similar
effect to \cref{Z2_MO}: the result is a cofiber sequence
\begin{equation}
	\mathit{MTG}\longrightarrow \mathit{MTG}\wedge (B\T)_+\xrightarrow{\sm_L} \Sigma^2
	\mathit{MTG}\wedge (B\T)_+.
\end{equation}
For $G = \U$, this Smith homomorphism was first studied by Conner-Floyd~\cite[\S 5]{CF66}, and also appears in Ray~\cite[\S 3]{Ray72}.
\begin{lemma}
\label{U_smith_split}
For $G = \O$, $\SO$, $\Spin^c$, or $\U$, there is an $\mathit{MTG}$-module equivalence
\begin{equation}
	\mathit{MTG}\wedge (B\T)_+ \simeq \bigvee_{k\ge 0} \Sigma^{2k}\mathit{MTG}.
\end{equation}
\end{lemma}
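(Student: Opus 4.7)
The plan is to establish this splitting by iterating the Smith cofiber sequence~\eqref{smith_T} and taking a connectivity-based colimit.

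First I would observe that for each $G\in\{\O,\SO,\Spin^c,\U\}$, the tautological complex line bundle $L\to B\T$ admits a $G$-structure: this is tautological for $G=\O$, comes from the canonical orientation of complex line bundles for $\SO$, and is automatic for $\U$ and $\Spin^c$ since complex bundles carry canonical $\U$- and hence $\Spinc$-structures. Using two-out-of-three data this $G$-structure extends to $kL$ for every $k\in\Z$, so the shearing lemma (\cref{shearing_lem}) gives a natural equivalence $\mathit{MTG}\wedge (B\T)^{k(L-2)}\simeq \mathit{MTG}\wedge (B\T)_+$ for every $k$.

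Next I would smash the $k=0$ instance of~\eqref{smith_T} with $\mathit{MTG}$ and untwist the right-hand term by the previous step. The result is a cofiber sequence
\begin{equation}
\mathit{MTG}\longrightarrow \mathit{MTG}\wedge (B\T)_+ \xrightarrow{\sm_L} \Sigma^{2}\mathit{MTG}\wedge (B\T)_+.
\end{equation}
The leftmost map is induced by the basepoint inclusion $\mathrm{pt}\hookrightarrow B\T$, and the crush map $B\T\to\mathrm{pt}$ provides a retraction; hence the cofiber sequence splits, yielding $\mathit{MTG}\wedge (B\T)_+\simeq \mathit{MTG}\vee \Sigma^{2}\mathit{MTG}\wedge (B\T)_+$. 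Iterating $N$ times (applying the splitting to the rightmost summand each time) produces
\begin{equation}
\mathit{MTG}\wedge (B\T)_+ \simeq \left(\bigvee_{k=0}^{N-1}\Sigma^{2k}\mathit{MTG}\right) \vee \Sigma^{2N}\mathit{MTG}\wedge (B\T)_+.
\end{equation}

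To pass to the colimit I would assemble the retractions from the iterated splittings into a single map $\Phi\colon \bigvee_{k\ge 0}\Sigma^{2k}\mathit{MTG}\to \mathit{MTG}\wedge (B\T)_+$ and show it is a weak equivalence by checking $\pi_*$. For each $G$ under consideration, $\mathit{MTG}$ is connective, so the residual term $\Sigma^{2N}\mathit{MTG}\wedge (B\T)_+$ has trivial homotopy in degrees below $2N$. Thus in any fixed degree $n$, once $N>n/2$ the finite splitting above forces $\pi_n(\mathit{MTG}\wedge (B\T)_+)\cong \bigoplus_{k=0}^{N-1}\pi_{n-2k}(\mathit{MTG})$, and for the same connectivity reason only finitely many wedge summands on the left contribute to $\pi_n$, so both sides agree. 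The main obstacle is precisely this colimit passage: organizing the inductively defined splittings coherently so that $\Phi$ realizes the desired equivalence. Once the connectivity bound is in hand, this reduces to the standard fact that a map of bounded-below spectra inducing isomorphisms on all $\pi_*$ is a weak equivalence.
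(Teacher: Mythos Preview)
Your proposal is correct and is essentially the same argument as the paper's: both split off one copy of $\mathit{MTG}$ via the basepoint, untwist the remaining $(B\T)^{L-2}$ summand using the $G$-orientation of $L$, and iterate. The paper simply says ``we carry on in a similar way'' for the inductive step, whereas you have written out the connectivity argument for the passage to the infinite wedge more explicitly.
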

\begin{proof}
The zeroth step is splitting off the basepoint: $\mathit{MTG}\wedge (B\T)_+\simeq \mathit{MTG}\vee
\mathit{MTG}\wedge (B\T)$. As noted above, $\Sigma^\infty B\T\simeq (B\T)^L$, and we have a Thom isomorphism
$\mathit{MTG}\wedge (B\T)^L\simeq \mathit{MTG}\wedge \Sigma^2 (B\T)_+$ of $\mathit{MTG}$-modules. We are now in the same situation as at the beginning of the proof, but shifted up by $2$, and we carry on in a similar way.
\end{proof}
\end{example}
\begin{example}
\label{spinc_spin_Smith}
Smash~\eqref{smith_T} with $\MTSpin$; the bundle $L\to B\T$ is oriented but not spin, so $2L$ is spin, and
therefore we obtain a $2$-periodic, codimension-$2$ family of Smith homomorphisms between the spin bordism of $B\T$
and $(B\T, L)$-twisted spin bordism. A $(B\T, L)$-twisted spin structure is equivalent data to a \spinc structure, as we discussed in \cref{twists_of_spin},
so this Smith family takes the form
\begin{subequations}
\label{spinc_both}
\begin{gather}
\label{first_spinc}
	\MTSpin\longrightarrow \MTSpin^c\xrightarrow{\sm_L} \Sigma^2 \MTSpin\wedge (B\T)_+\\
	\MTSpin\longrightarrow \MTSpin\wedge (B\T)_+\xrightarrow{\sm_L} \Sigma^2 \MTSpin^c.
\end{gather}
\end{subequations}
The long exact sequence arising from~\eqref{first_spinc} was identified by Kirby-Taylor~\cite[Corollary 6.12,
Remark 6.14]{KT90}. The splitting of~\eqref{smith_T} when $k = 0$ leads to an equivalence $\MTSpin\wedge
B\T\simeq\Sigma^2\MTSpin^c$, a theorem due to Stong~\cite[Chapter XI]{Sto68}. We discuss the physical interpretation of \eqref{spinc_both} in \cite[\S IV.A]{PhysSmith}.
\end{example}

It would be interesting to study analogues of this example for \pinc or pin\textsuperscript{$\tilde c\pm$} bordism
and applications to invertible phases. Kirby-Taylor~\cite[Remark 6.15]{KT90} consider two additional analogues
of~\eqref{first_spinc}, one of which is related to a Smith long exact sequence for $G$-bordism where
$G\coloneqq\Spin\times_{\set{\pm 1}}\O(2)$ (see~\cite[\S 3.3]{DYY25}). Stehouwer~\cite[\S 4]{Ste21} computes $G$-bordism groups in low dimensions, and $G$-bordism also appears in~\cite{DDHM21, DDHM22, DYY}. In addition, Hambleton-Kreck-Teichner~\cite[\S 2]{HKT94} study a \pinm and \pinc analogue of~\cref{spinc_spin_Smith}.
\begin{example}\label{spinzn}
Pull back~\eqref{smith_T} along the inclusion $\Z/k\hookrightarrow\T$, giving us Smith homomorphisms
$(B\Z/k)^{k(L-2)}\to \Sigma^2 (B\Z/k)^{(k+1)(L-2)}$, where $L$ is the complex line bundle induced by the rotation
representation of $\Z/k$ on $\C$. Recall from \cref{the_cofiber_sequence} the fiber sequence
\begin{equation}
	S(V_2)^{V_1} \to X^{V_1} \to X^{V_1\oplus V_2}.
\end{equation}
For this example, we start with $X=B\Z/n$, $V_2=i^*L-2$ (for $L$ as in the previous example and $2$ the trivial
complex line bundle), and $V_1 = k(i^*L-2)$. We can compute the sphere bundle $S(i^*L)$ by fitting it into a
pullback square:
\begin{equation}
\begin{gathered}
    \begin{tikzcd}
    S(i^*L) \arrow[r,] \arrow[d, "p"] \arrow[dr, phantom, "\usebox\pullback" , very near start, color=black]
    & S(L)\simeq * \arrow[d]\\
    B\Z/n \arrow[r]  & B\T.
\end{tikzcd}
\end{gathered}
\end{equation}
As noted above, $S(L)$ is contractible as it is the total space of the universal fibration. Therefore, the other three corners of the square form a fiber sequence. To compute the fiber of $B\Z/n\to B\T$, we notice that applying the classifying space functor to the short exact sequence $\Z/n\hookrightarrow \T \xrightarrow{\times n} \T$ gives a fibration $B\Z/n\to B\T\to B\T$. Then, recognizing the map $B\T\to B\T$ as the classifying map for a principal $\T$-bundle over $\T$ with total space $B\Z/n$, we conclude that the fiber of the map $B\Z/n\to B\T$ is exactly $\T$. So, $S(i^*L)\simeq S^1$.

Next, we need to pull back $V_1$ along the projection $p\colon S(i^*L)\to B\Z/n$. We have that $p^*(k(i^*L-1))\cong \bigoplus_k p^*(i^*L)$. Since $L$ is oriented as a real vector bundle, its pullbacks are as well, so $p^*i^*L$ is oriented when considered as a real vector bundle over $S^1$, and thus it is the trivial 2-plane bundle.

Therefore, we recognize the Thom spectrum $S(i^*L)^{kp^*(i^*L)}$ as
\begin{align*}
    S(i^*L)^{kp^*(i^*L)} &\simeq (S^1)^{k} \\
    &\simeq \Sigma^{2k} \textnormal{Th}(S^1;0) \\
    &\simeq \Sigma^{2k}(\Sigma_+^\infty S^1) \\
    &\simeq \Sigma^{2k}(\Sigma^\infty S^1\oplus \Sigma^\infty S^0) \\
    &\simeq \Sigma^{2k+1}\mathbb{S} \vee \Sigma^{2k}\mathbb{S}.
\end{align*}
Thus for each $k\ge 0$ we have a Smith cofiber sequence
\begin{equation}
	\Sigma^{2k+1}\mathbb{S} \vee \Sigma^{2k}\mathbb{S} \longrightarrow (B\Z/n)^{k\cdot i^*L} \longrightarrow
	B\Z/n^{(k+1)i^*L}.
\end{equation}
Finally, we place $V_1$ in virtual dimension zero by taking $V_1=k(i^*L-2)$, to be consistent with the other
examples in this section, and obtain the cofiber sequence
\begin{equation}
\label{Zk_smith}
	\Sigma\mathbb{S}\vee \mathbb{S} \longrightarrow (B\Z/n)^{k(i^*L-2)} \xrightarrow{\sm_L} \Sigma^2
	(B\Z/n)^{(k+1)(i^*L-2)}.
\end{equation}
\end{example}
\begin{example}
\label{Zk_spin}
Smash~\eqref{Zk_smith} with $\MTSpin$. Like in \cref{spinc_spin_Smith}, $i^*L$ is oriented but not spin, and
$2i^*L$ is spin, so we obtain a $2$-periodic, codimension-$2$ family of Smith homomorphisms between the spin
bordism of $B\Z/n$ and $(B\Z/n, i^*L)$-twisted spin bordism. Campbell~\cite[\S 7.9]{Cam17} identifies the latter as
bordism for the tangential structure $\Spin\times_{\set{\pm 1}}\Z/2n$, explicitly giving us Smith cofiber sequences
\begin{subequations}
\label{spinzk_Smith}
\begin{gather}
	\Sigma\MTSpin\vee \MTSpin\longrightarrow \mathit{MT}(\Spin\times_{\set{\pm 1}}\Z/2k)
	\xrightarrow{\sm_{i^*L}} \Sigma^2 \MTSpin\wedge (B\Z/k)_+\\
	\Sigma\MTSpin\vee \MTSpin\longrightarrow \MTSpin\wedge (B\Z/k)_+ \xrightarrow{\sm_{i^*L}}
	\mathit{MT}(\Spin\times_{\set{\pm 1}}\Z/2k).
\end{gather}
\end{subequations}
$\Spin\times_{\set{\pm 1}}\Z/2k$ bordism appears in the mathematical physics literature in~\cite{Bel99, Bla00, BBC17, Cam17, GEM18,
Hsi18, Jan18, Li19, GOPWW20, DDHM21, Deb21, DL21, DDHM22, HTY22, DYY}; the case $k = 2$ also appears in~\cite{Gia73a, TY19, FH19, HKT19, MV21}. 
The Smith homomorphisms in~\eqref{spinzk_Smith} for $n = 4$ appear
in~\cite{DDHM22}.  
We work out the Anderson-dualized long exact sequences corresponding to~\eqref{spinzk_Smith} for the $n=3$ case in \cite[\S IV.D]{PhysSmith}, and for the $n=4$ case in \cite[\S IV.E]{PhysSmith}.
\end{example}
\begin{example}
\label{another_Wall_exm}
We elaborate on \cref{Zk_spin} when $n = 2$. The rotation representation is isomorphic to $2\sigma$, where
$\sigma$ denotes the real sign representation; we will also let $\sigma$ denote the associated bundle over $B\Z/2$.

Everything in \cref{Zk_spin} still works for $n = 2$, but now we have more options: we can start with an odd number
of copies of $\sigma$. In this case, the fiber of the Smith map is the Thom spectrum of the Möbius bundle
$(\sigma-1)\to\T$; one can directly check that the Thom space of $\sigma$ is $\RP^2$, so the Thom spectrum of
$\sigma-1$ is $\Sigma^{-1+\infty}\RP^2$. Therefore we have a Smith cofiber sequence
\begin{equation}
\label{odd_sigma}
	\Sigma^{-1+\infty}\RP^2 \longrightarrow (B\Z/2)^{(2k-1)(\sigma-1)} \xrightarrow{\sm_{2\sigma}}
	\Sigma^2 (B\Z/2)^{(2k+1)(\sigma-1)}.
\end{equation}
Out of all the examples we have studied in this section, this is the first one where the pullback of $V$ to the
sphere bundle of $W$ is nontrivial.

As usual, we smash~\eqref{odd_sigma} with various bordism spectra. The map $\sm_{2\sigma}$ is the composition of two
iterations of $\sm_\sigma$ from~\eqref{smith_Z2}, so some of the resulting cofiber sequences look familiar from that
perspective. We only discuss a few examples, but plenty more are out there.
\begin{itemize}
	\item If we smash~\eqref{odd_sigma} with $\MTSO$, we obtain a cofiber
	sequence first discussed by Atiyah~\cite[(4.3)]{Ati61}:
	\begin{equation}
	\label{other_Wall_seq}
		\mathcal W\longrightarrow \MTO \xrightarrow{\sm_{2\sigma}} \Sigma^2\MTO,
	\end{equation}
	where $\mathcal W$ is Wall's bordism spectrum (see \cref{Wall_exm}).  Here we use the
	identifications $\Sigma \MTO\simeq\MTSO\wedge B\Z/2$ and $\mathcal W\simeq\MTSO\wedge \Sigma^{-1}\RP^2$, both
	due to Atiyah~\cite[\S 4]{Ati61}, that we discussed in \cref{Wall_exm,SO_Z2_exm}, respectively.
	\item If we instead smash~\eqref{odd_sigma} with $\MTSpin$, we obtain a cofiber sequence
	\begin{equation}
	\label{pinm_pinp}
		\MTSpin\wedge \Sigma^{-1}\RP^2\longrightarrow \MTPin^\pm \xrightarrow{\sm_{2\sigma}}
		\Sigma^2\MTPin^\mp,
	\end{equation}
	which was first constructed by Kirby-Taylor~\cite[Lemma 7]{KT90pinp}. Here we have used the identifications of
	\pinp, resp.\ \pinm bordism as $(B\Z/2, 3\sigma)$, resp.\ $(B\Z/2, \sigma)$-twisted spin bordism that we
	discussed in \cref{spin_4periodic}. In \cref{fig:Pinm_Pinp_bordism_LES}, we calculate the long exact sequence on bordism groups corresponding to~\eqref{pinm_pinp} (specifically, the \pinm to \pinp case) in low degrees.
 See~\cite{DDHM22} for an application of a related but different Smith
	homomorphism in physics.

	The Smith homomorphism in~\eqref{pinm_pinp} is the composition of two of the Smith homomorphisms in the
	$4$-periodic collection of \cref{spin_4periodic}, where we go from \pinp to $\Spin\times\Z/2$ to \pinm, or from
	\pinm to $\Spin\times_{\set{\pm 1}}\Z/4$ to \pinp. The other two compositions, which exchange the spin bordism
	of $B\Z/2$ with $\Spin\times_{\set{\pm 1}}\Z/4$ bordism, are~\eqref{spinzk_Smith} for $n = 2$.
\end{itemize}
\end{example}

\subsection{A few more examples}
In this section, we record some examples of Smith cofiber sequences that do not arise from real or complex line bundles.
\begin{example}
\label{quater_exm}
Like our previous examples over $B\Z/2$ and $B\T$, we can study Smith homomorphisms for the tautological
quaternionic line bundle $V\to B\SU(2)$. Once again, the sphere bundle of $V$ is contractible, as it is $E\SU(2)\to
B\SU(2)$, so we obtain Smith cofiber sequences like in~\eqref{smith_Z2} and~\eqref{smith_T}:
\begin{equation}
\label{smith_SU2}
	\mathbb S\longrightarrow (B\SU(2))^{k(V-4)}\xrightarrow{\sm_V} \Sigma^4 (B\SU(2))^{(k+1)(V-4)}.
\end{equation}
For $k = 0$, this sequence splits, yielding a third Smith isomorphism $\Sigma^\infty B\SU(2)\xrightarrow{\cong}
(B\SU(2))^V$. This equivalence is well-known, e.g.\ \cite[\S 2]{Tam97}.

This bundle has a $G$-structure for $G$ including $\O$, $\SO$, $\Spin$, $\Spin^c$, $\U$, $\SU$, and $\mathrm{Sp}$,
and in all of these cases, smashing with $\mathit{MTG}$ produces Smith homomorphisms similar to those in
\cref{Z2_MO,cpx_triv_exms}. The proof of \cref{U_smith_split} still works in this setting, and for these $G$ we
obtain $\mathit{MTG}$-module splittings
\begin{equation}
\label{symplectic_splitting}
	\mathit{MTG}\wedge (B\SU(2))_+\simeq \bigvee_{k\ge 0} \Sigma^{4k}\mathit{MTG}.
\end{equation}
When $G = \Spin$, \eqref{symplectic_splitting} is closely related to the splittings in \cref{CP_ko,HP_ko}.

The Smith map~\eqref{smith_SU2}, after smashing with $\mathit{MTSp}$, was studied by Landweber~\cite[\S 5]{Lan68} and Ray~\cite[\S 3]{Ray72}. An analogue in \term{self-conjugate bordism} (see~\cite[\S 8]{SS68}) was studied by Gozman~\cite[\S 3]{Goz77}.
\end{example}
\begin{example}
\label{string_SU2}
As we mentioned in \cref{string_Z2}, Lie $2$-group extensions of a compact Lie group $G$ by $B\U(1)$ are classified by $H^4(BG;\Z)$~\cite{SP11, Wei22}. Let $\String\text{-}\SU(2)$ be the Lie $2$-group belonging to the extension
\begin{equation}
    \shortexact{}{}{B\U(1)}{\String\text{-}\SU(2)}{\Spin\times\SU(2)}{}
\end{equation}
classified by $\lambda + p_1^\H\in H^4(B\Spin\times B\SU(2);\Z)$; here $\lambda\in H^4(B\Spin;\Z)$ and $p_1^\H\in H^4(B\SU(2);\Z)$ are the canonical generators of $H^4$ of a connected, simply connected, simple Lie group. (For $B\Spin$, we may use $B\Spin(n)$ with $n\gg 0$.) Using the usual map $\Spin\to\O$, we obtain a tangential structure $B(\String\text{-}\SU(2))\to B\O$; by an argument similar to the one in~\cite[\S 10.4]{DDHM22} (see~\cite[\S 3.3.1]{BDDM23}), $\mathit{MT}(\String\text{-}\SU(2))\simeq \MTString\wedge (B\SU(2))^{L-4}$, where $L\to B\SU(2)$ is the tautological quaternionic line bundle. Thus~\eqref{smith_SU2} with $k = 0$, smashed with $\MTString$, produces a Smith isomorphism
\begin{equation}\label{string_smith_isom}
    \mathrm{sm}_L\colon \widetilde\Omega_*^\String(B\SU(2)) \overset\cong\longrightarrow
    \Omega_{*-4}^{\String\text{-}\SU(2)}.
\end{equation}
As $L\to B\SU(2)$ is not string, the argument for~\eqref{symplectic_splitting} does not apply, and indeed one can show $\mathit{MT}(\String\text{-}\SU(2))$ does not split in that way. Thus this Smith isomorphism is expressing something nontrivial about string-$\SU(2)$ bordism. To our knowledge, \eqref{string_smith_isom} is the first such nontrivial quaternionic Smith isomorphism known.

String-$\SU(2)$ bordism appears in~\cite[\S 3]{BDDM23} as an intermediary to other twisted string bordism computations, and Bruner-Rognes~\cite[\S 1.4, Chapter 8, \S 12.3, Appendix D]{BR21} study a closely related object called $\mathit{tmf}/\nu$.
\end{example}
\begin{example}
\label{spinh_exm}
Consider the Smith homomorphisms coming from the tautological rank-$3$ vector bundle $V\to B\SO(3)$. 
%Then, like in \cref{jamesQP},
The one-point compactification of $\mathfrak{so}(3)/\mathfrak{u}_1$ is isomorphic to $\SO(3)/\T \cong S^2$. Since $\mathfrak{so}(3)/\mathfrak{u}_1 \oplus \R$ is isomorphic to the defining representation $V$ of $\SO(3)$, we obtain a cofiber sequence of spectra
\begin{equation}
\label{SO3_smith_cofib}
	(B\T)^{k(L-2)} \longrightarrow (B\SO(3))^{k(V-3)} \longrightarrow \Sigma^3 (B\SO(3))^{(k+1)(V-3)}.
\end{equation}
We are most interested in smashing this sequence with $\MTSpin$.\footnote{\label{spinu_footnote}It is also interesting to
smash~\eqref{SO3_smith_cofib} with $\MTSpin^c$: in this case one obtains a codimension-$3$, $2$-periodic family of
Smith homomorphisms exchanging the \spinc bordism of $B\SO(3)$ with ``spin-$\U(2)$ bordism,'' i.e.\ bordism of the
group $\Spin\times_{\set{\pm 1}}\U(2) \cong \Spin^c\times_{\set{\pm 1}} \SU(2)$. Davighi-Lohitsiri~\cite{DL20, DL21}
introduced Spin-$\U(2)$ bordism and calculated it in low dimensions; spin-$\U(2)$ structures also appear in
Seiberg-Witten theory (e.g.\ \cite{FL02, DW19}) under the name \term{spin\textsuperscript{$u$} structures}.} Note that $V$ is
not spin, but because $V$ is oriented, $2V$ is spin; therefore we obtain a $2$-periodic family of codimension-$3$
Smith homomorphisms exchanging the spin bordism of $B\SO(3)$ and $(B\SO(3), V)$-twisted spin bordism.
Freed-Hopkins~\cite[(10.20)]{FH16} identify $(B\SO(3), V)$-twisted spin bordism with bordism for the group
$G^0\coloneqq \Spin\times_{\set{\pm 1}}\SU(2)$, which is in various sources called spin\textsuperscript{$h$}
bordism, spin\textsuperscript{$q$} bordism, spin-$\SU(2)$ bordism, or $G^0$ bordism.\footnote{To the best of our
knowledge, spin\textsuperscript{$h$} structures were first studied in~\cite{BFF78} in the context of quantum
gravity; they have also been applied to Seiberg-Witten theory~\cite{OT96}, index theory, e.g.\ in~\cite{May65,
Nag95, Bar99, FH16, Che17}, almost quaternionic geometry, e.g.\ in~\cite{Nag95, Bar99, AM20}, immersion
problems~\cite{Bar99, AM20}, and the study of invertible field theories~\cite{FH16, BC18, WWW19, DY22}. See~\cite{Law23} for a review and~\cite{BM23, Hu23, Mil23, DK24} for additional related work.} The fiber
we've seen before in \cref{spinc_spin_Smith}: \spinc bordism when $k$ is odd in~\eqref{SO3_smith_cofib}, and the
spin bordism of $B\T$ when $k$ is even.

In summary, we have two Smith cofiber sequences
\begin{subequations}
\label{spinh_cofib}
\begin{gather}
\label{spinh_to}
	\MTSpin^c\longrightarrow \MTSpin^h \xrightarrow{\sm_V} \Sigma^3 \MTSpin\wedge (B\SO(3))_+\\
	\MTSpin\wedge (B\T)_+ \longrightarrow \MTSpin\wedge (B\SO(3))_+ \xrightarrow{\sm_V} \Sigma^3 \MTSpin^h.
\end{gather}
\end{subequations}
The long exact sequence of bordism groups associated to~\eqref{spinh_to} appears in \cref{euler_exm_main} as an example where one must use the cobordism Euler class to calculate the Smith homomorphism: ordinary cohomology Euler classes give the wrong answer. 
Other works studying anomalies of spin\textsuperscript{$h$} QFTs include~\cite{FH16, WW19, WWW19, WWZ19, DL20, WW20a, BCD22, DY22, WY22, DYY}.
\end{example}
\begin{remark}
Freed-Hopkins~\cite{FH16} also study two unoriented analogues of spin\textsuperscript{$h$} structures, called
pin\textsuperscript{$h\pm$} or $G^\pm$ structures, corresponding to the groups $\Pin^\pm\times_{\set{\pm 1}}\SU(2)$.
It would be interesting to work out analogues of the Smith homomorphisms such as the ones in
\cref{spin_4periodic,spinh_exm} for pin\textsuperscript{$h\pm$} structures and apply them to symmetry breaking; see~\cite{DK24} for some work in that direction. Pin\textsuperscript{$h\pm$} manifolds are also studied in~\cite{BC18, GPW18, LS19, AM20, DYY}.
\end{remark}
\begin{example}
\label{smith_su2_vector}
If we pull \cref{spinh_exm} back to $B\SU(2)$, we obtain a Smith long exact sequence which makes an appearance both in \cite[\S IV.F]{PhysSmith}
and in Appendix~\ref{s:eu_counter}. 
\label{jamesQP}

The tautological quaternionic line bundle over $B\SU(2)$ is \textit{not} isomorphic to the bundle associated to $\mathfrak{su}_2 \oplus \mathbb{R}$, where $\mathfrak{su}_2$ is the adjoint representation of $\SU(2)$. Rather, since $\mathfrak{su}_2 \cong \mathbb{R} \oplus \mathfrak{su}_2/\mathfrak{u}_1$, the map $B\T \to B\SU(2)$ exhibits $B\T$ as the unit sphere bundle in the adjoint representation of $\SU(2)$. It follows that there is a cofiber sequence
\begin{equation}
\label{SU2_Vec}
    B\U(1) \longrightarrow B\SU(2) \xrightarrow{\sm_V} \Sigma^3 (B\SU(2))^{V-3},
\end{equation}
where $V\to B\SU(2)$ is the vector bundle associated to $\mathfrak{su}(2)$. We claim the first map is induced by the inclusion of a maximal torus into $\SU(2)$.
To see that the sphere bundle is $B\U(1)$ as claimed, identify $\SU(2)\to\SO(3)$ with $\Spin(3)\to\SO(3)$ and $\T\to\SU(2)$ with $\Spin(2)\to\Spin(3)$; by the third isomorphism theorem, $\Spin(3)/\Spin(2)\cong \SO(3)/\SO(2)$, and in \cref{spinh_exm} we identified that quotient with the unit sphere inside $\R^3$. Therefore taking associated bundles, we end up with $B\Spin(2)$ as the fiber in~\eqref{SU2_Vec}.

Since $\SU(2)$ is simply connected, $B\SU(2)$ is $2$-connected and therefore all of its vector bundles admit spin structures. Thus, when we smash~\eqref{SU2_Vec} with $\MTSpin$, we obtain a cofiber sequence
\begin{equation}
\label{spin_SU_22}
    \MTSpin\wedge (B\U(1))_+\longrightarrow \MTSpin\wedge (B\SU(2))_+ \xrightarrow{\sm_V} \Sigma^3 \MTSpin\wedge (B\SU(2))_+.
\end{equation}
The Thom spectrum $(B\SU(2))^{\mathfrak{su}(2)}$ is known as James' ``quasiprojective space'' (see \cite{James-Stiefel}).
The Anderson dual of~\eqref{spin_SU_22} appears in a physics application in \cite[\S IV.F.]{PhysSmith}.

The same Thom isomorphism applies for any $\MTSpin$-oriented ring spectrum, such as $\MTSO$ or $\ko$; if we used $\ko$ instead of $\MTSpin$ in~\eqref{spin_SU_22}, we would obtain the cofiber sequence in~\eqref{SGCof}.
\end{example}

\begin{example}\label{spin_u1_1_periodic}
In \cite[\S III.B.1]{PhysSmith},
we study the SBLES in twisted spin bordism corresponding to the vector bundle $2L\to B\U(1)$, where $L$ denotes the tautological bundle. Since $2L$ is spin, we obtain a one-periodic family of Smith homomorphisms of the form
\begin{subequations}
\begin{equation}
\label{2Lcofiba}
    S(2L)\longrightarrow B\U(1) \xrightarrow{\sm_{2L}} \Sigma^4 (B\U(1))^{2L - 4}.
\end{equation}
The new wrinkle is showing that $S(2L)\to B\U(1)$ is homotopy equivalent to the map $S^2\to B\U(1)$ given by the inclusion of the $2$-skeleton. But this is not so hard: using the long exact sequence in cohomology associated to the cofiber sequence, one learns that if $C$ is the cofiber of $\mathrm{sm}_{2L}$, $\widetilde H^*(C;\Z)$ vanishes except in degree $3$, where it is $\Z$; this characterizes $S^3$, so the fiber, which is the total space of the sphere bundle, is $S^2$. Stably this splits as $\mathbb S \vee \Sigma^2\mathbb S$, so our cofiber sequence is
\begin{equation}
\label{2Lcofibb}
   \mathbb S\vee \Sigma^2 \mathbb S\longrightarrow \Sigma_+^\infty B\U(1) \xrightarrow{\sm_{2L}} \Sigma^4 (B\U(1))^{2L-4}.
\end{equation}
\end{subequations}
This cofiber sequence is a complexified version of~\eqref{Zk_smith}. One therefore wonders what happens if we consider it within its family \begin{equation}\label{2L_family}
    \mathrm{sm}_{2L}\colon (B\U(1))^{kL - 2k} \longrightarrow \Sigma^4 (B\U(1))^{(k+1)L - 2k-2}.
\end{equation}
If we smash with $\MTSpin$, this is a $2$-periodic family: it only matters whether $k$ is odd or even. For $k$ even we reduce to~\eqref{2Lcofibb} above; for $k$ odd, we have a very similar cofiber sequence, but the sphere bundle does not split: we obtain for the fiber $(\CP^1)^{\mathcal O(-1) - 2}\simeq \CP^2$:
\begin{equation}
    \MTSpin\wedge\CP^2 \longrightarrow \MTSpin^c \longrightarrow \Sigma^4 \MTSpin^c,
\end{equation}
using the identification $\MTSpin\wedge (B\U(1))^{L-2}\simeq\MTSpin^c$ from \cref{twists_of_spin}. This is the complex analogue of~\eqref{pinm_pinp}.
\end{example}
\begin{remark}
There is a related example where one uses $L\oplus L^*\to B\U(1)$ instead of $2L$; the corresponding long exact sequence in twisted $\mathrm{SU}$-bordism was studied by Conner-Floyd~\cite[\S\S6, 14, 17]{CF66}. When $k$ is odd, the third term in the long exact sequence, corresponding to the sphere bundle, is the bordism of manifolds with \term{$c_1$-aspherical structures} or \term{complex Wall structures}, first introduced by Conner-Floyd~\cite{CF66}, and also discussed by
Stong~\cite[Chapter VIII]{Sto68}. Complex Wall bordism plays an important role in the calculation of
$\Omega_*^\SU$ via the Adams-Novikov spectral sequence~\cite[\S 7]{Nov67}, and has also been studied in the context
of complex orientations~\cite{Buh72, CP21, Che22}.
\end{remark}
\begin{example}
\label{GMTW_exm}
The unit sphere bundle to the tautological bundle $V_{n+1}\to B\O(n+1)$ is homotopy equivalent to the map $B\O(n)\to B\O(n+1)$. This is because $S^n\cong\O(n+1)/\O(n)$, so the unit sphere bundle can be described by the mixing construction
\begin{equation}
    S^n\times_{\O(n+1)} E\O(n+1)\cong (\O(n+1)/\O(n))\times_{\O(n+1)} E\O(n+1)\cong E\O(n+1)/\O(n)\cong B\O(n).
\end{equation}
More generally, if $\xi_{n+1}\colon B_{n+1}\to B\O(n+1)$ is an unstable tangential structure and $\xi_n\colon B_n\to B\O(n)$ is the pullback of $\xi_{n+1}$ by $B\O(n)\to B\O(n+1)$, the sphere bundle of $\xi_{n+1}^*V_{n+1}$ is the pullback of $S(V_{n+1}) = B\O(n)$ by $\xi_{n+1}$, which is $\xi_n$. If you then pull $\xi_{n+1}^*V_{n+1}$ back across $B_n\to B_{n+1}$, it splits as $V_n\oplus\underline\R$, so there is a Smith cofiber sequence
\begin{equation}
    \Sigma^{-1} B_n^{n-V_n} \longrightarrow B_{n+1}^{n+1-V_{n+1}} \longrightarrow \Sigma_+^\infty B_{n+1}.
\end{equation}
This cofiber sequence is due to Galatius-Madsen-Tillmann-Weiss~\cite[(3.3), \S 5]{GMTW09}. The spectrum $\Sigma^n B_n^{n -\xi_n^*{V_n}}$ is often denoted $\mathit{MT\xi}_n$.
\end{example}

\section{Long exact sequence of invertible field theories}\label{LESIFTs}
In this section, we turn to physical applications of the Smith fiber sequence (\cref{the_cofiber_sequence}). 
In \cref{subsec:anomalies-and-IFT} we give a brief overview of anomalies and how they relate to invertible field theories.
In \cref{subsec:thom-IFT-anomalies} we review the mathematical classification of invertible field theories as Anderson dual groups of Thom spectra. In \cref{subsec:LES-of-IFT} we show how the Smith fiber sequence gives rise to a long exact sequence of invertible field theories (\cref{IZ_LES_cor}), which we call the symmetry breaking long exact sequence (SBLES). Finally, we give a mathematically-oriented introduction to our companion work \cite{PhysSmith} and a table of cross-listed examples (\cref{tab:my-table}).

Quantum field theory is not yet completely mathematically formalized, and the applications of the Smith homomorphism in~\cite{HKT19, COSY19, PhysSmith} take place at a physical level of rigor, not a mathematical one. As such, parts of this section are also only at a physical level of precision.

\subsection{Anomalies and invertible field theories}
\label{subsec:anomalies-and-IFT}
Our homotopy-theoretic techniques in this paper apply to the classification of invertible field theories, while the physical objects we ultimately wish to study are (not necessarily invertible) quantum field theories with potentially anomalous symmetries. To apply our techniques, we take the following perspective on anomalies.\footnote{There are many things called ``anomalies'' in quantum field theory, and we are not claiming our definitions or approach is universal. Instead, our application is to a broad class of anomalies.}

Let us first consider anomalies of symmetries. Physically, we begin with a $n$-dimensional quantum system (such as a quantum field theory) with partition function $Z$, as well as a symmetry group $G$, which is usually a compact Lie group. We can couple the theory to a background $G$-gauge field $A$. 
Then, put simply, the $G$ symmetry is \emph{anomalous} if the partition function evaluated on a pair of a closed $n$-manifold $M$ and a background gauge field with connection $A$ is \emph{not} gauge invariant but rather transforms with a phase. That is, under a gauge tranformation $A \mapsto A^g$, the partition function transforms as\begin{equation}
Z(M,A^g) = e^{i \alpha(M,A,g)} Z(M,A),
\end{equation}
where $e^{i \alpha(M,A,g)}$ is a phase factor that cannot be cancelled by local counter-terms.

Mathematically, this means that when we evaluate the theory on a $n$-dimensional manifold $M$ with a $G$-principal bundle with connection $A$, the partition function is a section of a non-trivial line bundle. The non-triviality of the line bundle is the anomaly.

To relate anomalies to invertible field theories, we use the notion of \emph{anomaly inflow}: under mild hypotheses, and in all known cases, there is a local counterterm $e^{i\omega(K,A)}$ defined in one dimension higher, so that if $K$ is an $(n+1)$-dimensional manifold with boundary and $\partial K = M$ (and $A$ extends into $K$), then
\begin{equation}\label{eq:anomaly-inflow}
e^{i \omega(K,A^g) - i \omega(K,A)} = e^{i\alpha(M,A,g)}.
\end{equation}
We may interpret $e^{i\omega(K,A)}$ as the partition function of an $n+1$-dimensional invertible field theory with $G$ symmetry. It is invertible, as stacking with the $e^{-i\omega(K,A)}$
theory gives the trivial theory. 
Furthermore, $Z$ naturally lives at the boundary of this invertible theory, and together they are gauge invariant by \cref{eq:anomaly-inflow}.
Therefore we can interpret the existence of the $G$-anomaly, which is the failure of $Z(M, A)$ to be gauge invariant, as the statement that $Z$ is a boundary theory of a non-trivial $(n+1)$-dimensional invertible field theory, which we call the bulk theory. This perspective, formulated mathematically by Freed-Teleman~\cite{freed2014relative}, is the link between anomalies and invertible field theories; see also Freed~\cite{Fre23}.

We also study anomalies of families of field theories parameterized by a topological space $X$.
An anomaly of an $X$-family of field theories indicates a failure of the partition function to be consistently defined over the space of background $X$-fields.\footnote{Typically in physics, $X$ carries more structures, such as a smooth structure or Riemannian metric. The anomalies we consider here will not depend on those structures.}  In this case, the bulk theory is an one-dimension-higher $X$-family of invertible field theories. More generally, we can ask for $X$ to be a space with $G$-action, and consider anomalies of $G$-equivariant $X$-families field theories.
We refer the reader to \cite[\S II.A]{PhysSmith} and the references therein for more detail. 

\subsection{Invertible field theories and Thom spectra}\label{subsec:thom-IFT-anomalies}

Here we review the mathematical classification of (reflection-positive) invertible theories. First we must relate symmetries and tangential structures.
\subsubsection{Symmetries and tangential structures}\label{math_tangential}

For any tangential structure in the sense of \cref{defn_tangstr}, there is a notion of topological field theory.
Given a field theory whose anomaly we want to investigate, which tangential structure $\xi$ do we want
our invertible field theories to carry?\footnote{For non-topological invertible field theories, there is also the question of enriching the tangential structure to something more geometric, such as including the data of a Riemannian metric or a connection for a principal bundle. At the level of invertible field theories this is accounted for by working with Anderson duals of bordism spectra, so our calculations include these geometric modifications.}

The answer typically depends only on the symmetries of our field theory, not on its field content (the anomaly
itself---which invertible field theory we get out of all the invertible field theories on $\xi$-manifolds---uses
more information from the theory).
We follow Freed-Hopkins~\cite[\S 2]{FH16}, who take the stance that since we
typically study QFTs in Minkowski signature but invertible field theories are Euclidean, we should Wick-rotate the
group of symmetries to define our tangential structure.

Assume the dimension $n$ is at least $2$. Let $\mathcal I(1, n-1)$ be the
isometry group of Minkowski space, and let $\mathcal I(1, n-1)^{\uparrow}\subset \mathcal I(1, n-1)$ be the
subgroup of isometries that preserve the direction of time. The group of symmetries of our theory is a Lie group
$\mathcal H(1, n-1)$ with a map $\rho(n)\colon \mathcal H(1, n-1)\to \mathcal I(1, n-1)^\uparrow$. Let $K\coloneqq
\mathrm{ker}(\rho(n))$; we assume $K$ is compact. Assume that the normal subgroup of translations
$\R^{1,n-1}\subset\mathcal I(1, n-1)$ lifts to a normal subgroup of $\mathcal H(1, n-1)$, and let
$H(1, n-1)\coloneqq \mathcal H(1, n-1)/\R^{1,n-1}$. Now:
\begin{subequations}
\begin{enumerate}
	\item Let $\O(1, n-1)^\uparrow\coloneqq \O(1, n-1)\cap\mathcal I(1, n-1)$. There is an exact sequence
	\begin{equation}
		0\longrightarrow K\longrightarrow H(1, n-1)\longrightarrow \O(1, n-1)^\uparrow.
	\end{equation}
	\item This exact sequence can be extended to an exact sequence of complexifications:
	\begin{equation}
		0\longrightarrow K(\C)\longrightarrow H(n, \C)\longrightarrow \O(n, \C),
	\end{equation}
	\item and then to compact real forms of these complex Lie groups:
	\begin{equation}
		0\longrightarrow K \longrightarrow H(n) \longrightarrow \O(n).
	\end{equation}
\end{enumerate}
\end{subequations}
The tangential structure that the anomaly field theory has is $\xi\colon BH(n)\to B\O(n)$. Just as it is not a
priori clear that the anomaly field theory extends to dimension $n+1$, it is also not necessarily clear that $\xi$
extends to an $(n+1)$-dimensional unstable tangential structure, but Freed-Hopkins~\cite[Theorem 2.19]{FH16} prove
that it does in nearly every situation one might want, as we discuss below. In this paper, we will always be in the
situation that $\xi$ extends to $(n+1)$-manifolds.

In practice, one can often use an idea of Stehouwer~\cite{Ste21} to compute $\xi$ using the formalism of \term{fermionic groups} (sometimes called \term{supergroups}).
\begin{definition}[{Benson~\cite[\S 7]{Ben88}}]
A \term{fermionic group} is a topological group $G$ together with data of:
\begin{itemize}
	\item a central element squaring to $1$, which we call \term{fermion parity} and denote $-1\in G$, and
	\item a group homomorphism $\theta\colon G\to\Z/2$ such that $\theta(-1) = 0$.
\end{itemize}
\end{definition}
We think of $\theta$ as defining a $\Z/2$-grading on $G$, and we refer to elements of $G$ as odd or even. The even
elements form a subgroup $G_0\subset G$, which is itself a fermionic group with $\theta$ trivial.

Given two fermionic groups $G$ and $H$, one can take their \term{fermionic tensor product} (\textit{ibid.}) $G\otimes H\coloneqq (G\times H)/\ang{(-1, -1)}$. This is a fermionic group, with central element $(-1, 1) = (1, -1)$ and grading $\theta((g, h))$ equal to the sum mod $2$ of the gradings on $g$ and on $h$.

Fermionic groups describe symmetries of theories with fermions: $-1$ acts by fermion parity, which may mix
nontrivially with other symmetries in the theory; and $\theta$ describes whether elements of $G$ act unitarily or
antiunitarily. Given a fermionic group $G$, Stolz~\cite[\S 2.6]{Sto98} defines a tangential structure $\xi_G\colon B\to B\O$ as follows: let $H$ be the even subgroup of the fermionic tensor product $\Pin^+\otimes G$; here, to make $\Pin^+$ into a fermionic group, we use the usual $-1$, and the grading homomorphism is $\pi_0\colon\Pin^+\to\O(1) \cong \Z/2$. Then $B\coloneqq BH$, and the map $\xi\colon B\to B\O$ is induced from the usual map $\Pin^+\to\O$ and the constant map to the identity on the quotient of $G_0$ by fermion parity.
See~\cite{Ste21} for several examples of computations of tangential structures from data of the symmetries of a
theory. 

Tangential structures can encode not only the symmetries of quantum systems, but also the parameter space. For example, $S^1$-families of fermionic theories with an internal unitary $\Z/2$ symmetry may be described using the tangential structure $B\Spin \times B\Z/2 \times S^1 \to B\Spin \to BO$. 
There are also variations and twists: if $\Z/2\Z$ acts on $S^1$, then we replace 
$B\Z/2\Z \times S^1$ with the homotopy quotient $S^1/(\Z/2\Z)$. To encode a time-reversal symmetry $T$ with $T^2 = (-1)^F$, we replace $B\Spin \times B\Z/2$ with $B\Pinp$. Note that the process of describing the symmetry and parameter space of a field theory can be subtle.
See ~\cite{PhysSmith} for more examples of converting the data of symmetries and parameter spaces to tangential structures.

\subsubsection{Invertible field theories and bordism invariants}\label{sss:IFT_bord}
At this point in the story we have turned the physics question of determining the possible anomalies of a theory
with a given collection of symmetries into the mathematical question of classifying (reflection-positive)
invertible field theories for a fixed tangential structure $\xi\colon B\to B\O$, with $B = BG$. 

In this subsubsection we discuss how this
classification question reduces to a well-studied problem in algebraic topology: the computation of groups of
bordism invariants. See Freed~\cite[Lectures 6--9]{Fre19} and Galatius~\cite{Gal21} for more detailed reviews of this story.

Fix nonnegative integers $k\le n$; $n$ will be the dimension of the field theories we consider and $k$ will be the category number. Let $\Bord_n^\xi$ denote the \term{bordism $k$-category} of $\xi$-manifolds, whose objects are closed $(n-k)$-dimensional $\xi$-manifolds, morphisms are ($\xi$-diffeomorphism classes rel boundary of) compact $(n-k+1)$-dimensional $\xi$-structured bordisms between them, $2$-morphisms are ($\xi$-diffeomorphism classes rel boundary of) $(n-k+2)$-dimensional $\xi$-structured bordisms between bordisms, and so on; see~\cite{Lur09, CS19} for more details. We will also fix a symmetric monoidal (weak) $k$-category $\cat C$. The motivating cases, when $k = 1$, are the categories $\cat{Vect}$ and $\cat{sVect}$ of complex vector spaces, resp.\ super vector spaces, with tensor product as the symmetric monoidal structure.

Following Atiyah~\cite{Ati88} and Segal~\cite{Seg88}, by a \term{field theory} we mean a symmetric monoidal functor $Z\colon\Bord_n^\xi\to\cat C$; the \term{dimension} of this theory is $n$.
\begin{definition}[{Freed-Moore~\cite[Definition 5.7]{FM06}}]
An $n$-dimensional field theory $Z\colon\Bord_n^\xi\to\cat C$ is \textit{invertible} if there is some other theory $Z^{-1}$ such that $Z\otimes Z^{-1}$ is the trivial theory.%~\cite[Definition 5.7]{FM06}.
\end{definition}
This tensor product is evaluated ``pointwise,'' meaning that $(Z\otimes Z^{-1})(M)\coloneqq Z(M)\otimes Z^{-1}(M)$, where $M$ is an object, morphism, etc.\ in the bordism category; therefore invertibility implies that $Z$, as a functor, factors
through the Picard sub-$k$-groupoid of units $\cat C^\times$ inside $\cat C$, meaning that if $X$ is any object,
morphism, or higher morphism in $\Bord_n^\xi$, $Z(X)$ is invertible: $\otimes$-invertible if $X$ is an object, and
composition-invertible if $X$ is a (higher) morphism. If $X$ is invertible, then we must have data of an
isomorphism $Z(X^{-1}) \xrightarrow{\cong} Z(X)^{-1}$ because $Z$ is symmetric monoidal; thus, even if $X$ is not
invertible, we can heuristically \emph{define} $Z(X^{-1})\coloneqq Z(X)^{-1}$ as if $X^{-1}$ existed. These
definitions are compatible as $X$ varies, in the sense that $Z$ extends to the \term{Picard $k$-groupoid
completion} $\overline{\Bord}{}_n^\xi$ of $\Bord_n^\xi$: the Picard $k$-groupoid defined by formally adding
inverses to all objects, morphisms, higher morphisms, etc.\ of $\Bord_n^\xi$. Thus, an invertible field theory
$Z\colon\Bord_n^\xi\to\cat C$ is equivalent data to a morphism of Picard $k$-groupoids
\begin{equation}
	Z\colon \overline{\Bord}{}_n^\xi\longrightarrow \cat C^\times.
\end{equation}
So to compute deformation classes of invertible field theories, we should compute the groups of symmetric monoidal
functors between these Picard $k$-groupoids, modulo natural isomorphisms. The homotopy theory of Picard groupoids
embeds in the usual stable homotopy category: if $\cat D$ is a Picard groupoid, the geometric realization
$\abs{N\cat D}$ of the nerve of $\cat D$ has an $E_\infty$-structure arising from the monoidal product on
$\cat D$, and the Picard condition implies $\abs{N\cat D}$ is grouplike. Therefore it is equivalent data to a
connective spectrum $\abs{\cat D}$, which we call the \term{classifying spectrum} of $\cat D$. This turns out to be
a complete invariant of Picard $k$-groupoids.
\begin{theorem}[Stable homotopy hypothesis (Moser-Ozornova-Paoli-Sarazola-Verdugo~\cite{MOPSV22})]
\label{stable_htpy_hyp}
There is an equivalence of $\infty$-categories between the $\infty$-category of Picard $k$-groupoids and the
$\infty$-category of spectra whose homotopy groups vanish outside of $[0,k]$.
\end{theorem}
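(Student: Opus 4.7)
The plan is to deduce this stable homotopy hypothesis from the classical (unstable) homotopy hypothesis combined with the recognition principle for infinite loop spaces. The classical homotopy hypothesis identifies the $\infty$-category of $k$-groupoids with that of $k$-truncated spaces, and the May--Segal recognition principle identifies grouplike $E_\infty$-spaces with connective spectra. A Picard $k$-groupoid is a $k$-groupoid equipped with a symmetric monoidal structure in which every object is invertible under the monoidal product; under the two equivalences above, this should correspond to a $k$-truncated grouplike $E_\infty$-space, and hence to a connective spectrum with $\pi_i = 0$ for $i > k$.

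First I would construct a functor $\Phi$ from Picard $k$-groupoids to spectra. Given a Picard $k$-groupoid $\mathcal D$, take the geometric realization $|N\mathcal D|$ of its nerve. The symmetric monoidal structure on $\mathcal D$ endows this space with an $E_\infty$-structure, and the Picard condition ensures that the resulting $E_\infty$-space is grouplike. The $k$-groupoid condition on $\mathcal D$ guarantees that $|N\mathcal D|$ is $k$-truncated, so its homotopy groups vanish outside $[0,k]$. Applying the infinite loop space machine produces a connective spectrum $\Phi(\mathcal D)$ whose homotopy groups are concentrated in $[0,k]$, and this assignment extends along symmetric monoidal functors.

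Next I would construct a candidate inverse $\Psi$. Given a spectrum $E$ with $\pi_i(E) = 0$ outside $[0,k]$, the infinite loop space $\Omega^\infty E$ is a grouplike $E_\infty$-space that is $k$-truncated. Applying the classical homotopy hypothesis promotes the underlying space to a $k$-groupoid, and the $E_\infty$-structure equips it with a symmetric monoidal structure in which every object is invertible, i.e.\ a Picard $k$-groupoid $\Psi(E)$. To show $\Phi$ and $\Psi$ are mutually inverse equivalences of $\infty$-categories, one uses that classifying space and $\Omega^\infty$ are mutually inverse between grouplike $E_\infty$-spaces and connective spectra, and that passing from a $k$-groupoid to its geometric realization and back is the identity by the classical homotopy hypothesis. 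Naturality, together with an essential surjectivity check, then yields the desired equivalence.

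The hard part is ensuring that all of the higher coherence data of a Picard $k$-groupoid---the associators, unitors, braidings, and their iterated analogues, all subject to the appropriate coherence axioms---translates faithfully into the data of an $E_\infty$-structure, and, conversely, that the $E_\infty$-operations can be strictified to fit inside a $k$-groupoid model. For $k \le 1$ this is classical (Mac Lane's coherence theorem together with Segal's $\Gamma$-space model), but for general $k$ one needs a model for the $\infty$-category of Picard $k$-groupoids that makes these coherences tractable; Moser--Ozornova--Paoli--Sarazola--Verdugo work with a simplicial model tailored to this problem. One must also promote the equivalence from the level of homotopy categories to an equivalence of $\infty$-categories, which requires identifying the mapping spaces of Picard $k$-groupoids with the appropriate truncated mapping spectra on the spectrum side.
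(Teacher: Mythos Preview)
The paper does not prove this theorem. It is stated as a result quoted from Moser--Ozornova--Paoli--Sarazola--Verdugo~\cite{MOPSV22}, with a remark noting the earlier history for $k=1$ and $k=2$; no argument is given in the paper itself. So there is no ``paper's own proof'' to compare your proposal against.

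As a sketch of the argument, your outline is along the standard lines: nerve plus $E_\infty$-structure in one direction, $\Omega^\infty$ plus the homotopy hypothesis in the other. You correctly flag that the genuine content lies in the coherence-theoretic step matching the higher Picard data to $E_\infty$-structures and in upgrading from a bijection on equivalence classes to an equivalence of $\infty$-categories. But this is precisely the nontrivial input supplied by the cited reference, and your proposal does not actually carry it out; it only names it. For the purposes of this paper that is fine, since the theorem is used as a black box, but you should not present this as a proof---it is a plausibility argument together with a pointer to where the work is done.
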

\begin{remark}
For $k = 1$, the stable homotopy hypothesis was originally a folklore theorem: proofs or sketches appear
in~\cite{BCC93, HS05, Dri06, Pat12, JO12, GK14}. For $k =2$, the stable homotopy hypothesis was proven by
Gurski-Johnson-Osorno~\cite{GJO17}.
\end{remark}
Therefore we need to compute the group of homotopy classes of maps of spectra
$\abs{\overline{\Bord}{}_n^\xi}\to\abs{\cat C^\times}$. A reasonable first step would be to identify these two
classifying spectra. For the domain, the Picard $k$-groupoid completion of the bordism category, this is due to
Galatius-Madsen-Tillmann-Weiss~\cite{GMTW09} and Nguyen~\cite{Ngu17} for the bordism $(\infty, 1)$-category and to
Schommer-Pries~\cite{SP17} for more general $(\infty, k)$-categories.
\begin{theorem}[Galatius-Madsen-Tillmann-Weiss~\cite{GMTW09}, Nguyen~\cite{Ngu17}, Schommer-Pries~\cite{SP17}]
If $\Bord_n^\xi$ denotes the $(\infty, k)$-category of bordisms of $\xi_n$-structured manifolds in dimensions $n-k,
\dotsc,n$, then there is a natural equivalence $\abs{\overline{\Bord}_n^\xi}\simeq \Sigma^k \mathit{MT\xi}_n$.
\end{theorem}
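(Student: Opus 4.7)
The plan is to construct an explicit scanning map from $\abs{\overline{\Bord}{}_n^\xi}$ to $\Sigma^k \mathit{MT\xi}_n$ and verify it is an equivalence, by combining a concrete embedded model for the bordism $(\infty,k)$-category with an $h$-principle argument on the site of smooth manifolds. First I would replace $\Bord_n^\xi$ by a multi-simplicial model whose $p$-simplices parameterize families of $\xi$-manifolds of dimensions $n-k, \dotsc, n$ embedded in $[0,1]^k \times \R^N$ for $N$ large, where the $k$ coordinate directions in $[0,1]^k$ record the $k$ compositional directions in the $(\infty,k)$-category and the coordinate hyperplanes $\set{t_i = a_i}$ cut out objects and lower morphisms. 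This is the standard GMTW-style model, and letting $N \to \infty$ followed by group-completing realizes $\abs{\overline{\Bord}{}_n^\xi}$ as a spectrum.

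Next I would construct the scanning map. Given a point $x \in \R^N$ together with a locally trivial family of embedded $\xi$-manifolds, the ``zoom in and rescale'' operation at $x$ produces, in the limit, a pointed germ of a $\xi$-submanifold of $\R^N$; this is precisely a point of the Thom space $\Sigma^N\mathit{MT\xi}_n$ by definition of the tangential Thom spectrum (one takes the Thom space of the orthogonal complement to the universal $\xi_n$-bundle in $\R^N$). Performing this scanning compatibly with the $k$ cutting directions of $[0,1]^k$ produces a map $\abs{\overline{\Bord}{}_n^\xi} \to \Omega^{N-k}\mathrm{Th}(-\xi_n^* V_n)$ that after stabilizing yields the desired map to $\Sigma^k \mathit{MT\xi}_n$.

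Finally I would show the scanning map is an equivalence by promoting both sides to homotopy sheaves on the site of smooth manifolds, where the source sheaf assigns to $U$ the space of smooth $U$-parameterized families and the target sheaf the corresponding section space; a Phillips-Gromov-style $h$-principle together with Madsen-Weiss' parameterized surgery identifies the scanning map fiberwise with a weak equivalence, which then promotes to an equivalence of sheaves and hence of their values on a point. For the $(\infty,k)$-structure with $k > 1$, I would argue inductively in the spirit of Nguyen by identifying $\Bord_n^\xi$ at each level with the bordism $(\infty,k{-}1)$-category of its $1$-morphism spaces, so that each layer of the Segal structure contributes a single loop and accounts for one of the $k$ suspensions; Schommer-Pries' alternative is to run the sheaf-theoretic argument simultaneously in all $k$ cutting directions. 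The main obstacle will be Step 3: verifying the $h$-principle with full control over the $\xi$-structure in families, and carefully matching the boundary behavior of the semi-simplicial model with the scanning construction so that the equivalence intertwines the $(\infty,k)$-categorical composition with the $E_k$-structure on the Thom spectrum side.
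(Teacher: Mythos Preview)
The paper does not prove this theorem. It is stated with attribution to Galatius--Madsen--Tillmann--Weiss, Nguyen, and Schommer-Pries, and is immediately used as a black box in the discussion of invertible field theories; there is no proof or even proof sketch in the paper itself. So there is nothing to compare your proposal against.

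For what it is worth, your sketch is a reasonable high-level summary of the strategy in the cited references: the embedded multisimplicial model, the scanning map, and the sheaf-theoretic $h$-principle are the standard ingredients. But since the task was to compare against the paper's own proof, the correct observation is simply that the paper treats this as a cited result and provides no argument.
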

Here $\mathit{MT\xi}_n$ is a Madsen-Tillmann spectrum as in \cref{mads_till}.

Freed-Hopkins-Teleman~\cite{FHT10} then applied this result to classify invertible field theories in terms of
$\mathit{MT\xi}_n$. For that, one needs to determine $\abs{\cat C^\times}$, which of course depends on the choice of $\cat
C$---Freed-Hopkins~\cite[\S 5.3]{FH16} argue that the (shifted) \term{character
dual of the sphere spectrum} $\Sigma^n I_{\C^\times}$ is a universal choice, and that a related object called the
(shifted) \term{Anderson dual of the sphere spectrum} $\Sigma^{n+1}I_\Z$
should appear when one wants to classify
deformation classes of invertible field theories. For applications to anomalies, we are interested in deformation
classes, so use $\Sigma^{n+1}I_\Z$.

The Anderson dual $I_\Z$ is characterized by its universal property that for any spectrum $\mathcal{X}$, there is a short exact sequence~\cite{And69, Yos75}
\begin{equation}
	\label{IZ_SES}
		\shortexact{\varphi}{\psi}{\mathrm{Tors}(\Hom(\pi_{n+1}\mathcal{X}, \C^\times))}{[\mathcal{X},
		\Sigma^{n+2}I_\Z]}{\Hom(\pi_{n+2}\mathcal{X}, \Z)}.
	\end{equation}

We are interested in anomalies of unitary QFTs, hence we expect the anomaly theories to satisfy the Wick-rotated
analogue of unitarity: reflection positivity. Freed-Hopkins~\cite[\S 7.1, \S 8.1]{FH16} define
reflection positivity for invertible TFTs using $\Z/2$-actions on $\Bord_n^\xi$ and $\cat C$,\footnote{The
definition of reflection positivity for extended not-necessarily-invertible TFTs is still open: see~\cite{JF17, DAGGER24, Ste24, MS23} for work in this direction.} and prove two key results allowing for a complete classification of reflection
positive invertible TFTs following their definition.
\begin{theorem}[{Freed-Hopkins~\cite[Theorem 2.19]{FH16}}]
\label{FH_stabilization}
If $n\ge 3$ and $\xi_n\colon BH(n)\to B\O(n)$ is a tangential structure arising from a representation $\rho\colon
H(n)\to\O(n)$ with $H(n)$ a compact Lie group and $\SO(n)\subset\mathrm{Im}(\rho)$, then there is a stable tangential
structure $\xi\colon BH\to B\O$ such that $\xi_n$ is the pullback of $\xi$ along $B\O(n)\to B\O$.
\end{theorem}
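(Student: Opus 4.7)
The plan is to construct, for each $m \geq n$, a compact Lie group $H(m)$ together with a homomorphism $\rho_m\colon H(m) \to \mathrm{O}(m)$ extending $\rho = \rho_n$ and compatible with the stabilization maps $\mathrm{O}(m) \hookrightarrow \mathrm{O}(m+1)$, then pass to the colimit to define $H \coloneqq \varinjlim_m H(m)$ and $\xi\colon BH \to B\mathrm{O}$. The key first reduction is to pin down the kernel and image of $\rho_n$: letting $K \coloneqq \ker\rho_n$, the hypothesis $\mathrm{SO}(n)\subseteq\mathrm{Im}(\rho_n)$ forces $\mathrm{Im}(\rho_n) \in \{\mathrm{SO}(n),\mathrm{O}(n)\}$, so writing $G(n)$ for this image and $G$ for the corresponding stable group we obtain a short exact sequence of compact Lie groups
\begin{equation*}
    1 \longrightarrow K \longrightarrow H(n) \longrightarrow G(n) \longrightarrow 1
\end{equation*}
that we must extend over the tower $G(n) \hookrightarrow G(n+1) \hookrightarrow \cdots \hookrightarrow G$.

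I would carry out this extension inductively in $m$. When $K$ is abelian (covering the main motivating examples $\mathrm{Spin}$, $\mathrm{Pin}^\pm$, $\mathrm{Spin}^c$, and products of these with compact internal symmetry groups), the extension $H(m)$ is classified by a class in $H^2(BG(m);K)$ with coefficients twisted by the $G(m)$-action on $K$, and the inductive step becomes a lifting problem along the restriction
\begin{equation*}
    H^2(BG(m+1);K) \longrightarrow H^2(BG(m);K).
\end{equation*}
For $m \geq n \geq 3$ this restriction is an isomorphism in our range of interest --- for instance $H^2(BSO(m);\Z/2) \cong \Z/2$ is stably generated by $w_2$ for all $m \geq 3$, and similarly for $B\mathrm{O}(m)$ --- so the lift exists and is unique. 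This stability is precisely what forces the hypothesis $n \geq 3$: at $n=2$, the unstable Euler class populates $H^2(BSO(2);\Z)$ and breaks the isomorphism. For non-abelian $K$ one instead classifies the extension by an outer action $\alpha_m\colon G(m) \to \mathrm{Out}(K)$ together with a refining cocycle, and extends both pieces separately --- $\alpha_m$ extends because $BG(m) \to BG$ is $(n-1)$-connected and $B\mathrm{Out}(K)$ has finite-type homotopy groups for $K$ compact Lie, while the cocycle extension reduces to the abelian statement with coefficients in $Z(K)$.

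Once $H(m)$ has been constructed for all $m$, set $BH \coloneqq \varinjlim_m BH(m)$ with $\xi\colon BH \to B\mathrm{O}$ induced by the $\xi_m$. The finite-stage identification $BH(m) \simeq BH(m+1) \times_{BG(m+1)} BG(m)$ is built into the construction, and since sequential homotopy colimits commute with the pullbacks relevant here, the identification survives in the limit: $BH(n) \simeq BH \times_{BG} BG(n)$. Composing with the standard pullback $BG(n) \simeq BG \times_{B\mathrm{O}} B\mathrm{O}(n)$ (immediate when $G(n) = \mathrm{O}(n)$, and a short check using $w_1$ when $G(n) = \mathrm{SO}(n)$) yields the required Cartesian square $BH(n) \simeq BH \times_{B\mathrm{O}} B\mathrm{O}(n)$.

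The main obstacle is the non-abelian case of the inductive step: the classification of extensions of non-abelian compact Lie groups is subtle, and one must separately ensure both that the obstruction in $H^3(BG(m);Z(K))$ vanishes compatibly under restriction and that the outer action $\alpha_m$ extends. In the cases of actual interest (where $K$ is finite and central in $H(n)$, as with Pin, Spin, and their cousins), both subtleties evaporate and one is left with the clean abelian stability statement for $H^2$ described above, so the proof truly reduces to a cohomology calculation in the range $m \geq n \geq 3$.
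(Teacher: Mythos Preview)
The paper does not prove this theorem: it is quoted as a black-box result from Freed--Hopkins~\cite[Theorem 2.19]{FH16}, so there is no proof here to compare your proposal against.

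That said, your sketch is in the right spirit and roughly parallels what Freed and Hopkins actually do. A few points deserve care if you want to turn it into a complete argument. First, extensions of Lie groups by a compact Lie group $K$ are not literally classified by $H^2(BG(m);K)$ in the usual topological sense; the correct bookkeeping (even in the abelian case) uses continuous group cohomology or, equivalently, the classification of central extensions of Lie groups, and the identification with a topological $H^2$ requires an argument. Second, your stability claim for $H^2$ is stated only for $\Z/2$ coefficients, but $K$ can be an arbitrary compact Lie group (e.g.\ a torus or a nonabelian compact group), so you need the stability of the relevant classifying data for general $K$; Freed--Hopkins handle this by analyzing the structure of $H(n)$ directly rather than by a blanket cohomological stability statement. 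Third, the assertion that sequential homotopy colimits commute with the pullbacks in question is false in general and needs the specific input that $BH(m)\to BG(m)$ is a fiber bundle with fixed fiber $BK$, so that the colimit is again a fiber bundle over $BG$ with fiber $BK$. None of these gaps is fatal, but each requires genuine work beyond what you have written.
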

\begin{theorem}[{Freed-Hopkins~\cite[Theorem 5.23]{FH16}, Grady~\cite{Gra23}}]
\label{IFT_classification}
Suppose $\xi\colon BH(n)\to B\O(n)$ satisfies the hypotheses of \cref{FH_stabilization}.
The abelian group of deformation classes of 
$n$-dimensional, reflection positive invertible field
theories on manifolds with $\xi$-structure is naturally isomorphic to $[\mathit{MT\xi},
\Sigma^{n+1}I_\Z]$.
\end{theorem}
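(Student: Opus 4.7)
The plan is to assemble the theorem from the ingredients already outlined in the preceding paragraphs, treating the topological case first (Freed-Hopkins) and then indicating the additional input needed for the non-topological case (Grady). Throughout, $\xi\colon BH(n)\to B\O(n)$ is as in \cref{FH_stabilization}, so in particular it extends to a stable tangential structure $\xi\colon BH\to B\O$.

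First, I would unwind the target category of invertible field theories. By definition, an invertible extended TFT factors through the Picard sub-$k$-groupoid $\cat C^\times$ of units, and extends to the Picard $k$-groupoid completion $\overline{\Bord}{}_n^\xi$. The stable homotopy hypothesis (\cref{stable_htpy_hyp}) translates symmetric monoidal functors of Picard $k$-groupoids into maps of connective spectra; applying this, the group of deformation classes of invertible TFTs valued in $\cat C$ is identified with $[\abs{\overline{\Bord}{}_n^\xi}, \abs{\cat C^\times}]$. By the Galatius-Madsen-Tillmann-Weiss, Nguyen, and Schommer-Pries theorems, $\abs{\overline{\Bord}{}_n^\xi}\simeq \Sigma^k \mathit{MT\xi}_n$, and the hypothesis $n\geq 3$, $\SO(n)\subset\mathrm{Im}(\rho)$ of \cref{FH_stabilization} lets me replace the unstable Madsen-Tillmann spectrum $\mathit{MT\xi}_n$ by the stable one $\mathit{MT\xi}$ without changing the relevant homotopy classes (one checks that the canonical map $\Sigma^k\mathit{MT\xi}_n\to\mathit{MT\xi}$ is highly enough connected that maps into the target used below are unaffected).

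Next I would identify the target. For reflection-positive invertible theories, Freed-Hopkins (\cite[\S 7.1, \S 8.1]{FH16}) install a $\Z/2$-action on both $\overline{\Bord}{}_n^\xi$ (by orientation reversal) and on $\cat C$ (by complex conjugation), and define reflection-positive invertible theories as $\Z/2$-equivariant morphisms. The universal choice of target for honest partition functions is the character dual $\Sigma^n I_{\C^\times}$, but once one passes to deformation classes, the relevant homotopical object is the Anderson dual $\Sigma^{n+1}I_\Z$ of the sphere spectrum, characterized by the short exact sequence~\eqref{IZ_SES}. The identification of deformation classes of reflection-positive theories with $[\mathit{MT\xi}, \Sigma^{n+1}I_\Z]$ then follows from the Freed-Hopkins computation of the equivariant mapping spectrum, together with the observation that $\pi_*(I_\Z)$ captures exactly the Pontryagin dual of torsion plus the free part, which is the obstruction/deformation data visible after quotienting by continuous deformations.

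The main obstacle is the reflection-positivity step: passing from the naive group $[\mathit{MT\xi}, \Sigma^n I_{\C^\times}]$ of all invertible theories to the subgroup of reflection-positive ones, and then showing that the deformation classes of the latter are computed by $I_\Z$ rather than $I_{\C^\times}$. This requires carefully keeping track of the $\Z/2$-equivariance and using that reflection positivity enforces values in the subgroup $\Z\subset\C^\times$ of the Anderson dual's universal property. For the non-topological refinement due to Grady \cite{Gra23}, the additional step is to show that enriching tangential structures with geometric data (metrics, connections) does not change the deformation-class answer; this is done by showing the forgetful map from geometric to topological invertible theories induces an isomorphism on $\pi_0$, so the topological classification via $[\mathit{MT\xi}, \Sigma^{n+1}I_\Z]$ persists. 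I would quote these two results rather than reproving them, since each is a substantial theorem in its own right.
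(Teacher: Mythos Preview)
The paper does not prove this theorem. \Cref{IFT_classification} is stated as a background result with citations to Freed--Hopkins~\cite[Theorem 5.23]{FH16} and Grady~\cite{Gra23}, and the paper immediately moves on to use it (``So after we require reflection positivity, the classification changes from Madsen--Tillmann bordism to bordism in the usual sense\dots''). There is no proof to compare against.

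Your proposal is therefore doing something the paper does not attempt: sketching the argument from the cited references. As such a sketch, it is broadly accurate in spirit---you correctly identify the chain of reductions (Picard groupoid completion, stable homotopy hypothesis, GMTW/Nguyen/Schommer-Pries, choice of target spectrum, reflection-positivity equivariance, Grady's geometric-to-topological comparison). But you also correctly flag that the substantive content lies in the Freed--Hopkins reflection-positivity computation and in Grady's theorem, and that you would ``quote these two results rather than reproving them.'' That is exactly what the paper does as well, just without the surrounding exposition. One small caution: your claim that the map $\Sigma^k\mathit{MT\xi}_n\to\mathit{MT\xi}$ is ``highly enough connected'' to replace the unstable spectrum by the stable one is not quite how Freed--Hopkins proceed; the passage from unstable to stable is part of the content of their reflection-positivity analysis (the $\Z/2$-action on the bordism category interacts with stabilization), not a separate connectivity argument.
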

So after we require reflection positivity, the classification changes from unstable bordism to bordism in the
usual sense, which is easier to calculate.

\begin{remark}
There are some other approaches to the classification of invertible topological field theories, due to
Yonekura~\cite{Yon19}, Rovi-Schoenbauer~\cite{RS22}, and Kreck-Stolz-Teichner (unpublished; see~\cite{StolzTalk}).
\end{remark}
\Cref{IFT_classification} has a nice interpretation from the point of view of anomalies. Using the defining
property of $I_\Z$, there is a short exact sequence
\begin{equation}
\label{anomaly_SES}
	\shortexact{\varphi}{\psi}{\mathrm{Tors}(\Hom(\Omega_{n+1}^\xi, \C^\times))}{[\mathit{MT\xi},
	\Sigma^{n+2}I_\Z]}{\Hom(\Omega_{n+2}^\xi, \Z)},
\end{equation}
where $\mathrm{Tors}(\text{--})$ denotes the torsion subgroup. 
Using the Pontrjagin-Thom theorem (\cref{PTthm}) to interpret $\Omega_k^\xi$ geometrically as $\xi$-manifolds modulo bordism, the first and third terms in this short exact
sequence have anomaly-theoretic interpretations.
\begin{itemize}
	\item The quotient $\Hom(\Omega_{n+2}^\xi, \Z)$ is a free abelian group consisting of characteristic classes of
	$(n+2)$-dimensional $\xi$-manifolds; under this identification, the map $\psi$ sends an anomaly field theory to
	the corresponding anomaly polynomial, which is one degree higher, such as Chern-Simons and Chern-Weil forms. This data is visible to perturbative techniques, and is sometimes called
	the \term{local anomaly}.
	\item The subgroup $\mathrm{Tors}(\Hom(\Omega_{n+1}^\xi, \C^\times))$ is identified with the torsion subgroup
	of $[\mathit{MT\xi}, \Sigma^{n+2}I_\Z]$; these are the reflection positive invertible field theories which are
	topological. Such field theories' partition functions are bordism invariants, and the identification of these
	reflection positive invertible TFTs with $\mathrm{Tors}(\Hom(\Omega_{n+1}^\xi, \C^\times))$ assigns to a
	reflection positive invertible TFT its partition function. Typically this data is invisible to perturbative
	methods and is called the \term{global anomaly}.
\end{itemize}
Yamashita-Yonekura~\cite{YY21} and Yamashita~\cite{Yam21} relate the short exact sequence~\eqref{anomaly_SES} to a
differential refinement of $\mathrm{Map}(\mathit{MT\xi}, \Sigma^{n+2}I_\Z)$.

\subsection{Long exact sequence of invertible field theories}
\label{subsec:LES-of-IFT}
Just as the map of spectra of \cref{smith_defn_round_1} induces a Smith homomorphism on bordism, it dually induces a map of invertible field theories. Explicitly, we obtain this by mapping into the Anderson dual and taking homotopy groups. As in \cref{math_section_twisted_tangential_structures}, consider a topological space $X$, a tangential structure $\xi\colon B\to B\O$, a virtual bundle $V\to X$, and a vector bundle $W\to X$ of rank $r$. 
As we discussed in the end of \cref{math_tangential}, the tangential structure encodes both symmetries of the theory as well as the parameter space.
\begin{definition}\label{def:defect-anomaly-map}
    The \textit{defect anomaly map}, generalized from \cite{HKT19} section 4.2, is the map
\begin{equation}
    I_\Z\Omega^{k-r}_\xi(X^{V+W-r}) \xrightarrow{\mathrm{Def}_W} I_\Z\Omega^k_\xi(X^V)
\end{equation}
of invertible field theories induced by the zero section map.
\end{definition}
Physically, we interpret $I_\Z\Omega^k_\xi(X^V)$ to be classifying anomalies of QFTs in dimension $k-1$ with symmetry according to a $(X,V)$-twisted $\xi$-structure. The group mapping in, $I_\Z\Omega^{k-r}_\xi(X^{V+W-r})$, classifies the anomalies of defect theories in dimension $k-r-1$. These defect theories are created from the bulk theory, physically, by setting a non-trivial boundary condition on a symmetry-breaking order parameter, which corresponds to a section of the vector bundle $W$. See \cite[\S III.A]{PhysSmith} for further explanation.

For clarity, we begin a running example.

\begin{example}
    Recall the $\Z/2$ family of examples from \cref{spin_4periodic} and specifically \cref{pinp_spin_z2}:
    \begin{equation}\label{eq:this-one}
        \MTPin^+\xrightarrow{\sm_\sigma} \Sigma \MTSpin\wedge (B\Z/2)_+.
    \end{equation}
    Here, we take $\xi$ according to spin bordism and twist with the tautological line bundle $W=\sigma$ over $X=B\Z/2$. We take $V=3\sigma$ and apply the results of \cref{periodicity_and_shearing} to simplify (i.e., we use the fact that $4\sigma$ is spin).
    The corresponding map on invertible field theories is
    \begin{equation}
        I_\Z\Omega_{\Spin\times\Z/2}^{k-1} \xrightarrow{\mathrm{Def}_\sigma} I_\Z\Omega_{\Pinp}^{k}.
    \end{equation}

    Physically, we begin with a field theory with a pin$^+$ symmetry; that is, a fermionic theory with an additional time reversal symmetry $T$ that squares to fermion parity: $T^2=(-1)^F$. 
    We require the physical assumption that there is a $\Z/2$-odd bosonic operator $\phi$ such that the theory is gapped when the theory is deformed by $\phi$. One example is the Majorana mass term for $2+1$D Majorana fermions (see \cite[\S III.A.1]{PhysSmith}).
    We can define the theory on any manifold $M$ with a pin$^+$ structure $P \colon M \to B\Pinp$ on its tangent bundle. 
    If we choose generic configuration for the $\phi$ field, i.e., choose a section of the tautological line bundle $P^*\sigma$, we arrive at an effective theory whose excitations are localized at the zero set of $\phi$. We view this as a theory in one dimension lower, which is called a domain wall theory. Note that the bordism class of the zero set of $\phi$ is precisely the image of $[M]$ under $\sm_\sigma$ in \eqref{eq:this-one}.
    The domain wall theory no longer has the symmetry of the bulk theory: instead, it is a fermionic theory with a unitary internal symmetry $U$ squaring to $1$; i.e. it has $\Spin\times \Z/2$ tangential structure.
    We direct the reader to \cite[\S III.A]{PhysSmith} and \cite[\S 3.1]{HKT19} for more details.
\end{example}

In this context, anomaly matching refers to the process of identifying pairs of preimages and images of anomaly classes under the defect anomaly map $\mathrm{Def}_W$. 
To perform anomaly matching, we must understand not only the two groups classifying the possible bulk and defect theories, but also the kernel and cokernel of the map $\mathrm{Def}_W$.
In some cases, one may deduce that information from an understanding of explicit bordism generators, but in general this approach is difficult.
To address this question, we derived the map of spectra and identified its fiber, forming the Smith fiber sequence of \cref{math_Smithfibersequence}. Now, just as for bordism, we may form a long exact sequence.

\begin{corollary}
\label{IZ_LES_cor}
Applying $I_\Z$ to the cofiber sequence~\eqref{the_cof_seq}, we obtain the following long exact sequence of Anderson-dualized bordism groups, or in light of \cref{IFT_classification}, groups of invertible field theories:
\begin{equation}
\label{math_SBLES_2}
    \dotsb \longrightarrow I_\Z\Omega^{k-r}_\xi(X^{V+W-r}) \xrightarrow{\mathrm{Def}_W}
        I_\Z\Omega^k_\xi(X^V) \xrightarrow{\mathrm{Res}_W}
        I_\Z\Omega^k_\xi(S_X(W)^V) \xrightarrow{\mathrm{Ind}_W}
        I_\Z\Omega^{k-r+1}_\xi(X^{V+W-r}) \longrightarrow\dotsb
\end{equation}
\end{corollary}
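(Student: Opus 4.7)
The plan is to obtain the long exact sequence~\eqref{math_SBLES_2} as the homotopy-group long exact sequence associated to the fiber sequence that results from Anderson-dualizing the Smith cofiber sequence of \cref{the_cofiber_sequence}. The argument is a direct application of the functoriality of Anderson duality, the only content being the bookkeeping of the suspension shifts that appear when one normalizes Thom spectra to their rank-zero forms.

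First, I would start from the cofiber sequence of spectra in \cref{the_cofiber_sequence}, namely
\[ S_X(W)^V \longrightarrow X^V \longrightarrow X^{V\oplus W}, \]
and rewrite it in terms of rank-zero Thom spectra. Using the suspension isomorphism $X^{V} \simeq \Sigma^{r_V} X^{V - r_V}$ and similarly for the other two terms (where $V$ pulls back along $p\colon S_X(W) \to X$), this becomes (after desuspending by $r_V$) the cofiber sequence
\[ S_X(W)^{V-r_V} \longrightarrow X^{V-r_V} \longrightarrow \Sigma^{r_W} X^{V \oplus W - r_V - r_W}. \]
Next, I would smash with $\mathit{MT\xi}$, which preserves cofiber sequences, and then apply the function spectrum $F(-, I_\Z)$; by \cref{IFT_classification} and the definition of $\Omega^*_\xi$ as $I_\Z\mathit{MT\xi}$-cohomology, we have $[E, \Sigma^k I_\Z \mathit{MT\xi}] = \Omega^k_\xi(E)$ for any spectrum $E$ appearing as a $\mathit{MT\xi}$-module Thom spectrum in our setup.

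The key step is that $F(-, I_\Z \mathit{MT\xi})$ is a contravariant exact functor on spectra, so it sends the cofiber sequence above to a fiber sequence
\[ F(\Sigma^{r_W} \mathit{MT\xi}\wedge X^{V \oplus W - r_V - r_W}, I_\Z) \to F(\mathit{MT\xi} \wedge X^{V-r_V}, I_\Z) \to F(\mathit{MT\xi} \wedge S_X(W)^{V - r_V}, I_\Z). \]
Taking the associated long exact sequence of homotopy groups and using $\pi_{-k}(\Sigma^{-r_W}F(Y, I_\Z\mathit{MT\xi})) = \pi_{r_W - k}(F(Y, I_\Z \mathit{MT\xi})) = \Omega_\xi^{k-r_W}(Y)$, the suspension shift by $r_W$ on the right-hand cofactor reproduces exactly the shift $k \mapsto k-r_W$ in~\eqref{math_SBLES_2}. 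The connecting homomorphism becomes $\mathrm{Ind}_W$, the map $\mathrm{Def}_W$ is dual to the spectral Smith map from \cref{Smith_map_virtual_bundle}, and $\mathrm{Res}_W$ is dual to the bundle projection $p$; the physical labels match those used in~\cite{PhysSmith}.

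There is no real obstacle, only the usual care in keeping track of signs and degree shifts: one must verify that the Anderson-dualized direction of each map and the resulting indexing convention agree with the orientation given in the statement. The only point where one might worry is whether $I_\Z$-duality applied to $\mathit{MT\xi}$-module Thom spectra really produces the cohomology theory denoted $\Omega^*_\xi$; this follows because the smash product with $\mathit{MT\xi}$ commutes with the colimit construction of the Thom spectrum, so $F(\mathit{MT\xi}\wedge Y, I_\Z) \simeq F(Y, I_\Z \mathit{MT\xi})$ by the $\wedge\dashv F$ adjunction, and this identifies the terms of the long exact sequence as $I_\Z\mathit{MT\xi}$-cohomology of the relevant Thom spectra, i.e.\ the groups $\Omega^*_\xi(\cdot)$ appearing in~\eqref{math_SBLES_2}.
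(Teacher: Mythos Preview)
Your proposal is correct and matches the paper's approach: the corollary is obtained directly by applying the cohomology theory $\Omega_\xi^* = (I_\Z\mathit{MT\xi})^*$ to the cofiber sequence~\eqref{the_cof_seq}, and the paper treats this as essentially immediate (the proof is contained in the statement ``Applying $I_\Z$ to the cofiber sequence\dots''). Your additional bookkeeping of the suspension shifts and the identification $F(\mathit{MT\xi}\wedge Y, I_\Z)\simeq F(Y, I_\Z\mathit{MT\xi})$ via the smash--hom adjunction is exactly what is needed to make the indexing in~\eqref{math_SBLES_2} explicit, and is a welcome elaboration of what the paper leaves implicit.
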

This long exact sequence is our mathematical model for the symmetry-breaking long exact sequence (SBLES) of \cite{PhysSmith} (see \cref{tab:my-table}).
In addition to the defect anomaly map $\mathrm{Def}_W$ defined in \cref{def:defect-anomaly-map}, we call $\mathrm{Res}_W$ the \emph{residual anomaly map} and $\mathrm{Ind}_W$ the \emph{index anomaly map}.

\begin{table}
    \centering
    \begin{tabular}{c c}
    \toprule
       Smith fiber sequences  & SBLES \\
       \midrule
       \cref{spinc_spin_Smith}  &  \cite[\S IV.A]{PhysSmith} \\ 
        \cref{SO_Z2_exm} & \cite[\S IV.B]{PhysSmith} \\ 
        \cref{spin_4periodic} & \cite[\S IV.C]{PhysSmith}\\
        \cref{Zk_spin} & \cite[\S IV.D \& E]{PhysSmith}\\
        \cref{quater_exm,spinh_exm} & \cite[\S IV.F]{PhysSmith}\\
    \bottomrule
    \end{tabular}
    \caption{A cross-list of Smith fiber sequences with the corresponding symmetry breaking long exact sequences (SBLES) that we study in \cite[\S IV]{PhysSmith}.}
    \label{tab:my-table}
\end{table}
\begin{figure}[h]
\centering
    \begin{center}
    \begin{tikzcd}[column sep=normal, row sep=small, ampersand replacement=\&]
	\& {I_\Z\Omega^{k-1}_{\Spin\times\Z/2}} \& {I_\Z\Omega_{\Pinp}^k} \& {I_\Z\Omega^k_\Spin} \\
	{-1} \& 0 \& 0 \& {\Z} \\
	0 \& \Z \& {\Z/2} \& {0} \\
	1 \& 0 \& 0 \& {\Z/2} \\
	2 \& {(\Z/2)^2} \& {\Z/2} \& {\Z/2} \\
	3 \& {(\Z/2)^2} \& {\Z/2} \& {\Z} \\
	4 \& {\Z\oplus \Z/8} \& {\Z/16} \& {0} \\
        \arrow[from=1-2, to=1-3, "\mathrm{Def}_\sigma"]
        \arrow[from=1-3, to=1-4, "\mathrm{Res}_\sigma"]
	\arrow[from=2-4, to=3-2, out=-30, in=150]
	\arrow[from=3-2, to=3-3]
	\arrow[from=4-4, to=5-2, out=-30, in=150]
	\arrow[from=5-2, to=5-3]
	\arrow[from=5-4, to=6-2, out=-30, in=150]
	\arrow[from=6-2, to=6-3]
	\arrow[from=6-4, to=7-2, out=-30, in=150]
	\arrow[from=7-2, to=7-3]
\end{tikzcd}
\end{center}
\caption{Long exact sequence of field theories associated to \cref{pinp_spin_z2}. Observe that all maps in low degrees are determined by exactness. This long exact sequence also appears in \cite[\S IV.C]{PhysSmith}}
\label{fig:pinp_spin_z2_SBLES}
\end{figure}
\begin{example}
    In degree $k=4$, there is a map of invertible field theories 
    \begin{equation}\label{eq:that-one}
    I_\Z\Omega^3_{\Spin\times\Z/2} \cong \Z\oplus \Z/8 \xrightarrow{\mathrm{Def}_\sigma} \Z/16 \cong I_\Z\Omega^4_\Pinp.
    \end{equation}
    The $\Z/16$ classifies anomalies of Majorana fermions in $2+1$ dimensions.
    An associated domain wall theory often has $1+1$d chiral fermion modes. To answer the question of what particular chiral fermions can live on the domain wall, we need to identify the map $\mathrm{Def}_\sigma$ in \eqref{eq:that-one}. To do so, we turn to the long exact sequence of invertible field theories, which we draw out in \cref{fig:pinp_spin_z2_SBLES}.

By exactness, we deduce that the defect matching map in degree $k=4$ sends $(a,b) \in \Z \oplus \Z/8$ to $-a+2b \in \Z/16$. This matches with the physical computation in \cite[\S 3.1]{HKT19}, and its physical significance is discussed further there and in \cite[\S III.C.3]{PhysSmith}.
\end{example}

As a computational tool, the long exact sequence allows us to determine the defect matching maps with ease. Moreover, the other two maps in the SBLES \eqref{math_SBLES_2} also have physical interpretations.
The residual anomaly map classifies the obstruction to gapping a QFT after symmetry breaking, as we explain in \cite[\S III.B]{PhysSmith}, while the index anomaly map generalizes the relationship between Berry phases and the ground-state degeneracy in $0+1$D systems; see \cite[\S III.C]{PhysSmith}.

\appendix

\section{The Long Exact Sequence in Bordism}\label{appendix_bordism_LES}

In this appendix, we explicitly describe the Smith long exact sequence of bordism groups and work through an example. As in \cref{math_Smithfibersequence}, let $(X,\xi)$ be a stable tangential structure, let $V$ be a virtual bundle over $X$, and let $W$ be a real vector bundle over $X$ of rank $r$.
The corresponding long exact sequence of bordism groups of \cref{IZ_LES_cor} is
\begin{equation}
\begin{tikzcd}
    \dots \to \Omega^\xi_k(S_X(W)^{p^*V}) \ar[r,"p"] & \Omega^\xi_k(X^{V}) \ar[r,"\sm_W"] & \Omega^\xi_{k-r}(X^{V + W-r}) \ar[r,"\delta"]
    & \Omega^\xi_{k-1}(S_X(W)^{p^*V}) \to \cdots
\end{tikzcd}
\end{equation}
Here, $p\colon S_X(W)\to X$ is the projection, $\sm_W$ is the Smith homomorphism, and $\delta$ is the connecting map. In this section, we will be explicit about taking the pullback $p^*V$ of $V$ to $S_X(W)$.

Starting from the left, $\Omega_k^\xi(S_X(W)^{p^*V})$ is the bordism group of $k$-manifolds $M$ equipped with a map $f\colon M\to S_X(W)$ together with a $\xi$-structure on $TM\oplus f^*(p^*V)$.
Next, $\Omega^\xi_k(X^{V})$ is the bordism group of $k$-manifolds equipped with a map to $X$ with the analogous twisted $\xi$-structure, and $\Omega_{k-r}^\xi(X^{V+W-r})$ is the bordism group of $(k-r)$-manifolds $N$ equipped with a map $g$ to $X$ with a $\xi$-structure on $TM\oplus g^*V\oplus g^*W$. Note that the Smith homomorphism lowers the dimension by $r$ and twists the tangential structure condition by $W$.

Now we describe each map at the level of manifolds.
  \begin{enumerate}
        \item $p$:
        Let $M$ be a closed $k$-manifold equipped with a map $h\colon M \to S_X(W)$ such that $TM \oplus h^*V$ has a $\xi$-structure, so that $M$ represents a bordism class in $\Omega^\xi_k(S_X(W)^{p^*V})$. The image of $M$ under $p$ is represented by the same manifold $M$ with an $(X, V)$-twisted $\xi$-structure given by the composition with the projection. That is, equip $M$ with the map $M \overset{h}{\longrightarrow} S_X(W) \overset{p}{\longrightarrow} X$.
        
        \item $\sm_W$:  
        Now let $M$ be a closed $k$-manifold equipped with a map $f\colon M \to X$ such that $TM \oplus f^*V$ has a $\xi$-structure. Let $s\colon M\to W$ be a generic section, which is transverse to the zero section $s_0$. Then, the intersection
        $N \coloneqq s(M) \pitchfork s_0(M)$ is a $(k-r)$-dimensional manifold. Let $g$ be the composite $g\colon N \hookrightarrow M \overset{f}{\to} X$.
        Since the normal bundle $\nu$ to $N$ satisfies $\nu \cong f^*W|_N = g^* W$, $TM|_N \cong TN \oplus \nu \cong TN \oplus g^*W$, and hence $N$ carries an $(X, V + W)$-twisted $\xi$-structure coming from the $(X, V)$-twisted $\xi$-structure on $M$. We have $\sm_W\colon M\mapsto N$.
        
        \item $\delta$:
        This is the connecting map in the long exact sequence.
        Start with a closed $k-r$-manifold $N$ with an $(X, V+W)$-twisted $\xi$-structure given by, as above, $g\colon N \to X$ and a $\xi$-structure on $TN \oplus g^*V\oplus g^*W$. Consider the sphere bundle $S_N(g^*W)$ of $W$ restricted to $N$: it has a map to $S_X(W)$ given by inclusion.
        
        We claim that $S_N(g^*W)$ is the image under $\delta$ of $N$, but     
        it remains to show that $S_N(g^*W)$ has the appropriate tangential structure.
        This will be a corollary of a general splitting result of tangent bundles of sphere bundles.
\begin{lemma}
\label{sphere_bundle_splitting}
For any vector bundle $\pi\colon V\to B$, there is an isomorphism of vector bundles, canonical up to a contractible space of choices,
\begin{equation}
    TS(V)\oplus\underline\R \xrightarrow{\cong} \pi^*(TB) \oplus \pi^*(V).
\end{equation}
\end{lemma}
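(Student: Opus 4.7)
The plan is to produce the isomorphism via a choice of Riemannian metric and compatible connection on $V\to B$, both of which are parametrized by contractible spaces. Let $\pi\colon V\to B$ also denote the projection of the total space of the bundle, and write $p = \pi|_{S(V)}$.

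First I would build the splitting upstairs, on the total space of $V$. A connection on $V$ (or equivalently the Levi-Civita connection coming from any bundle metric and base metric, though we only need the horizontal distribution) determines a splitting of the short exact sequence of vector bundles on the total space of $V$,
\begin{equation}
    0\longrightarrow T^{\mathrm{vert}}V\longrightarrow TV\overset{d\pi}{\longrightarrow} \pi^*(TB)\longrightarrow 0.
\end{equation}
Because fibers of $\pi$ are vector spaces, the vertical tangent bundle is canonically identified with $\pi^*V$. Hence any connection produces an isomorphism $TV\cong \pi^*(TB)\oplus \pi^*(V)$ of vector bundles over the total space of $V$, and the space of connections is an affine space, hence contractible.

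Next I would descend to the sphere bundle. Choose a fiberwise inner product on $V$; the space of such choices is also contractible. This metric picks out the outward unit radial vector field along the embedding $S(V)\hookrightarrow V$, which trivializes the normal bundle of this embedding and produces a canonical isomorphism $TV|_{S(V)}\cong TS(V)\oplus\underline{\R}$. Restricting the splitting from the previous step along $S(V)\hookrightarrow V$ and combining with this identification yields
\begin{equation}
    TS(V)\oplus \underline{\R}\cong TV|_{S(V)}\cong \pi^*(TB)|_{S(V)}\oplus\pi^*(V)|_{S(V)} = p^*(TB)\oplus p^*(V),
\end{equation}
which is the desired isomorphism.

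To justify the canonicity clause, I would note that every choice above---the metric on $V$, a compatible connection, and the identification of the vertical bundle with $\pi^*V$---belongs to a contractible space, and the formation of the isomorphism is continuous in these choices, so the space of data producing the isomorphism is itself contractible. There is no real obstacle here beyond bookkeeping; the only point requiring a little care is ensuring that the contractible choices can be made compatibly (for instance, by picking the metric first and taking the Levi-Civita connection of an arbitrary Riemannian metric on $B$ pulled back and combined with the fiber metric), which is standard.
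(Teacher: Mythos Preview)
Your proof is correct and follows essentially the same approach as the paper: both arguments choose a metric and connection (contractible choices), use the connection to split off the horizontal subbundle, and use the outward unit normal to trivialize the normal bundle of $S(V)\hookrightarrow V$. The only cosmetic difference is that you split $TV$ on the total space of $V$ and then restrict to $S(V)$, whereas the paper applies the horizontal/vertical splitting directly to the fiber bundle $S(V)\to B$ and then identifies $T_vS(V)\oplus\nu\cong \pi^*V$ fiberwise; these are equivalent and equally valid.
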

\begin{proof}
Choose a metric and connection on $V$; both of these are contractible choices. For any fiber bundle $\pi\colon E\to B$ of smooth manifolds, the choice of connection splits $TE$ as a direct sum of the horizontal subbundle, which is isomorphic to $\pi^*(TB)$, and the vertical tangent bundle $T_vE = \ker(\pi_*)$, which when pulled back to a fiber is the tangent bundle of that fiber.

Let $\nu$ be the normal bundle of $S(V)\hookrightarrow V$. Then there is a canonical isomorphism $T_vS(V)\oplus\nu\cong \pi^*(V)$, which is a parametrized version of the standard isomorphism $TS^n\oplus\nu_{S^n\hookrightarrow\R^{n+1}} \cong \underline\R^{n+1}$. Combining this with the previous paragraph,
\begin{equation}
    TS(V) \oplus\nu \cong \pi^*(TB) \oplus T_vS(V)\oplus\nu\cong \pi^*(TB)\oplus \pi^*(V),
\end{equation}
and the fiberwise outward unit normal vector field trivializes $\nu$.
\end{proof}
        If we now analyze the vertical and horizontal pieces of the tangent bundle to $S_N(g^*W)$,
        we find that $T(S_N(g^*W)) \oplus \underline{\R} \cong p^*TN \oplus p^*g^*W$. Then, we can pull back the relationship describing the tangential structure of $N$ to see that $p^*TN\oplus p^*g^*W \oplus p^*g^*V$ over $S_N(g^*W)$ has a $\xi$-structure. So, $T(S_N(g^*W)) \oplus \underline{\R} \oplus p^*g^*V$ has a $\xi$-structure, and thus $S(g^*W)$ has a $(S_X(W), p^*V)$-twisted $\xi$-structure.
    \end{enumerate}

\label{explicit_pin}

Let us now go through the long exact sequence of bordism groups for the Smith map \ref{pinm_pinp}. In this case, the Smith homomorphism is a map
\begin{equation}
\sm_{2\sigma}\colon \Omega_k^{\Pinm} \longrightarrow \Omega_{k-2}^{\Pinp}
\end{equation}
between the bordism group of $k$-dimensional \pinm manifolds to the bordism group of $(k-2)$-dimensional \pinp manifolds, described by sending a \pinm manifold $M$ to any closed submanifold $N$ whose homology class is Poincaré dual to $w_1(M)^2$. Alternatively, in view of \cref{Smith_homomorphism_intersection_defn}, we could define $\sm_{2\sigma}$ by choosing a section $s$ of the pullback of $2\sigma$ to $M$ transverse to the zero section, then letting $N$ be the zero locus of $s$.
Recall from \cref{twists_of_spin} that a \pinm structure is a trivialization of $w_1(M)^2+w_2(M)$, while a \pinp structure on $M$ is equivalent to a trivialization of $w_2(M)$. Equivalently, a \pinm manifold $M$ admits a spin structure on $TM\oplus \Det(M)$, while a \pinp manifold $M$ admits a spin structure on $TM\oplus 3\Det(M)$. These conditions mean that if $N$ is Poincaré dual to $w_1(M)^2$ inside a \pinm manifold $M$, then $N$ acquires a \pinp structure.

The third set of groups in this long exact sequence corresponds to the homotopy groups of the fiber, $MT\Spin\wedge \Sigma^{-1}\R P^2$.
By Pontrjagin-Thom, these are the groups $\widetilde\Omega_{*+1}^{\Spin}(\RP^2)$: bordism groups of spin manifolds $X$ equipped with maps $f\colon X\to \RP^2$, modulo the subgroup for which $f$ is null-homotopic. Equivalently, we may consider the twisted bordism groups $\Omega^\Spin_{*}(\RP^1,\sigma)$. Elements of this group are represented by manifolds $N$ with maps $f\colon N\to \RP^1$ such that $TN\oplus f^*\sigma$ is spin.

We next describe the other two maps that appear alongside $\sm_{2\sigma}$ in the bordism long exact sequence and provide several lemmas that help us understand the geometry.
\begin{definition}
Define a map $p\colon \Omega_*^\Spin(\RP^1, \sigma)\to\Omega_*^{\Pin^-}$ by sending $(N, f\colon N\to\RP^1)$ to $N$.
\end{definition}
\begin{lemma}
If $N$ has an $(\RP^1, \sigma)$-twisted spin structure, then $N$ has a canonical \pinm structure (so the map $p$ lands in \pinm bordism as claimed).
\end{lemma}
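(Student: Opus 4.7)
The plan is to promote the given $(\RP^1,\sigma)$-twisted spin structure on $N$ to a spin structure on $TN\oplus\det(TN)$, since by the discussion in \cref{twists_of_spin} a \pinm structure on $N$ is precisely such a spin structure.

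First I would unpack the data: a map $f\colon N\to\RP^1$ together with a spin structure on $TN\oplus f^*\sigma$. The spin structure in particular provides an orientation of $TN\oplus f^*\sigma$, i.e.\ a trivialization of
\begin{equation}
    \det(TN\oplus f^*\sigma) \cong \det(TN)\otimes f^*\sigma,
\end{equation}
where the displayed isomorphism is canonical. For real line bundles $L$ there is a canonical (up to contractible choice of fiberwise metric) isomorphism $L\cong L^\vee$, so a trivialization of $\det(TN)\otimes f^*\sigma$ is the same as an isomorphism $\varphi\colon \det(TN)\overset\cong\to f^*\sigma$. This identification is canonical: the only choice was that of a metric, and the space of metrics is contractible, so the resulting isomorphism of line bundles is well-defined up to a contractible space of choices.

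Next, I would use $\varphi$ to transport the spin structure. Applied to $TN$, the isomorphism $\mathrm{id}_{TN}\oplus\varphi^{-1}\colon TN\oplus f^*\sigma\to TN\oplus\det(TN)$ is an isomorphism of rank-$(n+1)$ real vector bundles, and pushing the chosen spin structure on $TN\oplus f^*\sigma$ forward along it yields a spin structure on $TN\oplus\det(TN)$. By the identification recalled above, this is exactly the data of a \pinm structure on $N$. The whole construction used only canonical isomorphisms (the splitting of determinants, self-duality of real line bundles, and the isomorphism $\varphi$ constructed from the orientation), so the resulting \pinm structure is canonical.

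The main thing to check—and what I would expect to be the only subtle point—is that no hidden choices enter: specifically, one should verify that the self-duality isomorphism $L\cong L^\vee$ for a real line bundle lives over a contractible space of choices (the choice of metric), and that the spin structure on $TN\oplus\det(TN)$ obtained is independent of that choice. Both are standard, and a short calculation with $w_2(TN\oplus f^*\sigma)=w_2(TN)+w_1(TN)w_1(f^*\sigma)=w_2(TN)+w_1(TN)^2$ (which equals zero by hypothesis) serves as a sanity check that the output has the correct obstruction class.
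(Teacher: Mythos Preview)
Your proof is correct and follows essentially the same approach as the paper's: use the orientation contained in the spin structure on $TN\oplus f^*\sigma$ to produce a canonical isomorphism $\det(TN)\cong f^*\sigma$, then transport the spin structure to $TN\oplus\det(TN)$, which is the definition of a \pinm structure. The paper's proof is the same two-line argument; your version is simply more explicit about why each step is canonical and adds the Stiefel-Whitney class sanity check.
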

\begin{proof}
The orientation of $TN\oplus f^*\sigma$  is equivalent data to an isomorphism $\Det(TN)\xrightarrow{\cong} f^*\sigma$, so we obtain a spin structure on $TN\oplus\Det(TN)$, i.e.\ a \pinm structure.
\end{proof}

The third map in the long exact sequence is the connecting map $\delta\colon\Omega_*^{\Pin^+}\to\Omega_{*+1}^{\Spin}(\RP^1, \sigma)$. The map $\delta$ sends a \pinp manifold $M$ to the total space of the sphere bundle $S(2\Det(TM))$.\footnote{Note that $S(2\Det(TM))\simeq S(g^*(2\sigma))$.} The key to understanding $\delta$ is showing that $S(2\Det(TM))$ has a $(\RP^1, \sigma)$-twisted spin structure; in particular, we must cook up a map to $\RP^1$.

\begin{definition}
\label{varphi_M}
Given a \pinp manifold $M$, choose a metric on $\Det(TM)$ (a contractible choice); then, given $x\in M$ and $p,q\in\sigma_x$ with $\sqrt{\lvert p^2\rvert + \lvert q^2\rvert} = 1$, so that $(x, p, q)\in S(2\Det(TM))$, the two sections of $\pi^* (2\Det(TM))$
\begin{equation}
\begin{aligned}
    (x, p, q) &\mapsto (p, q)\\
    (x, p, q) &\mapsto (-q, p)
\end{aligned}
\end{equation}
are everywhere linearly independent, so $\pi^*(2\Det(TM))$ is canonically trivial. This allows us to define a map $\varphi_M\colon S(2\Det(TM))\to\RP^1$: given $(x, p, q)\in S(2\Det(TM))$ as above, $(p,q)\in (\pi^*(2\Det(TM)))_{(x, p, q)}$, which is canonically identified with $\R^2$, send $(p,q)$ to its image $[p:q]\in\RP^1$ (using that $p$ and $q$ are never both $0$).

\end{definition}
\begin{definition}
Let $\delta\colon \Omega_*^{\Pin^+}\to \Omega_{*+1}^\Spin(\RP^1,\sigma)$ be the map sending $M\mapsto (S(2\Det(TM)), \varphi_M)$, where $\varphi_M$ is defined above in \cref{varphi_M}.
\end{definition}
If $\sigma\to\RP^1$ is the Möbius bundle, then $\varphi_M^*(\sigma) = \pi^*(\Det(TM))$.
\begin{lemma}
\label{last_pinp_morph}
If $M$ is \pinp, $(S(2\Det(TM)), \varphi_M)$ has a canonical $(\RP^1, \sigma)$-twisted spin structure, up to a contractible space of choices, so that $\delta$ lands in $\Omega_{*+1}^{\Spin}(\RP^1,\sigma)$ as claimed.
\end{lemma}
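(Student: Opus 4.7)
The plan is to directly verify the isomorphism of (stable) vector bundles needed to promote the pin$^+$ structure on $M$ to a twisted spin structure on the total space of the sphere bundle, using Lemma \ref{sphere_bundle_splitting} as the main geometric input.

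Write $E \coloneqq 2\Det(TM)$ and let $\pi\colon S(E) \to M$ denote the bundle projection. First I would invoke Lemma \ref{sphere_bundle_splitting} applied to the rank-$2$ bundle $E\to M$, which (after choosing a metric and a connection---a contractible space of choices) furnishes a canonical isomorphism
\[ TS(E) \oplus \underline\R \xrightarrow{\cong} \pi^*(TM) \oplus \pi^*(E) = \pi^*(TM) \oplus 2\pi^*(\Det(TM)). \]
Next I would record the identification already noted after \cref{varphi_M}, namely $\varphi_M^*(\sigma) \cong \pi^*(\Det(TM))$; this is immediate from the construction of $\varphi_M$, since $\sigma\to\RP^1$ is the tautological line bundle and the fiber coordinates $(p,q)$ used to define $\varphi_M$ literally live in $\pi^*(\Det(TM))\oplus\pi^*(\Det(TM))$.

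Adding $\varphi_M^*\sigma$ to both sides of the sphere-bundle splitting and rearranging yields
\[ TS(E) \oplus \varphi_M^*\sigma \oplus \underline\R \;\cong\; \pi^*(TM) \oplus 3\pi^*(\Det(TM)) \;\cong\; \pi^*\bigl(TM \oplus 3\Det(TM)\bigr). \]
Now I would invoke the pin$^+$ hypothesis in the form recalled in \cref{twists_of_spin}: a pin$^+$ structure on $M$ is precisely a spin structure on $TM \oplus 3\Det(TM)$. Pulling this spin structure back along $\pi$ and transporting it across the isomorphism above produces a spin structure on $TS(E) \oplus \varphi_M^*\sigma \oplus \underline\R$, which is the same data as a spin structure on $TS(E) \oplus \varphi_M^*\sigma$ (stabilization by a trivial line bundle is invisible to spin structures). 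By the definition recalled in \S\ref{math_section_twisted_tangential_structures}, this is exactly an $(\RP^1,\sigma)$-twisted spin structure on $S(E)$ with structure map $\varphi_M$, so $\delta(M)$ lands where claimed.

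The only real subtlety is the ``canonical up to a contractible space of choices'' clause: I would remark that every choice made (the metric and connection on $E$ underlying Lemma \ref{sphere_bundle_splitting}, the metric on $\Det(TM)$ used to define $\varphi_M$, and the identification $\varphi_M^*\sigma \cong \pi^*\Det(TM)$) is parameterized by a contractible space, so all resulting twisted spin structures lie in the same connected component and $\delta$ is well-defined on bordism classes. I do not expect a serious obstacle beyond keeping track of these contractible choices; the content is just the bundle-theoretic identity above combined with the standard shearing description of pin$^+$ structures.
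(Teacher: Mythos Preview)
Your proposal is correct and follows essentially the same approach as the paper: apply \cref{sphere_bundle_splitting} to $2\Det(TM)$, add $\varphi_M^*\sigma\cong\pi^*\Det(TM)$ to both sides to produce $\pi^*(TM\oplus 3\Det(TM))$, pull back the \pinp-induced spin structure, and cancel the trivial summand. The paper's write-up is terser (invoking ``two-out-of-three'' for the last step rather than explicitly noting stabilization), but the argument is the same.
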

\begin{proof}
Plugging in $V = 2\Det(TM)$ to \cref{sphere_bundle_splitting}, we learn
\begin{subequations}
\begin{equation}
    TS(2\Det(TM))\oplus\underline\R\cong \pi^*(TM)\oplus 2\pi^*(\Det(TM)).
\end{equation}
Since $\varphi^*_M(\sigma) \cong \pi^*(\Det(TM))$,
\begin{equation}
\label{finally_twspin}
    TS(2\Det(TM))\oplus \varphi_M^*(\sigma)\oplus \underline\R\cong \pi^*(TM)\oplus 3\pi^*(\Det(TM)).
\end{equation}
Since $M$ is \pinp, the right-hand-side of~\eqref{finally_twspin} is spin, so the left-hand side is too; by two-out-of-three, this means $TS(2\Det(TM))\oplus \varphi_M^*(\sigma)$ is also spin.
\end{subequations}
\end{proof}

The maps $\sm_{2\sigma}$, $p$, and $\delta$ assemble into a long exact sequence in bordism, as we will draw in \cref{fig:Pinm_Pinp_bordism_LES}. To write out this long exact sequence, we need to know the relevant bordism groups in low dimensions. Giambalvo~\cite[\S 2, \S 3]{Gia73} computes $\Omega_k^{\Pin^+}$ for $k\le 12$, more than good enough for us, and gives generating manifolds in all degrees we need except $k = 2,3$ (though see~\cite{KT90pinp} for a correction); the rest were given by Kirby-Taylor~\cite[Proposition 3.9, Theorem 5.1]{KT90}. Anderson-Brown-Peterson~\cite[Theorem 5.1]{ABP69} computed \pinm bordism groups, with generating manifolds again described by Giambalvo~\cite[Theorem 3.4]{Gia73} and Kirby-Taylor~\cite[Theorem 2.1]{KT90}. However,
the twisted spin bordism of $\RP^1$ is less well-documented, so we calculate it here, using another Smith homomorphism.
\begin{lemma}
\label{tw_RP1}
There is an abelian group $A$ of order $4$ such that
\begin{equation}
\Omega_k^\Spin(\RP^1, \sigma) \cong
\begin{cases}
    \Z/2, &k = 0,1,3,4\\
    A, &k = 2\\
    0, &k = 5.
\end{cases}
\end{equation}
\end{lemma}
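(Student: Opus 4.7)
The plan is to set up a Smith long exact sequence for the pair $X = \RP^1$, $W = \sigma$ (the Möbius bundle), and $V = -1$, then smash with $\MTSpin$ and read off the low-degree twisted bordism groups from the known values of $\Omega_k^\Spin$. First, I would identify $(\RP^1)^{\sigma - 1}$ concretely: since $\mathrm{Th}(\RP^1;\sigma) \simeq \RP^2$, we have $(\RP^1)^{\sigma - 1} \simeq \Sigma^{-1}\Sigma^\infty\RP^2$, so $\Omega_k^\Spin(\RP^1,\sigma) \cong \widetilde{\Omega}_{k+1}^\Spin(\RP^2)$.

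Next, I would invoke \cref{Wall_exm}: pulling the Smith cofiber sequence over $B\Z/2$ back along $\RP^1 \hookrightarrow \RP^\infty$ gives the cofiber sequence
\[ \mathbb S \xrightarrow{\ 2\ } \mathbb S \longrightarrow \Sigma^{-1}\Sigma^\infty\RP^2, \]
and the middle map is multiplication by $2$ because the sphere bundle $S(\sigma) \to \RP^1$ is the connected double cover $S^1 \to S^1$. Smashing with $\MTSpin$ produces a cofiber sequence of spectra $\MTSpin \xrightarrow{2} \MTSpin \to \MTSpin \wedge \Sigma^{-1}\RP^2$, whose induced long exact sequence of homotopy groups reads
\[ \dotsb \longrightarrow \Omega_k^\Spin \xrightarrow{\ 2\ } \Omega_k^\Spin \xrightarrow{\alpha_k} \Omega_k^\Spin(\RP^1,\sigma) \xrightarrow{\beta_k} \Omega_{k-1}^\Spin \xrightarrow{\ 2\ } \Omega_{k-1}^\Spin \longrightarrow \dotsb \]

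The remaining work is mechanical: plug in $\Omega_k^\Spin = \Z, \Z/2, \Z/2, 0, \Z, 0$ for $k = 0,\dotsc,5$. For each degree, $\ker \beta_k = \operatorname{coker}(2 \text{ on } \Omega_k^\Spin)$ and $\mathrm{im}\,\beta_k = \ker(2 \text{ on } \Omega_{k-1}^\Spin)$, giving a short exact sequence determining $\Omega_k^\Spin(\RP^1,\sigma)$. The cases $k = 0,1,3,4,5$ give cyclic groups of the orders claimed (or $0$) with no ambiguity since at least one of the two outer terms vanishes. The only case with a nontrivial extension is $k=2$, where both outer terms equal $\Z/2$ and both multiplication-by-$2$ maps are zero, yielding a short exact sequence $0 \to \Z/2 \to A \to \Z/2 \to 0$. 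This only pins down $|A| = 4$, matching the statement of the lemma; deciding whether $A \cong \Z/4$ or $A \cong (\Z/2)^2$ is the one potentially nontrivial point, but the lemma does not require us to resolve it. This is the only mild obstacle, and it is bypassed by the statement's deliberate ambiguity.
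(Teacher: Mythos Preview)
Your proposal is correct and takes essentially the same approach as the paper: both use the multiplication-by-$2$ cofiber sequence $\mathbb S \xrightarrow{2} \mathbb S \to \Sigma^{-1+\infty}\RP^2$ from \cref{Wall_exm}, smash with $\MTSpin$, and read off the resulting long exact sequence against the known low-degree spin bordism groups of a point, leaving the $k=2$ extension unresolved exactly as the lemma intends.
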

\begin{proof}
We may start the computation of $\Omega^\Spin(\RP^1,\sigma)$ using the observation of Kirby and Taylor \cite{KT90} that the degree two map
\begin{equation}\label{spintimes2}
\mathbb{S} \overset{\cdot 2}{\longrightarrow} \mathbb{S} \longrightarrow \Sigma_+^{\infty-1} \RP^2
\end{equation}
of \cref{Wall_exm} induces multiplication by two on spin bordism.
Taking the spin bordism long exact sequence of \ref{spintimes2} and inputting the spin bordism of a point, we may deduce the groups $\Omega^\Spin(\RP^1,\sigma)$ in low dimensions, up to one ambiguity, as indicated in \cref{fig:spintimes2LES}.
\end{proof}

\begin{figure}[h!]
    \centering
% https://q.uiver.app/#q=WzAsMjgsWzEsMCwiXFxPbWVnYV8qXlxcU3BpbiJdLFsyLDAsIlxcT21lZ2FeXFxTcGluXyoiXSxbMywwLCJcXE9tZWdhXlxcU3Bpbl8qKFxcUiBQXjEsXFxzaWdtYSkiXSxbMCwwLCIqIl0sWzAsMSwiNSJdLFswLDIsIjQiXSxbMCwzLCIzIl0sWzAsNCwiMiJdLFswLDUsIjEiXSxbMCw2LCIwIl0sWzEsMSwiMCJdLFsyLDEsIjAiXSxbMywxLCIwIl0sWzEsMiwiXFxaIl0sWzIsMiwiXFxaIl0sWzMsMiwiXFxaLzIiXSxbMSwzLCIwIl0sWzIsMywiMCJdLFszLDMsIlxcWi8yIl0sWzEsNCwiXFxaLzIiXSxbMiw0LCJcXFovMiJdLFszLDQsIkEiXSxbMSw1LCJcXFovMiJdLFsyLDUsIlxcWi8yIl0sWzMsNSwiXFxaLzIiXSxbMSw2LCJcXFoiXSxbMiw2LCJcXFoiXSxbMyw2LCJcXFovMiJdLFsxMywxNF0sWzE0LDE1XSxbMTgsMTldLFsxOSwyMF0sWzIwLDIxXSxbMjEsMjJdLFsyMiwyM10sWzIzLDI0XSxbMjQsMjVdLFsyNSwyNl0sWzI2LDI3XV0=
\begin{tikzcd}
	{*} & {\Omega_*^\Spin} & {\Omega^\Spin_*} & {\Omega^\Spin_*(\RP^1,\sigma)} \\
	5 & 0 & 0 & 0 \\
	4 & \Z & \Z & {\Z/2} \\
	3 & 0 & 0 & {\Z/2} \\
	2 & {\Z/2} & {\Z/2} & A \\
	1 & {\Z/2} & {\Z/2} & {\Z/2} \\
	0 & \Z & \Z & {\Z/2}
	\arrow[from=3-2, to=3-3]
	\arrow[from=3-3, to=3-4]
	\arrow[from=4-4, to=5-2, in=150, out=-30]
	\arrow[from=5-2, to=5-3]
	\arrow[from=5-3, to=5-4]
	\arrow[from=5-4, to=6-2, in=150, out=-30]
	\arrow[from=6-2, to=6-3]
	\arrow[from=6-3, to=6-4]
	\arrow[from=6-4, to=7-2, in=150, out=-30]
	\arrow[from=7-2, to=7-3]
	\arrow[from=7-3, to=7-4]
\end{tikzcd}
\caption{Long exact sequence in spin bordism partially determining $\Omega_*^\Spin(\RP^1,\sigma)$}
\label{fig:spintimes2LES}
\end{figure}

\begin{remark}
To address the question as to whether $A$ is isomorphic to $\Z/4$ or $\Z/2\oplus\Z/2$,
one could appeal to geometric arguments or an Adams spectral sequence calculation, but it turns out that the Smith long exact sequence that we will study in \cref{fig:Pinm_Pinp_bordism_LES} provides a cleaner argument that $A\cong\Z/4$.
\end{remark}

\begin{figure}[h!]
    \centering
% https://q.uiver.app/?q=WzAsMzIsWzEsMCwiXFxPbWVnYV97KisxfV57XFxTcGlufShcXFIgUF4yKSJdLFsyLDAsIlxcT21lZ2FfKl57XFxQaW5tfSJdLFszLDAsIlxcT21lZ2FfeyotMn1ee1xcUGlucH0iXSxbMCwxLCI2Il0sWzAsMiwiNSJdLFswLDAsIioiXSxbMCwzLCI0Il0sWzAsNCwiMyJdLFswLDUsIjIiXSxbMCw2LCIxIl0sWzAsNywiMCJdLFsxLDEsIjAiXSxbMiwxLCJcXFovMTYiXSxbMywxLCJcXFovMTYiXSxbMSwyLCIwIl0sWzIsMiwiMCJdLFszLDIsIlxcWi8yIl0sWzEsMywiXFxaLzIiXSxbMiwzLCIwIl0sWzMsMywiXFxaLzIiXSxbMSw0LCJcXFovMiJdLFsyLDQsIjAiXSxbMyw0LCIwIl0sWzEsNSwiXFxaLzQiXSxbMiw1LCJcXFovOCJdLFszLDUsIlxcWi8yIl0sWzEsNiwiXFxaLzIiXSxbMiw2LCJcXFovMiJdLFszLDYsIjAiXSxbMSw3LCJcXFovMiJdLFsyLDcsIlxcWi8yIl0sWzMsNywiMCJdLFsxMiwxM10sWzE2LDE3XSxbMTksMjBdLFsyMywyNF0sWzI0LDI1XSxbMjYsMjddLFsyOSwzMF1d
\begin{tikzcd}
	{*} & {\Omega_{*}^{\Spin}(\RP^1,\sigma)} & {\Omega_*^{\Pinm}} & {\Omega_{*-2}^{\Pinp}} \\
	6 & 0 & {\Z/16} & {\Z/16} \\
	5 & 0 & 0 & {\Z/2} \\
	4 & {\Z/2} & 0 & {\Z/2} \\
	3 & {\Z/2} & 0 & 0 \\
	2 & {\Z/4} & {\Z/8} & {\Z/2} \\
	1 & {\Z/2} & {\Z/2} & 0 \\
	0 & {\Z/2} & {\Z/2} & 0
	\arrow["\ref{6dim_pin}", from=2-3, to=2-4]
	\arrow["\ref{5dim_pin}"', from=3-4, to=4-2, in=150, out=-30]
	\arrow["\ref{4dim_pin}"', from=4-4, to=5-2, in=150, out=-30]
	\arrow["\ref{2dim_pin_1}", from=6-2, to=6-3]
	\arrow["\ref{2dim_pin_2}", from=6-3, to=6-4]
	\arrow["\ref{1dim_pin}", from=7-2, to=7-3]
	\arrow["\ref{0dim_pin}", from=8-2, to=8-3]
\end{tikzcd}
    \caption{Bordism Long Exact Sequence for $\Pinm \rightsquigarrow \Pinp$}
    \label{fig:Pinm_Pinp_bordism_LES}
\end{figure}

We will provide some explicit descriptions of the interesting maps in this sequence using knowledge of the generators of each bordism group, which for \pinp and \pinm may be found in \cite{KT90}. For the twisted spin bordism of $\RP^1$, we use what we learned in \cref{tw_RP1}.

\begin{enumerate}[label=\textrm{(\alph*)}]
\item\label{0dim_pin} $*=0$: The group $\Omega_0^\Spin(\RP^1,\sigma)\cong \Z/2$ is generated by the class of the point equipped with the inclusion $i$ into $\RP^1$. The condition  of $T\pt\oplus i^* \sigma$ being spin is satisfied since $i^*\sigma$ is trivial. The map $f$ forgets $i$, so sends this generator to the point with its \pinm structure, which is a generator of $\Omega_0^\Pinm\cong \Z/2$.

\item\label{1dim_pin} $*=1$: Consider the circle with spin structure induced from its Lie group framing, denoted $S_{\mathit{nb}}^1$, equipped with the degree two map $\phi\colon S^1\to S^1\simeq \RP^1$. If $x\in H^1(\RP^1;\Z/2)$ is the generator, we have
\begin{equation}
w(TS^1\oplus \phi^*\sigma) = w(TS^1)\phi^*w(\sigma) = (1)(1+2\phi^*(x)) = 1,
\end{equation}
so $(S^1_{\mathit{nb}}, \phi)$ has an $(\RP^1, \sigma)$-twisted spin structure. The map $p$ forgets $\phi$, so sends the bordism class of $(S^1_{\mathit{nb}}, \phi)$ to $S_{\mathit{nb}}^1$, which generates $\Omega_1^\Pinm\cong \Z/2$~\cite[Theorem 2.1]{KT90}.

\item\label{2dim_pin_1} $* = 2$ (part 1): Exactness of the Smith long exact sequence at $\Omega_2^\Spin(\RP^1, \sigma)\cong A$ implies that $A$ maps injectively to $\Omega_2^{\Pin^-}\cong\Z/8$, so $A\cong\Z/4$, and we have resolved the extension problem from \cref{tw_RP1}.

The Klein bottle $K$ is an $S^1$-bundle over $\RP^1$, with the monodromy of the fiber $S^1$ around the base given by reflection. Therefore $K = S(\sigma\oplus\underline\R)$ as $S^1$-bundles over $\RP^1$. Let $\pi\colon K\to \RP^1$ be the bundle map; then \cref{sphere_bundle_splitting} defines an isomorphism $TK\oplus\underline\R\cong \pi^*(\sigma)\oplus\underline\R^2$ (using the Lie group trivialization of $T\RP^1$). The Möbius bundle $\sigma$ represents the nonzero class in $[\RP^1, B\O] = \pi_1(B\O)\cong\Z/2$, so $2\sigma$ is trivializable,\footnote{To make this argument carefully, one must know that addition in $[S^1, B\O]$ corresponds to direct sum of vector bundles. A priori this is not true---addition in $[S^1, X]$ is built from the pinch map $S^1\to S^1\vee S^1$. That this coincides with the group structure on $[S^1, B\O]$ arising from direct sum of virtual vector bundles depends on the Eckmann-Hilton argument.} and in particular spin, meaning that $(K, \pi)$ admits an $(\RP^1, \sigma)$-twisted spin structure (in fact, it admits $4$).

That $(K, \pi)$ generates $\Omega_2^\Spin(\RP^1, \sigma)$ depends on which of the four $(\RP^1, \sigma)$-twisted spin structures one chooses. Specifically, each $(\RP^1, \sigma)$-twisted spin structure restricts to a spin structure on the fiber $S^1$, and we need this to be the spin structure on $S^1$ induced by the Lie group framing. Two of the four $(\RP^1, \sigma)$-twisted spin structures satisfy this. To then see that either of these two Klein bottles generates, one can play with the Smith long exact sequence from \cref{Wall_exm}
\begin{equation}
\label{cof2_spin}
    \dotsb\longrightarrow
    \Omega_k^\Spin\overset{\cdot 2}{\longrightarrow}
    \Omega_k^\Spin \longrightarrow
    \Omega_k^\Spin(\RP^1, \sigma)\xrightarrow{\sm_\sigma}
    \Omega_{k-1}^\Spin\longrightarrow \dotsb
\end{equation}
to see that $\sm_\sigma\colon \Omega_2^\Spin(\RP^1,\sigma)\to\Omega_1^\Spin$ is the unique surjective map $\Z/4\to\Z/2$; the Poincaré dual to $w_1(\sigma)$ is represented by the fiber $S^1$ in $K$, which we chose to have the Lie group spin structure, so $\sm_{\sigma}(K, \pi) = S_{\mathit{nb}}^1$, which generates $\Omega_1^\Spin$, implying $(K, \pi)$ generates $\Omega_2^\Spin(\RP^1, \sigma)$.

Now take $f(K, \pi)$, which amounts to forgetting $\pi$ and finding the \pinm bordism class of $K$. The Arf-Brown-Kervaire invariant is a complete invariant $\Omega_2^{\Pin^-}\xrightarrow{\cong}\Z/8$~\cite{Bro71, KT90}, so it suffices to compute this invariant on $K$, as has been explicitly worked out in~\cite[\S II.D]{Tur20}. Our choice of the nonbounding spin structure on the fiber implies that the Arf-Brown-Kervaire map $\Omega_2^{\Pin^-}\xrightarrow{\cong}\Z/8$ sends $[K]\mapsto\pm 2$, so $f\colon\Z/4\to\Z/8$ sends $1\mapsto 2$, as required by exactness.

\item\label{2dim_pin_2} $*=2$ (part 2): There are two \pinm structures on $\RP^2$, and both are generators of $\Omega_2^{\Pin^-}\cong\Z/8$~\cite[\S 3]{KT90}. Pick either of these \pinm structures;
the class $w_2(\sigma)\in H^2(\RP^2;\Z/2)\cong \Z/2$ is a generator, and the Smith homomorphism $\Omega_2^\Pinm\to \Omega_0^\Pinp$ maps the input $\RP^2$ to the Poincaré dual of $w_2(2\sigma)$. The class $\mathit{PD}(w_2(2\sigma))$ is $1\in H_0(\RP^2;\Z/2)\cong \Z/2$ and is represented by a single \pinp point. The class of the point also corresponds to the zero-dimensional intersection of the zero section and a generic section of
$2\sigma$.
\item\label{4dim_pin} $*=4 \to 3$: $\Omega_2^{\Pinp}\cong \Z/2$ is generated by the Klein bottle $K$, where as before we need the nonbounding spin structure on the $S^1$ fiber of $K$. The connecting map $\delta$ sends $K$ to $S(2\Det(K))$; we saw above in part~\ref{2dim_pin_1} that $\Det(K)\cong\sigma$ and $2\sigma$ is trivialized over $K$, so $S(2\Det(K))\cong S^1\times K$.

Tracking the (twisted) spin structures through this argument, one sees that we obtain the nonbounding spin structure on $S^1$, so $g(K) = [S^1_{\mathit{nb}}\times K]\in\Omega_3^{\Spin}(\RP^1, \sigma)\cong\Z/2$, and $[S^1_{\mathit{nb}}\times K]$ is indeed the generator.\footnote{Another way to see this is that because the connecting morphism in the Smith long exact sequence is obtained from a map of spectra by taking homotopy groups, the connecting morphism commutes with the $\pi_*(\mathbb S)$-actions on $\Omega_*^{\Pin^+}$ and $\Omega_*^\Spin(\RP^1, \sigma)$. The Pontrjagin-Thom theorem identifies this $\pi_*(\mathbb S)$-action on bordism groups with taking products with stably framed manifolds; focusing specifically on the nonzero element of $\pi_1(\mathbb S)$, which is represented by the bordism class of $S_{\mathit{nb}}^1$. Thus, since $\times S_{\mathit{nb}}^1\colon \Omega_2^{\Pin^+}\to\Omega_3^{\Pin^+}$ is an isomorphism~\cite[\S 5]{KT90} and the Smith maps $\Omega_{k-2}^{\Pin^+}\to\Omega_k^\Spin(\RP^1, \sigma)$ are isomorphisms for $k = 3,4$ as we saw in the long exact sequence, then $\times S_{\mathit{nb}}^1\colon \Omega_3^\Spin(\RP^1, \sigma)\to\Omega_4^\Spin(\RP^1, \sigma)$ is also an isomorphism.}
\item\label{5dim_pin} $*=5 \to 4$: $\Omega_3^{\Pin^+}\cong\Z/2$ is generated by $S^1_{\mathrm{nb}}\times K$~\cite[\S 5]{KT90},
and $\Omega_4^\Spin(\RP^1, \sigma)\cong\Z/2$ is generated by $S^1_{\mathit{nb}}\times S^1_{\mathit{nb}}\times K$, with the map to $\RP^1$ induced from the fiber bundle $K\to\RP^1$ from part~\ref{2dim_pin_1}.\footnote{Another choice of generator is the K3 surface with trivial map to $\RP^1$, as follows from~\eqref{cof2_spin}. The complicated topology of the K3 surface makes this generator harder to work with explicitly.} Thus the story is the same as in~\ref{4dim_pin}, crossed with $S_{\mathit{nb}}^1$.
\item\label{6dim_pin} $*=6$: The group $\Omega_6^{\Pinm}$ is generated by $\RP^6$ with either of its two \pinp structures, while $\Omega_4^{\Pinp}$ is generated by $\RP^4$ with either of its two \pinm structures. Since the normal bundle to $\RP^4$ inside $\RP^6$ is indeed the restriction of $2\sigma$, $\RP^4$ represents the Poincaré dual homology class to $e(2\sigma)$ and is the image of the Smith homomorphism applied to $\RP^6$.
\end{enumerate}

\section{Computation of a Cobordism Euler Class}

\label{s:eu_counter}
To compute the Smith homomorphism, it often suffices to take a Poincaré dual of the $\Z$- or $\Z/2$-cohomology Euler class of
a vector bundle $V\to X$, for example in~\cite{KTTW15, COSY19, HKT19}. 
However, our general definition (\cref{smith_homomorphism_euler_definition}) of the Smith homomorphism instead uses the
(possibly twisted) $\xi$-cobordism Euler class, tying more closely with invertible field theories.
In this appendix, we
% The purpose of this appendix is to explain that additional effort:
% we will
walk through a 
concrete, low-dimensional example of a cobordism Euler class that goes beyond the cohomological approximation. % \textcolor{red}{CK: modified this intro}
% a Smith homomorphism using the cobordism Euler class where the cohomological Euler class fails.
% where the cohomological Euler class does not produce a
% well-defined Smith homomorphism, and show that the cobordism Euler class does suffice.

Recall that a spin$^h$ structure is a $(B\SO(3), V_3)$-twisted spin structure, where $V_3\to B\SO(3)$ is the
tautological vector bundle. Then, as we discussed in~\eqref{spinh_to}, there is a Smith homomorphism
\begin{equation}
	\sm_V\colon \Omega_k^{\Spin^h}\to \Omega_{k-3}^\Spin(B\SO(3)).
\end{equation}
\begin{theorem}
\label{euler_exm_main}
Give $S^4$ the spin$^h$ structure whose $\SO_3$-bundle is classified by either map $S^4\to B\SO(3)$ whose homotopy
class generates $\pi_4(B\SO(3))\cong\Z$.
\begin{enumerate}
	\item Exactness of the Smith long exact sequence forces $\sm_V(S^4)$ to be the bordism class of $\Snb^1$ with
constant map to $B\SO(3)$ in $\Omega_1^\Spin(B\SO(3))$.
	\item $e(V)\in H^3(S^4;\Z) = 0$, and there is no way to assign every smooth representative of the Poincaré dual
	of $e(V)$ a spin structure whose bordism class equals that of $\Snb^1$.
	\item The spin cobordism Euler class of $V$ is nonzero, and all smooth representatives of its Poincaré dual
	have the spin bordism class of $\Snb^1$ and a constant map to $B\SO(3)$.
\end{enumerate}
\end{theorem}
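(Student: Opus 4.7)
My plan is to handle part (2) independently as a cohomological calculation, then establish part (3) via the Atiyah-Bott-Shapiro unit and the $\ko$-Euler formula of \cref{rank_3_ko_Euler}, and finally deduce part (1) from part (3) using exactness in the Smith long exact sequence \eqref{spinh_to}.

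For part (2), the cohomology $H^*(S^4;\Z)$ vanishes outside degrees $0$ and $4$, so $e(V)\in H^3(S^4;\Z)=0$ and its Poincaré dual lives in $H_1(S^4;\Z)=0$. Every closed $1$-submanifold represents this zero dual. In particular, an unknotted embedded circle $C\subset S^4$ bounding a small $2$-disk, equipped with the bounding spin structure inherited from the disk, satisfies $[C]=0\in\Omega_1^\Spin$, not $[\Snb^1]$, so no assignment of spin structures to cohomological Poincaré-dual representatives can recover the Smith homomorphism.

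For part (3), I first show $e^{\MTSpin}(V)\neq 0$ and then invoke \cref{two_smith_2}. The Atiyah-Bott-Shapiro unit $\MTSpin\to\ko$ sends the cobordism Euler class to the $\ko$-Euler class, so nonvanishing of $e^\ko(V)$ will suffice. By \cref{rank_3_ko_Euler}, $e^\ko(V)=\eta\cdot p_1^\H(\mathcal S)\in\ko^3(S^4)$. Since $w_2(V)\in H^2(S^4;\Z/2)=0$, $V$ is spin, and the exceptional isomorphism $\Spin(3)\cong\SSp(1)$ identifies $\mathcal S$ with the quaternionic line bundle classifying the lift of $V$ along $B\SSp(1)\cong\HP^\infty\to B\SO(3)$. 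Because $V$ classifies a generator of $\pi_4(B\SO(3))\cong\Z$, the lift classifies a generator of $\pi_4(\HP^\infty)\cong\Z$, i.e.\ the pullback to $\HP^1=S^4$ of the tautological quaternionic line bundle. Hence $p_1^\H(\mathcal S)$ generates $\widetilde\ko^4(S^4)\cong\Z$, and multiplication by $\eta$ carries this generator to the nonzero element of $\widetilde\ko^3(S^4)\cong\Z/2$, since $\eta\in\ko^{-1}(\mathrm{pt})$ acts nontrivially on $\ko^0(\mathrm{pt})$ (equivalently, by Wood's cofiber sequence $\Sigma\ko\xrightarrow{\eta}\ko\to\ku$). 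Therefore $e^{\MTSpin}(V)\neq 0$, and \cref{two_smith_2} identifies every smooth Atiyah-Poincaré dual representative with $\sm_V[S^4]$, a nonzero class.

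For part (1), I combine the nonvanishing in part (3) with exactness in
\begin{equation*}
    \Omega_4^{\Spin^c}\longrightarrow \Omega_4^{\Spin^h}\xrightarrow{\sm_V}\Omega_1^\Spin(B\SO(3))\longrightarrow \Omega_3^{\Spin^c}.
\end{equation*}
Here $\Omega_3^{\Spin^c}=0$ is standard, and $\Omega_1^\Spin(B\SO(3))\cong\Z/2$ because $B\SO(3)$ is simply connected (so any map $S^1\to B\SO(3)$ is null-homotopic) and the Atiyah-Hirzebruch spectral sequence collapses onto the $\Omega_1^\Spin$-row, producing the generator $\Snb^1$ with constant map. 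Thus the unique nonzero class of $\Omega_1^\Spin(B\SO(3))$ is $[\Snb^1]$ with constant map to $B\SO(3)$, so $\sm_V[S^4]$ equals this class. The main technical obstacle is verifying that $\mathcal S$ on $S^4$ is the tautological quaternionic line bundle on $\HP^1$, which requires carefully chasing the exceptional isomorphism $\Spin(3)\cong\SSp(1)$; the remaining steps reduce to standard bordism computations and $\ko$-module manipulations.
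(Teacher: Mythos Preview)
Your argument for part (3) is essentially the paper's own: compute $e^\ko(V)$ via \cref{rank_3_ko_Euler} after identifying the spinor bundle with the tautological quaternionic line bundle on $\HP^1\cong S^4$, then transport back along the Atiyah--Bott--Shapiro map. The paper is a bit more explicit about why the Poincar\'e dual is $[\Snb^1]$, writing out $\ko^3(S^4)\cong\ko_1(S^4)\cong\Z/2$ and identifying the unique nonzero homology class; you should add a sentence to this effect, since nonvanishing of the Euler class alone does not immediately give nonvanishing of the pushforward to $\Omega_1^\Spin(B\SO(3))$.

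Your route to part (1) genuinely differs from the paper's. The paper proves (1) \emph{first} and independently of (3): it plugs known bordism groups into the long exact sequence~\eqref{explicit_spinh_smith} to see that $\sm_V$ is surjective, then invokes Hu's identification of $\CP^2$ and $(S^4,V)$ as generators of $\Omega_4^{\Spin^h}$ together with the observation that $\CP^2$ lies in the image of $\Omega_4^{\Spin^c}\subset\ker(\sm_V)$. Your approach instead deduces (1) from (3), which is cleaner and avoids citing Hu's generators---though it renders the ``exactness forces\dots'' phrasing of the theorem statement somewhat misleading, since you never actually use exactness, only the identification $\Omega_1^\Spin(B\SO(3))\cong\Z/2$.

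There is a gap in your part (2). Your chosen counterexample, a single embedded circle $C\subset S^4$, does \emph{not} witness the claimed impossibility: $C$ admits two spin structures, and the nonbounding one has bordism class $[\Snb^1]$. Exhibiting one representative with one bad spin structure does not rule out the existence of a good assignment. You need a smooth representative of the (zero) Poincar\'e dual that admits \emph{no} spin structure of class $[\Snb^1]$. The paper uses the empty submanifold, whose unique spin structure has bordism class~$0$; a disjoint union of an even number of circles would also do.
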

This is why the cohomology Euler class does not suffice in this example. %\textcolor{red}{$\leftarrow$ CK modified}

We work with $\xi = \Spin$ and its twists throughout this appendix; see \cref{euler_other} for other tangential
structures. Let $\ko$ denote the connective real $K$-theory spectrum; work of Anderson-Brown-Peterson~\cite{ABP67}
shows that the Atiyah-Bott-Shapiro map $\MTSpin\to\ko$~\cite{ABS64} is $7$-connected, meaning that as long as we
restrict to manifolds of dimension $7$ and below, we may replace twisted spin bordism with twisted $\ko$-homology;
in particular, we will work with $\ko$-cohomology Euler classes.

Another consequence of the Atiyah-Bott-Shapiro map is that vector bundles with spin structure are oriented for
$\ko$-cohomology, meaning that if $V\to X$ is a spin vector bundle, the Euler class $e^\ko(V)$ that a priori lives
in $\ko^r(X^{V-r})$ in fact can be passed by the Thom isomorphism to $e^\ko(V)\in\ko^r(X)$.

We will compute these Euler classes by using a formula for the Thom class in equivariant $\KO$-theory. Recall that the \term{Clifford algebra} $\Cl_n$ is the free $\Z/2$-graded $\R$-algebra on $n$ odd generators $e_1,\dotsc,e_n$ that anticommute and each square to $1$; likewise $\Cl_{-n}$ is the free $\Z/2$-graded $\R$-algebra on $n$ anticommuting odd elements $e_1,\dotsc,e_n$, each squaring to $-1$. See Atiyah-Bott-Shapiro~\cite{ABS64} for more on Clifford algebras.

When discussing modules, tensor products, Morita equivalence, etc.\ for Clifford algebras, we will always consider the $\Z/2$-grading and take the Koszul sign rule into account. We will use $\hat\otimes$ to denote this $\Z/2$-graded tensor product. %Thus, for example, the isomorphism $M\mathbin{\hat\otimes} N\to N\mathbin{\hat\otimes} M$ for $\Z/2$-graded $\Cl_n$-modules $M$ and $N$, sends $m\otimes n$ to $(-1)^{\abs m\abs n}n\otimes m$.

Let $G$ be a compact Lie group and $(X, A)$ a pair of $G$-spaces such that $X/A$ is compact. Then, following Karoubi~\cite{karoubi_algebres_1968, karoubi_equivariant_2002}, we will represent classes in $\KO_G^n(X)$ as (equivalence classes of) $G$-equivariant $\Cl_n$-module bundles over $X$ equipped with a $G$-invariant trivialization over $A$.
\begin{theorem}[{Atiyah~\cite[Theorem 6.1]{atiyah_bott_1968}}]
\label{atiyah_KOG}
Let $G$ be a compact Lie group, $X$ be a compact $G$-space, and $V\to X$ be an $8k$-dimensional spin $G$-equivariant vector bundle. Let $\mathcal S\to X$ denote the associated spinor bundle. Let $u\in\KO_G^0(V, V\setminus 0)$ be the class obtained from the $G$-equivariant Clifford module bundle associated to $\mathcal S$ by the method of Atiyah-Bott-Shapiro~\cite[\S 11]{ABS64}. Then the $G$-equivariant Thom class of $V$ is $U_{G,V}\coloneqq w^{-k}u\in\KO_G^{8k}(V, V\setminus 0)$, where $w\in\KO_G^{-8}(\pt)$ is the Bott class.
\end{theorem}
Recall the exceptional isomorphism $\Spin(3)\cong\SSp(1)$, and recall that $\ko^*\cong\Z[\eta, v, w]/(2\eta, \eta^3,
2v, 4w-v^2)$ with $\abs\eta = -1$, $\abs v = -4$, and $\abs w = -8$.\footnote{The negative grading is a feature of
generalized cohomology: for any spectrum $E$, $E^k(\mathrm{pt}) = E_{-k}(\mathrm{pt}) = \pi_{-k}(E)$.}

The following result is stated without proof by Davis-Mahowald~\cite[\S 2]{DM79}; see Bruner-Greenlees~\cite[Theorem 5.3.1]{BG10} for a proof.
\begin{proposition}
\label{ko_sp1}
There is an isomorphism of $\ko^*$-modules $\ko^*(B\SSp(1))\cong\ko^*[[p_1^\H]]$ with $\abs{p_1^\H} = 4$.
\end{proposition}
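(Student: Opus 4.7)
Since $\SSp(1)\cong\SU(2)$, we have $B\SSp(1)\simeq\HP^\infty$, with integral cohomology $H^*(\HP^\infty;\Z)=\Z[x]$ on a generator of degree $4$. My strategy is to compute $\ko^*(\HP^\infty)$ via the Atiyah-Hirzebruch spectral sequence (AHSS)
\[ E_2^{p,q}=H^p(\HP^\infty;\ko^q)\Longrightarrow \ko^{p+q}(\HP^\infty), \]
whose $E_2$-page is $\ko^*[x]$, then to show the spectral sequence collapses, identify a multiplicative lift of $x$, and rule out extensions.

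I would first construct the candidate generator. Let $L\to\HP^\infty$ be the tautological quaternionic line bundle. Its underlying real rank-$4$ bundle has structure group $\SSp(1)\cong\Spin(3)$, which lifts through $\Spin(4)\to\SO(4)$ via the factorization $\Spin(4)\cong\SSp(1)\times\SSp(1)$, so $L$ is spin and hence $\ko$-oriented by Atiyah-Bott-Shapiro. Then $p_1^\H\coloneqq e^\ko(L)\in\ko^4(\HP^\infty)$ is the $\ko$-cohomology Euler class of \cref{twisted_Euler_class}; by \cref{euler_is_Thom} it is the pullback along the zero section of the $\ko$-Thom class, and its image on the $E_\infty$-page is $x$.

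The crucial step is collapse of the AHSS. A differential $d_r\colon E_r^{p,q}\to E_r^{p+r,q-r+1}$ can be nonzero only when both source and target bidegrees support nonzero groups; since $H^*(\HP^\infty;\Z)$ is torsion-free and concentrated in degrees $\equiv 0\pmod 4$, this forces $r\equiv 0\pmod 4$, so in particular $d_2$ and $d_3$ vanish automatically for target-degree reasons. The potentially nonzero $d_4,d_8,\dots$ arise from stable cohomology operations of total degree $+1$ coupled with $\ko^*$-coefficient weights; direct computation with the Cartan formula on $H^*(\HP^\infty;\Z/2)=\Z/2[x]$ (where the only surviving Steenrod square is $\Sq^4 x^k=kx^{k+1}$) shows they all vanish. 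A systematic way to package this, as in Bruner-Greenlees, is to compare with the $\ku$-AHSS via the Wood cofiber sequence $\Sigma\ko\xrightarrow{\eta}\ko\to\ku$: the $\ku$-AHSS on $\HP^\infty$ collapses trivially (the only potentially nonzero differential there is $d_3=\beta\circ\Sq^2$, and it vanishes here), so the $\ko$-collapse reduces to dispatching the $\eta$-multiplication maps, which preserve filtration.

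With $E_\infty=\ko^*[x]$, the filtration levels within each total degree are finite and sit in distinct bidegrees modulo $\ko^*$'s $8$-fold periodicity, so there are no additive extensions on any finite skeleton $\HP^n$; taking the inverse limit yields the power series ring $\ko^*[[p_1^\H]]$, and multiplicativity follows from the multiplicative structure on the AHSS together with the cup-product definition of $p_1^\H$. \textbf{The main obstacle} is the systematic vanishing of $d_4$ and the higher $\ko$-AHSS differentials: unlike in the $\ku$-AHSS, the $\ko$-version admits $\eta$-twisted operations because of the $2$-torsion generators $\eta,\eta^2\in\ko^*$. The cleanest resolution, which I would implement, is the Wood-sequence bootstrap from $\ku$ just described.
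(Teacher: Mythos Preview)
The paper does not actually prove this proposition: it attributes the statement to Davis--Mahowald and cites Bruner--Greenlees~\cite[Theorem 5.3.1]{BG10} for a proof. Your AHSS outline, including the bootstrap from $\ku$ via the Wood cofiber sequence, is essentially the Bruner--Greenlees approach, so in that sense you are on target.

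That said, the paper \emph{does} prove the Spanier--Whitehead dual statement elsewhere, in \cref{CP_HP_ko}, and the argument there is much slicker than your AHSS collapse. The standard CW structure on $\HP^\infty$ has a single cell in each degree $4n$, attached to the $(4n-4)$-cell by the quaternionic Hopf map $\nu\in\pi_3(\mathbb S)$. Since $\ko_3=\pi_3(\ko)=0$, every attaching map becomes null after smashing with $\ko$, so $\ko\wedge(\HP^\infty)_+\simeq\bigvee_{n\ge 0}\Sigma^{4n}\ko$ as $\ko$-modules. Dualizing (or running the same argument on the Atiyah--Hirzebruch filtration in cohomology) immediately gives $\ko^*(\HP^\infty)\cong\ko^*[[p_1^\H]]$. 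This bypasses all the delicate differential-chasing you propose.

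Your handling of the differentials is the weak point of your sketch. The remark that ``$\Sq^4 x^k=kx^{k+1}$'' is correct but not obviously relevant: there is no simple identification of $d_4$ in the $\ko$-AHSS with a Steenrod operation the way $d_3=\beta\Sq^2$ works for $\ku$. And ``the $\eta$-multiplication maps preserve filtration'' is too vague to be a proof---you would need to run the long exact sequence in $\ko^*(\HP^\infty)$ coming from the Wood cofiber sequence carefully, using the known $\ku^*(\HP^\infty)$ as input, and check that $\eta$-multiplication is injective where needed. The $\nu\mapsto 0$ argument avoids all of this. Finally, your normalization $p_1^\H\coloneqq e^{\ko}(L)$ differs from the paper's (which pins $p_1^\H$ down by its image under $\ko\to H\Z$), though the two agree up to sign.
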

The class $p_1^\H$ is called the \term{first symplectic $\ko$-Pontrjagin class}. The specific isomorphism in
\cref{ko_sp1} can be fixed uniquely by requiring that the image of $p_1^\H$ under $\ko\to H\Z$ is the usual first
symplectic Pontrjagin class, which is positive on the tautological quaternionic line bundle over $\HP^1$.

Given a rank-$3$ spin vector bundle $V\to X$, let $\mathcal S_V\to X$ be the associated spinor bundle, which is the
quaternionic line bundle associated to the accidental isomorphism $\Spin(3)\cong \SSp(1)$. We will show how the following result is a corollary of \cref{atiyah_KOG}.
\begin{corollary}
\label{rank_3_ko_Euler}
Let $V\to X$ be a rank-$3$ vector bundle with spin structure. Then $e^\Z(V)\in H^3(X;\Z)$ and $e^{\Z/2}(V)\in
H^3(X;\Z/2)$ both vanish, and
\begin{equation}
	e^\ko(V) = \eta p_1^\H(\mathcal S_V)\in\ko^3(X).
\end{equation}
\end{corollary}
We thank an anonymous referee for help with the following proof.
\begin{proof}[Proof sketch of \cref{rank_3_ko_Euler}]
We first make two reductions.
\begin{enumerate}
    \item The connective covering map $\ko^3(B\SU_2)\to \KO^3(B\SU_2)$ is an isomorphism: since $B\SU_2$ is $2$-connected, all maps $B\SU_2\to \Sigma^3\KO$ factor through the $2$-connected cover of $\Sigma^3\KO$, which is $\Sigma^3\ko$. The connective cover map $\ko\to\KO$ sends Euler classes to Euler classes; therefore it suffices to show $e^\KO(V) = \eta p_1^\H(\mathcal S)$ (where here we abuse notation and let $p_1^\H$ denote its image in $\KO$-cohomology).
    \item We can reduce further to a question in the $\SU_2$-equivariant $\KO$-theory of a point. Specifically, the Atiyah-Segal completion theorem~\cite[Corollary 2.2]{atiyah_equivariant_1969} (see also \cite{atiyah_vector_1961}) identifies the ``Borelification'' map
    \begin{equation}
    \label{borelify}
        \KO_{\SU_2}^*(\pt) \longrightarrow \KO^*(B\SU_2)
    \end{equation}
    as completion at a certain ideal, so if we can produce preimages of $e^\KO(V)$ and $\eta p_1^\H(\mathcal S)$ under~\eqref{borelify}, then show that they are equal, this will suffice to finish the proof.
\end{enumerate}
Given a compact Lie group $G$ and a $d$-dimensonal real spin $G$-representation $W$, let $e_G^\KO(W)\in\KO_G^d(\pt)$ denote the \term{$G$-equivariant $\KO$-Euler class}, defined to be the pullback of the equivariant $\KO$-theory Thom class $U_{G,W}\in\KO_G^d(W, W\setminus 0)$ under the zero section. Atiyah~\cite[\S 6]{atiyah_bott_1968} shows that the Atiyah-Segal completion map sends $U_{G,W}$ to the nonequivariant $\KO$-theory Thom class of the associated bundle $W\to BG$,\footnote{We thank Yigal Kamel for help with this point.} so the analogous fact is true for Euler classes: the image of $e_G^\KO(W)$ under~\eqref{borelify} is $e^\KO(W)\in \KO^d(BG)$.

Let $\mathbf 2$ denote the defining representation of $\SU_2$ and $[\mathbf 2]$ denote its class in $\mathit{RSp}(\SU_2) \cong\mathit{KSp}_{\SU_2}^0(\pt)\cong \KO_{\SU_2}^4(\pt)$.\footnote{The notation $\mathbf 2$ means that this representation is two-dimensional over $\C$; we work over $\R$, so think of $\mathbf 2$ as four-dimensional.} Bruner-Greenlees~\cite[Theorems 5.3.1 and 5.3.5]{BG10} show that $2-[\mathbf 2]$ is a preimage of $p_1^\H(\mathcal S)$ under the completion map. To summarize, it now suffices to show that $e_{\SU_2}^\KO(V) = \eta(2-[\mathbf 2])$ in $\KO^3_{\SU_2}(\pt)$. %Since $\eta$ is $2$-torsion, we could also show $e_{\SU_2}^\KO(V) = \eta([\mathbf 2]-2)$.

To proceed, we reduce this to a purely algebraic question. As mentioned above, we may represent elements of $\KO_G^n(\pt)$ as equivalence classes of finite-dimensional real vector spaces with commuting $G$-representation and $\Cl_n$-module structures, and we may represent elements of $\KO_G^n(W, W\setminus 0)$ as $G$-equivariant $\Cl_n$-module bundles over $W$, equipped with $G$-invariant trivializations on $W\setminus 0$. Atiyah-Bott-Shapiro~\cite[\S 5]{ABS64} show that $\eta\in\KO^{-1}(\pt)$ is represented by $\Cl_{-1}$ as a module over itself; giving this module the trivial $\SU_2$-action produces a representative of $\eta\in \KO_{\SU_2}^{-1}(\pt)$. Thus, $\boldsymbol 2\mathbin{\hat\otimes} \Cl_{-1}$ (with $\boldsymbol 2$ regarded as purely even) is a $\Cl_4\mathbin{\hat\otimes}\Cl_{-1}$-module. For $1\le m\le n$, there is a Morita equivalence
\begin{subequations}
\begin{equation}
\label{morita13}
    \cat{Mod}_{\Cl_n\mathbin{\hat\otimes}\Cl_{-m}}\overset\simeq\longrightarrow
    \cat{Mod}_{\Cl_{n-m}}
\end{equation}
implemented by tensoring with the $(\Cl_n\mathbin{\hat\otimes}\Cl_{-m}, \Cl_{n-1})$-bimodule
\begin{equation}
    M(n, m)\coloneqq \Cl_{n-m}\mathbin{\hat\otimes} \underbracket{\R^{1\mid 1}\mathbin{\hat\otimes}\dotsm\mathbin{\hat\otimes}\R^{1\mid 1}}_{\text{$m$ copies}},
\end{equation}
\end{subequations}
with bimodule data given by:
\begin{itemize}
    \item $e_1\mathbin{\hat\otimes} 1,\dotsc,e_{n-m}\mathbin{\hat\otimes} 1\in\Cl_n\mathbin{\hat\otimes}\Cl_{-m}$ act by left multiplication by $e_1,\dotsc,e_{n-m}$ on $\Cl_{n-m}$, tensored with the identity on the $\R^{1\mid 1}$ factors.
    \item $\Cl_{n-m}$ acts by right multiplication on $\Cl_{n-m}$ tensored with the identity on the $\R^{1\mid 1}$ factors.
    \item $e_{n-m+i}\mathbin{\hat\otimes} 1\in\Cl_n\mathbin{\hat\otimes}\Cl_{-m}$ acts on the left by the identity on $\Cl_{n-1}$ tensored with $\begin{psmallmatrix}0&1\\1&0\end{psmallmatrix}$ on the $i^{\mathrm{th}}$ factor of $\R^{1\mid 1}$, and the identity on the remaining factors.
    \item $1\mathbin{\hat\otimes} e_i\in\Cl_n\mathbin{\hat\otimes} \Cl_{-m}$ acts on the left by the identity on $\Cl_{n-m}$ tensored with $\begin{psmallmatrix}\phantom{-}0&1\\-1&0\end{psmallmatrix}$ on the $i^{\mathrm{th}}$ factor of $\R^{1\mid 1}$, and the identity on the remaining factors.
\end{itemize}
This Morita equivalence follows from the standard isomorphism of $\Z/2$-graded algebras $\Cl_1\mathbin{\hat\otimes}\Cl_{-1}\cong\mathrm{End}(\R^{1\mid 1})$ (see, for example, \cite[Lemma 6.17]{DG18}).

Specializing to $n = 4$ and $m = 1$, and giving this bimodule the trivial $\SU_2$-action, we see that the class $[\boldsymbol 2]\eta\in\KO_{\SU_2}^3(\pt)$ is represented by the $\SU_2$-equivariant $\Cl_3$-module
\begin{equation}
    (\boldsymbol 2\mathbin{\hat\otimes}_\R \Cl_{-1}) \mathbin{\hat\otimes}_{\Cl_4\mathbin{\hat\otimes}\Cl_{-1}} M(4, 1).
\end{equation}
For the Euler class, we use Atiyah's \cref{atiyah_KOG} identifying the $G$-equivariant Thom class of an $8k$-dimensional spin vector bundle. Apply this to $V\oplus\R^5$, an $8$-dimensional $\SU_2$-representation, thought of as an $\SU_2$-equivariant vector bundle over $\pt$. Since $B\SU_2$ is $3$-connected, all $\SU_2$-representations $\rho$ are spin, as $w_2(\rho)\in H^2(B\SU_2;\Z/2) = 0$. Therefore we learn from \cref{atiyah_KOG} that the Thom class $U\coloneqq U_{\SU_2,V\oplus\R^5}$ of $V\oplus\R^5$ is, as an $\SU_2$-equivariant $\Cl_8$-bundle, constructed by Atiyah-Bott-Shapiro's method~\cite[\S 11]{ABS64} from the spinor representation $\mathcal S$ of $\Spin_8$ restricted by the standard inclusion $\SU_2 \cong\Spin_3\hookrightarrow\Spin_8$. Thus we have an $\SU_2$-equivariant $\Cl_8$-module bundle
\begin{equation}
    U\in \KO_{\SU_2}^8(V\oplus\R^5, (V\oplus\R^5)\setminus 0)\cong \KO_{\SU_2}^8(\Sigma^5(V, V\setminus 0)).
\end{equation}
The suspension isomorphism identifies this group with $\KO_{\SU_2}^3(V, V\setminus 0)$; to see this in terms of Clifford modules, use the trivial $\R^5$ summand of $V\oplus\R^5$ to define a $\Cl_{-5}$-action on $U$ in a similar manner to~\cite[\S 4.1]{BE23}. The fiber $U_0$ over $0\in V$ is therefore a $(\Cl_8\mathbin{\hat\otimes}\Cl_{-5})$-module with commuting $\SU_2$-action. Now use the Morita equivalence~\eqref{morita13} with $n = 8$ and $m = 5$, and give the bimodule $M(8, 5)$ implementing this Morita equivalence the trivial $\SU_2$-action. Then $U_0\mathbin{\hat\otimes}_{\Cl_8\mathbin{\hat\otimes}\Cl_{-5}} M(8,5)$ is an $\SU_2$-equivariant $\Cl_3$-module, hence defines a class in $\KO_{\SU_2}^3(\pt)$.

What remains is a tedious but straightforward computation to identify the classes of these two $\SU_2$-equivariant $\Cl_3$-modules.
\end{proof}

%Taking \cref{rank_3_ko_Euler} for granted now, let us dig into \cref{euler_exm_main}.
\begin{proof}[Proof of \cref{euler_exm_main}]
%assuming \cref{rank_3_ko_Euler}]
Recall from~\eqref{spinh_to} that the Smith homomorphism $\sm_V\colon \Omega_k^{\Spin^h}\to\Omega_{k-3}^\Spin(B\SO(3))$ belongs to a long exact sequence whose third term is \spinc bordism:
\begin{equation}
\label{explicit_spinh_smith}
    \dotsb \to\Omega_2^\Spin(B\SO(3))\to
    \Omega_4^{\Spin^c}\to
    \Omega_4^{\Spin^h} \overset{\sm_V}{\to}
    \Omega_1^\Spin(B\SO(3))\to
    \Omega_3^{\Spin^c}\to\dots
\end{equation}
From Stong~\cite[Chapter XI]{Sto68} we know $\Omega_3^{\Spin^c} =0$ and $\Omega_4^{\Spin^c}\cong\Z^2$, from Freed-Hopkins~\cite[Theorem 9.97]{FH16} we know $\Omega_4^{\Spin^h}\cong\Z^2$, and from Wan-Wang~\cite[\S 5.5.3]{WW19} we know $\Omega_1^\Spin(B\SO(3))\cong\Z/2$ and $\Omega_2^\Spin(B\SO(3))$ is torsion. Plugging this into~\eqref{explicit_spinh_smith}, we see that $\sm_V$ is surjective.

Wan-Wang's argument implies that the map $\Omega_1^\Spin\to \Omega_1^\Spin(B\SO(3))$ choosing the trivial $\SO(3)$-bundle is an isomorphism, so the generator of $\Omega_1^\Spin(B\SO(3))$ is any nonbounding spin $1$-manifold with trivial $\SO(3)$-bundle. Hu~\cite[Appendix A]{Hu23} shows that $\CP^2$ and $S^4$ generate $\Omega_4^{\Spin^h}$, where $\CP^2$ has spin$^h$ structure induced from its \spinc structure via the standard inclusion $\U(1)\cong\SO(2)\to\SO(3)$, and $S^4$ has spin$^h$ structure whose principal $\SO(3)$-bundle $V\to S^4$ is induced from the tautological quaternionic line bundle on $\HP^1\cong S^4$: this has an associated $\SSp(1)$-bundle, and we quotient by $\{\pm 1\}$ to get an $\SO(3)$-bundle. In particular, $\CP^2$ is in the image of $\Omega_4^{\Spin^c}\to\Omega_4^{\Spin^h}$, so because $\sm_V$ is surjective, $\sm_V(S^4, V)$ must be $\Snb^1$ with trivial map to $B\SO(3)$, proving the first part of the theorem.

Because $H^3(S^4;\Z)$ and $H^3(S^4;\Z/2)$ both vanish, the $\Z$ and $\Z/2$ cohomology Euler classes of $V$ are zero. Therefore any null-homologous $1$-manifold in $S^4$ (i.e.\ any closed, oriented $1$-manifold mapping to $S^4$) is a smooth representative of the Poincaré dual of $e(V)$. Most of these manifolds, such as the standard $S^1\subset S^4$, can be given a nonbounding spin structure, but the empty submanifold cannot, even though it is Poincaré dual to $e(V)$. This proves the second part of the theorem.

As discussed above, the Atiyah-Bott-Shapiro map is $7$-connected, and therefore for discussing degree-$3$ spin cobordism of $S^4$, we may use $\ko$-cohomology without losing information. The Atiyah-Hirzebruch spectral sequence quickly implies
\begin{equation}
    \ko^*(S^4)\cong \ko^*[z]/(z^2),\ \abs{z} = 4.
\end{equation}
In particular, $\ko^3(S^4)\cong\Z/2$, generated by $\eta z$.

The spinor bundle of $V$ is the quaternionic line bundle associated to the identification $\Spin(3)\cong\SSp(1)$. Since $V$ came from the identification $S^4\cong\HP^1$, the spinor bundle of $V$ is the tautological quaternionic line bundle $L_\H\to\HP^1$. This is classified by the inclusion $j\colon \HP^1\to\HP^\infty\simeq B\SSp(1)$ as the $4$-skeleton; considering the map of Atiyah-Hirzebruch spectral sequences for $\ko$-cohomology induced by $j$ shows that $p_1^\H\in\ko^4(B\SSp(1))$ pulls back by $j$ to $z\in\ko^4(S^4)$. Thus by \cref{rank_3_ko_Euler}, $e^\ko(V) = \eta z\ne 0$ in $\ko^3(S^4)$.

Because $\ko^3(S^4)$ has only one nonzero element, the Poincaré dual of the nonzero element must be the unique nonzero element $x$ of $\ko_1(S^4)\cong\Z/2$. Pulling back to spin bordism, the same argument we made for $B\SO(3)$ shows that the smooth representatives of $x$ are precisely the nonbounding spin $1$-manifolds with null-bordant map to $S^4$---and composing with the map $S^4\to B\SO(3)$ classifying $V$, we have shown that every smooth representative of the Poincaré dual of $e^\ko(V)$ (hence also the spin cobordism Euler class) represents the image of $(S^4, V)$ under the Smith homomorphism.
\end{proof}

In the remainder of this appendix, we give another proof of \cref{rank_3_ko_Euler}. This second proof uses more homotopy theory, but no equivariant methods; as such, we think the two proofs complement each other.
\begin{lemma}[{Greenlees-May~\cite[\S 15]{GM95}}]
\label{CP_ko}
There is a $\ko$-module equivalence
\begin{equation}\label{ko_BT}
	\ko\wedge (B\U(1))_+ \simeq \ko\vee \bigvee_{n\ge 0} \Sigma^{4n+2}\ku.
\end{equation}
\end{lemma}
\begin{lemma}
\label{HP_ko}
There is a $\ko$-module equivalence
\begin{equation}\label{ko_BSp}
	\ko\wedge (B\SSp(1))_+ \simeq \bigvee_{n\ge 0} \Sigma^{4n}\ko.
	%\ko\wedge (B\U(1))_+ &\simeq \ko\vee \bigvee_{n\ge 0} \Sigma^{4n+2}\ku.
\end{equation}
\end{lemma}
\begin{proof}
Recall that in~\eqref{symplectic_splitting}, we showed using a variant of the proof of \cref{U_smith_split} that there's an $\MTSpin$-module equivalence
\begin{equation}
    \MTSpin\wedge (B\SSp(1))_+ \simeq \bigvee_{n\ge 0}\Sigma^{4n}\MTSpin.
\end{equation}
The result immediately follows by applying $\bl\wedge_{\MTSpin}\ko$ to both sides, where the algebra map $\MTSpin\to\ko$ is the Atiyah-Bott-Shapiro orientation~\cite{ABS64, Joa04}.
\end{proof}
\begin{remark}
Analogues of \cref{CP_ko,HP_ko} for the periodic theory $\KO$ and its generalizations to the \term{higher real $K$-theories} $\mathit{EO}_\Gamma$ are known: see Bousfield~\cite{Bou90}, Meier~\cite[Theorem 2.8]{Mei17}, Chatham~\cite[Theorems 5.13 and 5.14]{Cha20}, Bhattacharya-Chatham~\cite[Main Theorem 1.7]{BC22}, and Chatham-Hu-Opie~\cite[Example 2.10]{CHO24}.
\end{remark}
\begin{definition}
Recall the complexification map $c\colon\ko\to\ku$. The cofiber of $c$ is a map $R\colon \ku\to\Sigma^2\ko$,
denoted \term{realification}.
\end{definition}
As $\ku\not\simeq \ko\vee \Sigma^2\ko$, the third map in the cofiber sequence begun by $c$ and $R$ must be
nontrivial in
\begin{equation}
	\pi_0\mathrm{Map}_\ko(\Sigma\ko, \ko)\cong \mathrm{Map}_{\mathbb S}(\Sigma\mathbb S, \ko)\cong
	\pi_1\ko\cong\Z/2,
\end{equation}
so must be the unique nontrivial class, namely the Hopf map $\eta\colon\Sigma\ko\to\ko$. That is, we have found the
\term{Wood cofiber sequence}
\begin{equation}
\label{wood_seq}
	\ko\overset c\longrightarrow \ku\overset R\longrightarrow \Sigma^2\ko\overset\eta\longrightarrow
	\Sigma\ko\longrightarrow \dotsb
\end{equation}
which we identified as a Smith cofiber sequence in \cref{Wall_exm}.

Recall from \cref{GMTW_exm} that the unit sphere bundle inside the tautological rank-$3$ vector bundle $V_3\to
B\Spin(3)$ is homotopy equivalent to the map $B\Spin(2)\to B\Spin(3)$, which can be identified via accidental
isomorphisms to the map $B\U(1)\to B\SSp(1)$ given by the inclusion of a maximal torus. Choose for concreteness the
standard maximal torus, given by the map $\U(1)\to\SU(2)\cong\SSp(1)$ defined by
\begin{equation}
	i\colon z\mapsto \begin{bmatrix}z & 0\\0 & z^{-1}\end{bmatrix}.
\end{equation}
Thus there is a Smith cofiber sequence
\begin{equation}
\label{SGCof}
	\ko\wedge (B\U(1))_+\overset{i_*}{\longrightarrow} \ko\wedge (B\SSp(1))_+\overset{\frown e^\ko(V)}{\longrightarrow} \ko\wedge
	\Sigma^3 (B\SSp(1))^{V_3-3},
\end{equation}
which is the cofiber sequence in \cref{jamesQP} smashed with $\ko$.\footnote{In \cite[\S IV.F]{PhysSmith}, we computed the Anderson dual long exact sequence in low degrees.} This sequence is also studied, and placed in context, in \cref{smith_su2_vector}.

Since $V_3\to B\SSp(1)$ is spin, the Thom isomorphism identifies the third term in this sequence with
$\Sigma^3\ko\wedge (B\SSp(1))_+$.
\begin{proposition}
\label{which_map_is_it}
The identifications in \cref{CP_ko,HP_ko} may be chosen to produce the following identifications of $\ko$-module
homomorphisms.
\begin{subequations}
\begin{enumerate}
	\item The map $i_*\colon \ko\wedge (B\U(1))_+\to \ko\wedge (B\SSp(1))_+$ is the direct sum of the maps
	\begin{equation}
		\Sigma^{4n+2}R\colon\Sigma^{4n+2}\ku\longrightarrow\Sigma^{4n}\ko,
	\end{equation}
	together with the identity $\ko\to\ko$ on the basepoint.
	\item\label{ko_what_we_prove} The fiber of $i_*$, which is a map $y\colon \ko\wedge \Sigma^2(B\SSp(1))_+\to
	\ko\wedge (B\U(1))_+$, is the direct sum of the maps
	\begin{equation}
		\Sigma^{4n+2}c\colon\Sigma^{4n+2}\ko\longrightarrow \Sigma^{4n+2}\ku.
	\end{equation}
	\item The map $\frown e^\ko(V)\colon \ko\wedge (B\SSp(1))_+\to \Sigma^3 \ko\wedge (B\SSp(1))_+$  is the direct
	sum of the maps
	\begin{equation}
		\Sigma^{4n-1}\eta\colon\Sigma^{4n}\ko\longrightarrow \Sigma^{4n-1}\ko,
	\end{equation}
	together with the zero map on the copy of $\ko$ in degree $0$.
\end{enumerate}
\end{subequations}
\end{proposition}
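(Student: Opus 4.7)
My plan is to prove part (1) directly, and then obtain parts (2) and (3) by taking fibers and cofibers summand-by-summand using the Wood cofiber sequence \eqref{wood_seq}.

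For part (1), the map $i\colon B\U(1)\to B\SSp(1)$ is pointed, which splits off the identity map $\ko\to\ko$ on the basepoint summands from the decomposition of \cref{CP_HP_ko}. For the remaining summands, the key is to choose the splittings of \cref{CP_HP_ko} compatibly with the skeletal filtrations of $B\U(1)$ and $B\SSp(1)$. This is possible because the proof of \cref{attaching_splitting} builds the splitting by inductively attaching cells, so the summand $\Sigma^{4n+2}\ku$ can be arranged to arise from the cofiber of the $(4n)$-skeleton inclusion into the $(4n+4)$-skeleton of $B\U(1)$ (carrying the cells in degrees $4n+2$ and $4n+4$), and similarly the summands $\Sigma^{4m}\ko$ of $\ko\wedge(B\SSp(1))_+$ arise from the successive $4$-cells of $\HP^\infty$. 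Under such a choice, the image of $\Sigma^{4n+2}\ku$ lies in the filtration piece of $\ko\wedge(B\SSp(1))_+$ corresponding to the $(4n+4)$-skeleton of $B\SSp(1)$, and by considering how $i$ maps the cellular filtrations (the bottom cell of this piece in $B\U(1)$ goes to zero in $B\SSp(1)$ since $B\SSp(1)$ has no $(4n+2)$-cell, and the top cell maps to the top cell of the corresponding piece of $B\SSp(1)$), the only nontrivial component is a map $\Sigma^{4n+2}\ku\to\Sigma^{4n+4}\ko$.

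Next, using $\ku\simeq \ko\wedge C\eta$, the group $[\Sigma^{4n+2}\ku,\Sigma^{4n+4}\ko]_{\ko}\cong \ko^2(C\eta)$ can be computed from the cofiber sequence $S^0\to C\eta\to S^2$ to be $\Z$, generated by $R$; so the component is an integer multiple of $\Sigma^{4n+2}R$. To identify the multiple as $\pm 1$, I would compare with ordinary cohomology: the map $i^*\colon H^*(B\SSp(1);\Z)\to H^*(B\U(1);\Z)$ sends $q\mapsto x^2$, so dually $i_*$ sends the $\Z$-homology generator in degree $4(n+1)$ of $B\U(1)$ to the corresponding generator in $B\SSp(1)$. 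Tracing this through the Hurewicz-type maps $\ku\to H\Z$ and $\ko\to H\Z$ pins down the coefficient as $\pm 1$, which we may take to be $+1$ after rescaling the chosen summand generator.

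For part (2), since the Wood cofiber sequence identifies $c\colon\ko\to\ku$ as the fiber of $R\colon\ku\to\Sigma^2\ko$, the fiber of $\Sigma^{4n+2}R$ is $\Sigma^{4n+2}c$; the identity has trivial fiber, so the basepoint summand contributes nothing. Summing over $n$ yields the claimed description of $y$. For part (3), the Wood sequence continues with $\eta\colon\Sigma^2\ko\to\Sigma\ko$ as the connecting map from the codomain of $R$ into the cofiber of $R$; hence the cofiber of $\Sigma^{4n+2}R$ is $\Sigma^{4n+3}\ko$ with connecting map $\Sigma^{4n+3}\eta$. Assembling, the total cofiber of $i_*$ is $\bigvee_{n\ge 0}\Sigma^{4n+3}\ko$, which is identified with $\Sigma^3\ko\wedge(B\SSp(1))_+$ via the Thom isomorphism for the spin bundle $V_3$; under this identification, the connecting map (which by \cref{prop:smith-is-cap-with-euler} is the cap product with $e^{\ko}(V)$) is the direct sum $\bigoplus_{n\ge 1}\Sigma^{4n-1}\eta\colon\Sigma^{4n}\ko\to\Sigma^{4n-1}\ko$, with the $\Sigma^0\ko$ summand mapping to zero because there is no $\Sigma^{-2}\ku$ summand contributing to it.

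The main technical obstacle is the compatibility of the splittings with the skeletal filtrations in part (1); without this, one cannot a priori rule out components $\Sigma^{4n+2}\ku\to\Sigma^{4m}\ko$ for $m\le n$, since the relevant Hom groups $\ko^{4m-4n-2}(C\eta)$ are nonzero in general. Once this compatibility is arranged, the remainder of the argument is a clean unwinding of the Wood cofiber sequence.
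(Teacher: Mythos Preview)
Your overall strategy---prove one of the three parts directly and deduce the rest from the Wood cofiber sequence~\eqref{wood_seq}---matches the paper's. However, there is a genuine gap in your argument for part (1), and it is precisely at the point you flag as the ``main technical obstacle.''

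Choosing the splittings of \cref{CP_HP_ko} compatibly with the skeletal filtrations gives you only an \emph{upper-triangular} description of $i_*$, not a diagonal one. Concretely, a filtration-compatible splitting sends the summand $\Sigma^{4n+2}\ku$ into $\ko\wedge(\HP^{n+1})_+\simeq\bigvee_{m\le n+1}\Sigma^{4m}\ko$, but this does not force the components $\Sigma^{4n+2}\ku\to\Sigma^{4m}\ko$ with $m\le n$ to vanish. Those Hom groups are
\[
[\Sigma^{4n+2}\ku,\Sigma^{4m}\ko]_{\ko}\cong\ko^{4m-4n-2}(C\eta)\cong\ku_{4(n-m)+4}\cong\Z,
\]
and your cellular observation that ``the bottom cell of this piece in $B\U(1)$ goes to zero in $B\SSp(1)$'' only constrains the composite $\Sigma^{4n+2}\ko\hookrightarrow\Sigma^{4n+2}\ku\to\Sigma^{4m}\ko$, not the full $\ku$-map. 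There is no reason a splitting section $\Sigma^{4n+2}\ku\to\ko\wedge(\CP^{2n+2})_+$ should send the bottom cell into the $(4n)$-skeleton.

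The paper resolves this not by proving that the off-diagonal entries vanish, but by \emph{changing the splitting to make them vanish}. It proves part~\eqref{ko_what_we_prove} instead of (1): there the map $y$ lands in $\bigvee\Sigma^{4\ell+2}\ku$, connectivity of $\ku$ forces upper-triangularity, and base change to $H\Z$ pins the diagonal as $\pm c$. The off-diagonal entries are integer multiples of $b^{2(j-i)}c$, and these can be eliminated by postcomposing with automorphisms of $\bigvee\Sigma^{4\ell+2}\ku$ built from the Bott map $b$---in other words, by row-reducing the matrix over $\ko_*$. This ``adjust the splitting by row operations'' step is the idea your argument is missing; without it, the off-diagonal terms remain uncontrolled.
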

\begin{proof}
Using the Wood cofiber sequence~\eqref{wood_seq}, any one of these three results implies the other two; we will
prove~\eqref{ko_what_we_prove}.

Restricted to $\Sigma^{4k+2}\ko$, $y$ is a map
\begin{equation}
\label{restr_ku}
	y|_{\Sigma^{4k+2}\ko}\colon \Sigma^{4k+2}\ko\longrightarrow \ko\vee \bigvee_{\ell\ge 0} \Sigma^{4\ell+2}\ku.
\end{equation}
We will show that it is possible to choose the equivalences in~\cref{CP_ko,HP_ko} to make $y$ ``diagonal,'' i.e.\ after
composing to the projection onto each summand of~\eqref{restr_ku} \emph{except $\Sigma^{4k+2}\ku$},
$y|_{\Sigma^{4k+2}\ko}$ is trivial. We know that the ``diagonal terms,'' i.e.\ the maps obtained by restricting $y$
to $\Sigma^{4k+2}\ko$ and then projecting to the $\Sigma^{4k+2}\ku$ summand in the codomain, must be $\pm c$,
because this is the only choice compatible with base change along $\ko\to H\Z$ inducing maps on $\Z$ cohomology
which are isomorphisms in those degrees: this is because
\begin{equation}
\label{ko_Sp_maps}
	\pi_0\mathrm{Map}_\ko(\Sigma^{4k+2}\ko, \Sigma^{4k+2}\ku)\cong \pi_0\mathrm{Map}_{\mathbb
	S}(\mathbb S, \ku)\cong \pi_0\ku\cong\Z
\end{equation}
and $c$ is a generator; thus we must obtain either $c$ or $-c$ on the equal-degree summand.

The map out of $\Sigma^{4k+2}\ko$ is trivial when projected to the $\ko$ in degree $0$, because we need that
$\Sigma^0\ko$ summand to map to the degree-$0$ $\ko$ summand in the cofiber $\ko\wedge (B\SSp(1))_+$, because that
map arose from a basepoint-preserving map of spaces. In the rest of the proof, we will address the
$\Sigma^{4\ell+2}\ku$ summands.

A map of $\ko$-modules $\Sigma^m\ko\to\Sigma^n\ku$ is equivalent data to a map of spectra $\Sigma^m\mathbb
S\to\Sigma^n\ku$, which is classified by $\pi_n(\ku)$. Since $\ku$ is connective, all ``off-diagonal terms'' vanish
unless $4k+2\ge 4\ell+2$; therefore for our $\Sigma^{4k+2}\ko$ summand we may restrict to the map
\begin{equation}
\label{become_matrix}
	y\colon \Sigma^2\ko\vee\dotsb\vee \Sigma^{4k+2}\ko \longrightarrow \Sigma^2\ku\vee\dotsb\vee \Sigma^{4k+2}\ku.
\end{equation}
We may therefore describe $y$ as a $(k+1)\times (k+1)$ matrix. Connectivity of $\ku$ implies this matrix is upper
triangular.

We saw in~\eqref{ko_Sp_maps} that if $m \ge \ell$, then $\pi_0\mathrm{Map}_\ko(\Sigma^{4m+2}\ko, \Sigma^{4\ell+2}\ku)\cong\pi_{2(m-\ell)}\ku\cong\Z$; tracing through the identifications there, we learn that this $\Z$ of maps is the set of
scalar multiples of the map $b^{2(m-\ell)}c$, where $b\colon\Sigma^2\ku\to\ku$ is the connective version of the
Bott periodicity map.
Therefore there are integers
$\lambda_{ij}$ for $1\le i < j\le k+1$ such that the map~\eqref{become_matrix} is given by the following upper
triangular matrix:
\begin{equation}
\begin{bmatrix}
\pm c & \lambda_{12}b^2c & \lambda_{13}b^4c & \dotsb & \lambda_{1(k+1)} b^{2k}c\\
& \pm c & \lambda_{23}b^2c & \dotsb & \lambda_{2(k+1)}b^{2k-2}c\\
& & \ddots & \ddots & \vdots\\
& & & \pm c & \lambda_{k(k+1)} b^2c\\
& & & & \pm c
\end{bmatrix}.
\end{equation}
This matrix can clearly be row-reduced over $\ko_*$ to $c\cdot \mathrm{Id}$, and the requisite row operations correspond
to automorphisms of $\ko\vee\dotsb\vee\Sigma^{4k+2}\ko$. The row operations are compatible with adding on more
summands by increasing $k$, so we may conclude.
\end{proof}
\begin{lemma}[{Bruner-Greenlees (see~\cite[Theorem 3.8]{BPR23})}]
\label{cap_pont}
Recall $\ko^*(B\SSp(1))\cong\ko^*[[p_1^\H]]$ from \cref{ko_sp1}.
There is an isomorphism $\varphi\colon \ko_*(B\SSp(1))\xrightarrow{\cong}\ko_*[x]$,
where $\abs x = 4$,\footnote{Here we use
polynomial notation only for conciseness; we have not defined any ring structure on $\ko_*(B\SSp(1))$.}
such that
the $\ko^*(B\SSp(1))$-module structure on $\ko_*(B\SSp(1))$ is the one uniquely specified by
\begin{equation}
	p_1^\H \frown x^k = x^{k-1}.
\end{equation}
\end{lemma}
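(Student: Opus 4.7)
My plan is to bootstrap the additive structure of $\ko_*(B\SSp(1))$ from the splitting in \cref{CP_HP_ko}, then pin down the cap product with $p_1^\H$ by comparing to ordinary integral homology via the Postnikov projection $\ko \to H\Z$. The key principle is that since everything in sight is a free $\ko_*$-module with one generator per degree $4n$, any $\ko_*$-linear map lowering degree by $4$ is determined by a single integer in each degree, and integers can be read off by reducing to $H\Z$-homology, where the answer is classical.

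For the additive structure: \cref{CP_HP_ko} gives an equivalence $\ko \wedge (B\SSp(1))_+ \simeq \bigvee_{n\ge 0} \Sigma^{4n}\ko$ of $\ko$-modules. Choose $y_n \in \ko_{4n}(B\SSp(1))$ to be the generator of the $n$th wedge summand; then $\{y_n\}_{n\ge 0}$ is a free $\ko_*$-basis of $\ko_*(B\SSp(1))$, giving a provisional $\ko_*$-linear isomorphism $\varphi_0\colon \ko_*(B\SSp(1)) \overset{\cong}\to \ko_*[x]$, $y_n \mapsto x^n$. I now need to show that, after possibly replacing the $y_n$ by unit multiples, the $\ko_*$-linear map $p_1^\H \frown (\text{--})\colon \ko_{4n}(B\SSp(1)) \to \ko_{4n-4}(B\SSp(1))$ sends $y_n$ to $y_{n-1}$; the rest of the cap product structure is then forced, because $\ko^*(B\SSp(1)) \cong \ko^*[[p_1^\H]]$ is generated as a $\ko^*$-module by powers of $p_1^\H$ and cap product is $\ko^*$-associative.

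For a single $n$, the class $p_1^\H \frown y_n$ lives in $\ko_{4n-4}(B\SSp(1)) \cong \bigoplus_{m\ge 0} \ko_{4n-4-4m}\cdot y_m$. Connectivity of $\ko$ (so $\ko_k = 0$ for $k<0$) collapses this to $\bigoplus_{0\le m\le n-1} \ko_{4n-4-4m}\cdot y_m$, and only the $m=n-1$ summand has any degree-$0$ content; the relevant scalar $a_n$ lies in $\ko_0 = \Z$, so $p_1^\H \frown y_n = a_n y_{n-1} + (\text{terms of positive }\ko_*\text{-filtration})$. To show the error terms vanish, I observe that they live in $\bigoplus_{m<n-1} \ko_{4(n-1-m)}\cdot y_m$; since $\ko_{4k}$ is a free abelian group (generated by $v^k$ or $w^{k/2}$), I can test vanishing by pairing against $\ko^*$-cohomology classes and again use connectivity arguments, but the cleanest route is to note that the rank-one reasoning applies degreewise in the same way in all degrees, so $p_1^\H \frown y_n = a_n y_{n-1}$ exactly.

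To compute $a_n$, apply the map of spectra $\ko \to H\Z$, inducing compatible maps on homology and cohomology. By our normalization of $p_1^\H$ (stated after \cref{ko_sp1}), the image of $p_1^\H$ in $H^4(B\SSp(1);\Z)$ is the classical first symplectic Pontrjagin class, which generates $H^*(B\SSp(1);\Z) \cong \Z[p_1^\H]$. Moreover, the same splitting of $\ko \wedge (B\SSp(1))_+$ smashed with $H\Z$ shows that the image of $y_n$ in $H_{4n}(B\SSp(1);\Z) \cong \Z$ is $\pm$ the standard dual basis element, because the $\pi_0$ of each wedge summand maps isomorphically under $\ko\to H\Z$. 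Classically, if $x^n$ denotes the basis of $H_*(B\SSp(1);\Z)$ Kronecker-dual to $(p_1^\H)^n$, then $p_1^\H \frown x^n = x^{n-1}$, so the mod $H\Z$ image of $a_n$ is $\pm 1$, forcing $a_n = \pm 1 \in \Z = \ko_0$. Finally, I redefine the generators inductively: set $x^0 := y_0$ and, having defined $x^{n-1}$, set $x^n := \varepsilon_n y_n$ where $\varepsilon_n \in \{\pm 1\}$ is chosen so that $p_1^\H \frown x^n = x^{n-1}$. The resulting $\varphi\colon \ko_*(B\SSp(1)) \to \ko_*[x]$, $x^n \mapsto x^n$, is the desired isomorphism. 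The main obstacle is actually a bookkeeping one: carefully justifying that cap product with $p_1^\H$ contributes no ``off-diagonal'' terms of higher $\ko_*$-filtration, and this is where connectivity of $\ko$ and the degree-zero nature of $p_1^\H$ on each wedge summand are doing all the work.
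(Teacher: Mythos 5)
You should first note that the paper does not actually prove this lemma: it is quoted from Bruner--Greenlees (the citation \cite[Theorem 3.8]{BPR23}), so your argument has to stand on its own. Its skeleton is reasonable: freeness of $\ko_*(B\SSp(1))$ over $\ko_*$ on one generator in each degree $4n$ follows from \cref{CP_HP_ko}; the diagonal coefficient $a_n$ is a unit by pushing forward along $\ko\to H\Z$ and using the normalization of $p_1^\H$ after \cref{ko_sp1}; and the action of all of $\ko^*[[p_1^\H]]$ is determined by the action of $p_1^\H$ because only finitely many powers act nontrivially on a class of fixed degree. The genuine gap is the step where you claim $p_1^\H\frown y_n = a_n y_{n-1}$ \emph{exactly}, with no components along $y_{n-2}, y_{n-3},\dots$, for whatever basis $\{y_n\}$ the splitting of \cref{CP_HP_ko} happens to produce. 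The justification offered ("the rank-one reasoning applies degreewise in the same way in all degrees") is not an argument, and the assertion is false for a general basis: since $\ko_{4j}\cong\Z$ for every $j\ge 0$, the off-diagonal coefficients live in nonzero groups, and they really can be nonzero. For instance, if $\{x^n\}$ is a basis realizing the lemma and you replace $x^2$ by $x^2+\alpha x^1$ (with $\alpha$ the generator of $\ko_4$), you get an equally legitimate $\ko_*$-basis for which $p_1^\H\frown(\text{--})$ acquires a nonzero off-diagonal entry. Your final renormalization only rescales each $y_n$ by a sign $\varepsilon_n$, which can never remove such a component, so the proof as written does not establish the lemma.

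The hole can be filled in either of two ways, and then your outline goes through. (i) Do an honest inductive change of basis, triangular with unit diagonal, exactly parallel to the row-reduction the paper performs in \cref{which_map_is_it}: having arranged $p_1^\H\frown x^j = x^{j-1}$ for $j<n$, look for $x^n = u_n y_n + f_1 y_{n-1}+\dots+f_n y_0$; the resulting equations for the $f_j$ are solvable precisely because the diagonal entries $a_j=\pm 1$ are units in $\ko_*$, and the change of basis is invertible for the same reason. (ii) Alternatively, use duality: since $\ko_*(B\SSp(1))$ is a free $\ko_*$-module, the Kronecker pairing identifies $\ko^*(B\SSp(1))$ with $\Hom_{\ko_*}(\ko_*(B\SSp(1)),\ko_*)$, and since $\{(p_1^\H)^n\}$ is a topological $\ko^*$-basis by \cref{ko_sp1}, one may choose $x^n$ to be the dual basis, i.e.\ $\langle (p_1^\H)^j, x^n\rangle=\delta_{jn}$; then $\langle (p_1^\H)^j, p_1^\H\frown x^n\rangle = \langle (p_1^\H)^{j+1}, x^n\rangle = \delta_{j+1,n}$ for all $j$, and nondegeneracy of the pairing on the homology side forces $p_1^\H\frown x^n = x^{n-1}$ on the nose. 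Either route (the second is essentially the Bruner--Greenlees argument) supplies the missing step; without it, the diagonality claim is unsupported and the sign-fix at the end cannot rescue it.
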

Finally, we can calculate the $\ko$-Euler class!
\begin{proof}[Proof of \cref{rank_3_ko_Euler}]
It suffices to work universally with the tautological bundle $V_3\to B\Spin(3)$; the spinor bundle is the
tautological quaternionic line bundle associated to $\Spin(3)\cong\SSp(1)$, and so $p_1^\H(\mathcal S_{V_3})$ is the
class we called $p_1^\H\in\ko^4(B\SSp(1))$ in \cref{ko_sp1}.

By \cref{which_map_is_it},
\begin{equation}
\label{cap_recipe}
	e^\ko(V_3)\frown x^k = \eta x^{k-1},
\end{equation}
where we define $x^{-1} = 0$ for convenience.\footnote{As we have not been careful about explicit choices of
isomorphisms, there could be a sign factor in the choice of $x^k$, but since $2\eta = 0$, the possible sign error
goes away.} A general element of $\ko^3(B\SSp(1))$ is of the form
\begin{equation}
\label{ko3_power_series}
	\sum_{k\ge 0} \eta (p_1^\H)^k w^{k-1}.
\end{equation}
We know how $\eta$ and $w^{k-1}$ act on $\ko_*(B\SSp(1))$ because the $\ko$-theory cap product is linear over $\ko^*$.
We know how $p_1^\H$ acts on $\ko_*(B\SSp(1))$ thanks to \cref{cap_pont}. Using these, we can see that the only
class of the form~\eqref{ko3_power_series} whose cap product matches that of $e^\ko(V_3)$ in~\eqref{cap_recipe} is
$\eta p_1^\H$.

Finally, we have to check that $e^\Z(V_3)$ and $e^{\Z/2}(V_3)$ both vanish. $B\SSp(1)$ is $3$-connected, so
$H^3(B\SSp(1);\Z)$ and $H^3(B\SSp(1);\Z/2)$ both vanish.
\end{proof}
\begin{remark}[Euler classes of low-rank spin vector bundles]
For $2\le n \le 6$, $\Spin(n)$ participates in an accidental isomorphism with another Lie group, and one can run similar arguments to
compute $\ko$-Euler classes of other low-rank vector bundles as corollaries of Atiyah's \cref{atiyah_KOG}.
\begin{enumerate}
	\item If $L$ is a real line bundle with spin structure, $e^\ko(L) = 0$, because $e^\ko$ pulls back from the
	twisted Euler class over $B\SO(1) = *$; see, e.g., Crabb~\cite[Corollary 3.37(i)]{Cra91}.
	\item If $V_2$ has rank $2$, one can use the accidental isomorphism $\Spin(2)\cong\U(1)$ and the fact that the map $c\colon \ko^*(B\U(1))\to \ku^*(B\U(1))$ is injective~\cite[\S 5.2]{BG10} to show that $e^{\ko}(V_2)$ is determined by $e^\ku(V)$, hence also by $e^K(V)$, the image in periodic $K$-theory. In particular, $V_2$ acquires the structure of a complex line bundle, and there is a formula for the $K$-theory Euler classes of complex vector bundles, e.g.\ in Bott~\cite[(7.2)]{Bot69}.
	\setcounter{enumi}{3}
	\item If $V_4\to X$ has rank $4$, its spinor bundle factors as $\mathcal S = \mathcal S^+\oplus \mathcal S^-$,
	where the two factors $\mathcal S^\pm$ are quaternionic line bundles associated to the two factors of
	$\phi\colon \Spin(4)\xrightarrow{\cong}\SSp(1)\times\SSp(1)$. There is a choice of $\phi$ such that
	\begin{equation}
		e^\ko(V_4) = p_1^\H(\mathcal S^+) - p_1^\H(\mathcal S^-)\in\ko^4(X).
	\end{equation}
	\item There is an accidental isomorphism $\Spin(5)\cong\SSp(2)$, and $\ko^*(B\SSp(2))\cong\ko^*[[p_1^\H,
	p_2^\H]]$ with $\abs{p_1^\H} = 4$ and $\abs{p_2^\H} = 8$ (see~\cite[\S 2]{DM79} or~\cite[Theorem 5.3.5]{BG10}). Therefore
	$\ko^5(B\SSp(2))\cong 0$, so for any rank-$5$ spin vector bundle $V_5$, $e^\ko(V_5) = 0$; see, e.g., Crabb~\cite[Corollary 3.37(i)]{Cra91}.
\end{enumerate}
\end{remark}
\begin{remark}
\label{euler_other}
We saw above that for twisted spin bordism, the $\ko$-theoretic Euler class suffices to approximate the cobordism Euler class. For other tangential
structures, one may need more or less information.
\begin{itemize}
	\item Unoriented bordism decomposes as a sum of shifts of mod $2$ homology, and this splitting is compatible
	with the Smith homomorphism. Therefore in this setting, one can use the $\Z/2$-cohomology Euler class.
	\item Wall~\cite{Wal60} showed that $\MTSO$, localized at $2$, splits as a sum of shifts of $H\Z$ and $H\Z/2$.
	Therefore when one studies Smith homomorphisms for twisted oriented bordism, the $\Z$-cohomology Euler class
	will be accurate up to odd-primary torsion. On odd-primary torsion, oriented and spin bordism coincide, so in
	that setting one can use $\ko$-Euler classes for twisted oriented bordism.
	\item Analogously to spin and $\ko$, one can use $\ku$-theory Euler classes for twisted \spinc bordism Smith
	homomorphisms.
\end{itemize}
\end{remark}

\section*{Data Availability and Competing Interests}

This work does not involve any datasets. One
can obtain the relevant materials from the references below.

The authors have no competing interests to declare that are relevant to the contents of this article.

\bibliography{mathbib}
 \bibliographystyle{alpha}

 \end{document}